\long\def\symbolfootnote[#1]#2{\begingroup%
\def\thefootnote{\fnsymbol{footnote}}\footnote[#1]{#2}\endgroup}
\theoremstyle{plain}
\newtheorem{thm}{Theorem}[section]
\newtheorem{lem}[thm]{Lemma}
\newtheorem{prop}[thm]{Proposition}
\newtheorem{cor}[thm]{Corollary}
\theoremstyle{definition}
\newtheorem{rmk}[thm]{Remark}
\newtheorem{conv}[thm]{Convention}
\newtheorem{dfn}[thm]{Definition}
\newtheorem{qst}[thm]{Question}
\newcommand{\RR}{\mathbb{R}}      
\newcommand{\ZZ}{\mathbb{Z}}      
\newcommand{\NN}{\mathbb{N}}      
\newcommand{\PP}{\mathbb{P}} 
\newcommand{\PPP}{\mathcal{P}} 
\newcommand{\nc}[1]{\langle\langle#1\rangle\rangle} 
\newcommand{\pres}[1]{\langle#1\rangle} 
\newcommand{\cz}{\text{CAT}(0)}
\newcommand{\cl}[1]{\overline{#1}} 
\newcommand{\intr}[1]{\text{int}(#1)} 
\newcommand{\abs}[1]{\lvert#1\rvert}
\newcommand{\aux}[1]{\widecheck{#1}} 
\newcommand{\ucc}[1]{\bar{#1}} 
\newcommand{\pl}[1]{#1_{\#}} 
\newcommand{\os}[1]{#1^{(1)}} 
\newcommand{\zs}[1]{#1^{(0)}} 
\newcommand{\gos}[1]{G(#1)} 
\newcommand{\rl}{\ell_r} 
\newcommand{\bsl}[1]{\ell(#1)} 
\newcommand{\frgsl}[1]{L(#1)} 
\newcommand{\da}[1]{A(#1)} 
\newcommand{\stab}[1]{\text{stab}(#1)} 
\newcommand{\nbhd}[2]{\mathcal{N}_{#2}(#1)}
\newcommand{\diam}[1]{\text{diam}(#1)} 
\newcommand{\smcan}{C'(\frac{1}{6})} 
\newcommand{\horo}{\mathcal{H}} 
\newcommand{\aug}[1]{A(#1)} 
\newcommand{\coll}[1]{{#1}_c} 
\begin{document}


\title{Cubulating one-relator products with torsion}

\author{Ben Stucky}

\date{}

\maketitle


\begin{abstract}

We generalize results of Lauer and Wise to show that a one-relator product of locally indicable groups whose defining relator has exponent at least $4$ admits a proper and cocompact action on a $\cz$ cube complex if the factors do.

\end{abstract}

\section{Introduction}
\label{sect:intro}

Much effort has been devoted to studying groups which act properly and cocompactly on $\cz$ cube complexes, henceforth referred to as \emph{cubulable groups}, in recent years. Their most famous appearance is in the resolution of the Virtual Haken Conjecture by Agol and Wise, building on work of Bergeron-Wise, Kahn-Markovic, Perelman, Thurston, and others, in which the cubulation of hyperbolic $3$-manifold groups is featured prominently \cite{bw, km, perel1, perel2, thurs}. Simply knowing that a group is cubulable is sufficient to conclude a good deal of structural information about it. For instance, these groups satisfy a Tits alternative \cite{sw}, admit a quadratic-time solution to the word problem \cite{brid}, and satisfy the Novikov and Baum-Connes conjectures \cite{hp, ccjjv}. Cubulable groups which have the stronger property of being \emph{virtually special}, i.e., possess a finite index subgroup which embeds into a right-angled Artin group, enjoy stronger properties still, including separability of quasiconvex subgroups and linearity \cite{wisebook, hsuwise}.

Aside from hyperbolic 3-manifold groups, many classes of groups have been shown to be cubulable, including $\smcan$ small cancellation groups \cite{w1}. One-relator groups with torsion of exponent $n\geq 4$, groups which admit a presentation of the form $\pres{a_1,\ldots,a_m\mid w^n}$ with $n\geq 4$, were cubulated by Lauer and Wise in 2013 \cite{lw}. These groups are $\smcan$ when $n\geq 6$. An extension of Wise's result for $\smcan$ groups was pursued by Martin and Steenbock in 2014 when they successfully cubulated $\smcan$ small cancellation free products of cubulable groups \cite{ms2}. In 2017, Jankiewicz and Wise gave an alternative proof of Martin and Steenbock's result relying on Wise's cubical small cancellation theory developed in \cite{w2}, though they only proved it for $C'(\frac{1}{20})$ small cancellation free products \cite{jw}. In the present article, we generalize Lauer and Wise's cubulation results for one-relator groups with torsion to the free product setting.

A group is \emph{locally indicable} if every finitely generated subgroup admits $\ZZ$ as a homomorphic image. For an element $w$ of a group $G$, let $\nc{w}$ denote the normal closure of $w$ in $G$. The following is our main theorem.

\noindent
\begin{thm}
\label{main}
Let $A$ and $B$ be locally indicable, cubulable groups, $w$ a word in $A*B$ which is not conjugate into $A$ or $B$, and $n\geq 4$. Then $G=A*B/\nc{w^n}$ is cubulable.
\end{thm}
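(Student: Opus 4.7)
The plan is to adapt the wall construction of Lauer-Wise \cite{lw} to the free-product setting, using Brodskii-Howie's theory of locally indicable one-relator products to supply the needed small-cancellation input. First I would build a relative presentation complex: given $\cz$ cube complexes $X_A, X_B$ on which $A, B$ act properly and cocompactly, assemble them into a tree of cube complexes $\widetilde Y$ on which $A*B$ acts (the vertex spaces being $G$-translates of $\widetilde X_A$ and $\widetilde X_B$ glued along the Bass-Serre tree), then attach a cone along every $G$-translate of the combinatorial loop representing $w^n$ to form a $2$-complex $\widetilde Z$ with $\pi_1(\widetilde Z/G) = G$. The locally indicable hypothesis is what guarantees that $w$ has order exactly $n$ in $G$ and that the usual ``pieces'' analysis on the star graph of the relator produces short intersections between distinct cones; the hypothesis $n\geq 4$ then ensures any piece is shorter than a quarter of the cone boundary, which is the analogue of $C'(1/4)$ used in \cite{lw}.

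The next step is to produce walls in $\widetilde Z$ of two kinds. Walls of type (a) are hyperplanes of the vertex cube complexes $\widetilde X_A$ and $\widetilde X_B$, which are already two-sided and embedded in $\widetilde Y$ by the tree-of-spaces structure and which I would argue extend unchanged through $\widetilde Z$ (they do not cross cones). Walls of type (b) are the Lauer-Wise ``diameters'' of the attached cones: each arc pairing antipodal midpoints of the boundary $w^n$-loop is a local wall, and the small-cancellation estimate above lets adjacent diameters glue coherently across shared cone-edges into globally embedded, two-sided walls. Applying Sageev's construction then produces a $\cz$ cube complex $C$ on which $G$ acts; cocompactness would follow from compactness of the quotient presentation complex together with finiteness of wall-orbits, and properness from a Hruska-Wise-style linear separation argument combining the hyperbolicity contributed by the cones (via the $n\geq 4$ condition) with the proper cocompact actions of $A$ and $B$ on $X_A, X_B$.

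The main obstacle I anticipate is in the construction of the type-(b) walls: showing that relator diameters assemble into well-defined, embedded, two-sided global walls. In the classical Lauer-Wise setting, edges of the $w^n$-boundary are labeled by free generators and the diameter matching is purely combinatorial; here those edges traverse cube complexes carrying their own hyperplane structure, so the diameter construction must be compatible with these hyperplanes at every syllable boundary, and branching in the tree of cube complexes threatens to create self-intersections when a diameter propagates across a cone. This is where local indicability does its essential work: the Magnus-Moldavanskii-style subgroup control coming from Howie's analysis should provide enough rigidity to define the matching unambiguously and to rule out self-osculations, effectively playing the role of a cubical small-cancellation theorem for the augmented complex $\widetilde Z$.
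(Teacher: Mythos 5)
There is a genuine gap in your wall construction, and it sits at the heart of the argument. You propose that the type (a) walls are simply the hyperplanes of the vertex cube complexes, extended ``unchanged'' because ``they do not cross cones.'' But the boundary path $w^n$ of each attached $2$-cell traverses many cube edges inside the vertex spaces, and a hyperplane dual to such an edge, if it is not continued across the $2$-cell, fails to separate the resulting space: the rest of the boundary of that $2$-cell connects the two endpoints of the dual edge without meeting the hyperplane again. So the correct walls are not two independent families but hybrid objects -- graphs (in fact trees) of hyperplanes in which hyperplanes of $\widetilde X_A$ and $\widetilde X_B$ are joined to one another by arcs crossing the essential $2$-cells, together with the purely ``Lauer--Wise'' walls dual to the edges carrying the free-product structure. (Relatedly, the arcs inside a $2$-cell should join corresponding edges of consecutive periods of $w^n$, not antipodal midpoints.) Proving that these hybrid walls embed and separate is not a routine extension of the $C'(1/4)$-style piece analysis you invoke; in the free-product setting there is no honest metric small cancellation condition, and the replacement is a generalization of the Howie--Pride spelling theorem (existence of many ``extreme'' $2$-cells in reduced diagrams over a staggered complex with locally indicable vertex groups), proved via Howie's tower machinery. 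Your appeal to ``Magnus--Moldavanskii-style subgroup control'' gestures at the right source of rigidity but does not supply this diagram-theoretic input, which is what actually rules out self-intersections and non-separation.

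The second gap is cocompactness. You assert it ``would follow from compactness of the quotient presentation complex together with finiteness of wall-orbits,'' but that implication is false: finitely many orbits of walls does not make the Sageev dual cube complex cocompact, and this is precisely where the free-product case diverges from Lauer--Wise. Since $G$ is only hyperbolic \emph{relative} to $\{A,B\}$ (not hyperbolic), one cannot quote quasiconvexity of wall stabilizers in a hyperbolic group; instead one must prove that wall stabilizers are \emph{relatively} quasiconvex (in the paper this is a geometric argument about wall carriers and relative geodesics, using $n\geq 4$, transported to a cusped space built from combinatorial horoballs), that stabilizers act cocompactly on their walls, and that the peripheral dual cube complexes $\mathcal{C}_*(Y_P)$ are cocompact (which requires showing that any wall with unbounded intersection with a neighborhood of a peripheral subspace passes uniformly close to it). Only then does the Hruska--Wise relative cocompactness criterion yield a cocompact action. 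Your properness outline via linear separation is essentially right in spirit (and $n\geq 4$ is indeed where that bites), but as written the proposal omits the relative quasiconvexity and peripheral cocompactness steps entirely, and these are the substantive new difficulties beyond the one-relator-with-torsion case.
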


We remark that this is implied by the results of \cite{ms2} when $n\geq 6$ and \cite{jw} when $n\geq 20$.

To prove Theorem \ref{main}, we are motivated to pass to a broader class of groups; namely, we consider ``staggered'' quotients of a free product of finitely many locally indicable, cubulable groups. The topological models for these groups are \emph{staggered generalized $2$-complexes}. See Section \ref{sect:def} for the definition of such a complex $X$ and its \emph{minimal exponent $n(X)$}. Theorem \ref{main} follows from the more general statement below by taking $X$ to be a dumbell space for the free product $A*B$ with a $2$-cell corresponding to $w^n$ glued to it. 

\noindent
\begin{thm}
\label{mainstag}
Let $X$ be a staggered generalized $2$-complex. Suppose that $X$ has locally indicable, cubulable vertex groups and that $n(X)\geq 4$. Then $\pi_1(X)$ is cubulable.
\end{thm}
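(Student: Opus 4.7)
\emph{Proof plan.} The strategy is to construct a wallspace structure on the universal cover $\widetilde{X}$ that is preserved by $\pi_1(X)$, apply Sageev's construction to obtain a dual $\cz$ cube complex $\widetilde{Y}$, and then verify that the induced $\pi_1(X)$-action on $\widetilde{Y}$ is proper and cocompact by a Hruska--Wise type criterion. This follows the blueprint of Lauer--Wise, but now one must accommodate both vertex-group hyperplanes and relator hypergraphs simultaneously.

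First I would describe $\widetilde{X}$ explicitly as a tree-like assembly of lifts of the vertex spaces $X_v$ (the ``pieces''), each piece isomorphic to the universal cover $\widetilde{X}_v$, glued together along lifts of edges and boundaries of $2$-cells. By hypothesis each vertex group $G_v:=\pi_1(X_v)$ acts properly and cocompactly on a $\cz$ cube complex $C_v$, and I would use a $G_v$-equivariant map $\widetilde{X}_v \to C_v$ to pull back the hyperplanes of $C_v$ as a $G_v$-invariant family of walls in each piece.

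Next I would build two species of walls in $\widetilde{X}$:
(i) \emph{vertex walls}, one for each hyperplane of each $C_v$, consisting of its pullback in the appropriate piece and then extended into adjacent $2$-cells by a consistent ``tracking'' rule along the attaching maps so that the wall remains an embedded two-sided subspace separating $\widetilde{X}$;
(ii) \emph{hypergraph walls}, constructed in the spirit of Lauer--Wise by placing $n$ dual arcs in each $2$-cell attached along $w^n$ (one per copy of $w$), then matching up arcs that meet along the $1$-skeleton to form connected hypergraphs in $\widetilde{X}$. The staggering of $X$ supplies the linear ordering of $2$-cells needed for this gluing to be well-defined, and local indicability of the $G_v$ allows one to invoke the Magnus--Howie--Brodskii machinery, including a spelling theorem for staggered one-relator products of locally indicable groups with torsion, to control the way relator boundary words interact with vertex spaces.

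The main obstacle will be the detailed geometric analysis. I would need to verify: (a) each hypergraph wall is embedded and two-sided in $\widetilde{X}$, which is precisely where the hypothesis $n(X)\geq 4$ enters, by adapting the Lauer--Wise spelling/small-cancellation style argument to rule out self-intersections (for $n=3$ the corresponding hypergraph may fail to be embedded, so the hypothesis is sharp); (b) the wall collection has finitely many $\pi_1(X)$-orbits and satisfies a linear separation property, which upgrades via Sageev's construction to cocompactness of $\widetilde{Y}$; and (c) the action on $\widetilde{Y}$ is proper, which for elements acting elliptically on the Bass--Serre tree of the free product structure reduces to properness of the $G_v$-action on $C_v$, but for hyperbolic elements requires showing that a definite fraction of hypergraph walls cross the axis per unit of translation length. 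Point (c) is the subtlest, and the interplay between vertex walls at the boundary of $2$-cells and the hypergraph arcs entering those $2$-cells is the principal place where the staggering hypothesis, local indicability, and the exponent condition $n(X)\geq 4$ must all be deployed in concert.
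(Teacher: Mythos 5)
Your plan follows the same overall blueprint as the paper: two species of walls (hyperplane-type walls through the vertex spaces and Lauer--Wise-type walls through the essential $2$-cells), Sageev's dual complex, then properness and cocompactness of the induced action. However, there is a genuine gap at the cocompactness step. In item (b) you claim that finitely many orbits of walls plus linear separation ``upgrades via Sageev's construction to cocompactness''; this is false, and in fact linear separation is the standard criterion for \emph{properness} (this is exactly how the paper uses it, via Proposition \ref{9.1} and \cite[Theorem 5.2]{hw2}). Cocompactness is the delicate point precisely because $G=\pi_1(X)$ is not Gromov hyperbolic in general, so the quasiconvexity argument of Lauer--Wise is unavailable. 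What is actually needed, and what your plan omits, is: (1) relative hyperbolicity of $(G,\PP)$ with respect to the vertex groups (Lemma \ref{rh}, proved via Osin's linear relative Dehn functions using the extreme-$2$-cell machinery); (2) that each wall stabilizer acts cocompactly on its wall and is \emph{relatively} quasiconvex (Lemma \ref{wallsrqc}, Proposition \ref{stabsrqc}), which rests on a geometric estimate for wall carriers against relative geodesics (Lemma \ref{georqc}) and on an auxiliary hyperbolic space obtained by attaching Groves--Manning combinatorial horoballs; and (3) the peripheral hypothesis of the Hruska--Wise cocompactness criterion (Theorem \ref{jw3.1}), namely that each dual complex $\mathcal{C}_*(Y_P)$ over a vertex space is $P$-cocompact, which requires showing that any wall with unbounded intersection with a neighborhood of $Y_P$ passes uniformly close to $Y_P$ (Lemmas \ref{bigproj} and \ref{periphcc}). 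Invoking ``a Hruska--Wise type criterion'' in passing does not substitute for these verifications, and the ingredients you do list cannot replace them.

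A secondary problem is where you locate the hypothesis $n(X)\geq 4$. In the paper the walls embed and separate already under the standing assumption $n(X)\geq 2$ (Section \ref{sect:eandc}); the exponent bound $4$ is used instead in the relative quasiconvexity of wall carriers (Lemma \ref{georqc}), hence in cocompactness, and in the ``direction of bending'' argument (Lemma \ref{direction}) underlying linear separation, hence in properness. Your parenthetical assertion that the hypothesis is sharp because embeddedness fails for $n=3$ is unjustified: the paper proves no such failure and explicitly leaves the cases $n(X)\in\{2,3\}$ open.
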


Wise uses his theory of quasiconvex heirarchies to directly prove a strong generalization of the main result in \cite{lw}, namely that all one-relator groups with torsion are virtually special \cite[Corollary 18.2]{w2}. One-relator groups with torsion are Gromov hyperbolic, so when the exponent of the defining relator in a one-relator group is at least $4$, this result also follows from \cite{lw} and Agol's theorem that a hyperbolic, cubulable group is virtually special \cite[Theorem 1.1]{agol}.

Local indicability of $A$ and $B$ also implies that $G=A*B/\nc{w^n}$ is hyperbolic relative to $\{A,B\}$, a fact we will recover in the present article. Thus if $A$ and $B$ are hyperbolic themselves, then so is $G$ \cite[Corollary 2.41]{o1}, and \cite[Theorem 1.1]{agol} gives the following as a corollary to Theorem \ref{main}:

\noindent
\begin{cor}
Suppose that $A$ and $B$ are locally indicable, hyperbolic, and cubulable. Let $w$ be a word in $A*B$ which is not conjugate into $A$ or $B$, and $n\geq 4$. Then $G=A*B/\nc{w^n}$ is virtually special.
\end{cor}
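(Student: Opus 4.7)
The plan is to chain together three already-stated facts—Theorem \ref{main}, the relative hyperbolicity of $G$, and Agol's theorem—essentially following the roadmap laid out in the paragraph preceding the corollary. First, I would apply Theorem \ref{main} to conclude that $G = A * B / \nc{w^n}$ is cubulable; the hypotheses (local indicability, cubulability, $w$ not conjugate into a factor, $n \geq 4$) are exactly the ones assumed in the corollary, with the hyperbolicity of $A$ and $B$ being extra information not needed at this step.

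Next, I would use the relative hyperbolicity observation the author highlights: because $A$ and $B$ are locally indicable, $G$ is hyperbolic relative to the collection $\{A, B\}$. Since the peripheral groups $A$ and $B$ are themselves word-hyperbolic by assumption, Osin's combination theorem \cite[Corollary 2.41]{o1} promotes relative hyperbolicity to absolute hyperbolicity: $G$ is a hyperbolic group.

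Finally, since $G$ is both cubulable (step one) and word-hyperbolic (step two), Agol's theorem \cite[Theorem 1.1]{agol} immediately yields that $G$ is virtually special, completing the proof. There is no real mathematical obstacle here—the corollary is a formal consequence of the machinery already cited. The only point that requires a sentence of care is verifying the hypotheses line up: one must check that ``cubulable'' in the sense of this paper (proper and cocompact action on a $\cz$ cube complex) matches the input Agol's theorem needs, and that the relative hyperbolicity claim, which is promised to be recovered in the body of the article, can indeed be cited at this stage.
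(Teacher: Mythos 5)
Your proposal is correct and matches the paper's own argument exactly: the paper deduces the corollary from Theorem \ref{main} (cubulability), relative hyperbolicity of $G$ with respect to $\{A,B\}$ together with Osin's result \cite[Corollary 2.41]{o1} (hyperbolicity of $G$), and Agol's theorem \cite[Theorem 1.1]{agol}. No further comment is needed.
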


Though we suspect that Theorem \ref{mainstag} is true when $n(X)\geq 2$, we unfortunately find it necessary to impose the restriction that $n(X)\geq 4$, just as Lauer and Wise do, when seeking to prove properness of the action. In contrast to Lauer and Wise's setting, it also appears that the condition that $n(X)\geq 4$ is necessary for the cocompactness argument.

\noindent
\begin{qst}
Do Theorems \ref{main} and \ref{mainstag} hold when $n(X)\in\{2,3\}$?
\end{qst}

In view of the fact that one-relator groups with torsion are virtually special, the following question is intriguing but well beyond the scope of the present article.

\noindent
\begin{qst}
Let $A$ and $B$ be locally indicable, virtually special groups, $w$ a word in $A*B$ which is not conjugate into $A$ or $B$, and $n\geq 2$. Is $G=A*B/\nc{w^n}$ virtually special?
\end{qst}

\subsection{Methods}

Our methods are topological, and we follow \cite{lw} whenever possible. Briefly, the argument for proving Theorem \ref{main} is as follows. We first build a model space $X$ for $G=A*B/\nc{w^n}$  by starting with a dumbell space $X_A\vee X_B$ of non-positively curved cube complexes with $\pi_1(X_A)=A$ and $\pi_1(X_B)=B$, and then attaching a $2$-cell to a path corresponding to the word $w^n$, so that $\pi_1(X)=G$. See figures \ref{fig1} and \ref{fig2}. The task, then, is to build a $G$-invariant collection of walls in the universal cover, invoke a construction of a dual cube complex with a $G$-action due to Sageev \cite{ms}, and prove that the walls are geometrically nice enough to conclude properness and cocompactness of the action.

 \begin{minipage}{\linewidth}
      \centering
      \begin{minipage}{0.2\linewidth}
          \begin{figure}[H]
              \includegraphics[width=\linewidth]{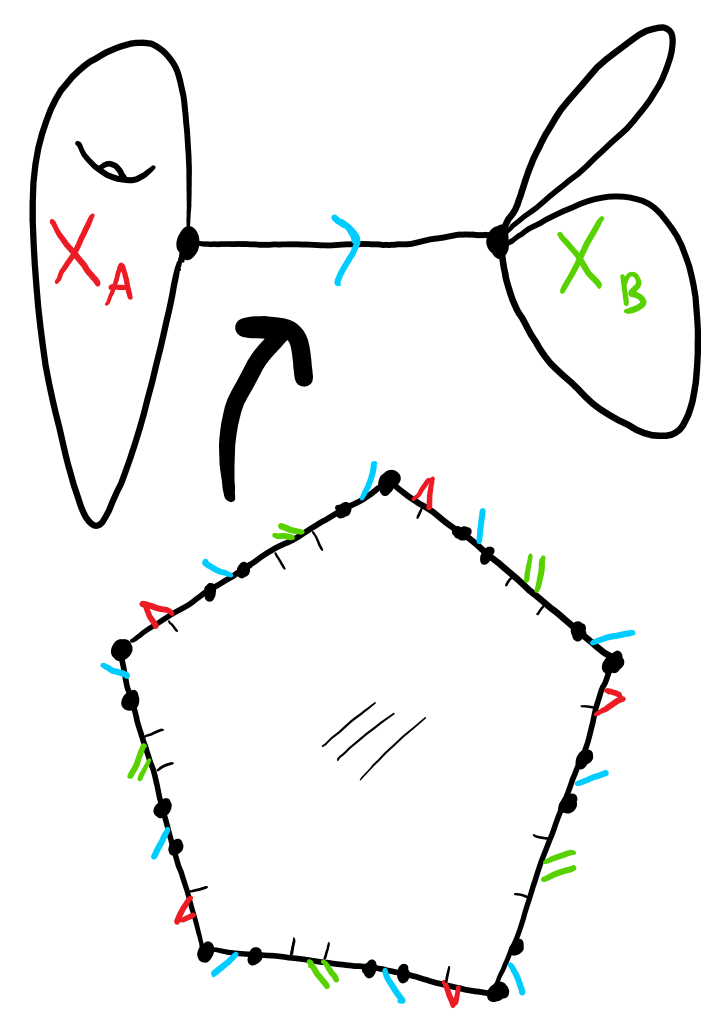}
							\caption{\footnotesize A presentation complex for $G$. The boundary path of the pentagonal cell corresponds to a word of the form $w^5$.}
					\label{fig1}
          \end{figure}
      \end{minipage}
      \hspace{0.2\linewidth}
      \begin{minipage}{0.4\linewidth}
          \begin{figure}[H]
              \includegraphics[width=\linewidth]{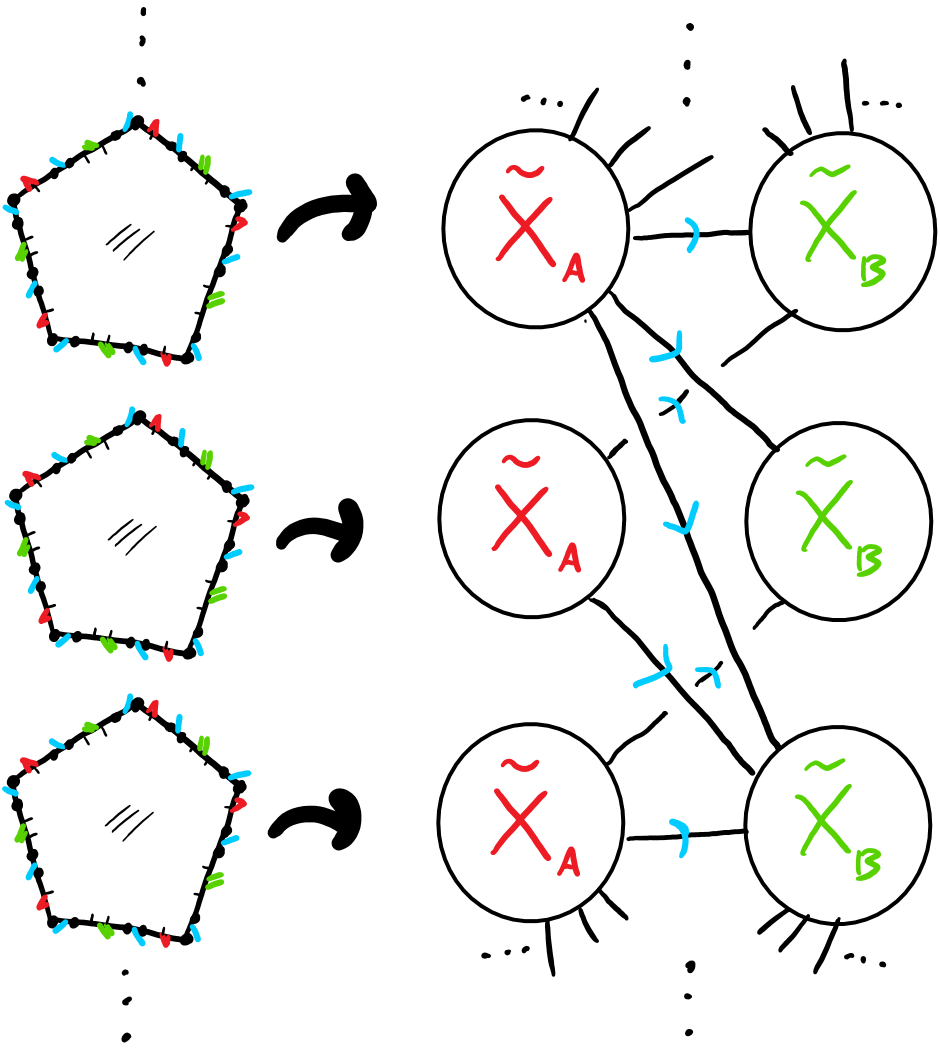}
					\caption{\footnotesize The universal cover of this presentation complex. We build our walls in this space by combining the Lauer-Wise walls considered in \cite{lw} (in the pentagonal cells) with the natural hyperplanes in the $\cz$ cube complex factors $\tilde{X}_A$ and $\tilde{X}_B$.}	
					\label{fig2}
          \end{figure}
      \end{minipage}
  \end{minipage}
	
\subsection{Outline}

We define staggered generalized $2$-complexes in Section \ref{sect:def}. We also define the notion of a \emph{tower} in this section, a fundamental tool for studying these complexes.

Let $G$ be the fundamental group of a staggered generalized $2$-complex $X$ with locally indicable, cubulable vertex groups and minimal exponent $n(X)\geq 4$. We prove geometric small cancellation results about exposed and extreme $2$-cells in generalized van Kampen diagrams over $G$ in Sections \ref{sect:ext} and \ref{sect:ext2}. These are strong statements about the local geometry of staggered generalized $2$-complexes on which the rest of this work depends. These sections are direct generalizations of the work of \cite{lw}. Here the importance of the hypothesis of local indicability will be made clear. The work in this section relies heavily on work of James Howie \cite{h1, h2, h3}.

In Section \ref{sect:rl}, we prove statements about the local geometry of a space $\ucc{X}$ which is essentially the universal cover of $X$, and we develop a tool called \emph{patchings} for producing the kinds of diagrams we can work with to prove results in later sections.

In Section \ref{sect:rh}, we recover relative hyperbolicity of $G$ using Osin's idea of \emph{linear relative Dehn functions} \cite{o1}, which will be important for later arguments. The results up to this point in the outline do not depend on the fact that $X$ has cubulable vertex groups.

We define the walls in $\ucc{X}$ in Section \ref{sect:wl}, combining the Lauer-Wise walls of \cite{lw} with the natural walls in the portions of the universal cover which are already $\cz$ cube complexes. \emph{Ladders} are defined as well -- these are a convenient way to focus our study of the walls on the $2$-skeleton of $\ucc{X}$. We prove that walls embed and separate in Section \ref{sect:eandc}.

We establish necessary conditions for the action on the dual cube complex to be cocompact in Section \ref{sect:relqc}. Here the present work diverges from \cite{lw} significantly in order to deal with the fact that $G$ is not a Gromov hyperbolic group, in general. The fact that $\smcan$ and one-relator groups with torsion are hyperbolic was used critically in \cite{w1} and \cite{lw} to get that the action of $G$ on the dual cube complex is cocompact, in part because quasiconvexity is much easier to characterize in hyperbolic groups. This was also a concern for Martin and Steenbock \cite{ms2}. We prove that wall stabilizers satisfy a property called \emph{relative quasiconvexity}; this turns out to be the key to cocompactness of the action. Importantly, this argument involves attaching \emph{combinatorial horoballs} (defined in \cite{gm}) to $\ucc{X}$ to obtain a hyperbolic space.

In Section \ref{sect:ls}, we show that the walls in $\ucc{X}$ satisfy a properness criterion called \emph{linear separation}, which roughly means that the number of walls separating two points grows linearly in the distance between them.

We put everything together in Section \ref{sect:action}. We use the Sageev construction to produce a dual cube complex with a $G$-action. Since our group is hyperbolic relative to the factors and our walls are relatively quasiconvex, a little more work allows us apply a theorem of Hruska and Wise and prove cocompactness in this more general setting \cite[Theorem 7.12]{hw2}. Linear separation is used to show that the action is proper. Theorem \ref{mainstag} is proved in Theorem \ref{main2} and Theorem \ref{main} is Corollary \ref{main3}.

\subsection{Acknowledgments}

The author wishes to thank Max Forester for his invaluable guidance throughout the duration of this project and without whom this work would not have been possible. He also wishes to thank Paul Plummer and Jing Tao for helpful discussions, and Noel Brady for helpful comments and questions during the post-production phase. Finally, he wishes to thank the faculty and graduate students of Temple University for their hospitality and generosity in providing a place for him to work and discuss mathematics during the 2018 -- 2019 academic year.

\section{Preliminaries}
\label{sect:def}

\noindent
\begin{dfn} (\textbf{Regular map}). Let $X$ be a CW complex. A continuous map $S^1\to X$ is called \emph{regular} if there is a decomposition of $S^1$ such that the map takes vertices to vertices and edges to edges.
\end{dfn}

\noindent
\begin{dfn} (\textbf{Cyclically reduced edge path}). Let $X$ be the total space of a graph of spaces where the vertex spaces are CW complexes and the edge spaces are trivial. A \emph{cyclically reduced edge path} is a regular edge path in $\os{X}$ with no backtracking and with the property that if it contains a path of the form $e\gamma e^{-1}$ where $e$ is an oriented edge between two vertex spaces and $\gamma$ maps to a single vertex space, then $\gamma$ represents a nontrivial element of $\pi_1$ of that vertex space.
\end{dfn}

The following is a more topological definition of a staggered generalized $2$-complex than that given in \cite{hp}.

\noindent 
\begin{dfn} (\textbf{Staggered generalized $2$-complex}). A \emph{staggered generalized $2$-complex} $X$ consists of:
\begin{itemize}
\item The \emph{total space} $\gos{X}$: A graph of spaces where the vertex spaces are CW complexes and the edge spaces $E(X)$ are trivial;
\item A set of $2$-cells $C(X)$ attached to $\os{\gos{X}}$ whose attaching maps are regular, map to cyclically reduced edge paths, and contain an edge of $E(X)$ in their image.
\item A \emph{staggering}:
\begin{itemize}
	\item[$\bullet$] A linear order on $C(X)$,
	\item[$\bullet$] A linear order on $E(X)$,
	\item[$\bullet$] For $c, c'\in C(X)$, if $c<c'$ then $\max(c)<\max(c')$ and $\min(c)<\min(c')$, where $\min(c)$ is defined to be the least edge from $E(X)$ occurring in the attaching map for $c$, and similarly for $\max(c)$.
\end{itemize}
\end{itemize}

We call $C(X)$ the \emph{essential} $2$-cells of $X$ and $E(X)$ the \emph{essential} edges. When comparing cells of $X$ we will sometimes use the notation $<_X$ to refer to the linear orders in the staggering. We will also sometimes write $\max_X(c)$ instead of $\max(c)$ to emphasize the staggering to which we are referring.
\end{dfn}

\noindent
\begin{dfn}
\label{reviseattach} (\textbf{Exponent/proper power/minimal exponent $n(X)$}). For an essential $2$-cell $\alpha$ of $C(X)$, the assumptions on the attaching map of $\alpha$ imply that $R=\partial\alpha$, viewed as an element of $\pi_1(\gos{X})$ for some choice of base-point, is not conjugate into the fundamental group of any vertex space. This implies that $R$ acts loxodromically on the Bass-Serre tree corresponding to $\gos{X}$, i.e., it has positive translation length. This implies that $R$ is not infinitely divisible in $\pi_1(\gos{X})$. Thus there is a well-defined \emph{exponent} $m=m(\alpha)=\max\{k \mid R=w^k \text{ for some } w \in\pi_1(\gos{X}) \}$. If $m\geq 2$ we say that $\alpha$ is attached by a \emph{proper power}. We define the \emph{minimal exponent} $n(X)=\min_{\alpha}m(\alpha)$.

For any cell $\alpha\in C(X)$, we are free to adjust the attaching map by free homotopy in $X$ without affecting $\pi_1(X)$. If the exponent of $\alpha$ is $m$, then the attaching map of $\alpha$ is freely homotopic to an edge path of the form $p^m$. We thus adopt the convention that the attaching map of $\alpha$ is periodic with period $m(\alpha)$.

\end{dfn}

\noindent
\begin{dfn} (\textbf{Indicable/locally indicable}). A group is called \emph{indicable} if it has $\ZZ$ as a quotient, and \emph{locally indicable} if every nontrivial finitely generated subgroup is indicable.
\end{dfn}

\noindent
\begin{dfn} (\textbf{Tower/tower lift/height/maximal}). A \emph{tower} is a map $f:Y\to X$ between connected CW complexes such that $f=i_0\circ p_1\circ i_1\circ\cdots\circ p_n \circ i_n$ where each $i_i$ is an inclusion of a finite subcomplex and each $p_i$ is an infinite cyclic cover. The number $n$ is called the \emph{height} of $f$. Let $K$ and $X$ be connected CW complexes and $\psi:K\to X$ be a map. A \emph{tower lift} is a map $\phi:K\to Y$ such that there is a tower $f:Y\to X$ and $\psi=f\circ \phi$. The map $\phi$ is called \emph{maximal} if any tower lift $\phi':K\to Y'$ of $\phi$ has the property that the associated tower $f':Y'\to Y$ is a homeomorphism.
\end{dfn}

Let $K$ be compact and $\psi:K\to X$ be a combinatorial map between connected CW complexes, that is, the restriction of $\psi$ to the interior of each cell is a homeomorphism. Howie shows \cite[Lemma 3.1]{h1} that $\psi$ has a maximal tower lift $\phi:K\to Y$. Note that a tower lift $\phi:K\to Y$ is not maximal if $\pi_1(K)$ is not indicable and $\pi_1(Y)$ is. Otherwise, $Y$ admits an infinite cyclic cover $Y'\to Y$ corresponding to the kernel of a nontrivial map $\pi_1(Y)\to\ZZ$, and $\phi$ will lift since $\phi_*(\pi_1(K))$ must lie in this kernel.

The following remark is straightforward, since it is easily verified for infinite cyclic covers and inclusions of finite subcomplexes (even with the free homotopy considerations of Definition \ref{reviseattach}).

\noindent
\begin{rmk}
\label{proppow}
If the attaching map of a $2$-cell $\alpha$ in $X$ is a proper power of exponent $n$, then for any $2$-cell $\beta$ in $Y$ with $f(\beta)=\alpha$ under a tower $f:Y\to X$, the attaching map of $\beta$ will be a proper power of the same exponent.
\end{rmk}

The following lemma connects staggered generalized $2$-complexes and towers.

\noindent
\begin{lem} 
\label{towerstag} (cf \cite[Lemma 2]{h3}). If $f:Y\to X$ is a tower and $X$ is a staggered generalized $2$-complex, then so is $Y$.
\end{lem}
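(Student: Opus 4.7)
The plan is to induct on the height $n$ of the tower $f = i_0 \circ p_1 \circ i_1 \circ \cdots \circ p_n \circ i_n$, reducing to two cases: (a) $Y \hookrightarrow X$ is an inclusion of a connected finite subcomplex, and (b) $p: Y \to X$ is an infinite cyclic cover. The lemma then follows by applying each preservation step alternately up the tower.

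Case (a) is mostly bookkeeping. The graph-of-spaces structure on $\gos{X}$ restricts to $\gos{Y}$, with vertex spaces being the connected components of $Y \cap X_v$ for each vertex space $X_v$, and trivial edge spaces restricting trivially. Set $E(Y) = E(X) \cap Y$ and $C(Y) = C(X) \cap Y$. Attaching maps are unchanged and thus remain regular; each still contains an edge of $E(Y)$, since any edge of $E(X)$ in the image of an attaching map in $Y$ lies in $E(X) \cap Y = E(Y)$. Cyclic reducedness is preserved because $\pi_1$ is functorial under subcomplex inclusions: a loop in a vertex space of $Y$ that is trivial in $\pi_1$ of the $Y$-vertex-space is trivial in $\pi_1$ of the $X$-vertex-space, and so contrapositively nontriviality transfers. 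The linear orders and the min/max conditions restrict verbatim since $\min_X(c) = \min_Y(c)$ and $\max_X(c) = \max_Y(c)$ for every $c \in C(Y)$.

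Case (b) carries the content. The graph-of-spaces structure on $\gos{X}$ pulls back to $\gos{Y}$, with vertex spaces being connected components of preimages of vertex spaces. Cyclic reducedness of lifted attaching maps holds because each such component is itself a cover of $X_v$, so $\pi_1$ of the component injects into $\pi_1(X_v)$. The essential cells and edges of $Y$ form $\ZZ$-orbits under the deck generator $t$; for each $c \in C(X)$ fix a base lift $\tilde{c}^{(0)}$, and similarly $\tilde{e}^{(0)}$ for each $e \in E(X)$, so every cell or edge of $Y$ is uniquely $t^k \tilde{c}^{(0)}$ or $t^k \tilde{e}^{(0)}$. Define the order on $E(Y)$ primarily by $<_X$ on projections and secondarily by $\ZZ$-shift: $\tilde{a}^{(j)} <_Y \tilde{b}^{(k)}$ iff $a <_X b$, or $a = b$ and $j < k$. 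Induce the order on $C(Y)$ by $\tilde{c} <_Y \tilde{c}'$ iff $\max(\tilde{c}) <_Y \max(\tilde{c}')$; this is linear because $\max_X$ is injective on $C(X)$ by the $X$-staggering, hence $\max_Y$ is injective on $C(Y)$.

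The max condition is automatic by construction. For the min condition, suppose $\max(\tilde{c}) <_Y \max(\tilde{c}')$. Either (i) $\max_X(p(\tilde{c})) <_X \max_X(p(\tilde{c}'))$, in which case the $X$-staggering gives $\min_X(p(\tilde{c})) <_X \min_X(p(\tilde{c}'))$ and hence $\min(\tilde{c}) <_Y \min(\tilde{c}')$ by the primary sort on projections, or (ii) $\max_X(p(\tilde{c})) = \max_X(p(\tilde{c}'))$, forcing $p(\tilde{c}) = p(\tilde{c}')$ and $\tilde{c}' = t^j \tilde{c}$ for some $j > 0$; then $t$-equivariance propagates the $\ZZ$-shift comparison from max to min. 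The main obstacle is choosing the correct order on $E(Y)$: a naive lexicographic order with $\ZZ$-shift primary fails because the shift-span between the min and max lifts of a base cell can vary across cells, but sorting primarily by $<_X$ on projection sidesteps this and allows the induced cell order to satisfy both conditions simultaneously.
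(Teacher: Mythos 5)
Your proof is correct and follows essentially the same route as the paper: reduce to the subcomplex-inclusion and infinite-cyclic-cover cases, restrict the staggering in the first, and in the second use the lexicographic order with the $X$-order primary and the deck-shift secondary. Your only deviation is defining the order on $C(Y)$ by pulling back along $\max_Y$ rather than lexicographically on the $2$-cells directly, but since $\max_X$ (hence $\max_Y$) is injective this yields the same order as the paper's, and your verification of the min/max conditions is just a more detailed spelling-out of what the paper leaves implicit.
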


\begin{proof} We induct on the number of maps $f$ comprises, so it suffices to assume that $f$ is an inclusion of a connected subcomplex or an infinite cyclic cover. In the first case, note that the staggering of $X$ restricts to a staggering of any subcomplex of $X$. In the second case, let $\rho$ be a generator of the deck group of the cover, and define a staggering on both the $1$-cells and $2$-cells of $Y$ by the prescription that $\alpha <\beta$ if $f(\alpha)<f(\beta)$ (if $f(\alpha)\neq f(\beta)$), or $\rho^n(\alpha)=\beta$ for some positive integer $n$ (if $f(\alpha) =f(\beta)$). This gives a ``lexicographic'' staggering for $Y$.\end{proof}

There may be multiple ways to stagger $Y$. Whenever $Y\to X$ is a tower, we make the convention that the staggering on $Y$ arises in the manner just described.

\section{Some extreme $2$-cells}
\label{sect:ext}

In this section let $X$ be a staggered generalized $2$-complex.

\noindent
\begin{conv} In what follows, when we refer to an $n$-cell $\alpha$ of a CW complex, it should be understood that $\alpha$ refers to the interior of that $n$-cell. When we need to explicitly refer to the closure of a cell $\alpha$, we will use the notation $\cl{\alpha}$.
\end{conv}

\noindent
\begin{lem}
\label{howiecollapse} (cf \cite[Lemma 3]{h3}; \cite[Lemma 2.6]{hw1}). Suppose $X$ is compact, has locally indicable vertex groups, and has at least one essential $2$-cell and no infinite cyclic cover. If the greatest essential $2$-cell $\alpha$ of $X$ is not attached along a proper power in $\pi_1(\gos{X})$, then $X$ collapses across $\alpha$ with free edge $\max\alpha$, i.e., $X$ is homotopy equivalent to the complex obtained after removing $\alpha$ and $\max\alpha$ from $X$ through a homotopy supported on $\cl{\alpha}$. \end{lem}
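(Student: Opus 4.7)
My plan is to establish that $e := \max\alpha$ is a free face of $\alpha$, at which point the desired collapse is an elementary one supported on $\cl{\alpha}$. The first step is immediate from the staggering: for every essential $2$-cell $\beta \ne \alpha$ we have $\beta <_X \alpha$ and hence $\max\beta <_X e$, so $e$ does not appear in any attaching map $\partial\beta$ with $\beta \ne \alpha$. It therefore suffices to prove that $e$ appears exactly once in $\partial\alpha$.

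I would proceed by contradiction, supposing that $e$ occurs $k \ge 2$ times in $\partial\alpha$, and producing a surjective homomorphism $\pi_1(X) \to \mathbb{Z}$ to contradict the no-infinite-cyclic-cover hypothesis. The key tool is the tower machinery. First I would take a maximal tower lift $\phi : S^1 \to Y$ of the attaching map $\partial\alpha : S^1 \to X$, with associated tower $f : Y \to X$; by Lemma \ref{towerstag}, $Y$ inherits the structure of a staggered generalized $2$-complex with locally indicable vertex groups, and by Remark \ref{proppow} the lift of $\alpha$ in $Y$ remains attached by a non-proper-power. Maximality of $\phi$ has two key consequences: the image $\phi(S^1)$ meets every cell of $Y$, and the image of $\phi_*$ in $\pi_1(Y)$ surjects onto every $\mathbb{Z}$-quotient of $\pi_1(Y)$. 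I would then decompose the lifted attaching circle in $Y$ into $k$ arcs obtained by cutting at the lifts of $e$, apply local indicability to the finitely generated subgroups of the vertex groups generated by these arcs, and combine the resulting $\mathbb{Z}$-valued homomorphisms with an $e$-counting contribution to build a surjection $\pi_1(Y) \to \mathbb{Z}$. Since no other relator $\partial\beta$ involves $e$, the homomorphism can be arranged so that each such $\partial\beta$ lies in the kernel; the non-proper-power hypothesis is then exactly what lets $\partial\alpha$ also lie in the kernel while the map remains onto $\mathbb{Z}$ rather than being forced into a finite cyclic image. Pushing this $\mathbb{Z}$-quotient down through the tower gives the desired contradictory quotient of $\pi_1(X)$.

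The principal obstacle will be the coordinated construction of the $\mathbb{Z}$-valued homomorphism. Local indicability supplies $\mathbb{Z}$-quotients only of finitely generated subgroups of the vertex groups, not of the ambient vertex groups themselves, so the construction must be performed inside the tower $Y$, where the relevant subgroup data live in a controlled finite subcomplex determined by the maximality of $\phi$; verifying that the resulting map descends across all the relators simultaneously while remaining surjective is the heart of the argument, and the non-proper-power hypothesis enters precisely to prevent the $\partial\alpha$ relator from forcing the image to be finite cyclic. This is the essential role played by Howie's analogous lemma in \cite{h3}, which should adapt to the present setting once the staggering and local indicability are combined via Lemma \ref{towerstag} and Remark \ref{proppow}.
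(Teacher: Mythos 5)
Your reduction is fine: the staggering does imply that no $2$-cell other than $\alpha$ uses $e=\max\alpha$ (non-essential $2$-cells live in vertex spaces), so the lemma is equivalent to showing $e$ occurs exactly once in $\partial\alpha$. The main argument, however, has genuine gaps. The decisive one is the last step: the contradiction you need is a surjection $\pi_1(X)\to\ZZ$, but your construction would at best produce a surjection $\pi_1(Y)\to\ZZ$ for a tower $Y\to X$, and such a quotient does not ``push down through the tower.'' A tower is a composition of subcomplex inclusions and infinite cyclic covers; the induced map $\pi_1(Y)\to\pi_1(X)$ is in general far from surjective, and a $\ZZ$-quotient of $\pi_1(Y)$ gives no homomorphism out of $\pi_1(X)$ at all (a proper subcomplex of $X$ can easily admit an infinite cyclic cover while $X$ does not). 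The setup is also off: a maximal tower lift of the attaching circle $S^1\to X$ must land in the smallest subcomplex of $Y$ containing the image, which is $1$-dimensional, so $Y$ is a graph and there is no ``lift of $\alpha$ in $Y$'' to which Remark \ref{proppow} could apply; in this paper and in Howie's work, towers are applied to maps of $2$-complexes (reduced diagrams, or $X$ itself), never to an attaching circle alone. Finally, the step you yourself flag as the heart --- combining $\ZZ$-quotients of the subgroups generated by the arcs with an ``$e$-counting contribution'' --- has no mechanism behind it: local indicability gives quotients of finitely generated subgroups of vertex groups, and there is no way to extend these to homomorphisms on the ambient vertex groups, nor does your sketch use the staggering of the \emph{other} $2$-cells, whose relators must also die under the putative map.

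The paper's proof (following Howie) runs along a genuinely different track, and the difference is not cosmetic. One first replaces any proper-power attaching map $p^n$ by $p$ (this changes neither $H^1$ nor the staggering), then inducts on the number of essential $2$-cells. The hypothesis ``no infinite cyclic cover'' is used homologically, as $H^1(X)=0$, and Mayer--Vietoris controls $H^1$ of the complement of $\alpha$; local indicability enters only via compactness, to show that a vertex space with trivial $H^1$ cannot carry a nontrivial closed subpath of an attaching map. The inductive step applies the inductive hypothesis with the staggering \emph{reversed} to collapse across the least $2$-cell with free edge its minimal essential edge, and only then collapses across $\alpha$ with free edge $\max\alpha$; it is this double use of the staggering, not a direct construction of a $\ZZ$-quotient, that makes the argument go. If you want to keep a tower-theoretic flavour, the contradiction has to be with maximality of a tower lift of a $2$-complex (as in Lemma \ref{4.6}), not with the nonexistence of infinite cyclic covers of $X$.
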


\begin{proof} We follow Howie's proof in \cite{h3} -- only minor changes are necessary.

Note that if some essential $2$-cell $\beta$ is attached by a proper power $p^n$ in $\gos{X}$, then replacing $\beta$ with the $2$-cell $\beta'$ attached by $p$ will not affect $H^1(X)$, and giving $\beta'$ the same position as $\beta$ in the ordering of the $2$-cells will not affect the staggering of $X$. So we may assume no essential $2$-cell is attached by a proper power.

We induct on the number of essential $2$-cells in $X$. If there is only one, then the rank of $H^1(\gos{X})$ is at most one, since $H^1(X)=0$. If $\gos{X}$ is a tree of spaces, then at most one vertex space can have nontrivial first homology by the Mayer-Vietoris theorem. Also, since the attaching map of $\alpha$ is reduced, cyclically reduced and has positive length, there exists a closed subpath $p'$ of the attaching map $p$ of $\alpha$ which lies in a vertex space $V$ of $\gos{X}$ for which $H^1(V)=0$. Since $p$ is reduced and cyclically reduced, $p'$ represents a nontrivial element $g$ of $\pi_1(V)$. Since $\pi_1(V)$ is locally indicable and finitely generated since $X$ is compact, we obtain a surjective map from $\pi_1(V)$ to $\ZZ$, giving us an infinite cyclic cover of $V$ and contradicting that $H^1(V)=0$. On the other hand, if $\gos{X}$ is not a tree of spaces, then we must have $H^1(V)=0$ for each vertex space and there is a unique simple cycle in $\gos{X}$. The attaching map of $\alpha$ must travel exactly once around this cycle, so that it uses $\max\alpha$ exactly once, and we can see that $X$ collapses across $\alpha$ with free edge $\max\alpha$.

For the inductive step, consider the Mayer-Vietoris sequence 

\[\cdots\to H^1(X)\to H^1(X\setminus \alpha)\oplus H^1(D^2)\to H^1(S^1)\to\cdots\]

associated to attaching $\alpha$ to the rest of $X$. Exactness shows that the rank of $H^1(X\setminus\alpha)$ is at most one. Let $X'$ be the subcomplex of $X$ formed by removing $\alpha$ and $\max\alpha$ from $X$. If $X'$ is connected, then $H^1(X\setminus\alpha)=H^1(X')\oplus\ZZ$, so $H^1(X')=0$. Otherwise $X'$ has two components $X_1$ and $X_2$ (say), and $H^1(X\setminus\alpha)=H^1(X_1)\oplus H^1(X_2)$; assume without loss of generality that $H^1(X_1)=0$. In this case, note that $X_1$ must contain at least one essential $2$-cell whose attaching map lies entirely inside it. If not, then $H^1(X_1)=0$ would imply that $X_1$ were a tree of spaces, with each vertex space having trivial first cohomology. Then since the attaching map $p$ of $\alpha$ uses $X_1$ and is reduced/cyclically reduced, we could find a closed subpath $p'$ of $p$ lying in some vertex space $V$ of $X_1$ such that $p'$ represents a nontrivial element $g$ of $\pi_1(V)$. As before (using compactness of $X$), indicability of $\pi_1(V)$ would lead to an infinite cyclic cover of $V$, contradicting that $H^1(V)=0$.

Thus we may apply the inductive hypothesis either to $X'$ (in case $X'$ is connected) or $X_1$ (in case $X'$ is not connected), but using the staggering \emph{opposite} to that inherited from $X$ (i.e., the orderings of the $1$-cells and $2$-cells are reversed). Then the complex in question collapses across its least essential $2$-cell $\beta$ (in the original ordering) with free edge $\min\beta$. But $\alpha$ does not involve $\min\beta$ since $\beta<\alpha$, so $X$ also collapses across $\beta$ with free edge $\min\beta$. Let $X''=X\setminus\{\beta,\min\beta\}$ be the result of this collapse.

Now $X''$ has fewer essential $2$-cells than $X$, so again apply the inductive hypothesis to $X''$ (using the original ordering) to see that $X''$ collapses across $\alpha$ with free edge $\max\alpha$. But $\beta$ does not involve $\max\alpha$ since $\beta<\alpha$. Thus $X=X''\cup\{\beta,\min\beta\}$ also collapses across $\alpha$ with free edge $\max\alpha$.
\end{proof}

\noindent
\begin{lem}
\label{310}
(cf \cite[Lemma 3.10]{lw}; \cite[Lemma 2.7]{hw1}). Suppose $X$ is compact, has locally indicable vertex groups, and has no infinite cyclic cover. Let $\alpha$ be the greatest essential $2$-cell of $X$. Then $\alpha$ is attached along a path $p^n$ where $p$ is a closed path in $\gos{X}$ passing through $\max(\alpha)$ exactly once. Moreover, no other 2-cell is attached along $\max(\alpha)$. \end{lem}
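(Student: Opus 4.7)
The plan is to reduce the statement to Lemma~\ref{howiecollapse} by the same ``replace $p^n$ by $p$'' trick that opens the proof of that lemma. By the convention adopted in Definition~\ref{reviseattach}, I would write the attaching map of $\alpha$ as $p^n$ where $n=m(\alpha)\geq 1$ and $p$ is not a proper power in $\pi_1(\gos{X})$ (so in particular $p$ is cyclically reduced, since otherwise $p^n$ would not be). The goal is to produce an auxiliary staggered generalized $2$-complex $X'$ to which Lemma~\ref{howiecollapse} applies cleanly, forcing $\max(\alpha)$ to behave as a free edge.

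Concretely, I would form $X'$ from $X$ by replacing $\alpha$ with a $2$-cell $\alpha'$ attached along $p$, keeping $\alpha'$ at the same position in the linear order on $2$-cells as $\alpha$. The staggering of $X$ descends to a valid staggering of $X'$, since $p^n$ and $p$ use the same set of essential $1$-cells, so $\min(\alpha')=\min(\alpha)$ and $\max(\alpha')=\max(\alpha)$ and the comparison conditions are preserved. The graph of spaces $\gos{X'}=\gos{X}$ is unchanged, so $X'$ is still compact with locally indicable vertex groups. The cellular boundary matrix of $X'$ differs from that of $X$ only by dividing one column by $n$, so the rank of its image over $\mathbb{Q}$ is unchanged; consequently $X'$ has no infinite cyclic cover either. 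Finally, $\alpha'$ is the greatest essential $2$-cell of $X'$ and, by maximality of $n$, it is not attached along a proper power.

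Lemma~\ref{howiecollapse} then applies to $X'$ and asserts that $X'$ collapses across $\alpha'$ with free edge $\max(\alpha')=\max(\alpha)$. This means $\max(\alpha)$ occurs exactly once in the boundary word $p$ of $\alpha'$ and does not appear in the attaching map of any other $2$-cell of $X'$. Transferring this back to $X$, whose other $2$-cells agree with those of $X'$: $p$ is a closed edge path passing through $\max(\alpha)$ exactly once, $\partial\alpha=p^n$, and no other $2$-cell of $X$ uses $\max(\alpha)$. The only delicate step is the verification that $X'$ inherits the ``no infinite cyclic cover'' hypothesis from $X$, but this is precisely the rank-of-$H^1$ observation already made at the start of the proof of Lemma~\ref{howiecollapse}, so no new technique is needed.
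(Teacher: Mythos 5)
Your proof is correct and takes essentially the same route as the paper: the paper simply cites the proof of \cite[Lemma 2.7]{hw1} with Lemma~\ref{howiecollapse} substituted for \cite[Lemma 2.6]{hw1}, and that proof is precisely the ``replace $p^n$ by $p$, then apply the collapse lemma'' reduction you carry out (the same reduction, with the same $H^1$ argument, already appears at the start of the paper's proof of Lemma~\ref{howiecollapse}). Your verification that $X'$ retains the no-infinite-cyclic-cover hypothesis, and that $p$ is not itself a proper power by maximality of $m(\alpha)$, correctly fills in the details that the paper leaves implicit.
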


\begin{proof} The proof is identical to the proof of \cite[Lemma 2.7]{hw1}, except that we appeal to Lemma \ref{howiecollapse} rather than \cite[Lemma 2.6]{hw1}.
\end{proof}

We will now prove some helpful results about van Kampen diagrams in $X$. For our purposes it will be useful to allow diagrams which are not planar. In what follows, the \emph{boundary} of a $2$-complex $E$, denoted $\partial E$, is the closure of the set of $1$-cells in $E$ which occur in the attaching map of at most one $2$-cell of $E$. 

\noindent
\begin{dfn} (\textbf{Cancelable pair/reduced/diagram}). Let $Y$ be a CW complex and $E$ a compact $2$-complex. Let $\phi:E\to Y$ be a combinatorial map. Let $\alpha$ and $\beta$ be a pair of $2$-cells of $E$ with attaching maps $\Phi_\alpha$ and $\Phi_\beta$. We say that $\alpha$ and $\beta$ form a \emph{cancelable pair} if there is a decomposition of $\partial\alpha$ as a loop $e_1\sigma_1$ for some edge $e_1$ and a decomposition of $\partial\beta$ as a loop $e_2\sigma_2$ for some edge $e_2$ such that $\Phi_\alpha(e_1)=\Phi_\beta(e_2)$ and $\phi\circ\Phi_\alpha(\sigma_1)=\phi\circ\Phi_\beta(\sigma_2)$. The map $\phi$ is called \emph{reduced} if $E$ does not contain a cancelable pair. It is called a \emph{diagram} if $E$ is simply connected.

\end{dfn}

The following remark is straightforward.

\noindent
\begin{rmk}
\label{covreduce}
Let $Y$ be a CW complex, $\psi: D\to Y$ a diagram, and $\phi: D\to Z$ a lift of $\psi$ to a cover. Then $\phi$ is reduced if and only if $\psi$ is reduced.
\end{rmk}

Thus we have the following.

\noindent
\begin{rmk}
\label{towreduce} Let $Y$ be a CW complex, $\psi: D \to Y$ a reduced diagram, and $\phi: D\to T$ a maximal tower lift. Then $\phi$ is reduced if and only if $\psi$ is reduced.
\end{rmk}

The following fundamental result is due to van Kampen:

\noindent
\begin{thm}
\label{vkl}
Let $Y$ be a CW complex and let $u$ be a closed path in $Y^{(1)}$. Then $u$ is null-homotopic if and only if there exists a diagram $D\to Y$ with $D$ a planar $2$-complex such that there is a parametrization of $\partial D$ mapping to $u$.
\end{thm}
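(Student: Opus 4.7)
The plan is to prove the two implications separately. The "if" direction is essentially definitional: if $\phi: D \to Y$ is a diagram with $D$ planar and simply connected and $\gamma : S^1 \to \partial D$ parametrizes the boundary with $\phi \circ \gamma = u$, then since $D$ is simply connected, $\gamma$ is null-homotopic in $D$; composing the null-homotopy with $\phi$ gives a null-homotopy of $u$ in $Y$.

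For the "only if" direction, I would first replace $Y$ by $Y^{(2)}$, which is harmless because $\pi_1(Y) = \pi_1(Y^{(2)})$ and $u$ already lies in $Y^{(1)}$. Choosing a spanning tree $T$ of $Y^{(1)}$, the complex $Y^{(2)}$ admits the standard edge-path presentation whose generators are the non-tree edges and whose relators are the attaching maps $\Phi_\alpha$ of the $2$-cells $\alpha$ (read as words after collapsing $T$). Since $u$ represents the trivial element of $\pi_1(Y)$, it lies in the normal closure of the relators, and hence may be written as
\[
u \; = \; \prod_{i=1}^k g_i \, \Phi_{\alpha_i}^{\epsilon_i} \, g_i^{-1}
\]
as a word in the free group on the edges of $Y^{(1)}$. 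By freely inserting cancelling pairs $g_i g_i^{-1}$, we may arrange for this identity to hold literally as an edge word, rather than only after free reduction in the free group.

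Then I would construct the "lollipop" diagram $D$: fix a planar basepoint $v_0$, and for each $i$ attach a subdivided arc $\sigma_i$ of length $|g_i|$ based at $v_0$, gluing its far end to a subdivided closed disk $D_i$ whose boundary has $|\Phi_{\alpha_i}|$ edges. Place the $k$ lollipops in disjoint angular sectors around $v_0$, in the cyclic order dictated by the product above, so that reading $\partial D$ around $v_0$ recovers the edge word $u$. The complex $D$ deformation retracts to $v_0$, hence is simply connected, and is planar by construction. Define $\phi : D \to Y$ combinatorially by sending $\sigma_i$ to $g_i$ edge by edge and $D_i$ onto $\alpha_i$ via $\Phi_{\alpha_i}^{\epsilon_i}$; this is the required diagram. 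The main subtlety is bridging the algebraic factorization of $u$ in the free group with a literal combinatorial parametrization of $\partial D$ by the edge word $u$, which is resolved by the insertion of backtracking stem pairs noted above. Apart from this bookkeeping the construction is mechanical, and no reduction step is required because the statement as given does not demand that the diagram be reduced.
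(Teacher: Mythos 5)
Your ``if'' direction is fine, and the lollipop construction is the right classical strategy (note the paper gives no proof at all here, quoting the statement as van Kampen's lemma), but there is a genuine gap at the bridging step you describe. The boundary of the lollipop diagram, read around the basepoint, is the \emph{unreduced} word $\prod_{i} g_i\,\Phi_{\alpha_i}^{\epsilon_i}\,g_i^{-1}$, which is only freely equal to $u$, and your claim that by inserting cancelling pairs one can make this identity hold literally as an edge word is false. Concretely, let $Y$ be the presentation complex of $\langle a,b \mid ab\rangle$ and $u=ba$: every literal (unreduced) product of conjugates of $(ab)^{\pm 1}$ contains the two-letter subword $ab$ or $b^{-1}a^{-1}$, while $ba$ contains neither, so no choice of stems $g_i$ and no insertion of cancelling pairs makes the concatenation literally equal to $ba$. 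Since the maps in question are combinatorial, a parametrization of $\partial D$ mapping to $u$ forces the boundary edge word of $D$ to be exactly $u$ (including whatever backtracking $u$ itself has), so the lollipop diagram as you have built it does not yet satisfy the conclusion.

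What is missing is the second half of the standard proof of van Kampen's lemma: a sequence of elementary moves on the diagram --- identifying two adjacent boundary edges that map to an edge and its inverse (folding), deleting spurs, and, if $u$ has backtracking, attaching spurs or trees to realize it --- each of which preserves planarity and simple connectivity (possibly pinching the disk into a wedge of disks and arcs, which is precisely why $D$ is allowed to be a planar $2$-complex rather than a disk), and which terminates with boundary word exactly $u$. This folding step is not the same as making the diagram reduced in the cancelable-pair sense; you are right that reducedness is not demanded by the statement, but the boundary folding cannot be skipped, and it is where the actual content of the lemma lies. Two smaller points to clean up in the same pass: with generators only the non-tree edges you must reinstate the tree edges when defining the map on the stems $\sigma_i$ and on $\partial D_i$, and the spanning tree should be taken in the component of $Y$ containing $u$.
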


In the above theorem, we may assume $D$ is reduced if $u$ is a cyclically reduced path, as there are standard moves that we can do to make $D$ reduced \emph{without} affecting $\partial D$.

\noindent
\begin{dfn} (\textbf{Position}). Two $1$-cells $e_1$ and $e_2$ on the boundary of an essential $2$-cell $\alpha$ in $X$ are in the same \emph{position} in $\alpha$ if they are attached to the same $1$-cell of $X$, and a path in $\partial\alpha$ from the terminal $0$-cell of $e_1$ to the terminal $0$-cell of $e_2$ is a cyclic conjugate of $p^j$ for some $j\in\ZZ$. For a $1$-cell $e$ in $\partial\alpha$ we let $[e]_\alpha$ denote the collection of the $n$ $1$-cells in the same position as $e$ in $\alpha$. If $\phi:E\to X$ is a combinatorial map, we extend these definitions to $1$-cells and $2$-cells of $E$ by considering their images under $\phi$.
\end{dfn}

\noindent
\begin{dfn} (\textbf{External/internal/exposed}). Let $\phi:E\to X$ be a combinatorial map. An essential $2$-cell $\alpha$ in $E$ is \emph{external} if there is an essential $1$-cell in $\partial\alpha\cap\partial E$; otherwise it is called \emph{internal}. An essential $2$-cell $\alpha$ in $E$ is \emph{exposed} if there is an essential $1$-cell $e$ in $\partial\alpha$ such that every $1$-cell in $[e]_\alpha$ lies in $\partial E$. We also say $e$ is an exposed edge. By definition, only essential edges can be exposed.
\end{dfn}

Note that if $\phi:E\to X$ is a combinatorial map, then a total order $<_X$ of some cells of $X$ induces an order of the preimages of those essential cells of $X$ in $E$, which we will also denote by $<_X$. Since two cells of $E$ may map to the same cell of $X$, it may be the case that $\alpha =_X \beta$ for cells $\alpha$ and $\beta$ of $E$. In this sense, $<_X$ is a \emph{quasi-order}. Note that by our convention for staggerings associated to towers, if $E\to T$ is a tower lift of $\phi$ and $\alpha<_X\beta$, then $\alpha<_T\beta$ for essential cells $\alpha$ and $\beta$ of $E$.

\noindent
\begin{lem}
\label{4.7} 
(cf \cite[Lemma 4.7]{lw}; \cite[Lemma 4.1]{hw1}). Suppose $X$ has locally indicable vertex groups. Let $\phi:D\to T$ be a maximal tower lift of a reduced diagram $\psi:D\to X$. If $\alpha$ is a greatest (resp. least) $2$-cell of $D$ (under $<_T$), then $\alpha$ is exposed with exposed edge $\max_T\alpha$ (resp. $\min_T\alpha$). In particular, every reduced diagram $D\to X$ with at least one essential $2$-cell has an exposed essential $2$-cell.
\end{lem}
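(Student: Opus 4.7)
The plan is to prove the "greatest" case directly, obtain the "least" case by switching to the opposite staggering, and read off the last sentence as an immediate corollary. Let $\phi : D \to T$ be the given maximal tower lift. By Remark \ref{towreduce}, $\phi$ is reduced; by Howie's construction of maximal tower lifts (\cite[Lemma~3.1]{h1}), I may assume $T$ is compact and generated as a subcomplex by $\phi(D)$, so every essential cell of $T$ is the image of some essential cell of $D$. Lemma \ref{towerstag} shows $T$ is a staggered generalized $2$-complex. Its vertex groups are locally indicable, since local indicability passes both to subgroups (the inclusion steps in the tower) and to subgroups of kernels of $\ZZ$-valued homomorphisms (the infinite cyclic cover steps). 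Finally, because $D$ is simply connected, $\phi$ lifts through any infinite cyclic cover of $T$; maximality therefore precludes the existence of such a cover. These properties place $T$ in the hypotheses of Lemma \ref{310}.

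Now let $\alpha$ be a greatest $2$-cell of $D$ under $<_T$, so $\alpha' := \phi(\alpha)$ is the greatest essential $2$-cell of $T$. By Lemma \ref{310}, $\alpha'$ is attached along a path $p^n$ with $p$ traversing $e^{*} := \max \alpha'$ exactly once, and no other essential $2$-cell of $T$ meets $e^{*}$. The periodicity convention (Definition \ref{reviseattach}) together with Remark \ref{proppow} forces the attaching map of $\alpha$ in $D$ to be a periodic lift of period $n$, so all $n$ edges of $\partial\alpha$ in position $\max_T \alpha$ coincide as a single edge $e$ of $D$, traversed $n$ times. Suppose for contradiction that $e \notin \partial D$. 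Then $e$ appears on the attaching map of some $2$-cell $\beta \neq \alpha$ of $D$; since $e$ is essential and attaching maps of non-essential $2$-cells cannot meet essential edges, $\beta$ is essential, so $\phi(\beta)$ meets $e^{*}$ and is therefore equal to $\alpha'$ by Lemma \ref{310}. Picking preimages $e_1, e_2$ of $e$ in $\partial\alpha, \partial\beta$ and decomposing $\partial\alpha = e_1 \sigma_1$, $\partial\beta = e_2 \sigma_2$, the paths $\phi \circ \Phi_\alpha(\sigma_1)$ and $\phi \circ \Phi_\beta(\sigma_2)$ each trace out $\partial\alpha' = p^n$ with one initial copy of $e^{*}$ removed; by periodicity, this trace is uniquely determined regardless of which of the $n$ copies one starts from, so the two paths coincide. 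Hence $(\alpha, \beta)$ is a cancelable pair, contradicting reducedness of $\phi$.

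Therefore $e = \max_T\alpha \in \partial D$, and since $[e]_\alpha = \{e\}$ as a collection of edges, $\alpha$ is exposed with exposed edge $\max_T\alpha$. The "least" case follows by running the same argument with respect to the opposite staggering of $T$ (reversing the orders on both essential $1$-cells and essential $2$-cells), as in the proof of Lemma \ref{howiecollapse}. The final sentence is immediate: any reduced diagram $D \to X$ with at least one essential $2$-cell has a maximal tower lift whose target has a greatest essential $2$-cell, and its preimage in $D$ is exposed by the above. The main obstacle is the careful alignment in the cancelable pair argument: one must use the periodicity of the attaching map to ensure that $\phi \circ \Phi_\alpha(\sigma_1)$ and $\phi \circ \Phi_\beta(\sigma_2)$ trace the same path in $T$, a subtlety that would not arise if $n = 1$ were allowed.
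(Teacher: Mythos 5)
Your overall strategy---pass to the compact maximal tower $T$, observe it has no infinite cyclic cover (since $D$ is simply connected, $\phi$ would lift, violating maximality) so that Lemma \ref{310} applies to $T$, and then extract a cancelable pair from the periodicity of $\partial\alpha'=p^n$---is exactly the paper's argument. However, there is a genuine error at the center of your write-up: the claim that Definition \ref{reviseattach} and Remark \ref{proppow} force the attaching map of the $2$-cell $\alpha$ \emph{of the diagram} $D$ to be periodic, so that ``all $n$ edges of $\partial\alpha$ in position $\max_T\alpha$ coincide as a single edge $e$ of $D$, traversed $n$ times.'' The periodicity convention and Remark \ref{proppow} concern $2$-cells of $X$ and of towers $Y\to X$; they say nothing about $2$-cells of a diagram. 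A $2$-cell of $D$ is attached along a path in the $1$-skeleton of $D$ whose $\phi$-image is $p^n$, but that path is typically an embedded circle, so the $n$ occurrences of the position $\max_T\alpha$ are in general $n$ \emph{distinct} edges of $D$. (If they always coincided, ``exposed'' would reduce to ``some boundary edge lies on $\partial D$,'' and statements such as Lemma \ref{noorbits} and the branch analysis of Lemma \ref{4.9} would be pointless.) Since your concluding step---``$[e]_\alpha=\{e\}$, hence $\alpha$ is exposed''---rests on this false identification, the proof as written does not establish the actual conclusion, which is precisely that \emph{all} $n$ edges in the position class lie on $\partial D$. (A smaller quibble: your justification that the vertex groups of $T$ are locally indicable ``because local indicability passes to subgroups'' does not literally cover the subcomplex-inclusion steps, since $\pi_1$ of a subcomplex of a vertex space need not embed in the vertex group; the paper leaves this point implicit as well.)

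The repair is exactly your cancelable-pair computation, run edge by edge instead of once: if some edge $e'$ of $D$ in position $\max_T\alpha$ fails to lie in $\partial D$, then $e'$ lies on a second $2$-cell $\beta$ of $D$; $\beta$ is essential (no square meets an essential edge), and $\phi(\beta)=\alpha'$ because, by Lemma \ref{310}, $\alpha'$ is the only $2$-cell of $T$ using $\max\alpha'$. Since $\partial\alpha'=p^n$ traverses $\max\alpha'$ exactly once per period, the complementary arcs of $e'$ in $\partial\alpha$ and in $\partial\beta$ have the same $\phi$-image, so $(\alpha,\beta)$ is a cancelable pair, contradicting reducedness of $\phi$ via Remark \ref{towreduce}. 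This is the paper's proof. Your supporting verifications (compactness of $T$, absence of an infinite cyclic cover, surjectivity forcing $\phi(\alpha)=\alpha'$, and the reversal of the staggering for the ``least'' case) are fine; only the ``periodic lift'' shortcut needs to be removed and replaced by the argument above applied to an arbitrary edge of $[e]_\alpha$ not on $\partial D$.
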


\begin{proof}
Note that $T$ is compact by definition. Let $\alpha'$ be the unique greatest $2$-cell of $T$. By Lemma \ref{310}, $\alpha'$ is the unique $2$-cell whose attaching map uses the edge $\max\alpha'$, and it uses it exactly $n$ times if $n$ is the exponent of $\alpha'$. Let $e$ be an essential $1$-cell of $\alpha$ mapping to $\max\alpha'$. If $\alpha$ is not exposed in $D$, then there is a $2$-cell $\beta$ of $D$ adjacent to $\alpha$ along some essential $1$-cell $e'$ which also maps to $\max\alpha'$. Since $\alpha'$ is the unique $2$-cell using $\max\alpha'$, we must have $\phi(\beta)=\alpha'$. Since the attaching map of $\alpha'$ uses $\max\alpha'$ exactly $n$ times and is a proper power of exponent $n$, we must have that $\sigma_\alpha$, the longer path from the terminal to the initial vertex of $e'$ in $\partial\alpha$, and $\sigma_{\beta}$, the analogous path in $\partial\beta$, must map to the same path in $T$. This shows that $\alpha$ and $\beta$ form a cancelable pair and contradicts that the map $\phi$ is reduced (by Remark \ref{towreduce}).
\end{proof}

\noindent
\begin{dfn} (\textbf{Auxiliary diagram/extreme}). Let $\phi:E\to X$ be a combinatorial map. The \emph{auxiliary diagram} $\aux{E}$ associated to $E$ is obtained from $E$ by collapsing all regions of $E$ which map to vertex spaces of $X$ to points. For any set $S$ of $E$, denote the image of $S$ in $\aux{E}$ by $\aux{S}$. We say that an essential $2$-cell $\alpha$ of $E$ is \emph{extreme} if there is a subpath $\gamma$ of $\partial\alpha=p^n$ (also called \emph{extreme}) such that $\gamma$ contains every $1$-cell in $[e]_\alpha$ for some exposed edge $e$ in $\alpha$, and $\aux{\gamma}$ does not intersect the closure of a $2$-cell in $\aux{E}$ other than the closure of $\aux{\alpha}$, except possibly at its endpoints.
\end{dfn}

\noindent
\begin{rmk}
All extreme $2$-cells are exposed. When $n=1$ the definitions of exposed and extreme coincide.
\end{rmk}

The following basic topological fact will be quite useful throughout this paper. The proof is straightforward.

\noindent
\begin{lem}
\label{snip} (Snipping Lemma) Let $E$ be a simply connected $2$-complex. Let $\gamma$ be an embedded, locally separating arc in $E$ between two points $x$ and $y$ in $\partial E$, and suppose that the interior of $\gamma$ does not intersect $\partial E$.  We call $\gamma$ a \emph{snipping arc}. Then $E\setminus\gamma$ is disconnected (i.e, $\gamma$ is separating). In particular, suppose $\intr{\gamma}\cap E$ is contained in a single $2$-cell $\alpha$, and fix a parametrization $p:S^1\to\partial\alpha$. Let $v$ and $w$ be two points of $S^1$ which lie in distinct components of $S^1\setminus p^{-1}(\gamma)$. Then there is no path from $p(v)$ to $p(w)$ in $E\setminus\gamma$.
\end{lem}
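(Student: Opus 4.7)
The plan is to establish that $\gamma$ separates $E$ via a Mayer--Vietoris calculation exploiting simple connectivity, and then deduce the ``in particular'' clause by identifying which component of $E\setminus\gamma$ contains each of $p(v)$ and $p(w)$. First I would choose an open neighborhood $N$ of the compact arc $\gamma$ in $E$ that deformation retracts onto $\gamma$ (for instance a regular neighborhood, after subdividing so that $\gamma$ is a $1$-subcomplex). The local separation hypothesis means each point of $\gamma$ has a neighborhood in which $\gamma$ locally cuts out two ``sides,'' and these local choices of side assemble into a double cover of $\gamma$. Because $\gamma$ is an arc and therefore simply connected, this double cover is trivial, so $N\setminus\gamma$ has exactly two connected components $N^+$ and $N^-$.

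Next I would apply Mayer--Vietoris to the open cover $E = N \cup (E\setminus\gamma)$ with intersection $N \cap (E\setminus\gamma) = N\setminus\gamma$. Since $E$ is connected and simply connected, $H_1(E)=0$ by Hurewicz, and $N$ is contractible; the relevant portion of the sequence reads
\[
0 \to H_0(N\setminus\gamma) \to H_0(N) \oplus H_0(E\setminus\gamma) \to H_0(E) \to 0,
\]
i.e.\ $0 \to \ZZ^2 \to \ZZ \oplus H_0(E\setminus\gamma) \to \ZZ \to 0$. A rank count forces $H_0(E\setminus\gamma)$ to have rank at least $2$; the injectivity of the first map, together with the fact that both generators of $H_0(N\setminus\gamma)$ land in the same class of $H_0(N)$, forces them into \emph{distinct} classes of $H_0(E\setminus\gamma)$. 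Equivalently, $N^+$ and $N^-$ lie in different components of $E\setminus\gamma$. This proves the first assertion.

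For the ``in particular'' clause, the interior of $\gamma$ lies inside the open $2$-cell $\alpha$, which $\gamma$ splits into two open half-disks $\alpha_1$ and $\alpha_2$. Shrinking $N$ if necessary, near some interior point of $\gamma$ we may arrange $N^+ \subset \alpha_1$ and $N^- \subset \alpha_2$. Now $p(v)$ lies on the arc $p(I_1)$ of $\partial\alpha$ bounding $\alpha_1$, so a short path from $p(v)$ straight into $\alpha$ enters $\alpha_1$ and connects $p(v)$ to $N^+$ without ever crossing $\gamma$; thus $p(v)$ lies in the component of $E\setminus\gamma$ containing $N^+$. Symmetrically $p(w)$ lies in the component containing $N^-$, and since these two components are distinct, no path in $E\setminus\gamma$ joins $p(v)$ to $p(w)$.

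The main technical hurdle is the first step: verifying that $N\setminus\gamma$ globally has exactly two components rather than merely being locally two-sided. This reduces to the triviality of the ``side'' double cover of $\gamma$, which holds because $\gamma$ is an arc and thus contractible, with no appeal to $\pi_1(E)$ needed. Once this is in hand, the remainder is a formal Mayer--Vietoris computation and a routine local identification of sides near an interior point of $\gamma$.
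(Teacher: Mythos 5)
Your Mayer--Vietoris reduction is sound, and it is the right skeleton: with $N$ an open neighborhood of $\gamma$ deformation retracting to $\gamma$, simple connectivity of $E$ makes $H_0(N\setminus\gamma)\to H_0(N)\oplus H_0(E\setminus\gamma)$ injective, so distinct components of $N\setminus\gamma$ remain distinct in $E\setminus\gamma$; and your identification of which side of $\alpha$ the points $p(v)$ and $p(w)$ feed into is fine. (The paper gives no proof to compare against; it declares the lemma straightforward.) The genuine gap is your first step, which is where all the content of the lemma lives: the claim that the local sides of $\gamma$ assemble into a double cover of $\gamma$, hence that $N\setminus\gamma$ has exactly two components, namely ``the two sides.'' The hypothesis is only that $\gamma$ is \emph{locally separating}, i.e.\ small punctured neighborhoods are disconnected. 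In a $2$-complex this gives neither exactly two local components (the arc may run along an edge contained in three or more $2$-cells, or pass through a vertex with disconnected link) nor a covering-space structure on the set of local sides, because sheets can merge at non-manifold points. A concrete test case: let $E$ be two disks glued to each other at their center vertex $v$, and let $\gamma$ be a radius of each, joined at $v$. This satisfies every hypothesis of the lemma, and $\gamma$ is locally separating everywhere (at $v$ the punctured neighborhood is two slit disks), yet at $v$ the two sides coming from the first disk merge with one another, so there is no side double cover; $N\setminus\gamma$ does have two components, but they are not the two ``sides'' your argument names, and the two sides of $\gamma$ within either disk lie in the \emph{same} component of $E\setminus\gamma$. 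The lemma's conclusion survives in this example, but your intermediate claims do not, so the proof as written does not establish it.

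This matters most for the ``in particular'' clause, which is the form the paper actually uses. The danger there is that the two sides of $\gamma$ inside $\alpha$ reconnect in $E\setminus\gamma$ by traveling around an endpoint $x$ or $y$ through other cells incident to it; nothing in your argument rules this out, and indeed your proof never uses the hypotheses that $x,y\in\partial E$ and that $\intr{\gamma}$ misses $\partial E$ --- a sign that the essential point has been skipped. What saves the statement in every application in the paper is that the endpoints of a snipping arc are interior points of $1$-cells of $\partial E$, which lie in the attaching map of only the one $2$-cell $\alpha$; then every point of $\gamma$ has a neighborhood pair homeomorphic to a disk with a diameter or a half-disk with a radius, so $N\setminus\gamma$ really does have exactly two components, one on each side of $\gamma$, and the remainder of your argument (the Mayer--Vietoris injectivity plus the local identification of sides near an interior point of $\gamma$) goes through verbatim. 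So either justify this two-sidedness from the actual local structure at the points of $\gamma$ (or add it as a hypothesis), rather than deducing it from ``locally separating'' alone.
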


\noindent
\begin{lem}
\label{4.9} (cf \cite[Lemma 4.9]{lw}). Suppose $\phi:E\to X$ is a combinatorial map, $E$ is simply connected, and a $2$-cell $\alpha$ of $E$ is external. Let $B$ be a component of $\cl{E\setminus\cl{\alpha}}$. Then $B\cap\cl{\alpha}$ is connected, $B$ is simply connected, and $\cl{\alpha}$ is simply connected.
\end{lem}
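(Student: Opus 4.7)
The plan is to prove the three claims in the order stated. The first claim, connectedness of $B \cap \cl{\alpha}$, is the heart of the lemma; the other two will follow from it by standard van Kampen-style arguments. The principal tool is the Snipping Lemma (Lemma \ref{snip}), and the external hypothesis on $\alpha$ is used precisely to furnish points on $\partial E$ that can serve as endpoints of snipping arcs.

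To prove $B \cap \cl{\alpha}$ is connected, I would argue by contradiction. Assume $B \cap \cl{\alpha}$ has two distinct components $K_1, K_2$, and choose a path $p \subset B$ from $x_1 \in K_1$ to $x_2 \in K_2$. Work in the characteristic disk $\Phi \colon D^2 \to \cl{\alpha}$ of $\alpha$; the preimages $\Phi^{-1}(K_j)$ are disjoint closed subsets of $\partial D^2$. Since $\alpha$ is external, there is an essential edge $e \subset \partial\alpha \cap \partial E$, so $\Phi^{-1}(e) \subset \partial D^2$ is a nonempty union of arcs (more than one if $\alpha$ is attached by a proper power). I would construct an arc $\tilde{q} \subset D^2$ with interior in the open disk and endpoints on $\partial D^2$ mapping under $\Phi$ to points of $\partial E$ — at least one endpoint in $\Phi^{-1}(\mathrm{int}(e))$ — such that the endpoints of $\tilde{q}$ separate $\Phi^{-1}(K_1)$ from $\Phi^{-1}(K_2)$ on $\partial D^2$. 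The image $q = \Phi(\tilde{q})$ is then a snipping arc in $E$ with interior in $\alpha$, and the ``in particular'' clause of Lemma \ref{snip} places $x_1$ and $x_2$ in different components of $E \setminus q$, contradicting the path $p \subset B \subset E \setminus q$ that joins them.

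For the remaining two claims, I would use van Kampen's theorem (Theorem \ref{vkl}) together with the first claim. For $\cl{\alpha}$ simply connected: any loop $\gamma \subset \cl{\alpha}$ is null-homotopic in $E$, so it bounds a reduced planar diagram $D \to E$; applying the first claim to every component $B_i$ of $\cl{E \setminus \cl{\alpha}}$ gives that each $B_i \cap \cl{\alpha}$ is connected, so the portion of $D$ mapping into each $B_i$ attaches to the rest of $D$ along a single arc of $\partial\alpha$ and can be homotoped out of $B_i$ onto $\partial\alpha$. This produces a null-homotopy of $\gamma$ inside $\cl{\alpha}$. The argument for $B$ simply connected is symmetric: a loop $\gamma \subset B$ bounds a diagram in $E$, and the portions mapping into $\cl{\alpha}$ can be collapsed using simple connectivity of $\cl{\alpha}$ (now established) together with connectedness of $B \cap \cl{\alpha}$.

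The main obstacle I anticipate is in the first claim, specifically the construction of $\tilde{q}$ with both endpoints mapping to $\partial E$ and simultaneously separating $\Phi^{-1}(K_1)$ from $\Phi^{-1}(K_2)$ along $\partial D^2$. Choosing both endpoints in $\Phi^{-1}(\mathrm{int}(e))$ is the most natural option, but it may fail when the arcs of $\Phi^{-1}(e)$ and the preimages $\Phi^{-1}(K_j)$ are cyclically arranged on $\partial D^2$ in an inconvenient way; in such cases the argument must take advantage of the combinatorial structure of the attaching map — in particular, the periodicity when $\alpha$ is a proper power and the appearance of multiple arcs of $\Phi^{-1}(e)$ — or else extend the snipping arc through an edge-path in $B$ to a point of $\partial E$ adjacent to $B$.
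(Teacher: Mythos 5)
Your overall plan for the first claim (a snipping arc through $\intr{\alpha}$ separating the preimages of the two components, contradicted by a path in $B$) is the right shape, but the step you yourself flag as the anticipated obstacle is exactly the crux, and it is left unresolved. Anchoring the snipping arc at the essential edge $e$ furnished by externality does not work: the arcs of $\Phi^{-1}(e)$ need not separate $\Phi^{-1}(K_1)$ from $\Phi^{-1}(K_2)$ on $\partial D^2$, and your fallback of ``extending the snipping arc through an edge-path in $B$ to a point of $\partial E$'' is not available, since Lemma \ref{snip} requires an embedded, locally separating arc whose interior misses $\partial E$; an extension through the $1$-skeleton of $B$ generally violates both conditions. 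The paper's proof finds the endpoints by a different device that does not use the external hypothesis at this point: with $v,w$ chosen in distinct components of $B\cap\cl{\alpha}$ and $\Gamma$ the component containing $v$, take a \emph{maximal} arc $\lambda$ of $S^1$ mapping into $\Gamma$ (after subdividing so that the parametrization $p\colon S^1\to\partial\alpha$ is combinatorial). The edges of $S^1$ immediately before and after $\lambda$ must map to edges lying in $\partial E$: if such an edge lay in the closure of a $2$-cell other than $\alpha$, it would lie in $\cl{E\setminus\cl{\alpha}}$ and, being adjacent to $\Gamma$, would be absorbed into $\Gamma$, contradicting maximality of $\lambda$. Joining interior points of these two flanking edges by an arc through $\intr{\alpha}$ gives a legitimate snipping arc; a preimage of $v$ lies in $\lambda$, every preimage of $w$ lies in the other component of $S^1\setminus p^{-1}(\gamma)$, and the path from $v$ to $w$ inside $B$ contradicts Lemma \ref{snip}. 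This maximality argument is the missing idea, and without it (or a substitute) your proof of the first claim does not go through.

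For the remaining two claims the paper takes a shorter and more robust route than your diagram-pushing sketch: write $E=B\cup\cl{E\setminus B}$ with connected intersection $B\cap\cl{E\setminus B}=B\cap\cl{\alpha}$ and apply van Kampen's theorem to conclude $B$ (and $\cl{E\setminus B}$) is simply connected, then induct, removing the components $B_1,\ldots,B_k$ of $\cl{E\setminus\cl{\alpha}}$ one at a time, to arrive at $\cl{\alpha}$. Your version needs more than connectedness of $B_i\cap\cl{\alpha}$: the assertion that the portion of the diagram mapping into $B_i$ ``attaches along a single arc of $\partial\alpha$'' does not follow from connectedness (its frontier in the planar diagram can consist of several arcs and circles), and ``homotoping it out of $B_i$ onto $\partial\alpha$'' presupposes that its boundary loops are null-homotopic in $\cl{\alpha}$, which is close to what you are trying to prove. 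So even granting the first claim, the second half of your argument would need to be reworked along the lines of the decomposition-plus-induction argument.
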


\begin{proof}

Suppose $B\cap\cl{\alpha}$ is disconnected and pick points $v$ and $w$ in distinct components therein. Let $\Gamma$ be the component containing $v$. Fix a parametrization $p:S^1\to\partial\alpha$ and subdivide $S^1$ so that $p$ is a combinatorial map. Let $\lambda$ be a maximal arc of $S^1$ (under inclusion) such that $p(\lambda)=\Gamma$. Let $e$ be the last edge of $S^1$ before $\lambda$ and $f$ be the first edge after $\lambda$. It follows that $p(e)$ and $p(f)$ lie in $\partial E$. Connect two points on the interior of $p(e)$ and $p(f)$ by a snipping arc $\gamma$ through the interior of $\alpha$. The fact that there is a path from $v$ to $w$ in $B$ (thus avoiding $\gamma$) contradicts the Snipping Lemma. Thus $B\cap\cl{\alpha}$ is connected.

Note that $E$ is the union of $B$ and $\cl{E\setminus B}$, and that $B\cap\cl{E\setminus B}=B\cap\cl{\alpha}$. Since $E$ is simply connected, so is $B$ by van Kampen's theorem. This proves the second statement of the lemma.

Note that $\cl{E\setminus B}$ is also simply connected by van Kampen's theorem. Proceeding inductively, let $B_1,\ldots,B_k$ be components of $\cl{E\setminus\cl{\alpha}}$ and observe that $\cl{E\setminus (B_1\cup\ldots\cup B_k)}$ decomposes as the union of $\cl{E\setminus(B_1\cup\ldots\cup B_{k-1})}$ and $B_k$ with connected intersection $B_k\cap\cl{E\setminus (B_1\cup\ldots\cup B_{k-1})}=B_k\cap\cl{\alpha}$. By inductive hypothesis and van Kampen's theorem again, $\cl{E\setminus (B_1\cup\ldots\cup B_k)}$ is simply connected. After finitely many steps we obtain that $\cl{\alpha}$ is simply connected, proving the lemma.
\end{proof}

\noindent
\begin{dfn} (\textbf{Branch}). Let $D\to X$ be a reduced diagram. If $\alpha$ is an exposed $2$-cell of $D$ with exposed edge $e$, then the components of $\cl{D\setminus\cl{\alpha}}$ which contain at least one essential $2$-cell are called the \emph{branches} of $D$ at $(\alpha,e)$.
\end{dfn}

The following is immediate by Lemma \ref{4.9} and van Kampen's Theorem:

\noindent
\begin{lem} \label{nicebranch}
Let $D\to X$ be a reduced diagram, and suppose $\alpha$ is an exposed $2$-cell of $D$ with exposed edge $e$. Let $B$ be a branch of $D$ at $(\alpha,e)$. Then $B\cup\cl{\alpha}$ is simply connected.
\end{lem}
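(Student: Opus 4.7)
The plan is to apply Lemma~\ref{4.9} directly, together with van Kampen's theorem. First I would observe that since $\alpha$ is exposed with exposed edge $e$, the essential edge $e$ lies in $\partial\alpha\cap\partial D$, so in particular $\alpha$ is external. This means the hypotheses of Lemma~\ref{4.9} are satisfied with $E=D$.

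Next, I would invoke Lemma~\ref{4.9} to conclude three things simultaneously: $B$ is simply connected (since $B$ is a component of $\cl{D\setminus\cl{\alpha}}$, even though the definition of branch only keeps components containing at least one essential $2$-cell, the conclusion of Lemma~\ref{4.9} applies to any component, and branches are a subclass of such components); $\cl{\alpha}$ is simply connected; and the intersection $B\cap\cl{\alpha}$ is connected.

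Finally, I would write $B\cup\cl{\alpha}$ as the union of two simply connected subcomplexes with connected intersection, and apply van Kampen's theorem (choosing small open CW neighborhoods of $B$ and $\cl{\alpha}$ in $B\cup\cl{\alpha}$ if one wishes to appeal to the open-cover version) to conclude that $\pi_1(B\cup\cl{\alpha})$ is trivial.

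There is essentially no obstacle here beyond noting that ``exposed implies external,'' since Lemma~\ref{4.9} was set up precisely to enable this kind of conclusion; the only minor point to verify is that the branch $B$ genuinely is one of the components whose properties Lemma~\ref{4.9} describes, which is immediate from the definition of branch.
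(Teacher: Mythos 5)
Your proof is correct and is essentially the paper's argument: the paper records this lemma as immediate from Lemma \ref{4.9} and van Kampen's theorem, which is exactly what you spell out (exposed implies external, a branch is a component of $\cl{D\setminus\cl{\alpha}}$, so Lemma \ref{4.9} gives that $B$ and $\cl{\alpha}$ are simply connected with connected intersection, and van Kampen finishes).
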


We can now prove our first diagram result:

\noindent
\begin{prop} 
\label{4.11} (cf \cite[Theorem 4.11]{lw}). Let $\psi:D\to X$ be a reduced diagram where $X$ has locally indicable vertex groups, and suppose that $D$ contains at least two essential $2$-cells. Then $D$ contains at least two extreme essential $2$-cells.
\end{prop}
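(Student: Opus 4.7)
The plan is strong induction on the number $N\geq 2$ of essential $2$-cells in $D$, using Lemma~\ref{4.7} (exposed $2$-cells via a maximal tower lift), Lemma~\ref{4.9} together with Lemma~\ref{nicebranch} (branch decomposition along an exposed cell into simply-connected pieces), the Snipping Lemma, and the observation that $\aux D$ is simply connected, being a quotient of $D$. The key reformulation I would use throughout is that, for an exposed $2$-cell $\alpha$ with exposed edge $e$, the $n=m(\alpha)$ copies of $e$ partition $\partial\alpha$ into $n$ ``gaps,'' and $\alpha$ is extreme if and only if all essential edges of $\partial\alpha$ that are shared with some other essential $2$-cell of $\aux D$ lie in (the closure of) a single gap.

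Fix a maximal tower lift $\phi:D\to T$ and let $\alpha_M,\alpha_m$ be the greatest and least essential $2$-cells of $D$ under $<_T$; they are distinct since $N\geq 2$, and Lemma~\ref{4.7} gives that $\alpha_M$ is exposed with exposed edge $\max_T\alpha_M$ and $\alpha_m$ with $\min_T\alpha_m$. For the base case $N=2$, the only other essential $2$-cell available to share edges with $\aux\alpha_M$ is $\aux\alpha_m$. Applying Lemma~\ref{4.9} to the external cell $\aux\alpha_M$ in $\aux D$, the component of $\cl{\aux D\setminus\cl{\aux\alpha_M}}$ containing $\aux\alpha_m$ meets $\cl{\aux\alpha_M}$ in a single connected arc $A\subset\partial\aux\alpha_M$ that contains every shared essential edge and avoids $[\max_T\alpha_M]_{\alpha_M}$ (whose edges lie on $\partial D$ and are shared with no other $2$-cell). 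Hence $A$ is contained in the closure of a single gap of $\partial\alpha_M$, which yields an extreme subpath $\gamma$ for $\alpha_M$. A symmetric argument applied to $\alpha_m$ with exposed edge $\min_T\alpha_m$ gives the second extreme cell.

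For the inductive step $N>2$, I begin with the same analysis for $\alpha_M$. Applying Lemma~\ref{4.9} to $\aux\alpha_M$ in $\aux D$, each component of $\cl{\aux D\setminus\cl{\aux\alpha_M}}$ meets $\cl{\aux\alpha_M}$ in a single connected arc disjoint from $[\max_T\alpha_M]_{\alpha_M}$, hence lying in a single gap of $\partial\alpha_M$. In $D$, the branches $B_1,\ldots,B_k$ of $(\alpha_M,\max_T\alpha_M)$ correspond to these components (modulo collapsed vertex-space regions); each $B_j$ is simply connected with strictly fewer essential $2$-cells, and the restriction $\phi|_{B_j}$ is a reduced diagram, so by the inductive hypothesis (or directly by Lemma~\ref{4.7} when $B_j$ has a single essential $2$-cell) each $B_j$ supplies extreme cells in its sub-diagram. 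If all component-arcs lie in one gap, $\alpha_M$ is itself extreme in $D$, and combining it with one extreme cell inherited from a branch gives two. Otherwise, multiple gaps contain component-arcs and separate the branches into distinct ``gap-groups,'' and I would argue that the inductively produced extreme cells in distinct groups cannot simultaneously have their extreme subpaths lying on $B_j\cap\cl\alpha_M$, giving at least two extreme cells of $D$; a symmetric argument with $\alpha_m$ serves as a backup.

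The main obstacle I anticipate is the ``survival'' step in the inductive case: an extreme $2$-cell $\beta$ produced in a branch $B_j$ has an extreme subpath that may traverse $B_j\cap\cl\alpha_M$ rather than $\partial D$, destroying extremeness in $D$. The resolution I expect uses that the extreme subpath of $\beta$ must contain the exposed edges $[f]_\beta$, whose positions lie in $\partial B_j=(\partial D\cap\partial B_j)\cup(B_j\cap\cl\alpha_M)$, and that $B_j\cap\cl\alpha_M$ is confined to a single gap of $\partial\alpha_M$. The Snipping Lemma then rules out configurations in which every inductively supplied extreme cell of $B_j$ has its exposed edges trapped in $B_j\cap\cl\alpha_M$, so at least one such cell lands with exposed edges on $\partial D$ and inherits extremeness in $D$.
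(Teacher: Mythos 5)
Your base case is a reasonable variation (applying Lemma~\ref{4.9} to $\aux\alpha_M$ in $\aux D$ rather than to $\alpha_M$ in $D$, then collapsing), and the general strategy of tower lift plus branch decomposition is the one the paper uses. However, your inductive step has a genuine gap, which you partly flag yourself: the ``survival'' problem.

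The issue is that you apply the inductive hypothesis to a branch $B_j$ on its own. The inductively-produced extreme $2$-cell $\beta\subset B_j$ has exposed edges $[f]_\beta$ lying on $\partial B_j=(\partial D\cap B_j)\cup(B_j\cap\cl\alpha_M)$, and nothing prevents some or all of those edges from lying on the arc $B_j\cap\cl\alpha_M$, which is interior to $D$. In that case $\beta$ is not even exposed in $D$, let alone extreme, and your claim that ``the Snipping Lemma rules out configurations in which every inductively supplied extreme cell of $B_j$ has its exposed edges trapped in $B_j\cap\cl\alpha_M$'' is asserted, not proved; I don't see how the Snipping Lemma does this, and you would in any case need an argument that works even when a branch has only one essential $2$-cell. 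The ``gap-groups'' case is likewise left as a sketch. The paper avoids this difficulty entirely with one small but decisive change: it applies the inductive hypothesis to $B_j'=B_j\cup\cl{\alpha}$ rather than to $B_j$. Since $B_j'$ has at least two essential $2$-cells (namely $\alpha$ and at least one in $B_j$), the inductive hypothesis produces an extreme $\alpha_1\neq\alpha$, and then $\cl{\alpha_1}\subset B_j$ forces every element of $[f]_{\alpha_1}$ to lie in $B_j$; such an edge is on $\partial B_j'$ if and only if it is on $\partial D$, so $\alpha_1$ is automatically exposed in $D$, and extremeness then transfers because $\cl\alpha$ separates $B_j$ from the other branches. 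I'd recommend restructuring your inductive step along these lines: apply the hypothesis to $B_j\cup\cl{\alpha}$, not $B_j$, and the survival problem disappears.
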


\begin{proof}
The proof is quite similar to that of \cite[Theorem 4.11]{lw}.

We induct on the number of essential $2$-cells in $D$. Let $\phi:D\to T$ be a maximal tower lift of $\psi$, and note that $T$ is compact by definition.

First suppose there are exactly two essential $2$-cells in $D$, $\alpha$ and $\beta$. Then $\alpha$ and $\beta$ are both either greatest or least essential $2$-cells, and so Lemma \ref{4.7} implies that they are both exposed. We claim that $\alpha$ and $\beta$ are both extreme. To see $\alpha$ is extreme, let $e$ be an exposed essential edge of $\alpha$. Let $B$ be the branch of $D$ at $(\alpha,e)$ which contains $\beta$. By Lemma \ref{4.9}, $B\cap\cl{\alpha}$ is contained in an arc of $\partial\alpha$ between two consecutive elements of $[e]_{\alpha}$, $e_1$ and $e_2$. Let $\gamma$ be the arc of $\partial\alpha$ containing $e_1$ and $e_2$ which does not intersect $B$. Note that $\gamma$ contains $[e]_\alpha$. Collapse $D$ to the auxiliary diagram $\aux{D}$, which will have exactly two $2$-cells, $\aux{\alpha}$ and $\aux{\beta}$. Note that $\aux{B}=\cl{\aux{\beta}}$. Since $\gamma$ does not intersect $B$ except possibly at its endpoints, $\aux{\gamma}$ does not intersect the closure of $\aux{\beta}$ except possibly at its endpoints. Thus $\alpha$ is extreme. An identical argument shows $\beta$ is extreme.

For the inductive step, note first that we can find two exposed $2$-cells $\alpha$ and $\beta$ in $D$. Indeed, if $T$ has only one essential $2$-cell, then every essential $2$-cell of $D$ is a greatest $2$-cell and so is exposed by Lemma \ref{4.7}, so choose $\alpha$ and $\beta$ arbitrarily. On the other hand if $T$ has two or more essential $2$-cells, and since $\phi$ is surjective, we can find a $2$-cell in $D$ ($\alpha$, say) mapping to the greatest $2$-cell of $T$, and a $2$-cell in $D$ ($\beta$, say) mapping to the least $2$-cell of $T$; Lemma \ref{4.7} will imply that $\alpha$ and $\beta$ are exposed. If $\alpha$ and $\beta$ are extreme we are done, otherwise assume without loss that $\alpha$ is not extreme. Then for an exposed edge $e$ of $\alpha$, there are at least two branches of $D$ at $(\alpha,e)$ (by Lemma \ref{4.9}). Call them $B_1$ and $B_2$. Now $B_1'=B_1\cup \cl{\alpha}$ and $B_2'=B_2\cup\cl{\alpha}$ are simply connected by Lemma \ref{nicebranch}, and thus $\phi_{|B_i'}$ is a reduced diagram for $i=1,2$ with fewer essential $2$-cells than $\psi$. By the inductive hypothesis there is an extreme essential $2$-cell $\alpha_1\neq\alpha$ in $B_1'$. Observe that $\alpha_1$ is also extreme in $D$ since $\alpha$ separates $B_1$ from all other branches of $D$ at $(\alpha,e)$. Similarly, we can find an extreme cell $\alpha_2\neq\alpha$ in $D$ which lies in $B_2'$. They are distinct since $\alpha_1$ lies in $B_1$ and $\alpha_2$ lies in $B_2$.
\end{proof}

Note: This generalizes part of the Spelling Theorem of Howie and Pride \cite[Theorem 3.1(iii)]{hp}, since the diagrams considered in that paper are planar.

The following is a simple criterion for identifying when an essential $2$-cell in a diagram is \textit{not} extreme. It is straightforward to verify. We will not use it until later.

\noindent
\begin{lem}
\label{notextreme}
Let $\phi:E\to X$ be a combinatorial map and let $\alpha$ be an essential $2$-cell of $E$ with boundary path $p^n$, where the loop $p$ is not a proper power. Suppose that there are two vertices $x$ and $y$ lying in $\partial\alpha$ with the following properties:
\begin{itemize}
\item[(i)] Both paths from $x$ to $y$ in $\partial\alpha$ contain at least as many edges as $p$.
\item[(ii)] Each of the vertices $\aux{x}$ and $\aux{y}$ lies in the closure of at least two essential $2$-cells in $\aux{E}$.
\end{itemize}
Then $\alpha$ is not extreme in $E$.
\end{lem}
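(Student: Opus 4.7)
The plan is to argue by contradiction: suppose $\alpha$ is extreme, witnessed by a subpath $\gamma$ of $\partial\alpha=p^n$ that contains every $1$-cell of $[e]_\alpha$ for some exposed edge $e$ of $\alpha$, and such that $\aux{\gamma}$ meets the closure of no other essential $2$-cell of $\aux{E}$ except possibly at its endpoints. The overall strategy is to use the periodicity of $[e]_\alpha$ to force $\gamma$ to be ``long,'' then use hypothesis (ii) to trap $x$ and $y$ in the short complement, and finally contradict hypothesis (i).

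The first step is a length computation. Since $p$ is not a proper power, the $n$ cells of $[e]_\alpha$ are distinct $1$-cells on the cycle $\partial\alpha$, occurring at positions cyclically spaced by exactly $|p|$ edges around a circle of total edge-length $n|p|$. The shortest arc containing all $n$ of them runs from the first occurrence to the last the short way around and has length $(n-1)|p|+1$ edges. So any admissible $\gamma$ satisfies $|\gamma|\geq (n-1)|p|+1$, equivalently the complement arc satisfies $|\partial\alpha\setminus\gamma|\leq |p|-1$.

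Next I would apply condition (ii) to pin down where $x$ and $y$ can sit relative to $\gamma$. By (ii), $\aux{x}$ and $\aux{y}$ each lie in the closure of some essential $2$-cell of $\aux{E}$ other than $\aux{\alpha}$. If either of $x,y$ were an interior vertex of $\gamma$ (i.e.\ not an endpoint), then $\aux{\gamma}$ would meet the closure of that other $2$-cell at a non-endpoint of $\aux{\gamma}$, contradicting extremity. Therefore both $x$ and $y$ lie in the closed complement arc $(\partial\alpha\setminus\gamma)\cup\{u,v\}$, where $u,v$ denote the two endpoints of $\gamma$.

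The concluding step is to compare the two path lengths. One of the two arcs from $x$ to $y$ in $\partial\alpha$ is contained entirely in this closed complement, so its length is at most $|\partial\alpha\setminus\gamma|\leq |p|-1$. But condition (i) asserts that both arcs from $x$ to $y$ have length at least $|p|$, a contradiction. Hence no such $\gamma$ exists and $\alpha$ is not extreme. I expect the only mildly delicate point to be bookkeeping the minimum length $(n-1)|p|+1$ correctly and noticing that coincidences $x\in\{u,v\}$ or $y\in\{u,v\}$ are permissible but harmless, since the bound $|p|-1 < |p|$ already has slack built in.
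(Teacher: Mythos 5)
Your proof is correct and follows essentially the same route as the paper's: the paper's one-line argument is that condition (i) forces $x$ or $y$ into the interior of any arc $\gamma$ containing $[e]_\alpha$, and condition (ii) then makes the interior of $\aux{\gamma}$ touch another $2$-cell's closure. You simply make the underlying pigeonhole explicit by computing $\abs{\gamma}\geq(n-1)\abs{p}+1$ so that the complement arc has at most $\abs{p}-1$ edges, and phrase the same argument contrapositively.
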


\begin{proof}
Let $\gamma$ be a subpath of $\partial\alpha$ such that $\gamma$ contains every $1$-cell in $[e]_\alpha$ for some essential edge $e$ in $\alpha$. Condition (i) implies that either $x$ or $y$ lies in the interior of $\gamma$, and condition (ii) implies that the interior of $\aux{\gamma}$ touches the closures of some $2$-cell of $\aux{E}$ other than the closure of $\aux{\alpha}$. Thus $\alpha$ is not extreme.
\end{proof}

\section{Many extreme $2$-cells}
\label{sect:ext2}

In this section let $X$ be a staggered generalized $2$-complex with locally indicable vertex groups.

\noindent
\begin{dfn} (\textbf{Magnus subcomplex}) (cf \cite[Definition 3.6]{lw}). A \emph{Magnus subcomplex} $Z\subset X$ is a subcomplex with the following properties:
\begin{itemize}
\item[(i)] The subcomplex $Z$ contains the disjoint union of all vertex spaces.
\item[(ii)] If $\alpha$ is an essential $2$-cell of $X$ with the property that all essential boundary $1$-cells of $\alpha$ lie in $Z$, then $\alpha$ lies in $Z$.
\item[(iii)] The essential $1$-cells of $X$ contained in $Z$ form an interval.
\end{itemize}
\end{dfn}

The following lemma is equivalent to Howie's ``locally indicable'' Freiheitssatz \cite[Theorem 4.3]{h1}. We will reprove it for completeness.

\noindent
\begin{lem}
\label{frei} (cf \cite[Theorem 6.1]{hw1}). If $Z$ is a Magnus subcomplex of $X$, then the inclusion $i:Z\to X$ is $\pi_1$-injective for any choice of base-point in $Z$.
\end{lem}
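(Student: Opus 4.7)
The plan is to produce a reduced van Kampen diagram for $u$ over $X$ and show it automatically maps into $Z$. Given a closed loop $u$ in $Z$ that is null-homotopic in $X$, I would first cyclically reduce $u$ within $Z$, then apply Theorem \ref{vkl} to obtain a reduced diagram $\psi:D\to X$ whose boundary parametrizes $u$. Reinterpreting the same $D$ as a diagram $D\to Z$ will witness null-homotopy of $u$ in $Z$ and yield $\pi_1$-injectivity, so the goal reduces to showing $\psi(D)\subseteq Z$.

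I would handle the case in which $D$ has no essential $2$-cells separately, via standard free-product considerations for $\pi_1(\gos{X})$. Otherwise, pass to a maximal tower lift $\phi:D\to T$. By Lemma \ref{towerstag}, the compact complex $T$ is a staggered generalized $2$-complex whose lexicographic staggering refines that of $X$ along $f:T\to X$. Let $\alpha$ be a greatest essential $2$-cell of $D$ under $<_T$. By Lemma \ref{4.7}, $\alpha$ is exposed with exposed edge $\max_T\alpha$, so every $1$-cell in $[\max_T\alpha]_\alpha$ lies in $\partial D$, and in particular $\psi(\max_T\alpha)\in Z$.

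The pivotal step is the identification $\psi(\max_T\alpha)=\max_X\psi(\alpha)$. By Remark \ref{proppow}, the attaching map of $\phi(\alpha)$ in $T$ is a proper power of the same exponent as that of $\psi(\alpha)$, so positions transfer faithfully under $f$; combined with the fact that $<_T$ refines $<_X$, the $<_T$-greatest essential edge of $\partial\phi(\alpha)$ must map under $f$ to the $<_X$-greatest essential edge of $\partial\psi(\alpha)$. Hence $\max_X\psi(\alpha)\in Z$. Now maximality of $\alpha$ under $<_T$, together with the staggering axiom $c<c'\Rightarrow\max(c)<\max(c')$, propagates this bound: for every essential $2$-cell $\gamma$ of $D$, $\max_X\psi(\gamma)\leq_X\max_X\psi(\alpha)$, which lies in $Z$. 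A symmetric argument applied to a least essential $2$-cell of $D$ yields $\min_X\psi(\gamma)\geq_X\min_X\psi(\alpha)\in Z$. By the interval property~(iii) of a Magnus subcomplex, every essential boundary edge of $\psi(\gamma)$ then lies in $Z$, and property~(ii) forces $\psi(\gamma)\in Z$.

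To finish, each essential $1$-cell of $D$ either lies in $\partial D$ or bounds some essential $2$-cell of $D$ (since vertex-space $2$-cells have no essential edges in their boundaries), and in both cases maps into $Z$; inessential cells map into $Z$ by property~(i). Thus $\psi(D)\subseteq Z$, as desired. The main obstacle is the identification $\psi(\max_T\alpha)=\max_X\psi(\alpha)$: one must carefully track how the lexicographic staggering on $T$ interacts with the unrolling of proper powers by infinite cyclic covers, using Remark \ref{proppow} to forbid the exponent of the attaching map from being altered along $f$, which would otherwise reshuffle the notion of ``greatest edge'' between $T$ and $X$ and break the propagation argument.
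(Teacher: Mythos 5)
Your proposal is correct and uses essentially the same machinery as the paper's proof: a reduced van Kampen diagram, a maximal tower lift, Lemma \ref{4.7} to expose the $<_T$-extreme $2$-cells along $\partial D$, and the interval and closure properties of a Magnus subcomplex; your pivotal identification $f(\max_T\phi(\alpha))=\max_X\psi(\alpha)$ is indeed forced by the lexicographic staggering convention on towers, so that step is sound. The only difference is organizational: the paper argues by contradiction (after possibly reversing the staggering, an essential edge outside $Z$ would make the greatest essential edge of $T$ lie outside $Z$, contradicting exposure on $\partial D$), whereas you argue directly by sandwiching every essential boundary edge between the images of the exposed $\max_T$ and $\min_T$ edges, both of which lie in $Z$.
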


\begin{proof}
We follow the proof in \cite{hw1} -- minimal modifications are necessary.

Let $g\in\ker i_*$. Then any loop $u$ representing $i_*(g)$ is nullhomotopic in $X$, so we may apply Theorem \ref{vkl} to construct a reduced diagram $\psi:D\to X$ where $D$ is a disk and $\psi(\partial D)=u$. We will show that every $2$-cell of $D$ maps to $Z$; this will imply $u$ is nullhomotopic in $Z$ and so $g=1$ in $\pi_1(Z)$.

If every essential $1$-cell in $D$ maps to $Z$ (or no essential $1$-cells appear in $D$), then conditions (i) and (ii) imply that every $2$-cell in $D$ maps to $Z$ and we are done. So suppose there is an essential $1$-cell in $D$ not mapping to $Z$ (for brevity, say ``$D$ has a $1$-cell not in $Z$''). Reversing the staggering of $X$ if necessary, we may assume by condition (iii) that $D$ has a $1$-cell not in $Z$ which is greater than any essential $1$-cell in $Z$. Let $\phi:D\to T$ be a maximal tower lift of $\psi$. Note that for any edge $e\in D$ with the property that $e$ is greater (under $<_X$) than any essential $1$-cell in $Z$, $e$ is greater (under $<_T$) than any essential $1$-cell of $T$ mapping to $Z$ by the tower $T\to X$. Thus the greatest essential $1$-cell of $T$, which we call $e'$, does not map to $Z$. Therefore no edge in $\phi^{-1}(e')$ lies in $\partial D$.

Since $e'$ is in the image of the surjective map $\phi$, this last fact implies that $e'$ must lie on the boundary of some essential $2$-cell in $T$. Thus $e'$ is $\max_T\alpha$ for the greatest essential $2$-cell $\alpha$ of $T$. Applying Lemma \ref{4.7}, $\alpha'$ is exposed in $D$ with exposed edge $e'$. This contradicts that no edge in $\phi^{-1}(e')$ lies in $\partial D$.
\end{proof}

Recall the following fact, the proof of which is technical but requires only Bass-Serre theory and Howie's Freiheitssatz (see \cite{h2}):

\noindent
\begin{lem}
\label{3.4} \cite[Corollary 3.4]{h2} Let $(\mathcal{G}, Y)$ be a graph of groups with trivial edge groups and locally indicable vertex groups. Let $w$ be a cyclically reduced closed word of positive length in $(\mathcal{G}, Y)$, and let $N$ be the normal closure of the subgroup generated by $w$. Then no proper closed subword of $w$ represents an element of $N$.
\end{lem}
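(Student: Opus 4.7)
The plan is to argue by contradiction, combining Howie's Freiheitssatz (Lemma~\ref{frei}) with the tower and extreme-cell technology from Sections~\ref{sect:def}--\ref{sect:ext} and Bass--Serre normal form. First I would realize $(\mathcal{G},Y)$ as a graph of spaces $\gos{X}$ with $\pi_1(\gos{X})\cong G$ and attach a single essential $2$-cell $\alpha$ along a combinatorial loop representing $w$, producing a staggered generalized $2$-complex $X=\gos{X}\cup_w\alpha$ with $\pi_1(X)\cong G/N$. The linear order on the essential edges $E(X)$ is at our disposal; the $2$-cell ordering is trivial since $X$ has a unique essential $2$-cell.

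Suppose for contradiction that $w'\in N$ is a proper closed subword; writing $w\equiv w'w''$ cyclically, we also have $w''\in N$, and both are null-homotopic in $X$. By Theorem~\ref{vkl} there is a reduced disk diagram $\psi':D'\to X$ with $\partial D'\mapsto w'$. If $D'$ has no essential $2$-cells, then $w'=1$ already in $\pi_1(\gos{X})=G$; Bass--Serre normal form, using triviality of edge groups and local indicability of vertex groups, forces a reduced closed word of positive length to be nontrivial, a contradiction. Otherwise, take a maximal tower lift $\phi:D'\to T$ via Lemma~\ref{towerstag}. By Proposition~\ref{4.11}, $D'$ has at least two extreme essential $2$-cells, and each exposes all $m(\alpha)$ positional copies of some essential edge along $\partial D'\mapsto w'$, where $w=p^{m(\alpha)}$ with $p$ not a proper power. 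Choosing the order on $E(X)$ so that the maximum $e_0\in E(X)$ is an essential edge appearing in $w$ but avoided by $w'$, the greatest extreme cell's exposed edge must map to $\max_X\alpha=e_0$ under $T\to X$, forcing $e_0\in w'$ and contradicting our choice of $e_0$.

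The main obstacle is the case when every essential edge used by $w$ also appears in $w'$ (for instance $w=gtgt^{-1}$, $w'=tgt^{-1}$ over a single vertex with a loop edge $t$), so no such $e_0$ exists. I would handle this by inducting on the cyclic length $|w'|$: since $|w''|<|w|$ and $w''\in N$ is itself a proper closed subword, either $w''$ avoids some essential edge (reducing to the main argument) or $w''$ is strictly shorter than $w'$ and we invoke the inductive hypothesis. At the base of the induction, when $w''$ reduces to a single nontrivial vertex-group element, I would apply Lemma~\ref{frei} to the Magnus subcomplex $Z\subset X$ consisting of the disjoint union of vertex spaces (a valid Magnus subcomplex with no essential edges and no essential $2$-cells): the inclusion $\pi_1(Z)\hookrightarrow\pi_1(X)=G/N$ would force $w''=1$ in a vertex group, contradicting nontriviality. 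When the length induction stalls (comparable $|w'|$ and $|w''|$ with $m(\alpha)=1$ and identical edge sets), the final step is to pass to an infinite cyclic cover of $X$ afforded by local indicability of a vertex group met by $w$; in the cover, new essential edges appear and the maximum-edge argument applies. This intricate interplay of Bass--Serre theory with the Freiheitssatz in iterated tower lifts is the technical content of \cite{h2}.
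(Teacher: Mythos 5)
The paper does not actually prove this statement: it is quoted from Howie (\cite[Corollary 3.4]{h2}) and used as a black box, with only the remark that the proof needs Bass--Serre theory and the Freiheitssatz. So the question is whether your blind argument constitutes a proof on its own, and it does not, although the easy half is essentially right. Building $X=\gos{X}\cup_w\alpha$, filling $w'$ with a reduced diagram (Theorem \ref{vkl}), and running the maximal-tower argument works exactly as in the proof of Lemma \ref{frei} \emph{provided} $w'$ avoids some essential edge occurring in $w$: stagger $E(X)$ with that edge maximal, and the $<_T$-greatest $2$-cell is exposed with exposed edge $\max_T$, which lies on $\partial D'$ and maps to the avoided edge, a contradiction. (Here the correct tool is Lemma \ref{4.7}; Proposition \ref{4.11} gives extreme cells but does not tell you which edge is exposed. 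Your no-$2$-cell case and the single-vertex-element base case via the Magnus subcomplex of vertex spaces are fine.)

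The gap is the case you flag yourself, and neither of your devices closes it. First, the dichotomy ``either $w''$ avoids an essential edge or $|w''|<|w'|$'' is false: both $w'$ and its cyclic complement $w''$ can use every essential edge of $w$ while $|w''|\geq|w'|$ (already for $w=(at)^3$ in $\langle a\rangle*\langle t\rangle$ with $w'=at$, $w''=atat$), and since the inductive hypothesis is only available for words strictly shorter than $w'$, the induction stalls exactly at this generic case --- which includes the proper-power situation the rest of the paper depends on. Second, the fallback of ``passing to an infinite cyclic cover of $X$ afforded by local indicability of a vertex group'' is unjustified: such a cover requires a nontrivial homomorphism $\pi_1(X)=G/N\to\ZZ$, which local indicability of a vertex group of $\gos{X}$ does not by itself supply (note $G/N$ may have torsion when $w$ is a proper power), and the assertion that in the cover ``new essential edges appear and the maximum-edge argument applies'' is precisely the Magnus-style tower induction that constitutes Howie's proof, which you have not carried out; your closing sentence explicitly defers it to \cite{h2}. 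Deferring to \cite{h2} is what the paper itself does, and is legitimate there, but it means your proposal proves only the special case in which some essential edge of $w$ is avoided by the subword, not the lemma as stated.
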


A topological interpretation of this gives the following:

\noindent
\begin{lem}
\label{3.9} (cf \cite[Corollary 3.9]{lw}). In $X$, let $p$ be a nontrivial proper subpath of the attaching map of an essential $2$-cell $\alpha$, and suppose that $p$ is a closed path in $X$. Then $p$ is not nullhomotopic in $X$.
\end{lem}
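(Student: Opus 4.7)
The plan is to reduce to Lemma~\ref{3.4} by localizing inside a Magnus subcomplex whose only essential $2$-cell is $\alpha$, and then transport the conclusion back to $X$ via Howie's Freiheitssatz (Lemma~\ref{frei}).

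The first step is to build the Magnus subcomplex $Z \subset X$ consisting of the disjoint union of all vertex spaces of $X$, together with every essential $1$-cell lying in the closed interval $[\min_X\alpha, \max_X\alpha]$ and the single essential $2$-cell $\alpha$. Axioms (i) and (iii) in the definition of a Magnus subcomplex are built directly into this construction. The only delicate step is verifying axiom (ii), and here the staggering hypothesis is exactly what is needed: if $\beta \in C(X)$ has all its essential boundary $1$-cells contained in $Z$, then in particular $\min_X\beta \geq \min_X\alpha$ and $\max_X\beta \leq \max_X\alpha$, so the monotonicity of $\min$ and $\max$ in the staggering forces both $\beta \geq_X \alpha$ and $\beta \leq_X \alpha$. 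Hence $\beta = \alpha$, confirming that $Z$ is a Magnus subcomplex. Lemma~\ref{frei} then gives the inclusion-induced injection $\pi_1(Z) \hookrightarrow \pi_1(X)$, so it suffices to prove that $p$ is nontrivial in $\pi_1(Z)$.

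Because $\alpha$ is the unique essential $2$-cell of $Z$, we have $\pi_1(Z) = \pi_1(\gos{Z})/\nc{R}$, where $R$ denotes the attaching word of $\alpha$. The graph of spaces $\gos{Z}$ has trivial edge groups and locally indicable vertex groups by hypothesis, and $R$ is cyclically reduced, closed, and of positive length by the definition of a staggered generalized $2$-complex. The subpath $p$ is closed by assumption and uses only essential edges from the interval $[\min_X\alpha, \max_X\alpha]$ together with arcs in vertex spaces, so it is a proper closed subword of $R$ in the graph-of-groups $(\mathcal{G}, Y)$ underlying $\gos{Z}$. Applying Lemma~\ref{3.4} to this data yields $p \notin \nc{R}$, so $p$ represents a nontrivial element of $\pi_1(Z)$, and therefore of $\pi_1(X)$. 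The main obstacle is nothing more than correctly identifying $Z$; the rest is a direct application of the Freiheitssatz combined with Lemma~\ref{3.4}.
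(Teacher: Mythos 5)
Your proof is correct and follows essentially the same route as the paper: pass to a Magnus subcomplex whose only essential $2$-cell is $\alpha$, apply Lemma~\ref{3.4} to the underlying graph of groups with trivial edge groups and locally indicable vertex groups, and transport the conclusion back to $\pi_1(X)$ via Lemma~\ref{frei}. The only differences are cosmetic: you spell out the staggering argument showing no other $2$-cell is forced into $Z$ (which the paper leaves implicit), while the paper additionally passes to the component of $Z$ containing $\alpha$ before applying Lemma~\ref{3.4}, a point your write-up glosses over but which does not affect the argument since Lemma~\ref{frei} holds for any base-point in $Z$.
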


\begin{proof}Let $Z$ be the Magnus subcomplex of $X$ consisting of all vertex spaces and the $2$-cell $\alpha$. Let $Z'$ be the component of $Z$ containing $\alpha$. Then $\pi_1(Z'\setminus\alpha)$ decomposes as a graph of groups satisfying the hypotheses of Lemma \ref{3.4}. Let $w=[\partial\alpha]$. Since $\partial\alpha$ is cyclically reduced, we realize $[p]$ as a proper closed subword of $w$. Applying Lemma \ref{3.4}, $p$ is not nullhomotopic in $Z'$. But $\pi_1(Z)=\pi_1(Z')$ for appropriate choice of base-point, and $\pi_1(Z')$ injects into $\pi_1(X)$ by Lemma \ref{frei}. Thus $p$ is not nullhomotopic in $X$. \end{proof}

Also recall the main theorem from \cite{h2}:

\noindent
\begin{lem}
\label{howiemain} \cite[Theorem 4.2]{h2} Let $A$ and $B$ be locally indicable groups, and let $G$ be the quotient of $A*B$ by the normal closure of a cyclically reduced word $w$ of positive length. Then the following are equivalent:
\begin{itemize}
\item[(i)] $G$ is locally indicable;
\item[(ii)] $G$ is torsion free;
\item[(iii)] $w$ is not a proper power in $A*B$.
\end{itemize}
\end{lem}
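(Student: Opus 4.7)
The plan is to prove the cycle $(i)\Rightarrow(ii)\Rightarrow(iii)\Rightarrow(i)$. The first implication is immediate: any non-trivial finite cyclic subgroup is finitely generated but admits no surjection onto $\mathbb{Z}$, so local indicability forces torsion-freeness. For $(ii)\Rightarrow(iii)$ I would argue the contrapositive. If $w=u^n$ with $n\geq 2$, build a CW model $X$ of $G$ by taking a graph of spaces for $A*B$ with trivial edge spaces and classifying vertex spaces for $A$ and $B$, together with a single essential $2$-cell attached along $u^n$. Since $u$ is a non-trivial proper closed subpath of this attaching map, Lemma \ref{3.9} implies $u$ is not nullhomotopic in $X$; but $u^n=w=1$ in $G$, so the class of $u$ is a non-trivial torsion element.

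The main content is $(iii)\Rightarrow(i)$. Let $H$ be a non-trivial finitely generated subgroup of $G$, and let $\tilde X_H\to X$ be the corresponding covering space. Since $H$ is finitely generated, I can pick a finite connected subcomplex $K\subset \tilde X_H$ with $\pi_1(K)\twoheadrightarrow \pi_1(\tilde X_H)=H$. Consider the composition $\psi:K\hookrightarrow\tilde X_H\to X$ and take a maximal tower lift $\phi:K\to T$ with tower $f:T\to X$. By Lemma \ref{towerstag}, $T$ is a compact staggered generalized $2$-complex with locally indicable vertex groups, and by Remark \ref{proppow} together with hypothesis (iii), no essential $2$-cell of $T$ is attached along a proper power.

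Next I would argue that $\pi_1(T)$ admits a non-trivial homomorphism to $\mathbb{Z}$. If $H^1(T)\neq 0$ this is immediate. Otherwise, Lemma \ref{howiecollapse} permits collapsing $T$ across its greatest essential $2$-cell without changing the homotopy type; the result is still a staggered $2$-complex with locally indicable vertex groups and no proper-power attachments, so I iterate. Either $H^1$ eventually becomes non-zero, or all essential $2$-cells disappear, leaving a graph of spaces with trivial edge groups whose fundamental group is locally indicable by the classical result that free products of locally indicable groups are locally indicable. Since $f_*\circ\phi_*(\pi_1(K))=H\neq 1$, $\pi_1(T)$ is non-trivial and hence indicable, yielding $\lambda:\pi_1(T)\twoheadrightarrow\mathbb{Z}$. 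Maximality of $\phi$ forbids $\phi_*(\pi_1(K))$ from lying in $\ker\lambda$ (otherwise $\phi$ would lift to the infinite cyclic cover of $T$ determined by $\lambda$), so $\lambda\circ\phi_*\neq 0$.

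The remaining and most delicate step, and the one I expect to be the main obstacle, is to show that this map descends to $H$, i.e., vanishes on $\ker(\pi_1(K)\twoheadrightarrow H)$. Because $K$ was chosen inside $\tilde X_H$, the map $\psi$ lifts to $\tilde X_H$, and I would use maximality of the tower lift to arrange that $f:T\to X$ itself factors through $\tilde X_H$, so that loops in $K$ which bound in $\tilde X_H$ already bound in $T$ and thus die under $\lambda\circ\phi_*$. Making this factorization precise, and carrying the inductive bookkeeping through the collapse steps above, is the technical heart of the argument; once in place, $H$ inherits a non-trivial map to $\mathbb{Z}$, completing the cycle.
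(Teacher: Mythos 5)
The paper does not actually prove this lemma: it is imported verbatim from Howie \cite[Theorem 4.2]{h2} and used as a black box (indeed, the paper deduces its Corollary \ref{locind} \emph{from} it), so there is no internal proof to compare against; your attempt has to be judged on its own. Your (i)$\Rightarrow$(ii) is fine, and your (ii)$\Rightarrow$(iii) is also fine and non-circular: Lemma \ref{3.9} rests only on the Freiheitssatz (Lemma \ref{frei}) and Howie's Corollary 3.4, not on the present lemma, so applying it to the single-cell model of $G$ to see that $u\neq 1$ while $u^n=1$ is legitimate.

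The genuine gap is in (iii)$\Rightarrow$(i), at exactly the step you flag, and it is not a finishable technicality. Your tower argument (maximality plus Remark \ref{proppow}, Lemma \ref{towerstag}, Lemma \ref{howiecollapse}) produces at best a nontrivial homomorphism $\lambda\circ\phi_*\colon\pi_1(K)\to\mathbb{Z}$; but $\pi_1(K)\to H$ is only a surjection, and indicability does not pass to quotients (if $K$ merely carries a generating set, $\pi_1(K)$ is close to free and the conclusion is vacuous). The proposed repair --- using maximality to make $f\colon T\to X$ factor through $\tilde X_H$ --- is not available: maximality only says $\phi$ admits no further lift up a tower over $T$; it gives no control over $f_*(\pi_1(T))$, which need not lie in $H$ (the first stage of a tower can be all of $X$, and the first infinite cyclic cover can correspond to a character of $G$ having nothing to do with $H$). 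Even granting such a factorization, you would still need $\lambda$ to vanish on $\ker(\pi_1(T)\to H)$, i.e.\ the epimorphism to $\mathbb{Z}$ to factor through $H$, which is the original problem all over again; and "loops in $K$ that bound in $\tilde X_H$ already bound in $T$" would require $T\to\tilde X_H$ to be $\pi_1$-injective, which nothing guarantees. (Two smaller points: since the collapses of Lemma \ref{howiecollapse} are homotopy equivalences, $H^1(T)$ cannot "become nonzero" along the way; and the vertex spaces of $T$ are finite subcomplexes of covers of the original vertex spaces, so "locally indicable vertex groups" for $T$ needs the same care the tower machinery always requires.) This descent from $\pi_1(K)$ to $H$ is the entire content of the implication; Howie's and Brodskii's proofs of (iii)$\Rightarrow$(i) proceed by a genuine induction on the length of $w$ through a Magnus-style hierarchy rather than a single maximal-tower argument, which is presumably why the paper cites the statement instead of proving it with the tools developed here.
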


Howie mentions the following corollary \cite{h2}:

\noindent
\begin{cor}
\label{locind} (cf \cite[Corollary 4.5]{h2}). Suppose $X$ is such that the attaching map of each essential $2$-cell is not a proper power. Then $\pi_1(X)$ is locally indicable.
\end{cor}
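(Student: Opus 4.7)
The plan is to induct on the number $k$ of essential $2$-cells of $X$. In the base case $k = 0$, we have $X = \gos{X}$, so $\pi_1(X)$ is a free product of the (locally indicable) vertex groups with a free group contributed by the non-tree edges of the underlying graph. By the classical theorem (Higman) that free products of locally indicable groups are locally indicable, $\pi_1(X)$ is locally indicable.

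For the inductive step, let $\alpha$ be the greatest essential $2$-cell. The staggering ensures that $\max(\alpha)$ is used by no other essential $2$-cell's attaching map (since $\max(c) <_X \max(\alpha)$ for $c \neq \alpha$, and each such $c$'s boundary uses only edges up to $\max(c)$). I would let $X^-$ be the Magnus subcomplex of $X$ consisting of all vertex spaces, all essential edges strictly less than $\max(\alpha)$, and all essential $2$-cells other than $\alpha$ (whose boundaries all lie in $X^-$ by the above). Then $X^-$ is a staggered generalized $2$-complex with $k-1$ essential $2$-cells satisfying the hypotheses, so by induction $\pi_1(X^-)$ is locally indicable.

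Let $X' = X \setminus \alpha$. Since $\partial\alpha$ traverses $\max(\alpha)$ exactly once (by Lemma \ref{310}, whose conclusion applies since the non-proper-power hypothesis gives exponent one), $\max(\alpha)$ is not a bridge in $X'$. Adding $\max(\alpha)$ and any essential edges greater than $\max(\alpha)$ (none of which appear in any $2$-cell) back to $X^-$ yields $X'$, and van Kampen's theorem expresses $\pi_1(X') \cong \pi_1(X^-) * F =: A * B$, where $B = F$ is a free group generated by these added edges; both factors are locally indicable. Writing $\partial\alpha = u \cdot t^{\pm 1}$ in $A * B$, where $t \in B$ corresponds to the loop through $\max(\alpha)$ and $u \in A$ is the class of the complementary subpath, I would apply Lemma \ref{howiemain} with $w = \partial\alpha$. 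Cyclic reducedness in $A * B$ reduces to showing $u \neq 1$, which follows from the Freiheitssatz Lemma \ref{frei} (giving $\pi_1(X^-) \hookrightarrow \pi_1(X)$, as $X^-$ is Magnus) combined with Lemma \ref{3.9} (proper subpaths of $\partial\alpha$ are nontrivial in $\pi_1(X)$). That $\partial\alpha$ is not a proper power in $A * B$ follows from abelianization: since $\partial\alpha$ traverses $\max(\alpha)$ exactly once, it projects to $\pm 1$ in the $\mathbb{Z}\langle t \rangle$ summand of $H_1(A * B) \cong H_1(A) \oplus H_1(B)$, precluding $\partial\alpha = w^m$ for any $m \geq 2$. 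Lemma \ref{howiemain} then gives that $\pi_1(X) = A * B / \nc{\partial\alpha}$ is locally indicable, completing the induction.

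The main obstacle lies in verifying the hypotheses of Lemma \ref{howiemain} at the inductive step, particularly the cyclic reducedness and non-proper-power-ness of $\partial\alpha$ in the free product $A * B$. These are secured respectively by the Freiheitssatz (preventing $\partial\alpha$ from degenerating into a single factor of $A * B$) and by the careful use of Lemma \ref{310} ensuring $\max(\alpha)$ is traversed by $\partial\alpha$ only once, which anchors the abelianization argument. If the compactness and vanishing-$H^1$ hypotheses of Lemma \ref{310} are not immediately available in our setting, one may first reduce to the compact case via a maximal tower lift argument, using Lemma \ref{towerstag} and Remark \ref{proppow} to preserve the hypotheses under towers.
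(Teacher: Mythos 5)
Your base case and your observation that the staggering forces $\max(\alpha)$ to be used only by $\alpha$ are fine, but the inductive step rests on a claim that is false in this generality: that $\partial\alpha$ traverses $\max(\alpha)$ exactly once. You justify this by Lemma \ref{310}, but that lemma requires $X$ to be compact, and, crucially, to have \emph{no infinite cyclic cover} (i.e.\ $H^1(X)=0$); neither hypothesis is available here, and the second one is essentially the negation of what the corollary is trying to prove. A concrete counterexample to your intermediate claim: take $\gos{X}$ a single vertex space (a point) with two essential loop edges $a<b$ and one $2$-cell attached along $abab^{-1}$ (cyclically reduced, not a proper power). Here $\max(\alpha)=b$ is traversed twice, yet all hypotheses of the corollary hold (and its conclusion does too: this is the Klein bottle group). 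Since the "exactly once" claim fails, both the decomposition $\partial\alpha=u\,t^{\pm1}$ and your abelianization argument for non-proper-power collapse. The suggested repair via a maximal tower lift does not work either: towers are applied to maps from \emph{compact} complexes into $X$ (diagrams), whereas here you need local indicability of $\pi_1(X)$ itself; passing to a tower changes the group in question, and in any case a tower target with indicable $\pi_1$ is exactly the situation Lemma \ref{310} excludes.

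The paper's proof avoids this entirely: it splits $X\setminus\alpha$ along the \emph{single} edge $e=\max\alpha$ (two cases, according to whether $e$ separates $X\setminus\alpha$ or not), obtaining $\pi_1(X\setminus\alpha)\cong A*B$ or $A*\langle t\rangle$ with factors locally indicable by induction, and then applies Howie's theorem (Lemma \ref{howiemain}) directly to $w=[\partial\alpha]$ -- which is allowed to cross $e$ arbitrarily many times, since Howie's theorem only needs $w$ cyclically reduced, of positive length, and not a proper power in the free product. So the fix is to drop the "uses $\max(\alpha)$ once" device and argue as in the paper. One further omission: your induction on the number of essential $2$-cells presumes $C(X)$ is finite; the paper first reduces to this case by writing $\pi_1(X)$ as a direct limit of the fundamental groups of subcomplexes with finitely many essential $2$-cells and using that a direct limit of locally indicable groups is locally indicable. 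You should include that reduction as well.
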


\begin{proof}

Consider the set of all staggered generalized $2$-complexes $X'$ which have all of the same data as $X$, except that $C(X')$ is a finite subset of $C(X)$. Then the set of the groups $\pi_1(X')$ forms a directed system for which $\pi_1(X)$ is the direct limit. Since a direct limit of locally indicable groups is locally indicable, it suffices to assume $C(X)$ is finite.

Induct on the number of essential $2$-cells in $X$.

If there is only one essential $2$-cell, then there are two cases. If $\alpha$ uses some essential edge which separates $\gos{X}$, then let $X_A$ and $X_B$ be the two components. Let $A=\pi_1(X_A)$, $B=\pi_1(X_B)$, and $w=[\partial\alpha]$. Note that $A$ and $B$ decompose as free products of locally indicable groups and are thus locally indicable (by, e.g., the Kurosh subgroup theorem). Now apply Lemma \ref{howiemain} to get the result. Otherwise let $e$ be an essential edge used by $\alpha$. We can see that $\pi_1(\gos{X})$ decomposes as a free product $A*\langle t\rangle$, where $A=\pi_1(\gos{X}\setminus e)$ and $t$ corresponds to a loop with winding number $1$ over $e$. Let $A=\pi_1(X_A)$, $B=\langle t\rangle$, and $w=[\partial\alpha]$. Again observe that $A$ is locally indicable. Lemma \ref{howiemain} again applies to give the result.

For the inductive step, let $\alpha$ be the greatest essential $2$-cell of $X$ and let $e=\max{\alpha}$. Then no other essential $2$-cell uses $e$. If $e$ separates $X\setminus \alpha$, then let $X_A$ and $X_B$ be the two components. Let $A=\pi_1(X_A)$, $B=\pi_1(X_B)$, and $w=[\partial\alpha]$. Now $X_A$ and $X_B$ are staggered generalized $2$-complexes with locally indicable vertex groups and fewer essential $2$-cells, and so $A$ and $B$ are locally indicable by induction. Now apply Lemma \ref{howiemain}. If $e$ does not separate $X\setminus \alpha$, we can see that $\pi_1(X\setminus \alpha)$ decomposes as a free product $A*\langle t\rangle$, where $A=\pi_1(X\setminus\{\alpha,e\}$) and $t$ corresponds to a loop with winding number $1$ over $e$, since no essential $2$-cell uses $e$ except $\alpha$. Let $A=\pi_1(X_A)$, $B=\langle t\rangle$, and $w=[\partial\alpha]$. Again observe that $A$ is locally indicable by the inductive hypothesis. Lemma \ref{howiemain} again applies to give the result.
\end{proof}

We can put these results together and get a strong amplification of Remark \ref{towreduce}:

\noindent
\begin{lem}
\label{4.6} (cf \cite[Lemma 4.6]{lw}). Let $\psi:D\to X$ be a reduced diagram. Let $\phi:D\to T$ be a maximal tower lift of $\psi$. If $\alpha$ and $\beta$ are adjacent essential $2$-cells of $D$ then $\phi(\alpha)\neq\phi(\beta)$.
\end{lem}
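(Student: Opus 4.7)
My plan is to argue by contradiction and exhibit a cancelable pair involving $\alpha$ and $\beta$, contradicting reducedness of $\phi$ via Remark~\ref{towreduce}. Assume $\phi(\alpha)=\phi(\beta)=\alpha'$ for distinct adjacent essential $2$-cells $\alpha,\beta$ of $D$ sharing essential edge $e$, and set $e'=\phi(e)$.

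A key preliminary observation is that $T$ admits no nontrivial infinite cyclic cover: since $D$ is simply connected (as a diagram), $\phi_\ast(\pi_1(D))=0$ lies trivially in the subgroup corresponding to any such cover, so $\phi$ would lift through it, contradicting maximality. Consequently Lemma~\ref{310} applies to $T$, and more usefully to the Magnus subcomplex $Z\subseteq T$ consisting of all vertex spaces, all essential edges $\leq\max_T\alpha'$, and all $2$-cells $\leq_T\alpha'$, in which $\alpha'$ is the greatest $2$-cell. By Corollary~\ref{locind} applied to $Z$, the attaching map of $\alpha'$ is $p^n$ with $p$ not a proper power, so $p^n$ is $|p|$-periodic as a loop in $T^{(1)}$.

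I then parametrize $\partial\alpha$ and $\partial\beta$ as loops in $D^{(1)}$ so that $\phi\circ\Phi_\alpha=\phi\circ\Phi_\beta=p^n$ as cyclic words in $T^{(1)}$. Both boundaries contain $e$ at some positions, yielding oriented edges $e_1\in\partial\alpha$ and $e_2\in\partial\beta$ with $\Phi_\alpha(e_1)=\Phi_\beta(e_2)=e$. A cancelable pair then results if one can choose $e_1,e_2$ at positions congruent modulo $|p|$ in the cyclic parametrizations; by the $|p|$-periodicity of $p^n$, such a choice forces $\phi\circ\Phi_\alpha(\sigma_1)=\phi\circ\Phi_\beta(\sigma_2)$, giving the desired cancelable pair and completing the contradiction.

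The main obstacle---and where I expect to spend the most effort---is this phase-matching step. In the favorable case $e'=\max_T\alpha'$, Lemma~\ref{310} applied to $Z$ guarantees that $e'$ occurs exactly once within $p$, so every preimage of $e$ in $\partial\alpha$ and $\partial\beta$ lies in the same $|p|$-orbit and the matching is automatic; this is precisely the argument template already used in the proof of Lemma~\ref{4.7}. For a general shared essential edge $e$, where $e'$ may appear at several positions within one period $p$, the matching must be established by combining the $|p|$-periodic structure of $p^n$ with the combinatorial rigidity of $\phi$ near $e$ forced by the adjacency of $\alpha$ and $\beta$ in $D$. I expect to adapt the argument of \cite[Lemma~4.6]{lw} to the staggered generalized $2$-complex setting, possibly via a further restriction to a Magnus subcomplex in which $e'$ plays the role of a maximal edge.
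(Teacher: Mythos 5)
Your preliminary observation that $T$ has no infinite cyclic cover is correct, and your strategy of producing a cancelable pair to contradict reducedness via Remark~\ref{towreduce} is the right frame for one half of the argument. However, the proposal stops precisely at the step where the actual work of the lemma lies, and the route you sketch for getting past it will not succeed.

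You correctly identify the obstacle: when the image $e'=\phi(e)$ occurs at more than one position within a single period $\hat p$ of $\partial\phi(\alpha)$, there is no automatic phase-matching between $\partial\alpha$ and $\partial\beta$. Your proposed repair---restricting to a Magnus subcomplex of $T$ in which $e'$ is the greatest edge---cannot work. A Magnus subcomplex containing the $2$-cell $\alpha'=\phi(\alpha)$ must contain every essential edge of $\partial\alpha'$, in particular $\max_T\alpha'$; since the essential edges of a Magnus subcomplex form an interval, $e'$ can be the maximum only if $e'=\max_T\alpha'$, which is exactly the favorable case you already handled. You also invoke Lemma~\ref{310} for a subcomplex $Z$ of $T$ without verifying the hypothesis that $Z$ has no infinite cyclic cover; this is not inherited from $T$ (subcomplexes can gain first cohomology), so that application is unjustified.

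The paper resolves the hard case by a different mechanism entirely. Rather than trying to force a cancelable pair when $e'$ recurs within one period, it observes a dichotomy for the path $\tau$ of length $|\hat p|$ in $\partial\alpha$ beginning at $e$: either exactly one edge of $\tau$ maps to $\phi(e)$ (set $S=\{e\}$), in which case periodicity forces all preimages of $\phi(e)$ in $\partial\alpha$ into the single orbit $[e]_\alpha$ (and likewise for $\beta$), giving a cancelable pair and contradicting reducedness; or two distinct edges of $\tau$ map to $\phi(e)$ (or $\tau$ fails to embed), in which case $\phi(\tau)=\hat p$ contains a proper closed subpath $\gamma$. Passing to the non-periodic complex $X'$, Lemma~\ref{3.9} shows the image $\gamma'$ of $\gamma$ is nontrivial in $\pi_1(X')$; since $\pi_1(X')$ is locally indicable (Corollary~\ref{locind}) and $\pi_1(T)$ is finitely generated, this forces $\pi_1(T)$ to be indicable, so $T$ has an infinite cyclic cover, contradicting maximality of the tower lift. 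This second branch---converting the recurrence of $e'$ within one period into a proper closed subloop of $\hat p$, and then into indicability of $\pi_1(T)$---is the essential idea absent from your proposal, and without it the proof is incomplete.
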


\begin{proof}
The proof is in the same spirit as that of \cite[Lemma 4.6]{lw}.

Suppose that $\phi(\alpha)=\phi(\beta)$ and let $e$ be a $1$-cell in $\cl{\alpha}\cap\cl{\beta}$ (essential or not). Observe that $\psi(\alpha)=\psi(\beta)$. Let $p^n$ be the boundary path of $\psi(\alpha)=\psi(\beta)$, where $p$ is not a proper power. By Remark \ref{proppow}, the boundary path of $\phi(\alpha)=\phi(\beta)$ is of the form $\hat{p}^n$ where $\hat{p}$ is a lift of $p$ to $T$. Let $\tau$ be the path of length $\abs{\hat{p}}$ in $\partial\alpha$ which begins at the initial point of $e$ and traverses $e$ in the positive direction. The path $\phi(\tau)$ is a closed loop, and we claim that there is a proper closed subpath of $\phi(\tau)$ in $T$. If the statement ``the path $\tau$ is embedded except possibly at its endpoints'' is false, then this is obvious, so in order to prove the claim, we may assume that $\tau$ is embedded in $D$ except possibly at its endpoints. Consider the set $S$ of edges in $\phi^{-1}(\phi(e))\cap\partial\alpha$ which belong to $\tau$, which is nonempty since it contains $e$. If this set has exactly one element, then $[e]_\alpha$ is the only orbit of edges in $\partial\alpha$ mapping to the edge $\psi(e)$. Since $\psi(\alpha)=\psi(\beta)$, this implies that $\psi^{-1}(\psi([e]_\alpha))\cap\partial\beta =[e]_\beta$ so that $\alpha$ and $\beta$ form a cancelable pair, which contradicts that $D$ is reduced. Thus $S$ contains two distinct elements, and so there are two distinct edges of $\tau$ which become identified under $\phi$. This proves the claim. Thus there is a proper closed subpath $\gamma$ of $\hat{p}$ in $T$. See figure \ref{fig:lem46}.

\begin{figure}[htbp]
	\centering
		\includegraphics{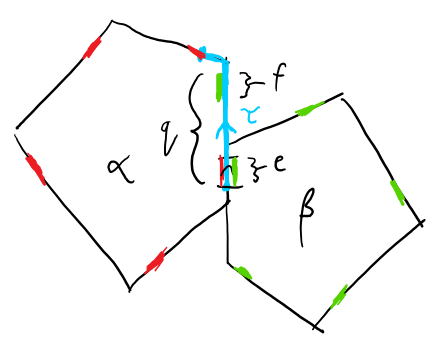}
	\caption{\footnotesize Proving the claim: the fact that $S$ contains two distinct edges $e$ and $f$ implies that the path $q$ contains the desired path $\gamma$, since $e$ and $f$ (in fact, all red and green edges) become identified under $\phi$.}
	\label{fig:lem46}
\end{figure}

Let $X'$ be the $2$-complex associated with $X$ having nonperiodic attaching maps, and consider the map $X\to X'$ which is the identity on the $1$-skeleton of $X$, and an $m$-fold branched cover on each essential $2$-cell if $m$ is the exponent of that $2$-cell. Let $\gamma'$ be the image of $\gamma$ in $X'$. By Lemma \ref{3.9}, $\gamma'$ represents a nontrivial element of $\pi_1(X')$. Thus $\pi_1(T)$ maps to a nontrivial subgroup of $\pi_1(X')$, and that subgroup is finitely generated since $T$ is compact. Since $\pi_1(X')$ is locally indicable by Corollary \ref{locind}, $\pi_1(T)$ is indicable. Thus $T$ has an infinite cylic cover and the tower lift $D\to T$ is not maximal, a contradiction.
\end{proof}

Now we can study connected subdiagrams of a reduced diagram:

\noindent
\begin{lem}
\label{5.1} (cf \cite[Lemma 5.1]{lw}). Let $D\to T$ be a maximal tower lift of a reduced diagram $D\to X$. Let $D'$ be a connected subcomplex of $D$, and let $\alpha$ be a greatest $2$-cell of $D'$. Then $\alpha$ is exposed in $D'$.
\end{lem}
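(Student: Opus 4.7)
The plan is to adapt the exposure argument of Lemma \ref{4.7} to the subcomplex setting, replacing the direct appeal to Lemma \ref{310} with the staggering property of $T$. I will argue by contradiction: suppose $\alpha$ is not exposed in $D'$.

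First I set $\alpha'=\phi(\alpha)$ and pick an essential edge $e\in\partial\alpha$ with $\phi(e)=\max_T\alpha'$. Since $\alpha$ is not exposed in $D'$, some edge $e'\in[e]_\alpha$ fails to lie in $\partial D'$, which means $e'$ is shared between $\alpha$ and another essential $2$-cell $\beta$ of $D'$ (the cell $\beta$ is essential because only essential $2$-cells use essential edges in their attaching maps). The key small observation is that $\phi(e')=\phi(e)=\max_T\alpha'$: by Remark \ref{proppow}, the attaching map of $\alpha'$ in $T$ is $\hat{p}^{m}$ where $\hat{p}$ is a closed loop lifting $p$, so positions in $\partial\alpha$ differing by a multiple of $|p|$ land on the same edge of $\hat{p}$ in $T^{(1)}$. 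Hence the attaching map of $\phi(\beta)$ also uses $\max_T\alpha'$.

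Next I invoke the staggering of $T$: the condition ``$c<_T c'$ implies $\max_T c <_T \max_T c'$'' means that any $2$-cell of $T$ whose attaching map uses $\max_T\alpha'$ must be $\geq_T\alpha'$. Hence $\phi(\beta)\geq_T\phi(\alpha)$. But $\alpha$ is a greatest $2$-cell of $D'$, so $\phi(\beta)\leq_T\phi(\alpha)$, forcing $\phi(\beta)=\phi(\alpha)$. Since $\alpha$ and $\beta$ are adjacent essential $2$-cells of $D$ (sharing $e'$), this contradicts Lemma \ref{4.6} and completes the proof.

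The main obstacle to navigate is that $\phi(\alpha)$ need not be the greatest $2$-cell of $T$, since $\phi$ restricted to $D'$ is in general not surjective onto $T$. This precludes the direct use of Lemma \ref{310} as in the proof of Lemma \ref{4.7} to deduce uniqueness of the $2$-cell attached along the max edge. The resolution is that the staggering condition on $T$ alone is enough to trap $\phi(\beta)$ at $\phi(\alpha)$ within the quasi-ordering induced on $D'$, after which Lemma \ref{4.6} delivers the contradiction.
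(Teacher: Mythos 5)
Your argument is correct, but it takes a genuinely different route from the paper. You never leave the original tower lift $\phi:D\to T$: you observe that the shared edge $e'$ maps to $\max_T\phi(\alpha)$ (by Remark \ref{proppow} and the periodicity convention, exactly as in the proof of Lemma \ref{4.7}), that the staggering of $T$ (which exists by Lemma \ref{towerstag}) forces any $2$-cell of $T$ using $\max_T\phi(\alpha)$ to be $\geq_T\phi(\alpha)$, and then maximality of $\alpha$ in $D'$ plus Lemma \ref{4.6} gives the contradiction; note Lemma \ref{4.6} is available here without circularity since it precedes Lemma \ref{5.1} (whereas Lemma \ref{4.7} could not have been proved this way, as Lemma \ref{4.6} depends on it through the Freiheitssatz). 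The paper instead forms the subcomplex $B$ consisting of $\alpha$ and its neighbors, passes to a minimal simply connected subcomplex $B'$ of $D$ containing $B$, takes a fresh maximal tower lift $B'\to T'$, and applies Lemma \ref{4.7} there together with a minimality argument via Lemma \ref{4.9} --- machinery introduced precisely to repair the Lauer--Wise step where $B$ was tacitly assumed simply connected. Your approach sidesteps the simple-connectivity issue entirely and avoids the auxiliary tower lift, at the modest cost of arguing directly with the staggering rather than quoting Lemma \ref{4.7} as a black box; what it buys is a shorter, more transparent proof that also identifies the exposed edge as one mapping to $\max_T\phi(\alpha)$. The only point to make explicit is that the $2$-cell $\beta$ is distinct from $\alpha$ (as Lemma \ref{4.6} requires): this holds because an edge not in $\partial D'$ lies on at least two $2$-cells of $D'$, and the attaching map of $\alpha$ cannot traverse the same $1$-cell of $D$ twice (a repeated $1$-cell would produce a proper closed subpath of $\partial\alpha$ that is nullhomotopic in the simply connected $D$, contradicting Lemma \ref{3.9}); this is the same implicit step the paper makes in the proof of Lemma \ref{4.7}, so it is a minor addition rather than a gap.
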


Note: The proof below is slightly more complicated than Lauer and Wise's proof of \cite[Lemma 5.1]{lw}. There, the authors seem to assume that the subcomplex $B$ defined in the proof below is simply connected without justification.

\begin{proof} By Lemma \ref{4.6} applied to the map $D\to T$, each essential $2$-cell adjacent to $\alpha$ in $D'$ is strictly below $\alpha$ (under $<_T$). Let $B$ be the smallest subcomplex of $D'$ containing $\alpha$ and all $2$-cells adjacent to $\alpha$. Let $B'$ be a minimal simply connected subcomplex of $D$ containing $B$ (under inclusion). Let $B'\to T'$ be a maximal tower lift of the composition $B'\hookrightarrow D\to T$, and let $\alpha'$ be a greatest essential $2$-cell of $B'$ under $<_{T'}$. Now Lemma \ref{4.7} implies $\alpha'$ is exposed in $B'$. Note that since all essential $2$-cells in $B\setminus\alpha$ are below $\alpha$ under $<_{T}$, they are also below $\alpha$ under $<_{T'}$. Thus $\alpha'\notin B\setminus\alpha$.  If $\alpha'\neq \alpha$, then consider the component of $\cl{B'\setminus\cl{\alpha'}}$ containing $\alpha$. This subcomplex of $D$ contains $B$, is simply connected (by Lemma \ref{4.9}), and it is strictly contained in $B'$. This violates minimality of $B'$. Thus $\alpha'=\alpha$, so $\alpha$ is exposed in $B'$. But $B'$ contains all $2$-cells in $D'$ adjacent to $\alpha$, so $\alpha$ is also exposed in $D'$.  \end{proof}

For an essential $2$-cell $\alpha$ in a reduced diagram $D\to X$, let $V$ be the preimage in $D$ of the disjoint union of the vertex spaces of $X$, and define the following subcomplexes of $D$:
\[ \widehat{G_\alpha} = \{\cl{\beta}\in D |\beta \geq_X\alpha\}\cup V \]
\[ \widehat{L_\alpha} = \{\cl{\beta}\in D |\beta <_X\alpha\}\cup\{\cl{\alpha}\}\cup V \]

Let $G_\alpha$ and $L_\alpha$ be the components of $\widehat{G_\alpha}$ and $\widehat{L_\alpha}$, respectively, containing $\alpha$.

\noindent
\begin{lem} 
\label{5.3} (cf \cite[Lemma 5.3]{lw}). The components of $\widehat{G_\alpha}$ and $\widehat{L_\alpha}$ are simply connected.
\end{lem}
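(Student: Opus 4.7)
The plan is to reverse the staggering of $X$ when needed (which swaps the roles of $\widehat{G_\alpha}$ and $\widehat{L_\alpha}$), so that it suffices to prove the statement for $\widehat{L_\alpha}$, and then induct on the number of essential $2$-cells of $D$. In the base case, $D$ has only $\alpha$ as an essential $2$-cell, so $\widehat{L_\alpha}=\cl{\alpha}\cup V$. Lemma \ref{4.7} makes $\alpha$ exposed (and hence external), so Lemma \ref{4.9} yields that $\cl{\alpha}$ is simply connected, and the $V$-pieces glue on along connected intersections without introducing loops.

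For the inductive step, fix a maximal tower lift $\phi:D\to T$, and let $\gamma_0$ be the $<_T$-greatest essential $2$-cell of $D$. If $\gamma_0\leq_X\alpha$, then since $<_X$ implies $<_T$, any essential $2$-cell $\beta>_X\alpha$ would force $\beta>_X\gamma_0$ and hence $\beta>_T\gamma_0$, contradicting the choice of $\gamma_0$; thus every essential $2$-cell of $D$ is $\leq_X\alpha$, making $\widehat{L_\alpha}$ coincide with $D$, which is simply connected. Otherwise $\gamma_0>_X\alpha$, so $\gamma_0\notin\widehat{L_\alpha}$, and Lemma \ref{4.7} gives that $\gamma_0$ is exposed in $D$ with exposed edge $e=\max_T\gamma_0$.

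Let $B_1,\ldots,B_k$ be the branches of $D$ at $(\gamma_0,e)$. By Lemma \ref{4.9}, each $B_i$ is simply connected, and by Lemma \ref{nicebranch}, so is each $B_i\cup\cl{\gamma_0}$. Since $\cl{\gamma_0}$ separates $D$ into branches and $\gamma_0\notin\widehat{L_\alpha}$, any component $C$ of $\widehat{L_\alpha}$ lies in a single branch $B_i$. When $k\geq 2$, each $B_i\cup\cl{\gamma_0}$ is a reduced sub-diagram strictly smaller than $D$; the inductive hypothesis then identifies $C$ with a component of $\widehat{L_\alpha}$ in that sub-diagram, which is simply connected. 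When $k=1$, apply the inductive hypothesis instead to $B_1$, which is a reduced sub-diagram with one fewer essential $2$-cell than $D$.

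The step I expect to require the most care is verifying that the identification of $C$ with a component of $\widehat{L_\alpha}$ in the smaller sub-diagram is faithful, in view of the vertex space preimage $V$ and any $1$-cells of $\partial\gamma_0$ adjacent only to $\gamma_0$ in $D$. Such extra cells attach locally (as arcs or trees meeting $C$ along connected pieces of $V\cap\cl{\gamma_0}$), so they should not disturb simple connectedness; but documenting this cleanly at the level of the CW structure is the main bookkeeping burden of the proof.
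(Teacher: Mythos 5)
Your approach differs from the paper's in a meaningful way, and it has a genuine gap that needs addressing. The paper's proof is a bottom-up iteration inside the fixed diagram $D$: it repeatedly removes a \emph{least} essential $2$-cell from the current connected subcomplex of $D$, appealing to Lemma~\ref{5.1} (not Lemma~\ref{4.7}) at each step. The point of using Lemma~\ref{5.1} is precisely that it applies to arbitrary connected subcomplexes of $D$ using the one fixed maximal tower lift $D\to T$, so the ordering $<_T$ never changes and no new tower lifts need to be built; Lemma~\ref{4.9} then shows each removal keeps the components simply connected, and after finitely many removals one is looking exactly at $\widehat{G_\alpha}$. You instead run a top-down induction on the number of essential $2$-cells, removing the $<_T$-greatest cell $\gamma_0$ via Lemma~\ref{4.7} and re-entering the induction on the branches. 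This is a reasonable plan, but it requires building a fresh maximal tower lift for each branch, which is exactly the complication Lemma~\ref{5.1} was designed to sidestep.

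The concrete gap is in the inductive step. You want to apply the inductive hypothesis to $B_i\cup\cl{\gamma_0}$ and read off a simply connected component of $\widehat{L_\alpha}$ there. But $\widehat{L_\alpha}$ is defined relative to a chosen essential $2$-cell $\alpha$ of the diagram, and nothing forces $\alpha$ to lie in $B_i\cup\cl{\gamma_0}$: if $\alpha$ lives in a branch $B_j$ with $j\neq i$, then the inductive hypothesis for the fixed $\alpha$ simply does not speak about the subdiagram $B_i\cup\cl{\gamma_0}$. To make the induction close up you would need to strengthen the statement being proved, e.g. replace "$\alpha$ a $2$-cell of $D$" with a threshold in $C(X)$ (so that $\widehat{L_\alpha}$ still makes sense even when $\alpha$ is absent from the subdiagram), and then verify that $\widehat{L_\alpha}(D)\cap B_i$ agrees with the corresponding $\widehat{L}$-set computed in $B_i\cup\cl{\gamma_0}$, being careful about $2$-cells that tie under $<_X$ or $<_T$. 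That agreement is not automatic and needs an argument. Related to this, your Case~1 claim that $\widehat{L_\alpha}$ coincides with $D$ when $\gamma_0\leq_X\alpha$ overstates things: the definition keeps $\cl{\alpha}$ itself but excludes other cells $\beta$ with $\beta=_X\alpha$ and $\beta\neq\alpha$, which can exist. Finally, the claim "any component $C$ of $\widehat{L_\alpha}$ lies in a single branch $B_i$" needs a topological justification — $C$ may meet $\partial\gamma_0$ along several of the arcs cut out by the exposed edges, and you must show those arcs cannot be linked inside $\widehat{L_\alpha}$ — and you correctly flag this as the delicate point, but as written it is still an open step. The cleanest repair is really to use Lemma~\ref{5.1} as the paper does, which keeps the whole argument inside $D$.
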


\begin{proof} The proof is nearly identical to that of \cite[Lemma 5.3]{lw}. We obtain $\widehat{G_\alpha}$ by successively removing the closure of a least essential $2$-cell from $D$ and passing to components of the closure of what remains. Reversing the staggering, Lemma \ref{5.1} ensures that each successive essential $2$-cell will be exposed, and Lemma \ref{4.9} implies that removing each successive cell leaves simply connected components. In finitely many steps we obtain $\widehat{G_\alpha}$, and the argument is essentially the same for $\widehat{L_\alpha}$.
\end{proof}

We are ready to prove our second main diagram theorem:

\noindent
\begin{prop}
\label{5.4} (cf \cite[Theorem 5.4]{lw}). Let $D\to X$ be a reduced diagram. If $D$ has an internal essential $2$-cell that maps to an exponent $n$ $2$-cell of $X$, then $D$ contains at least $2n$ extreme $2$-cells.
\end{prop}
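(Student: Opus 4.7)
The plan is to induct on the number of essential $2$-cells in $D$ and, for the inductive step, to produce $n$ distinct extreme essential $2$-cells in $G_\gamma\setminus\cl\gamma$ (each strictly greater than $\gamma$) together with $n$ more in $L_\gamma\setminus\cl\gamma$ (each strictly less than $\gamma$), yielding $2n$ distinct extreme cells in $D$.

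Fix a maximal tower lift $\phi:D\to T$, and let $\gamma$ be the given internal essential $2$-cell of exponent $n$. Since $\gamma$ is internal, no essential edge of $\partial\gamma$ lies on $\partial D$, and so Lemma \ref{4.7} implies $\gamma$ is neither a greatest nor a least essential $2$-cell of $D$ under $<_T$. Consequently $G_\gamma\setminus\cl\gamma$ and $L_\gamma\setminus\cl\gamma$ each contain essential $2$-cells, and by Lemma \ref{5.3} both $G_\gamma$ and $L_\gamma$ are simply connected. In $G_\gamma$, $\gamma$ is the unique least essential $2$-cell; applying Lemma \ref{4.7} to a maximal tower lift $G_\gamma\to T_G$ under the reversed staggering shows $\gamma$ is exposed in $G_\gamma$ with some exposed edge $e_G$, and Lemma \ref{310} then guarantees that no other essential $2$-cell of $G_\gamma$ uses $e_G$. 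Thus all $n$ copies $e_1,\dots,e_n\in[e_G]_\gamma$ lie on $\partial G_\gamma$ and cyclically partition $\partial\gamma$ into arcs $a_1,\dots,a_n$. By Lemma \ref{4.9} applied in $G_\gamma$, each component of $\cl{G_\gamma\setminus\cl\gamma}$ meets $\cl\gamma$ in a connected subset of a single arc $a_i$. A symmetric analysis in $L_\gamma$ yields an exposed edge $e_L$, $n$ boundary edges $e_1',\dots,e_n'$, and arcs $b_1,\dots,b_n$.

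For each $i$, let $D_i$ be the union of $\cl\gamma$ with all components of $\cl{G_\gamma\setminus\cl\gamma}$ that meet $\cl\gamma$ inside $a_i$. An iterated use of Lemma \ref{4.9} and van Kampen's theorem shows each $D_i$ is simply connected. Whenever $D_i$ contains an essential $2$-cell besides $\gamma$, Proposition \ref{4.11} yields two extreme essential $2$-cells in $D_i$; since only one can be $\gamma$, we extract an extreme cell $\alpha_i\neq\gamma$ of $D_i$. Because $D_i$ is joined to the rest of $D$ only across $\cl\gamma$ together with the branches tied to $a_i$, the extreme arc witnessing $\alpha_i$'s extremeness in $D_i$ is disjoint from $\cl\gamma$ (apart from its endpoints) and hence remains an extreme arc in $\aux{D}$, so $\alpha_i$ is extreme in $D$. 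Distinct $\alpha_i$ come from branches tied to disjoint arcs and so are distinct cells of $D$. A parallel construction inside $L_\gamma$ produces $n$ extreme cells $\beta_1,\dots,\beta_n$ of $D$, each $<_X\gamma$. Since the $\alpha_i$ are all $>_X\gamma$ and the $\beta_j$ are all $<_X\gamma$, the collection $\{\alpha_i\}\cup\{\beta_j\}$ consists of $2n$ distinct extreme $2$-cells.

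The main obstacle is verifying that every $D_i$ (and its analogue in $L_\gamma$) really does contain an essential $2$-cell beyond $\gamma$, so that Proposition \ref{4.11} can be invoked to produce a nontrivial extreme cell in each of the $n$ sectors. When the base loop $p$ of $\partial\gamma$ contains only a single essential edge, every arc $a_i$ is vertex-space, and essential $2$-cells of $G_\gamma\setminus\cl\gamma$ could a priori cluster in branches attached through only a subset of the $a_i$. Resolving this requires the inductive hypothesis: one passes to a strictly smaller reduced subdiagram (for instance, obtained from $D_i$ by collapsing branches or applying the inductive hypothesis to a suitably chosen internal essential $2$-cell inside $D_i$) and then uses the cyclic $\mathbb{Z}/n$ structure of $\partial\gamma$ together with Lemma \ref{310} to transport the resulting extreme cells into the remaining sectors. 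This distribution step, which leans on local indicability via Corollary \ref{locind} and Lemma \ref{4.6}, is the most delicate part of the proof.
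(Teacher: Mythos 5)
The central gap is your transfer of extremeness from the sector complexes $D_i$ to $D$. Each $D_i$ is built only from $\cl{\gamma}$ and branches of $G_\gamma$, so it omits every $2$-cell of $D$ lying in $\widehat{L_\gamma}$ (everything strictly below $\gamma$ under $<_T$). A branch of $G_\gamma$ is a component of $\cl{G_\gamma\setminus\cl{\gamma}}$, not of $\cl{D\setminus\cl{\gamma}}$, so it is not separated from the rest of $D$ by $\cl{\gamma}$: $2$-cells of $D$ below $\gamma$ may be glued along $\partial\alpha_i$ anywhere, including along copies of the would-be exposed edge and along the extreme arc, so ``extreme in $D_i$'' gives neither exposedness nor the auxiliary-diagram condition in $D$, and your claim that $D_i$ ``is joined to the rest of $D$ only across $\cl{\gamma}$'' is false. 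This is exactly why the paper's proof is structured in two steps: it takes $G_i$ to be the component of $\widehat{L_\gamma}\cup B_i$ containing $\gamma$ --- so that any $2$-cell of $D$ adjacent to the chosen cell is already in $G_i$, making ``exposed in $G_i$'' imply ``exposed in $D$'' --- and only afterwards upgrades exposed to extreme by applying Proposition \ref{4.11} to a branch of $D$ at the exposed cell not containing $\gamma$, which \emph{is} genuinely cut off from the rest of $D$ by that cell's closure (Lemma \ref{nicebranch}).

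Two further problems. First, your justification that $\gamma$ is exposed in $G_\gamma$ --- applying Lemma \ref{4.7} to a fresh maximal tower lift $G_\gamma\to T_G$ --- does not work: $\gamma$ is a least cell of $G_\gamma$ under $<_T$, but a new maximal tower lift may reorder cells having the same image in $X$, so $\gamma$ need not be least under $<_{T_G}$, and Lemma \ref{4.7} then exposes possibly some other cell. This is precisely what Lemma \ref{5.1} is for, and its proof requires the minimal simply connected subcomplex trick together with Lemma \ref{4.6}; you should cite it rather than rederive it. Second, the step you defer to an inductive ``distribution'' argument --- that every sector contains an essential $2$-cell besides $\gamma$ --- needs no induction, and your sketch of it is too vague to check. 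The paper's mechanism: since $\gamma$ is internal in $D$ and exposed in $L_\gamma$ with edge $e_L$, each of the $n$ edges of $[e_L]_\gamma$ is used by a second $2$-cell of $D$, which is necessarily essential and lies in $G_\gamma$ (it cannot lie in $L_\gamma$, and Lemma \ref{4.6} makes it strictly comparable); hence these edges are internal in $G_\gamma$, and since one of them sits in each arc between consecutive elements of $[e_G]_\gamma$, every sector meets a branch containing an essential $2$-cell. This same observation is what justifies your unproved ``consequently $G_\gamma\setminus\cl{\gamma}$ and $L_\gamma\setminus\cl{\gamma}$ each contain essential $2$-cells'' --- merely knowing $\gamma$ is not globally greatest or least does not place a larger or smaller cell in the \emph{component} $G_\gamma$ or $L_\gamma$. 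You introduce $e_L$ and the arcs $b_1,\dots,b_n$ but never use them in this way, so as written the argument does not close.
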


\begin{proof}
The proof is essentially the same as that of \cite[Theorem 5.4]{lw}.

Let $D\to T$ be a maximal tower lift of $D\to X$, and let $\alpha$ be an internal essential $2$-cell of $D$ of exponent $n$. Define $\widehat{G_\alpha}$ and $\widehat{L_\alpha}$ with respect to $<_T$. Now Lemma \ref{5.1} implies that $\alpha$ is exposed in both $G_\alpha$ and $L_\alpha$, so there exist essential $1$-cells $e_G$ and $e_L$ in $\alpha$ such that each $1$-cell in $[e_G]_\alpha$ lies in $\partial G_\alpha$ and each $1$-cell in $[e_L]_\alpha$ lies in $\partial L_\alpha$. Since $\alpha$ is internal, this last statement implies that $[e_G]_\alpha$ and $[e_L]_\alpha$ must be distinct. Since the $n$ elements of $[e_L]_\alpha$ are internal in $G_\alpha$, and because each branch of $G_\alpha$ at $(\alpha,e_G)$ intersects $\partial\alpha$ in an arc (Lemma \ref{4.9}), there are exactly $n$ branches of $G_\alpha$ at $(\alpha,e_G)$. Call them $B_1,\ldots,B_n$. Let $G_i$ be the component of $\widehat{L_\alpha}\cup{B_i}$ containing $\alpha$. Note that $G_i$ contains at least one essential $2$-cell strictly greater than $\alpha$ since $B_i$ contains an essential $2$-cell adjacent to $\alpha$ (applying Lemma \ref{4.6} to $D\to T$). So any greatest $2$-cell of $G_i$ lies in $B_i$. Now Lemma \ref{5.1} implies that there exists an essential $2$-cell $\alpha'$ in $B_i$ which is exposed in $G_i$. Note that $\alpha'$ is exposed in $D$ since if $\beta$ is a $2$-cell of $D$ adjacent to $\alpha'$ and $\beta$ doesn't lie in $\widehat{L_\alpha}$, then $\beta$ is essential and $\beta\geq\alpha$, so $\beta$ lies in $G_i$. Thus we obtain $n$ distinct exposed $2$-cells in $D$, one in each $B_i$, and all strictly greater than $\alpha$. 

We repeat almost the same argument for $L_\alpha$ to obtain $n$ more distinct exposed $2$-cells in $D$, all strictly less than $\alpha$ (in this case, the argument is actually simpler, as we don't need to apply Lemma \ref{4.6}). Thus we obtain $2n$ exposed $2$-cells in $D$. This completes the proof in the case $n=1$, as the definitions of exposed and extreme coincide.

Thus assume $n\geq 2$, and let $\alpha_1,\ldots,\alpha_{2n}$ be exposed $2$-cells of $D$. If $\alpha_i$ is not extreme, then $D$ has at least two branches at $(\alpha_i, e_i)$ for some $e_i$ by Lemma \ref{4.9}. Let $B$ be a branch not containing $\alpha$, and note that $B\cup\cl{\alpha_i}$ is simply connected by Lemma \ref{nicebranch}. By Proposition \ref{4.11}, there are at least two extreme essential $2$-cells in $B\cup\cl{\alpha_i}$; any one of these not equal to $\alpha_i$ is extreme in $D$. Repeating for each $i$, we obtain $2n$ extreme $2$-cells. They are distinct since for $j\neq i$, $\alpha_j$ lies in the branch of $D$ at $(\alpha_i,e_i)$ containing $\alpha$.
\end{proof}

\section{Geometry of the universal cover}
\label{sect:rl}

From now on, we assume that each essential $2$-cell of $X$ is attached by a proper power, that is, $n(X)\geq 2$.

Let $X$ be a staggered generalized $2$-complex with locally indicable vertex groups and such that $n(X)\geq 2$. We will soon be assuming that the vertex groups of $X$ are cubulated. This section contains a collection of results about the geometry of $X$ which do not depend on this assumption.

In what follows, we will be working in the universal cover of $X$ (denoted by $\tilde{X}$), or at least a space with the same one skeleton.

By Lemma \ref{frei}, $\pi_1(V)$ embeds naturally in $\pi_1(X)$ for each vertex space $V$ of $X$, and thus $\gos{\tilde{X}}$ (the preimage of $\gos{X}$ in $\tilde{X}$) decomposes as a graph of spaces with trivial edge spaces, where each vertex space is $\tilde{V}$ for some vertex space $V$ of $X$. Let $\ucc{X}$ be the space obtained from $\tilde{X}$ by identifying elevations of essential $2$-cells of $X$ which have the same boundary; it may be viewed as a subcomplex of $\tilde{X}$ which contains $\gos{\tilde{X}}$. Give $\os{\gos{\tilde{X}}}$ the combinatorial metric in which every edge has length $1$. All of the metric statements in this section are really about $\os{\gos{\tilde{X}}}=\os{\ucc{X}}$, and all paths of interest are edge paths. From now on, let $d$ be the graph metric on $\os{\ucc{X}}$.

Once and for all, for each essential $2$-cell $\alpha$, arrange that lifts of maximal subpaths of $\partial\alpha$ mapping to a vertex space $V$ are geodesics in each $\os{\tilde{V}}$ as follows: Suppose that the exponent of $\alpha$ is $n$, so the boundary $\partial\alpha$ is a path of the form $p^n$, where $p$ is a loop in $\os{\gos{X}}$. For each maximal subpath $p_V$ of $p$ mapping entirely to a vertex space $V$ of $X$, note that $p_V$ is a loop. We modify $p$ by replacing $p_V$ by a loop $p'_V$ in $\os{V}$ with the properties that $p'_V$ has the same basepoint as $p_V$, $p'_V$ and $p_V$ represent the same element of $\pi_1(X)$, and $p'_V$ uses a minimal number of edges. Let $p'$ be the result of modifying $p$ in this way. Replace $\alpha$ by a $2$-cell $\alpha'$ with attaching map $(p')^n$. Doing this for all essential $2$-cells does not affect $\pi_1(X)$, and the resulting staggered generalized $2$-complex has the desired property. Thus we may assume that $X$ has the property that lifts of maximal subpaths of $\partial\alpha$ mapping to a vertex space $V$ are geodesics in each $\os{\tilde{V}}$ for each essential $2$-cell $\alpha$.

In what follows, we refer to cells in $\ucc{X}$ as essential or not according to whether their images in $X$ are essential or not.

\subsection{Admissible pseudometrics and relative geodesics}

We will need to work with paths in $\ucc{X}$ which generalize geodesics. The idea of relative geodesics as defined below is that they allow for the possibility that paths can be ``shorter than they look,'' but only in vertex spaces. At certain times in what follows, we will be ``augmenting'' $\ucc{X}$ and allowing for this sort of behavior.

\noindent
\begin{dfn}
\label{rg} (\textbf{Admissible pseudometrics/relative length/relative geodesic}). Let $d$ denote the metric on $\os{\ucc{X}}$ where every edge has length one. For each vertex space $\tilde{V}$, choose a pseudometric $d_{\tilde{V}}$ on $\tilde{V}^{(0)}$. We require that this choice of pseudometrics is invariant with respect to the action of $G$ on $\ucc{X}$. If this holds we say the choice of pseudometrics is \emph{admissible}.

Let $\gamma:I\to\ucc{X}$ be a path whose endpoints are $0$-cells $x$ and $y$ of $\ucc{X}$. Decompose $\gamma$ as a concatenation $\gamma_{v_1}e_1\ldots\gamma_{v_k}e_k\gamma_{v_{k+1}}$, where each $\gamma_{v_i}$ is a (possibly degenerate) maximal edge path mapping to a vertex space $\tilde{V}_i$ of $\ucc{X}$, and the $e_i$ are essential edges. We define the \emph{relative length} of $\gamma$, $\rl(\gamma)$, by the following formula:

\[\rl(\gamma)=k+\sum_{i=1}^{k+1}d_{\tilde{V}_i}(i(\gamma_{v_i}),t(\gamma_{v_i})),\]

where $i(\lambda)$ and $t(\lambda)$ denote the initial and terminal vertices, respectively, of a path or edge $\lambda$. We say $\gamma$ is a \emph{relative geodesic} if the restriction of $\gamma$ to each vertex space is a geodesic in the one-skeleton of that vertex space, and $\rl(\gamma)$ is minimal among all paths from $x$ to $y$. If we have not made an explicit choice of admissible pseudometrics on vertex spaces, the statement that $\gamma$ is a relative geodesic should be taken to mean that there is a choice of admissible pseudometrics which makes $\gamma$ a relative geodesic.

\end{dfn}

Some examples of admissible choices of pseudometrics are as follows (provided that the choices are made in a $G$-invariant manner):

\begin{itemize}

\item Make no change: For some/all $\tilde{V}$, define $d_{\tilde{V}}(x,y)=d(x,y)$ for some/all $x,y\in\tilde{V}^{(0)}$. Thus geodesics are relative geodesics.

\item ``Electrify'' some/all $\tilde{V}$ by defining $d_{\tilde{V}}(x,y)=0$ for all $x,y\in\tilde{V}$.

\item ``Cone off'' some/all $\tilde{V}$ by adding a new vertex and connecting all vertices of $\tilde{V}$ to it by an edge of length 1/2, and define $d_{\tilde{V}}$ by the metric this procedure induces, so that $d_{\tilde{V}}(x,y)=1$ for all distinct $x,y\in\tilde{V}$.

\item For some/all $\tilde{V}$, choose $d_{\tilde{V}}$ so that there is a constant $C$ such that \[\abs{d_{\tilde{V}}(x,y)-2\log(d(x,y)+1)}<C\] for all $x,y\in\tilde{V}$. This is the choice we will end up making later on.

\end{itemize}

\subsection{Local geometry of essential $2$-cells}

The following fact is a crucially important statement about the boundaries of essential $2$-cells in $\ucc{X}$. 

\noindent
\begin{lem}
\label{noorbits}
Suppose $X$ is a staggered generalized $2$-complex with locally indicable vertex groups and $n(X)\geq 2$. Let $\gamma$ a relative geodesic in $\ucc{X}$. Let $e$ be an essential edge of an essential $2$-cell $\alpha$. Then there exists an element of $[e]_\alpha$ not contained in $\gamma$.
\end{lem}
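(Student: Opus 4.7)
The plan is to argue by contradiction: suppose that $\gamma$ is a relative geodesic containing every element of $[e]_\alpha$. Write $\partial\alpha = p^n$ and label $[e]_\alpha = \{e_0, e_1, \ldots, e_{n-1}\}$ in the cyclic order along $\partial\alpha$, so that between the endpoint of $e_i$ and the startpoint of $e_{i+1}$ lies a subarc of $\partial\alpha$ of relative length $\rl(p) - 1$. Let $e_{\pi(0)}, e_{\pi(1)}, \ldots, e_{\pi(n-1)}$ denote the order in which these edges appear along $\gamma$, and let $\gamma_0$ be the subpath of $\gamma$ from the startpoint of $e_{\pi(0)}$ to the endpoint of $e_{\pi(n-1)}$. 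Note that $\gamma_0$ contains all $n$ copies of $e$, so $\rl(\gamma_0) \geq n$.

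I would try to produce a strictly shorter alternative to $\gamma_0$ between the same two endpoints, contradicting the relative geodesic property of $\gamma$. The natural candidate is a subarc of $\partial\alpha$: setting $d := \pi(n-1) - \pi(0) \pmod{n}$, the two arcs of $\partial\alpha$ between these endpoints have relative lengths $d\cdot\rl(p) + 1$ (going forward) and $(n-d)\cdot\rl(p) - 1$ (going backward), and I would select the direction that traverses the fewest essential edges from $[e]_\alpha$. The chosen arc uses at most $\lfloor n/2 \rfloor$ such edges versus $\gamma_0$'s $n$, providing a potential source of savings.

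The hard part will be the careful comparison of relative lengths, because the $\partial\alpha$-arc may a priori be longer than $\gamma_0$ when the vertex-space pseudometric contributions along $\gamma_0$ are small. To address this, I plan to analyze the gap subpaths of $\gamma_0$ between consecutive (in $\gamma$-order) occurrences of $[e]_\alpha$-edges, bound each by the corresponding shortest $\partial\alpha$-shortcut between its endpoints (using that $\gamma$ is a relative geodesic), and aggregate these estimates. The key inputs are $n \geq 2$ together with the ``once and for all'' arrangement ensuring that vertex-space subpaths of $\partial\alpha$ are realized as graph-metric geodesics; this constrains the pseudometric contributions along $\gamma_0$ relative to those along $\partial\alpha$, and should yield the desired strict inequality $\rl(\gamma_0) > \rl(\text{alternative})$, producing the contradiction.
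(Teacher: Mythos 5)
Your plan to show $\gamma_0$ is longer than some competing path is the right general shape, but the competitor you chose (a subarc of $\partial\alpha$) cannot do the job, and you already notice the symptom: the $\partial\alpha$-arc may a priori be much longer than $\gamma_0$. The issue is not just a ``careful comparison'' away. A $\partial\alpha$-arc spanning roughly half the cell has relative length on the order of $\tfrac{n}{2}\cdot\rl(p)$, whereas $\rl(\gamma_0)$ can be as small as roughly $n$ if the gap subpaths $\sigma_i$ are cheap (e.g.\ for a heavily ``electrified'' admissible pseudometric), so you can easily have $\rl(\gamma_0)\ll\rl(\partial\alpha\text{-arc})$. Bounding each gap $\rl(\sigma_i)$ above by a $\partial\alpha$-shortcut, as you propose, gives an \emph{upper} bound on $\rl(\gamma_0)$; to contradict the geodesic property you need a \emph{lower} bound on $\rl(\gamma_0)$ together with an \emph{upper} bound on the competitor, and your scheme produces neither.

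The paper closes this gap with an ingredient your proposal never invokes: the finite-order rotation $w\in\pi_1(X)$ fixing the interior of $\alpha$ (coming from the proper-power structure $\partial\alpha=p^m$). One picks the gap $\sigma$ of minimal relative length, so that $\rl(\gamma_0)\geq m\cdot\rl(\sigma)+m$; then, because $\rl$ is $G$-invariant, one builds a competitor $\lambda$ out of at most $\lfloor m/2\rfloor$ rotated copies $w^i\sigma$ of $\sigma$ chained with members of $[e]_\alpha$, giving $\rl(\lambda)\leq \tfrac{m}{2}\rl(\sigma)+\tfrac{m}{2}+1$. This links the upper and lower bounds through the same quantity $\rl(\sigma)$, which is exactly what your direct $\partial\alpha$-arc cannot do. For $m\geq 3$ the two bounds are incompatible and you are done.

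Two further gaps: (a) the arithmetic above fails when $m=2$, and the paper handles this case with an entirely separate argument (reduce to $\sigma$ connecting antipodal points, form $E=\alpha\cup\sigma'$, patch it to a reduced diagram, and use Proposition~\ref{4.11} together with the Snipping Lemma to produce a smaller counterexample); your proposal has no mechanism for $m=2$, yet you claim $n\geq 2$ suffices for your estimate. (b) The paper's argument is structured as a minimal-counterexample induction on the number of edges of $\gamma$, which is what makes the $m=2$ reduction close up; your proposal has no inductive setup. Without the rotation idea and the $m=2$ case, the proof does not go through.
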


\begin{proof}
Suppose that the lemma is false. Among all triples $(\alpha,e,\gamma)$ with the property that all members of $[e]_\alpha$ lie in the relative geodesic $\gamma$, choose one for which the number of edges in $\gamma$ is minimal. Note that $\gamma$ will contain at least two edges.

Label the elements of $[e]_\alpha$, $e_1,\ldots,e_m$ (where $m\geq 2$ is the exponent of $\alpha$) in the order that they occur along $\gamma$, and orient them consistently with $\gamma$. Let $i(e_i)$ and $t(e_i)$ be the initial and terminal vertices, respectively, of $e_i$ for $i\in\{1,\ldots,m\}$. By minimality, the initial point of $\gamma$ is $i(e_1)$ and the terminal point is $t(e_m)$. Let $\sigma_i$ be the subpath of $\gamma$ between $t(e_i)$ and $i(e_{i+1})$, for $i\in\{1,\ldots,m-1\}$. Choose $\sigma\in\{\sigma_i\}$ such that $\rl(\sigma)$ is minimal. See figure \ref{fig4}. Decompose the image of $\partial\alpha$ in $X$ as a path $p^m$ where $p$ is not a proper power. The closed path $p$ corresponds to an order $m$ element $w$ of $\pi_1(X)$ which acts on $\ucc{X}$ by ``rotation'' through a point in the interior of $\alpha$. Consider the paths $\{w^i\sigma\}$ for $i\in\{1,\ldots,m\}$. Each path will connect two elements of $[e]_\alpha$ and the orbits will chain together to form an $m$-pointed star shape with corners on members of $[e]_\alpha$ (there are two cases according to whether the $\{w^i\sigma\}$ meet at their endpoints or have endpoints separated by the elements of $[e]_\alpha$).

\begin{minipage}{\linewidth}
      \centering
      \begin{minipage}{0.4\linewidth}
          \begin{figure}[H]
              \includegraphics[width=\linewidth]{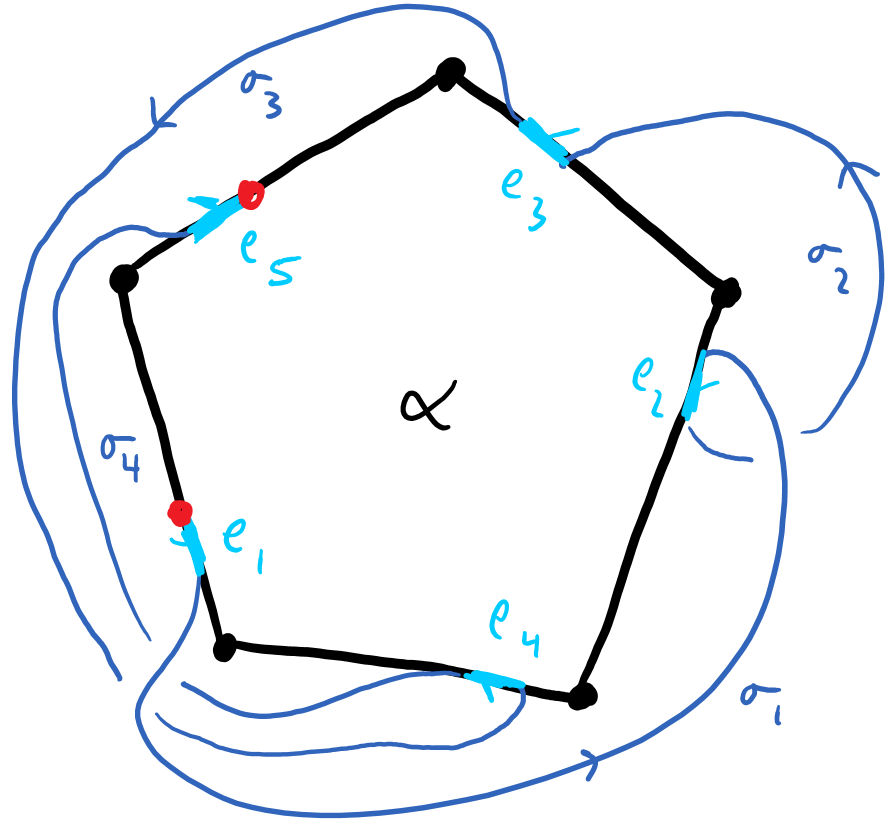}
							\caption{\footnotesize Decomposition of $\gamma$ into the $\sigma_i$. Suppose that $\sigma=\sigma_4$.}
					\label{fig4}
          \end{figure}
      \end{minipage}
      \hspace{0.1\linewidth}
      \begin{minipage}{0.32\linewidth}
          \begin{figure}[H]
              \includegraphics[width=\linewidth]{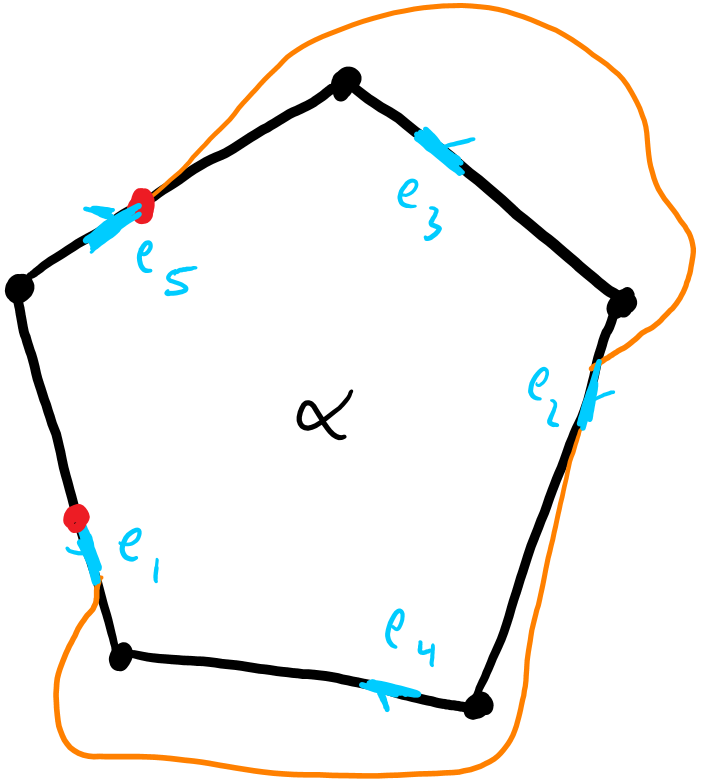}
					\caption{\footnotesize In this example, $\lambda$ is made up of two orbits of $\sigma$ and the edges $e_1$ and $e_2$.}
					\label{fig5}
          \end{figure}
      \end{minipage}
  \end{minipage}

Now, find a shortest relative path $\lambda$ in $\ucc{X}$ connecting $i(e_1)$ to $t(e_m)$ using only $w$-orbits of $\sigma$ and members of $[e]_\alpha$.  See figure \ref{fig5}. It is clear that $\rl(\lambda)\leq \frac{m}{2}\rl(\sigma)+\frac{m}{2}+1$. On the other hand, since $\gamma$ is a relative geodesic with the same endpoints as $\lambda$, we have that $\rl(\lambda)\geq m\rl(\sigma)+m$. Unless $m=2$, this contradicts the inequality

\[\frac{m}{2}L+\frac{m}{2}+1<m(L+1),\]

which holds when $L\geq 0$ and $m\geq 3$.

Thus we have reduced to the case $m=2$. We may also assume that $\sigma$ connects antipodal points of $\partial\alpha$, for otherwise $w\sigma$ connects $i(e_1)$ to $t(e_2)$ and $\rl(w\sigma)<\rl(\gamma)$ since $w\sigma$ avoids $e_1$ and $e_2$.

Observe by Lemma \ref{3.9} that $\partial\alpha$ embeds in $\ucc{X}$, so the two paths $\lambda_1$ and $\lambda_2$ of $\partial\alpha\setminus\{e_1,e_2\}$ do not intersect in $\ucc{X}$ (labeled so that $t(e_1)\in\lambda_1$). Since $\sigma$ starts in $\lambda_1$ and ends in $\lambda_2$, we can find an innermost subpath $\sigma'$ of $\sigma$ whose endpoints lie in $\lambda_1$ and $\lambda_2$, respectively, and whose interior does not intersect $\partial\alpha\setminus\{e_1,e_2\}$. Note that $\sigma'$ does not cross $e_1$ or $e_2$, as this would provide an obvious way to decrease the relative length of $\gamma$.

Consider the compact subcomplex $E=\alpha\cup\sigma'$ of $\ucc{X}$. By choice of $\sigma'$, $\pi_1(E)=\ZZ$. Let $q$ be a reduced path in $\ucc{X}$ which represents a generator of $\pi_1(E)$, and $D'\to\ucc{X}$ a reduced disk diagram with boundary $q$. Let $D=E\cup D'$. If $D$ is not reduced, then there is an essential $2$-cell $\beta$ of $D'$ such that $\alpha$ and $\beta$ form a cancelable pair and share an edge $f$ in their common boundary. If this happens, then ``fold'' $\beta$ over $\alpha$ by identifying the paths $\partial\beta\setminus\{ f\}$ and $\partial\alpha\setminus \{f\}$ and deleting $\beta$ from $D$. This is a homotopy equivalence and has the effect of modifying $q$ and deleting an essential $2$-cell from $D'$. This process terminates after finitely many steps, so we may assume that $D$ is reduced. We may also assume that $\partial D$ is contained in $\partial\alpha\cup\sigma'$, since any $2$-cell contributing an edge to $\partial D$ not in $\partial\alpha\cup\sigma'$ may simply be removed from $D$ without affecting that $D$ is simply connected. Note that at most one of $e_1$ and $e_2$ lies in $\partial D$. Otherwise, connect a point of $e_1$ to a point of $e_2$ by a snipping arc running across the interior of $\alpha$, and observe that the path $\sigma'$ contradicts Lemma \ref{snip}. Without loss of generality, assume that $e_1$ is internal in $D$. Thus $e_1$ lies in the boundary of at least two distinct essential $2$-cells of $D$.

Thus there exist at least two essential $2$-cells in $D$. Consider the natural reduced map $D\to X$. By Proposition \ref{4.11}, there is an extreme essential $2$-cell $\beta$ of $D$ distinct from $\alpha$ with exposed edge $f$, say. Since $\partial D$ is contained in $\partial\alpha\cup\sigma'$, all elements of $[f]_\beta$ are contained in this subcomplex of $\ucc{X}$ as well. In fact, all elements of $[f]_\beta$ are contained in $\sigma'$ since otherwise they could not lie on the boundary of $D$. Now $(\beta,f,\sigma')$ is a counterexample to the lemma. The fact that $\rl(\sigma')<\rl(\gamma)$ contradicts minimality of $(\alpha,e,\gamma)$, and the lemma is proved.
\end{proof}

\subsection{Patchings}

The following construction is of critical importance for later arguments. It shows that certain non-simply connected subcomplexes of $\ucc{X}$ can be made simply connected without introducing extra exposed or extreme $2$-cells, as follows.

\noindent
\begin{dfn} (\textbf{Patching}). Let $\phi:E\to \ucc{X}$ be reduced, where $E$ is compact but not necessarily simply connected. A \emph{patching} for $\phi$ is a simply connected $2$-complex $\pl{E}$ and a reduced diagram $\pl{\phi}:\pl{E}\to\ucc{X}$ such that $\pl{E}$ contains $E$ as a subcomplex, $\pl{\phi}|_E=\phi$, and none of the essential $2$-cells of $\pl{E}\setminus E$ are exposed in $\pl{E}$.
\end{dfn}

\noindent
\begin{rmk} In view of the unique composition $\ucc{X}\to\tilde{X}\to X$, where the first map is any inclusion of $\ucc{X}$ into $\tilde{X}$, reduced diagrams $D\to\ucc{X}$ give rise to reduced diagrams $D\to X$ and vice versa by Remark \ref{covreduce}. Whenever we have a patching $\pl{E}\to \ucc{X}$, we will casually confuse it with the corresponding diagram $\pl{E}\to X$ in order to apply Propositions \ref{4.11} and \ref{5.4}.
\end{rmk}

\noindent
\begin{lem}
\label{patching}
Let $\phi:E\to \ucc{X}$ be an inclusion of a compact connected $2$-complex. Suppose that there is a path $\lambda$ in $E$ with the property that $\lambda$ contains every isolated edge of $E$ and maps to a relative geodesic in $\ucc{X}$. Then a patching for $\phi$ exists.
\end{lem}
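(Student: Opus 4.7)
I will construct the patching $\pl{E}$ in two stages: first, gluing reduced disk diagrams from $\ucc{X}$ to $E$ to kill $\pi_1(E)$; then applying reduction moves. The verification that no essential $2$-cell of $\pl{E}\setminus E$ is exposed will follow from a careful analysis of $\partial \pl{E}$ combined with Lemma \ref{noorbits} applied to $\lambda$.

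\emph{Construction.} First I will observe that $\ucc{X}$ is simply connected: $\tilde{X}$ is simply connected by definition, and the identification of essential $2$-cells sharing a common boundary preserves $\pi_1$, because any combinatorial disk diagram in $\tilde{X}$ passing through a removed $2$-cell can be rerouted through its retained representative without altering its boundary. Since $E$ is compact and connected, $\pi_1(E)$ is finitely generated; fix loops $c_1,\ldots,c_k$ in $E$ representing a generating set. Each $c_i$ is nullhomotopic in $\ucc{X}$, so by Theorem \ref{vkl} it bounds a reduced disk diagram $D_i \to \ucc{X}$. Glue each $D_i$ to $E$ along $c_i$ to form a simply-connected complex $Z$ (by van Kampen's theorem) with a natural map $Z\to\ucc{X}$ extending $\phi$. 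Apply standard reduction moves to obtain the reduced diagram $\pl{\phi}: \pl{E} \to \ucc{X}$. Since $\phi$ is already reduced, no cancelable pair lies entirely inside $E$; whenever reducing a cancelable pair consisting of a $2$-cell of $E$ with one of some $D_i$, we fold the $D_i$ cell onto the $E$ cell, so that every $2$-cell of $E$ persists throughout and $E\hookrightarrow\pl{E}$.

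\emph{Exposed edges lie in $\lambda$.} Suppose, for contradiction, that $\alpha\in\pl{E}\setminus E$ is an essential $2$-cell exposed in $\pl{E}$ with exposed edge $e$, so $[e]_\alpha\subset\partial\pl{E}$. For each $e'\in [e]_\alpha$: since $e'\in\partial\pl{E}\cap\partial\alpha$, the edge $e'$ lies in exactly one $2$-cell of $\pl{E}$, namely $\alpha$. Were $e'$ in the boundary of any $2$-cell of $E$, it would lie in at least two $2$-cells of $\pl{E}$ ($\alpha$ plus that cell), a contradiction. Hence $e'$ is in no $2$-cell of $E$: either $e'\in E$, making it an isolated edge of $E$ and thus $e'\in\lambda$, or $e'\in\pl{E}\setminus E$ is a new edge introduced by patching. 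The latter case must be ruled out by choosing the $D_i$ minimally---so that the reduction process leaves no essential interior edge of any $D_i$ lying in only one $2$-cell of $\pl{E}$---or, failing that, by iteratively adjoining to $\pl{E}$ neighboring $2$-cells of $\ucc{X}$ along any remaining essential edge of $\partial\pl{E}\setminus E$; this auxiliary process terminates via a monotone decrease in the number of such essential $1$-cells on $\partial\pl{E}\setminus E$.

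\emph{Contradiction.} Once every $e'\in [e]_\alpha$ lies in $\lambda$, its image under $\pl{\phi}$ is a full orbit of an essential edge of the essential $2$-cell $\pl{\phi}(\alpha)$ of $\ucc{X}$, contained entirely in the relative geodesic $\pl{\phi}(\lambda)$. This directly contradicts Lemma \ref{noorbits}, completing the proof. The main obstacle is the ``new edge introduced by patching'' case above: controlling the essential edges of $\pl{E}\setminus E$ that survive on $\partial\pl{E}$, which requires either a precise selection of the disk diagrams $D_i$ or an iterative absorption argument with a termination guarantee.
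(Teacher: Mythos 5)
Your construction of $\pl{E}$ is essentially the paper's: glue reduced disk diagrams over $\ucc{X}$ along loops generating $\pi_1(E)$, then fold away cancelable pairs so that $E$ survives as a subcomplex, and finish by showing an exposed orbit $[e]_\alpha$ with $\alpha\in\pl{E}\setminus E$ would violate Lemma \ref{noorbits}. The gap is exactly where you flag it: ruling out the possibility that some edge of $[e]_\alpha$ on $\partial\pl{E}$ is a ``new'' edge of $\pl{E}\setminus E$ rather than an isolated edge of $E$. Neither of your proposed fixes works. ``Choosing the $D_i$ minimally'' is not an argument: you give no sense of minimality and no reason why it would prevent folds from leaving an interior edge of some $D_i$ in only one $2$-cell. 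The absorption process fails outright: gluing a $2$-cell of $\ucc{X}$ onto one essential boundary edge introduces all the other edges of its attaching map, many of them essential and each typically lying in only one $2$-cell of the enlarged complex, so the claimed monotone decrease is false and there is no termination guarantee; the adjoined cells themselves lie in $\pl{E}\setminus E$ and may be exposed (indeed, by Lemma \ref{4.7} every reduced diagram containing an essential $2$-cell has an exposed one, so exposed cells cannot be eliminated --- they must be forced into $E$); and the ad hoc gluing can destroy reducedness of the map to $\ucc{X}$.

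What actually closes this case is bookkeeping built into the construction itself, which the paper gets by attaching the disks one at a time. The boundary $\partial D_i$ is glued along a path $p_i$ in $E^{(1)}$, and each fold deletes a $2$-cell of the disk currently being attached and identifies its freed boundary edges with edges of the previously built complex $E_{i-1}$. Consequently, after the folds, every edge of (the image of) $D_i$ that lies in at most one $2$-cell of $E_i$ is an edge of $E_{i-1}$; and since attaching a disk along its boundary creates no new isolated edges, the isolated edges of each $E_{i-1}$ are isolated edges of $E$ and hence lie on $\lambda$. So if some $\alpha$ in $D_i$ were exposed in $\pl{E}$, the whole orbit $[e]_\alpha$ would consist of isolated edges of $E$ lying on the relative geodesic $\lambda$, contradicting Lemma \ref{noorbits}: the ``new edge'' case you could not handle simply does not occur once the folds are organized this way. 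As written, your proof does not establish this, so it has a genuine gap at the crucial step, even though your construction and final contradiction coincide with the paper's.
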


\begin{proof}

If $E$ is simply connected, then $\phi$ is a reduced diagram so set $\pl{\phi}=\phi$ and we are done. Otherwise let $g_1,\ldots, g_k$ be generators of $\pi_1(E)$. Let $E_0=E$ and $\phi_0=\phi$. For each $i$, Let $p_i$ be a reduced path in $E^{(1)}$ such that $[p_i]=g_i$. Let $\rho_i:D_i\to\ucc{X}$ be a reduced disk diagram such that $\rho_i(\partial D_i)=\phi(p_i)$. Inductively define $E_i=E_{i-1}\sqcup_{p_i}D_i$, and observe that there is a natural combinatorial map $\phi_i:E_i\to \ucc{X}$. If $\phi_i$ is not reduced, then there is a cancelable pair of $2$-cells in $E_i$, but the cancelable pair cannot both lie in $E$ or in $D_{j}$ for any $j<i$, since $\phi_i$ restricted to $E$ and to $D_j$ is reduced. We can make $\phi_i$ reduced as follows: First suppose that there is a cancelable pair of $2$-cells $\alpha_E$ and $\alpha_{D_i}$ in $E$ and $D_i$, respectively. Let $e_i$ denote the shared edge between $\alpha_E$ and $\alpha_{D_i}$, and let $\sigma_E$ and $\sigma_{D_i}$ be the paths in $\partial\alpha_E\setminus e_i$ and $\partial_{D_i}\setminus e_i$, respectively, from the terminal to the initial vertex of $e_i$, which are identified under $\phi_i$. Modify $E_i$ and $\phi_i$ by removing $\alpha_{D_i}$ from $D_i$ and identifying $\sigma_{D_i}$ with $\sigma_E$. Note that this process preserves $E$ as a subcomplex of $E_i$, and that, although we are modifying $\partial D_i$, $\rho_i(\partial(D_i\setminus\alpha_{D_i}))$ is homotopic to $p_i$ in $E_i$. It preserves homotopy type of $E_i$ because it is a homotopy equivalence. Repeating as many times as necessary, we may assume that there is no cancelable pair between $E$ and $D_j$ for any $j<i$. On the other hand, suppose that there is a cancelable pair of $2$-cells $\alpha_{D_j}$ and $\alpha_{D_i}$ in $D_j$ and $D_i$, respectively, for some $j<i$. Similarly to the first case, let $e_i$ denote the shared edge between $\alpha_{D_j}$ and $\alpha_{D_i}$, and let $\sigma_{D_j}$ and $\sigma_{D_i}$ be the paths in $\partial\alpha_{D_j}\setminus e_i$ and $\partial_{D_i}\setminus e_i$, respectively, from the terminal to the initial vertex of $e_i$, which are identified under $\phi_i$. Modify $E_i$ and $\phi_i$ by removing $\alpha_{D_i}$ from $D_i$ and identifying $\sigma_{D_i}$ with $\sigma_{D_j}$. Again, note that this process preserves $E$ as a subcomplex of $E_i$, and that, although we are modifying $\partial D_i$, $\rho_i(\partial(D_i\setminus\alpha_{D_i}))$ is homotopic to $p_i$ in $E_i$. It preserves homotopy type of $E_i$ because it is a homotopy equivalence. Repeating as many times as necessary, we may assume that there is no cancelable pair between $D_i$ and $D_j$ for any $j<i$, and thus that $\phi_i$ is reduced. Now $\pl{E}=E_k$ contains $E$, and since $\pl{E}$ is simply connected, $(\pl{\phi}=\phi_k):\pl{E}\to\ucc{X}$ is a reduced diagram. By construction, it is also clear that $\pl{\phi}|_E=\phi$.

It remains to prove that any essential $2$-cell $\alpha$ belonging to $\pl{E}\setminus E$ is not exposed in $\pl{E}$. To that end, let $\alpha$ be an essential $2$-cell belonging to $\pl{E}\setminus E$. Then $\alpha$ belongs to the complex $D_i$ for some $i\geq 1$. Consider the complex $E_{i-1}$ to which $D_i$ has been attached by its boundary, and assume that folds have been performed as described in the previous paragraph so that $E_i\to\ucc{X}$ is reduced. Observe that $\lambda$ contains every isolated edge of $E_{i-1}$ and maps to a relative geodesic in $\ucc{X}$, which is true by assumption for $i=1$. Indeed, it is obvious that $\lambda$ maps to a relative geodesic in $\ucc{X}$, and for $1\leq j<i$, every isolated edge of $D_j$ must belong to $\partial D_j$, so attaching $D_j$ to $E_{j-1}$ by its boundary cannot create new isolated edges in $E_j$. Now, if $\alpha$ is exposed in $\pl{E}$, then there is some exposed edge $e$ in $\partial\alpha$ such that $[e]_\alpha$ lies in $\partial E_{i}$. Since each edge of $[e]_\alpha$ also lies in $\partial D_i$, it must be the case that every edge of $[e]_\alpha$ is an isolated edge of $E_{i-1}$. Thus each edge of $[e]_\alpha$ belongs to $\lambda$, contradicting Lemma \ref{noorbits}.
\end{proof}

\subsection{More local geometry of essential $2$-cells}

With patchings as the fundamental tool, we now prove some other statements about the local geometry of essential $2$-cells.

\noindent
\begin{lem}
\label{shareboundary}
Let $\alpha$ and $\beta$ be distinct essential $2$-cells of $\ucc{X}$. Let $e$ be an essential edge of $\alpha$. Then at most one element of $[e]_\alpha$ lies in $\partial\beta$.
\end{lem}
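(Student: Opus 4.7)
Suppose for contradiction that distinct essential $2$-cells $\alpha \neq \beta$ of $\ucc{X}$ share two distinct edges $e_1, e_2 \in [e]_\alpha$. Decompose $\partial\alpha = e_1 \tau e_2 \tau'$, where $\tau, \tau'$ are the two arcs of $\partial\alpha \setminus \{e_1, e_2\}$, and write $\partial\beta$ similarly with arcs $\sigma, \sigma'$ between the endpoints of $e_1$ and $e_2$ (if $e_1, e_2$ appear with opposite relative orientations in $\partial\alpha$ and $\partial\beta$, one reverses $\partial\beta$'s parametrization and proceeds analogously). The plan is to build a simply connected subcomplex $E$ of $\ucc{X}$ containing $\cl{\alpha}$ and $\cl{\beta}$ in which every essential $1$-cell lies in the boundary of at least two $2$-cells of $E$, then derive a contradiction from Proposition \ref{5.4}.

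Since $\ucc{X}$ is simply connected, the loops $\mu = \tau \sigma^{-1}$ and $\mu' = \tau' (\sigma')^{-1}$ bound reduced disk diagrams $D, D' \to \ucc{X}$. Let $E = \cl{\alpha} \cup \cl{\beta} \cup D \cup D'$, attached via these boundary identifications, and fold any cancelable pairs as in the proof of Lemma \ref{patching} to make $E \to \ucc{X}$ reduced; folding removes $2$-cells from $D$ or $D'$ but preserves $\alpha$ and $\beta$. A van Kampen computation on the rank-$3$ free group $\pi_1(\partial\alpha \cup \partial\beta)$, noting that the four relators $[\partial\alpha], [\partial\beta], [\mu], [\mu']$ together generate the whole group (they satisfy one dependency since $\mu\mu'$ is conjugate to $\partial\alpha (\partial\beta)^{-1}$), shows that $E$ is simply connected.

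In $E$, every essential edge of $\partial\alpha$ is shared with another $2$-cell: $e_1$ and $e_2$ with $\beta$; the essential edges of $\tau$ with the $2$-cells of $D$ incident to $\tau \subseteq \partial D$; and the essential edges of $\tau'$ with the $2$-cells of $D'$ incident to $\tau' \subseteq \partial D'$. Hence $\alpha$ is internal in $E$, and symmetrically so is $\beta$. Since $\partial D \cup \partial D' \subseteq \partial\alpha \cup \partial\beta$, every essential $1$-cell of $E$ lies on $\partial\alpha \cup \partial\beta$, and each such $1$-cell belongs to at least two $2$-cells of $E$. Therefore $\partial E$ contains no essential $1$-cells, so no essential $2$-cell of $E$ can be exposed, and in particular none can be extreme.

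On the other hand, Proposition \ref{5.4} applied to the reduced diagram $E \to X$ with its internal essential $2$-cell $\alpha$ of exponent $n = m(\alpha) \geq 2$ produces at least $2n \geq 4$ extreme essential $2$-cells in $E$, contradicting the previous paragraph. The main obstacle is verifying that the folding step neither destroys the internality of $\alpha$ and $\beta$ nor introduces essential edges into $\partial E$; this requires a case analysis of cancelable pairs between $\alpha, \beta$ and $2$-cells of $D, D'$, showing that any edge unmasked by a fold was already adjacent to a surviving $2$-cell of $E$. Granting this, the reasoning above gives the required contradiction.
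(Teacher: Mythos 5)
Your strategy has a genuine gap, and it is exactly the step you defer at the end: making $E\to\ucc{X}$ reduced while keeping every essential edge of $\partial\alpha\cup\partial\beta$ covered by two $2$-cells is not a routine verification, it is the whole difficulty, and the argument as written does not supply it. Two concrete failure points. First, a reduced disk diagram $D$ for $\mu=\tau\sigma^{-1}$ need not carry a $2$-cell along every edge of its boundary: disk diagrams can have spurs and $1$-dimensional portions, so the assertion that ``the essential edges of $\tau$ [are shared] with the $2$-cells of $D$ incident to $\tau\subseteq\partial D$'' is unjustified. Second, and more seriously, the cancelable pairs you must fold are precisely pairs $(\alpha,\delta)$ or $(\beta,\delta)$ with $\delta$ a cell of $D$ or $D'$ whose image in $\ucc{X}$ coincides with $\alpha$ or $\beta$; folding deletes $\delta$, and $\delta$ may be exactly the cell that was covering an essential edge of $\tau$, while the path $\partial\delta\setminus e$ being identified with $\partial\alpha\setminus e$ can lie along $\partial D$ and so contribute no replacement $2$-cell. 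Nothing in your construction rules this out, so after reduction you cannot conclude that $\partial E$ contains no essential edges, hence cannot conclude that $\alpha$ and $\beta$ (or anything else) fail to be exposed, and the contradiction with Propositions \ref{4.11} and \ref{5.4} is never reached. (A smaller slip: ``every essential $1$-cell of $E$ lies on $\partial\alpha\cup\partial\beta$'' is false, since $D$ and $D'$ have interior essential edges; this is harmless, because interior edges lie in two $2$-cells by definition, but it is a sign the bookkeeping is not tight.)

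The paper avoids all of this by not trying to make $\alpha$ and $\beta$ internal. It applies Lemma \ref{patching} directly to $E=\cl{\alpha}\cup\cl{\beta}$ (the hypotheses hold trivially, as there are no isolated edges); the defining property of a patching is that the added $2$-cells are never exposed, so Proposition \ref{4.11} forces $\alpha$ to be extreme in $\pl{E}$ with some exposed essential edge $f$, and $f\notin[e]_\alpha$ because $e_1$ and $e_2$ are internal. Since $e_1,e_2$ lie in the same position class and the attaching map is periodic, each of the two arcs of $\partial\alpha\setminus\{e_1,e_2\}$ contains an element of $[f]_\alpha$; connecting two such elements by a snipping arc through the interior of $\alpha$ and then joining $e_1$ to $e_2$ through the interior of $\beta$ contradicts Lemma \ref{snip}. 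If you want to keep your setup, the fix is to discard the hand-built disk diagrams and folding, invoke Lemma \ref{patching} instead, and then argue from extremality (plus the Snipping Lemma) rather than from internality.
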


\begin{proof}
Suppose that two elements $e_1$ and $e_2$ of $[e]_\alpha$ lie in $\partial\beta$. Then the complex $E=\cl{\alpha}\cup\cl{\beta}$ satisfies the hypotheses of Lemma \ref{patching}, so let $\pl{E}$ be a patching. By Proposition \ref{4.11}, $\alpha$ is extreme in $\pl{E}$ with exposed edge $f$. Note that $f\notin [e]_\alpha$ since $e_1$ and $e_2$ are internal in $\pl{K}$. Thus there are two elements of $[f]_\alpha$, $f_1$ and $f_2$, lying in distinct components of $\partial{\alpha}\setminus\{e_1,e_2\}$. Connect midpoints of $f_1$ and $f_2$ by a snipping arc running through the interior of $\alpha$, and observe that any path between $e_1$ and $e_2$ through the interior of $\beta$ contradicts Lemma \ref{snip}.
\end{proof}

The following strong statement rules out several more pathologies for a relative geodesic which intersects the boundary of an essential $2$-cell in $\ucc{X}$.

\noindent
\begin{lem}
\label{notweird}
Let $\alpha$ be an essential $2$-cell in $\ucc{X}$ with boundary path $p^n$, and let $\gamma$ be relative geodesic which uses at least $2$ essential edges of $\partial\alpha$. With respect to the orientation of $\gamma$, let $e$ and $e'$ be the first and last essential edges in $\partial\alpha\cap\gamma$ (labeled so that their orientations are consistent with $\gamma$). Index the essential edges of $\gamma$ from $e_1=e$ to $e_m=e'$. The following statements hold:

\begin{itemize}

\item[(i)] Each $e_i$ lies in $\partial\alpha$.
\item[(ii)] There is a path $\lambda_i$ in $\partial\alpha$ connecting $e_i$ to $e_{i+1}$ which does not use any essential edges.
\item[(iii)] The orientations of the $e_i$ are consistent with an orientation of $\partial\alpha$.

\end{itemize}

\end{lem}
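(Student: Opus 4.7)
The plan is to build a patching in which $\alpha$ is the unique essential $2$-cell, and then use a fold-tracking argument for (i) and the Snipping Lemma (Lemma \ref{snip}) for (iii) and (ii). I would let $\gamma'$ denote the subpath of $\gamma$ from $i(e_1)$ to $t(e_m)$ and set $E = \cl{\alpha} \cup \gamma'$ as a subcomplex of $\ucc{X}$. Since $\gamma'$ is a subpath of a relative geodesic (hence itself a relative geodesic) and contains every isolated edge of $E$, Lemma \ref{patching} would yield a patching $\pl{E}$. By Proposition \ref{4.11}, if $\pl{E}$ had two or more essential $2$-cells then it would contain two extreme ones; but no essential $2$-cell of $\pl{E} \setminus E$ can be exposed (hence none is extreme), so both extreme cells would lie in $E$ and therefore both equal $\alpha$ -- a contradiction. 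Thus $\pl{E}$ would have $\alpha$ as its unique essential $2$-cell.

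For (i), I would argue by contradiction: suppose $e_i \notin \partial\alpha$ for some $i$. Then $e_i$ is an isolated essential edge of $E$, so it lies on a reduced loop $p_j$ representing a generator of $\pi_1(E)$ (obtained by concatenating a sub-arc of $\gamma'$ between consecutive intersections with $\partial\alpha$ with a return arc along $\partial\alpha$), and hence on $\partial D_j$ for one of the disks used in the proof of Lemma \ref{patching}. Reducedness of $p_j$ prevents pendant edges in $D_j$, so $e_i$ must lie on the boundary of some essential $2$-cell $\beta$ of $D_j$. For $\pl{E}$ to retain only $\alpha$ as an essential $2$-cell, $\beta$ must be eliminated by a chain of folds during the patching construction; these folds identify boundary paths of essential $2$-cells while preserving the combinatorial map to $\ucc{X}$, and tracing the chain shows $e_i$ eventually gets identified with some edge of $\partial\alpha$ in $\pl{E}$. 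Preservation of the map to $\ucc{X}$ then forces $e_i \in \partial\alpha$ in $\ucc{X}$, a contradiction.

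For (iii), I would assume some consecutive pair $e_i, e_{i+1}$ (both in $\partial\alpha$ by (i)) have orientations inconsistent with any orientation of $\partial\alpha$, and let $\gamma_i'$ denote the vertex-space subpath of $\gamma$ from $t(e_i)$ to $i(e_{i+1})$. I would draw a snipping arc $\delta$ in $\pl{E}$ from the midpoint of $e_i$ to the midpoint of $e_{i+1}$ through the interior of $\alpha$; the midpoints lie in $\partial \pl{E}$ because $e_i$ and $e_{i+1}$ bound only $\alpha$ in $\pl{E}$. A straightforward case-check on which halves of $e_i$ and $e_{i+1}$ lie in each arc of $\partial\alpha \setminus \delta$ shows that the inconsistency hypothesis places $t(e_i)$ and $i(e_{i+1})$ on opposite arcs; Lemma \ref{snip} then separates them in $\pl{E}$. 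But $\gamma_i'$ avoids the interior of $\alpha$, and hence $\delta$, contradicting the separation. For (ii), assuming (i) and (iii), I would suppose some essential edge $f$ of $\partial\alpha$ lies strictly between $e_i$ and $e_{i+1}$ along the consistent orientation; the same snipping argument with $f$'s midpoint in place of $e_{i+1}$'s again puts $t(e_i)$ and $i(e_{i+1})$ on opposite arcs and contradicts Lemma \ref{snip}. Hence no such $f$ exists, and the non-essential sub-arc of $\partial\alpha$ between $e_i$ and $e_{i+1}$ provides the required $\lambda_i$.

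The main obstacle is the fold-tracking in (i): making rigorous that a chain of patching folds, viewed through the map $\pl{E} \to \ucc{X}$, forces the essential edge $e_i$ on a folded $2$-cell to actually coincide with an edge of $\partial\alpha$ in $\ucc{X}$, rather than merely be glued to one within $\pl{E}$.
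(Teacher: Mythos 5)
Your overall strategy is genuinely different from the paper's, and it is essentially sound once the acknowledged gap in (i) is closed. The paper proves each of (i), (ii), (iii) separately, each time building a patching tailored to that part (e.g.\ for (i) it takes $E = \alpha \cup \sigma$ where $\sigma$ is the sub-arc of $\gamma$ between the essential edges of $\partial\alpha$ flanking the offending $e_i$), and then contradicts Proposition \ref{4.11} directly by producing a second candidate for an extreme $2$-cell. You instead construct a single patching $\pl{E}$ of $\cl{\alpha}\cup\gamma'$, show via Proposition \ref{4.11} that $\alpha$ is the unique essential $2$-cell of $\pl{E}$, and then deduce (i)--(iii) from that fact together with the Snipping Lemma. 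This unification is a real simplification. Your treatments of (iii) and (ii) are correct: once you know $\alpha$ is the only essential $2$-cell, $e_i$, $e_{i+1}$ and any essential edge of $\partial\alpha$ automatically lie on $\partial\pl{E}$, so the snipping arcs exist, and the vertex-space sub-arcs $\gamma_i'$ of $\gamma$ (which avoid $\intr\alpha$ and so avoid the snipping arc) supply the contradiction. (The paper's argument for these parts is logically the same picture, just packaged as ``otherwise one of $e_i,e_{i+1}$ is internal, giving a second essential $2$-cell.'')

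The gap you flag in (i) is real, but the fix is much easier than fold-tracking and is in fact exactly how the paper handles it. You do not need to trace what the folds do to the auxiliary $2$-cell $\beta$ in $D_j$; apply your own homological reasoning directly to $\pl{E}$ rather than to $D_j$. You have already established that $\alpha$ is the unique essential $2$-cell of $\pl{E}$. If $e_i \notin \partial\alpha$, then since an essential edge cannot lie on the boundary of a square, $e_i$ is an isolated edge of $\pl{E}$. On the other hand $e_i$ is non-separating in $E$, because it lies on a loop consisting of a sub-arc of $\gamma'$ that leaves $\partial\alpha$ and returns, together with a return arc along $\partial\alpha$; attaching the patching disks to $E$ cannot disconnect $\pl{E}\setminus e_i$, so $e_i$ remains non-separating in $\pl{E}$. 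But $\pl{E}$ is simply connected, so $H_1(\pl{E})=0$, and an isolated non-separating $1$-cell gives a nontrivial class in $H_1$. This contradiction shows $e_i\in\partial\alpha$ in $\pl{E}$, and since $\alpha\in E$ and $\pl{\phi}|_E$ is the inclusion into $\ucc{X}$, this gives $e_i\in\partial\alpha$ in $\ucc{X}$. That is exactly the ``isolated and non-separating'' observation the paper makes; substituting it for the fold-tracking closes your gap and makes the whole proposal correct.

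One small nit: your claim that reducedness of $p_j$ by itself prevents pendant edges in $D_j$, and hence forces $e_i$ onto the boundary of an essential $2$-cell of $D_j$, is not quite watertight (reducedness rules out immediate backtracks but not all cut-edge configurations). This does not matter once you switch to the argument above, since you never need to reason about $D_j$ individually.
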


\begin{proof}
(i): Assume that some $e_i$ does not lie in $\partial\alpha$. Let $f_1$ be the last essential edge of $\gamma$ before $e_i$ which lies in $\partial\alpha$, and let $f_2$ be the first essential edge of $\gamma$ after $e_i$ which lies in $\partial\alpha$. Let $\sigma$ be the subpath of $\gamma$ whose first edge is $f_1$ and last edge $f_2$. Consider the complex $E=\alpha\cup\sigma$. Then $E$ satisfies the hypothesis of Lemma \ref{patching}, so let $\pl{E}$ be a patching for $E$. The fact that $\pl{E}$ is simply connected implies $e_i$ is contained in an essential $2$-cell $\beta$ of $\pl{E}$ distinct from $\alpha$, since otherwise $e_i$ is isolated and non-separating. Thus $\pl{E}$ contains at least two essential $2$-cells. This contradicts Proposition \ref{4.11}, since $\alpha$ is the only essential $2$-cell of $\pl{E}$ which can be extreme.

(ii): Assume there is no path in $\partial\alpha$ connecting $e_i$ to $e_{i+1}$ which does not use any essential edges. Let $\lambda_1$ and $\lambda_2$ be the two subpaths of $\partial\alpha$ connecting $e_i$ to $e_{i+1}$. The subcomplex $E=\alpha\cup\gamma$ satisfies the hypotheses of Lemma \ref{patching}, so let $\pl{E}$ be a patching. Note that at least one of $\lambda_1$ or $\lambda_2$ has the property that all essential edges therein lie in the interior of $\pl{E}$, otherwise we may join two boundary essential edges of $\lambda_1$ and $\lambda_2$ by a snipping arc running across the interior of $\alpha$, and observe that the portion of $\gamma$ between $e_i$ and $e_{i+1}$ contradicts Lemma \ref{snip}. Without loss of generality, all essential edges of $\lambda_1$ are internal in $\pl{E}$. Also, at least one essential edge exists there by assumption. Thus there is an essential $2$-cell $\beta$ of $\pl{E}$ distinct from $\alpha$. This contradicts Proposition \ref{4.11}, since $\alpha$ is the only essential $2$-cell of $\pl{E}$ which can be extreme.

(iii): If this statement is false, then there is a pair of edges $e_i$ and $e_{i+1}$ which have opposite orientations in $\partial\alpha$. Let $\sigma$ be the subpath of $\gamma$ starting with $e_i$ and ending with $e_{i+1}$, and let $E=\alpha\cup\sigma$. This subcomplex satisfies the hypotheses of Lemma \ref{patching}, so let $\pl{E}$ be a patching. Now, observe that at least one of $e_i$ or $e_{i+1}$ is internal in $\pl{E}$. Indeed, if this is not the case then connect $e_i$ and $e_{i+1}$ together by a snipping arc running across the interior of $\alpha$. The portion of $\sigma$ between $e_i$ and $e_{i+1}$ now contradicts Lemma \ref{snip}. Thus at least one of $e_i$ or $e_{i+1}$ is internal. This shows that there is an essential $2$-cell in the diagram distinct from $\beta$, but this contradicts Proposition \ref{4.11}, since $\alpha$ is the only essential $2$-cell of $\pl{E}$ which can be extreme.
\end{proof}

The following is also useful:

\noindent
\begin{lem}
\label{geodboundary}
Let $\alpha$ be an essential $2$-cell in $\ucc{X}$, and let $\gamma$ be a relative geodesic. Then the number of essential edges in $\partial\alpha\cap\gamma$ is at most half the number of essential edges in $\partial\alpha$. 
\end{lem}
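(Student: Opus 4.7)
The plan is to argue by contradiction using a complementary path across $\partial\alpha$ and the rotational symmetry of $\alpha$. Write $\partial\alpha = p^n$ where $p$ uses $k$ essential edges, so $\partial\alpha$ has $kn$ essential edges in total. Suppose $\gamma$ uses $m > kn/2$ of them. By Lemma \ref{notweird}, these form a consecutive arc $e_1,\ldots,e_m$ on $\partial\alpha$ with consistent orientation. Let $\tau$ be the subpath of $\gamma$ from $b_1 := i(e_1)$ to $c_m := t(e_m)$; this is a sub--relative-geodesic, traversing these $m$ essential edges with edge-geodesic vertex-space subpaths between them.

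Next, construct the alternative path $\tau'$ from $b_1$ to $c_m$ by traversing $\partial\alpha$ in the opposite direction, using the remaining $kn - m$ essential edges together with the vertex-space subpaths $v_j \subset \partial\alpha$ on that side. By the convention set at the start of Section \ref{sect:rl}, each $v_j$ is an edge-geodesic in its vertex space, so $\tau'$ is a legitimate path whose relative length is computed directly. Since $\tau$ is a relative geodesic, $\rl(\tau) \leq \rl(\tau')$.

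To exploit this, use the rotational symmetry of $\alpha$. The element $w \in \pi_1(X)$ represented by the loop $p$ satisfies $w^n = 1$ and cyclically permutes the vertex spaces around $\partial\alpha$, sending $V_i$ to $V_{i+k}$. By $G$-invariance of the admissible pseudometric, vertex-space contributions to relative length from spaces in the same $w$-orbit are equal. Grouping the contributions of $\tau$ and $\tau'$ by the $k$ positions and using that together they visit each position exactly $n$ times gives
\[
\rl(\tau) + \rl(\tau') \;=\; kn + n\sum_{q=1}^{k} D_q,
\]
where $D_q$ is the per-position vertex-space distance. Combining this with $\rl(\tau) \leq \rl(\tau')$ forces $\rl(\tau) \leq \tfrac{1}{2}\bigl(kn + n\sum_q D_q\bigr)$.

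The main obstacle is extracting the desired contradiction with $m > kn/2$ from this bound. Writing $\rl(\tau) = m + E_\tau$ where $E_\tau$ is the vertex-space contribution of $\tau$, the inequality becomes $m + E_\tau \leq kn/2 + \tfrac{n}{2}\sum_q D_q$; so $m > kn/2$ would force $E_\tau < \tfrac{n}{2}\sum_q D_q$, i.e.\ $E_\tau < E_{\tau'}$. To see this is impossible, one performs a careful combinatorial analysis of how the consecutive arc of length $m$ is distributed among the $k$ positions, invoking Lemma \ref{notweird} to control the distribution and Lemma \ref{noorbits} to preclude the extremal configurations; the balance forced by $G$-invariance and the edge-geodesic convention on the $v_j$ then rules out the imbalance needed to make $\tau$ shorter than $\tau'$, contradicting that $\gamma$ is a relative geodesic.
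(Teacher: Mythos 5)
Your setup is fine (the consecutive-arc structure from Lemma \ref{notweird}, the subpath $\tau$, the complementary path $\tau'$, and the identity $\rl(\tau)+\rl(\tau')=kn+n\sum_q D_q$ all check out), but the proof is not complete: the last paragraph, where the actual contradiction is supposed to appear, is only asserted, and the inequality it needs is false. From $\rl(\tau)\le\rl(\tau')$ and $m>kn/2$ you need $E_\tau\ge\tfrac{n}{2}\sum_q D_q$, i.e.\ that $m-1$ \emph{consecutive} gaps always carry at least half the total gap length. That fails whenever the $m-1$ gaps do not contain $n/2$ full periods' worth of positions. Concretely, take $k=2$, $n=5$, with the two gap values per period equal to $0$ and $10$, and let $\tau$ span $m=6>5$ consecutive essential edges positioned so that its five gaps are $0,10,0,10,0$. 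Then $E_\tau=20<25=\tfrac{n}{2}\sum_q D_q$, and indeed $\rl(\tau)=26<34=\rl(\tau')$: the comparison with the complementary arc produces no contradiction at all, and nothing in Lemmas \ref{notweird} or \ref{noorbits} precludes this configuration (only $3$ of the $5$ edges in each position class are used). So the complementary-path comparison is simply the wrong competitor — the other side of $\partial\alpha$ can be far longer in the vertex spaces even though it uses fewer essential edges — and no "careful combinatorial analysis" of the distribution can rescue it, at least when $n$ is odd.

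The paper's proof uses the rotation in a different and essential way: rather than rotating only to average gap lengths, it rotates the geodesic itself. With $w$ the order-$n$ rotation stabilizing $\alpha$, one chooses $i$ so that the essential edges of $w^i\gamma_1$ cover the complementary arc together with $e_1$ and $e_m$, and then takes the subpath $\gamma'$ of $w^i\gamma_1$ from $t(e_m)$ to $i(e_1)$. Since $\rl(w^i\gamma_1)=\rl(\gamma_1)$ by $G$-invariance and $\gamma'$ omits $e_1$ and $e_m$, one gets $\rl(\gamma')<\rl(\gamma_1)$ with the same endpoints (after reversal), contradicting relative geodesity — and crucially this works regardless of how the vertex-space gap lengths are distributed, because the competitor is a translate of $\gamma_1$ minus two essential edges rather than the opposite side of $\partial\alpha$. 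That is the idea missing from your argument: you must manufacture the shortcut from a rotated copy of the geodesic, not from the boundary arc it does not use.
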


\begin{proof}
Let $e_1$ and $e_m$ be the first and last essential edges of $\alpha\cap\gamma$, if they exist, and labeled so that they are oriented consistently with $\gamma$. By Lemma \ref{notweird}, it makes sense to orient $\partial\alpha$ consistently with $\gamma$. We may assume that $e_1$ and $e_m$ are distinct, for otherwise $\partial\alpha\cap\gamma$ is a single edge and there is nothing to prove. Let $\sigma$ be the (possibly degenerate) arc of $\partial\alpha$ between $t(e_1)$ and $i(e_m)$ but not including either of these edges, and let $\sigma'$ be the other (possibly degenerate) arc of $\partial\alpha\setminus\{e_1,e_m\}$. Lemma \ref{notweird} also implies that $\gamma$ uses every essential edge of $\sigma$, every essential edge of $\gamma$ lies in $\sigma$, and the orientations and order in which these edges are visited are the same in both $\gamma$ and $\sigma$. Suppose the boundary path of the image of $\alpha$ in $X$ is of the form $p^n$, where $p$ is not a proper power. The path $p$ is a loop in $X$ which corresponds to an order $n$ element $w$ in $\pi_1(X)$ which acts by ``rotation'' of $\ucc{X}$ through a point in the interior of $\alpha$.

Let $\gamma_1$ be the portion of $\gamma$ running from $i(e_1)$ to $t(e_m)$. If $\gamma_1$ uses strictly more than half of the essential edges in $\partial\alpha$, then there is some integer $i$ such that $w^i\gamma_1$ properly contains all essential edges of $\sigma'$ as well as $e_m$ and $e_1$. Let $\gamma'$ be the subpath of $w^i\gamma_1$ running from $t(e_m)$ to $i(e_1)$; note $\rl(\gamma')<\rl(w^i\gamma_1)$ since $w^i\gamma_1$ uses $e_m$ and $e_1$ but $\gamma'$ does not. Since $\rl(w^i\gamma_1)=\rl(\gamma_1)$ by $G$-invariance of $\rl$, the path $\gamma'$ is an ``$\rl$-shortcut;'' this contradicts that $\gamma$ is a relative geodesic.
\end{proof}

\subsection{Convexity of vertex spaces}

The following fact will also be useful.

\noindent
\begin{lem}
\label{vertexspacesconvex}
The vertex spaces of $\ucc{X}$ are convex.
\end{lem}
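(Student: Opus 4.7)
My plan is to fix vertices $x, y$ in a vertex space $\tilde{V}$ of $\ucc{X}$, choose a geodesic $\gamma$ from $x$ to $y$ in $\os{\ucc{X}}$ and a geodesic $\gamma_V$ in $\os{\tilde{V}}$ between the same endpoints, and prove that $\gamma$ must already lie inside $\tilde{V}^{(1)}$. Initial or terminal edges shared by $\gamma$ and $\gamma_V$ automatically lie in $\tilde{V}$, so by stripping them off (and inducting on length) I may reduce to the case in which the loop $\gamma\cdot\gamma_V^{-1}$ is cyclically reduced. Since $\ucc{X}$ is simply connected (it is obtained from $\tilde{X}$ by identifying only essential $2$-cells that already share their attaching circles, which does not alter $\pi_1$), Theorem \ref{vkl} produces a reduced disk diagram $\phi:D\to\ucc{X}$ with $\phi(\partial D)=\gamma\cdot\gamma_V^{-1}$, which by Remark \ref{covreduce} I also regard as a reduced diagram $D\to X$ via the composition $\ucc{X}\to\tilde{X}\to X$.

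The key step is to show that $D$ has no essential $2$-cells. If it had one, then Lemma \ref{4.7} would furnish an exposed essential $2$-cell $\alpha$ with exposed edge $e$, meaning that every one of the $n\geq 2$ members of $[e]_\alpha$ lies on $\partial D$. But $\gamma_V$ uses only non-essential edges (it lives in $\tilde{V}$), so all members of $[e]_\alpha$ must actually lie along $\gamma$. With the "make no change" admissible pseudometric, $\gamma$ is a relative geodesic, and this directly contradicts Lemma \ref{noorbits}. This is the main obstacle, and it is exactly where the diagrammatic extreme-$2$-cell theory built up in Sections \ref{sect:ext} and \ref{sect:ext2}, compressed into Lemma \ref{noorbits}, does the real work; the remainder of the argument is routine.

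With $D$ free of essential $2$-cells, any essential edge of $D$ must be isolated, since essential edges only occur on the boundaries of essential $2$-cells (non-essential $2$-cells live entirely in vertex spaces). In a reduced planar simply connected $2$-complex, however, an isolated edge is necessarily a bridge and would be traversed twice by $\partial D$, contradicting cyclic reducedness of $\gamma\cdot\gamma_V^{-1}$. Hence $D$ contains no essential edges, so in particular $\gamma$ uses no essential edges and therefore lies entirely within $\tilde{V}^{(1)}$. This gives $\abs{\gamma}\geq d_{\tilde{V}}(x,y)$, while the reverse inequality is trivial since $\gamma_V$ is itself a path in $\ucc{X}$ from $x$ to $y$. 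Thus $d(x,y)=d_{\tilde{V}}(x,y)$ and $\gamma$ is also a geodesic in $\os{\tilde{V}}$, which gives the desired convexity.
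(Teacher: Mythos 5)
Your proof is essentially the same as the paper's: both fill (a cyclic reduction of) the loop $\gamma\gamma_V^{-1}$ with a reduced disk diagram and apply Lemma \ref{4.7} together with Lemma \ref{noorbits} to rule out essential $2$-cells in $D$. There is, however, one slip in your final step. A simply connected planar reduced diagram with cyclically reduced boundary can certainly contain an isolated bridge edge (take two nondegenerate disk subdiagrams joined by a single arc: the boundary reads $\partial D_1\, e\, \partial D_2\, e^{-1}$ and is cyclically reduced as long as each $\partial D_i$ is nontrivial), so cyclic reducedness of $\gamma\gamma_V^{-1}$ does not by itself exclude isolated essential edges in $D$. The correct reason such an edge cannot occur is that both of its traversals by $\partial D$ would have to lie on $\gamma$ (since $\gamma_V$ stays in $\tilde{V}$ and uses no essential edges), forcing the geodesic $\gamma$ to use the same edge of $\ucc{X}$ twice, which is impossible for a geodesic. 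For what it is worth, the paper reduces instead to an innermost subpath of $\gamma$ lying outside $\tilde{V}$ and derives its contradiction from that hypothesis, leaving the isolated-edge point implicit; your argument makes the point explicit but cites the wrong reason for it.
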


Reminder: We are using the path metric on $\os{\ucc{X}}$.

\begin{proof}
Let $\gamma$ be a geodesic edge path between vertices $x$ and $y$ of a vertex space $\tilde{V}$. By passing to an innermost subpath outside of $\tilde{V}$, we may assume that $\gamma\cap \tilde{V}=\{x,y\}$. Let $\gamma'$ be a shortest path from $x$ to $y$ in $\tilde{V}$. Note that neither $\gamma$ nor $\gamma'$ backtrack. Also, the first edges of $\gamma$ and $\gamma'$ are not identified by the innermost subpath assumption; neither are the last edges. Thus the loop $\gamma(\gamma')^{-1}$ is reduced, so we may fill it with a reduced diagram $D$. If $D$ contains an essential $2$-cell, then by Lemma \ref{4.7}, there as an exposed essential $2$-cell $\alpha$ with exposed edge $e$. Since $\gamma'$ consists only of edges which are not essential, all elements of $[e]_\alpha$ lie on $\gamma$. This contradicts Lemma \ref{noorbits}. Thus $D$ contains no essential $2$-cells and so $\gamma$ also maps to $\tilde{V}$, which is also a contradiction.
\end{proof}

\section{Relative hyperbolicity}
\label{sect:rh}

Let $X$ be a staggered generalized $2$-complex with locally indicable vertex groups and $n(X)\geq 2$. From this point onward, assume that the total space $\gos{X}$ is a finite graph of spaces, i.e., the graph obtained by collapsing each vertex space of $\gos{X}$ to a point is finite. Note that this does not imply that $\gos{X}$ is compact as vertex spaces may not be. However, it does imply that $C(X)$ is finite. A result of crucial importance later on is that $\pi_1(X)$ is relatively hyperbolic with these assumptions. We prove this now.

We will use a definition of relative hyperbolicity in terms of relative Dehn functions, introduced in a more general form by Osin in \cite{o1}, which Hruska shows is well-defined and equivalent to no fewer than five others (\cite{hr1}) in the case that the set of peripheral subgroups is finite.

\noindent
\begin{dfn} (\textbf{Finite relative presentation/finite relative generating set}). Suppose $\PP$ is a finite collection of infinite subgroups of a countable group $G$ (called \emph{peripheral subgroups}) and let $\PPP$ be the union of all $P\in\PP$. We say that $(G,\PP)$ has a \emph{finite relative presentation} with \emph{finite relative generating set} $\mathcal{S}$ if $\mathcal{S}$ is finite and symmetrized ($\mathcal{S}=S\sqcup \overline S$), $\mathcal{S}\cup\PPP$ is a generating set for $G$, and the kernel of the natural map from $F(S)*(*_{P\in \PP}P)\to G$ is finitely normally generated, where $F(S)$ denotes the free group on the set $S$.
\end{dfn}

\noindent
\begin{dfn} (\textbf{Linear relative Dehn function}). Suppose $(G,\PP)$ has a finite relative presentation with finite relative generating set $\mathcal{S}=S\sqcup \overline S$. Let $\PPP$ be the union of all $P\in\PP$. Let $K=F(S)*(*_{P\in \PP}P)$ and $\mathcal{R}$ be a finite normal generating set for the kernel of the natural map $K\to G$.  For any word $W$ over $\mathcal{S}\cup\PPP$ representing the identity of $G$ (called a \emph{trivial word}), we have an equation in $K$ of the form $W=\Pi_{i=1}^lk_i^{-1}R_ik_i$ where $R_i\in\mathcal{R}$ and $k_i\in K$ for each $i$. The smallest such $l$ is called the area of $W$ and denoted by $A(W)$. We say $(G,\PP)$ has a \emph{linear relative Dehn function} for this relative presentation if there is a linear function $f:\NN\to\NN$ such that for each trivial word $W$ of length at most $m$ in $\mathcal{S}\cup\PPP$, $A(W)\leq f(m)$.
\end{dfn}

\noindent
\begin{dfn} (\textbf{Relatively hyperbolic}) \cite[Definition 3.7]{hr1}. Suppose $(G,\PP)$ has a finite relative presentation. If  $(G,\PP)$ has a linear relative Dehn function for some finite relative presentation of $(G,\PP)$, then we say $(G,\PP)$ is \emph{relatively hyperbolic} (or \emph{$G$ is hyperbolic relative to $\PP$}).
\end{dfn}

\noindent
\begin{lem}
\label{rh}
Suppose $X$ is a staggered generalized $2$-complex with locally indicable vertex groups, $n(X)\geq 2$, and the total space $\gos{X}$ is a finite graph of spaces. Let $\PP$ be the collection of vertex groups of $X$. Then $(\pi_1(X),\PP)$ is relatively hyperbolic.
\end{lem}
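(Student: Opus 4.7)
The plan is to produce a finite relative presentation for $(\pi_1(X),\PP)$ and verify that its relative Dehn function is linear. For the presentation I would take $\mathcal{S}$ to be a symmetrized generating set corresponding to the finitely many essential edges of $X$, and let $\mathcal{R}$ consist of the attaching words of the finitely many essential $2$-cells in $C(X)$, each written as a word in $\mathcal{S}\cup\PPP$. Finiteness of $\mathcal{S}$ follows from the assumption that $\gos{X}$ is a finite graph of spaces, while finiteness of $\mathcal{R}$ follows from $|C(X)|<\infty$. Bass-Serre theory together with Lemma \ref{frei} confirms that this yields a genuine finite relative presentation of $(\pi_1(X),\PP)$.

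Given a trivial word $W$ of relative length $m$, I would realize $W$ as a closed edge path in $\os{X}$ containing at most $m$ essential edges and invoke Theorem \ref{vkl} to obtain a reduced diagram $\psi\colon D\to X$ with boundary reading $W$. Since $A(W)$ is bounded above by the number of essential $2$-cells of $D$, it suffices to show that every reduced diagram $D\to X$ has at most $\tfrac{1}{2}|\partial D|_{\mathrm{ess}}$ essential $2$-cells, where $|\partial D|_{\mathrm{ess}}$ counts essential $1$-cells of $\partial D$.

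The heart of the argument is an induction on the number of essential $2$-cells. For the inductive step, I would use Proposition \ref{4.11} (or Lemma \ref{4.7} when there is a unique essential $2$-cell) to locate an extreme essential $2$-cell $\alpha\subset D$ with exposed edge $e$, exponent $n=m(\alpha)\geq n(X)\geq 2$, and extreme subpath $\gamma\subset\partial\alpha=p^n$ containing $[e]_\alpha$. Letting $k$ denote the number of essential edges in one period $p$, a short combinatorial count gives $|\gamma|_{\mathrm{ess}}\geq(n-1)k+1$ and hence $|\partial\alpha\setminus\gamma|_{\mathrm{ess}}\leq k-1$. The extremality condition also forces every essential edge of $\gamma$ to lie on $\partial D$, since an interior such edge would place $\aux{\gamma}$ into the closure of a second essential $2$-cell of $\aux{D}$. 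Peeling $\alpha$ off $D$ should then produce, via Lemmas \ref{4.9} and \ref{nicebranch}, reduced simply connected branches $B_1,\ldots,B_r$ whose essential $2$-cell counts sum to one less than that of $D$ and whose total essential boundary length is at most $|\partial D|_{\mathrm{ess}}-2$. Applying the inductive hypothesis to each $B_i$ then gives the desired bound, and hence $A(W)\leq\tfrac{1}{2}m$.

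The main obstacle I anticipate is the peeling step: one must verify that each branch inherits a reduced diagram structure and that the total essential boundary length really does drop by at least two, which requires careful tracking of which essential edges of $\partial\alpha\setminus\gamma$ were already on $\partial D$ versus those shared with other essential $2$-cells of $D$. Granting this, linearity of the relative Dehn function follows, so $(\pi_1(X),\PP)$ is relatively hyperbolic.
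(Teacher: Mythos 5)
Your proposal follows essentially the same route as the paper: a finite relative presentation built from the essential edges and the essential $2$-cells, followed by a Dehn-algorithm argument that repeatedly peels extreme $2$-cells supplied by Proposition \ref{4.11} (and Lemma \ref{4.7}), using $n(X)\geq 2$ to force the essential boundary length to drop and hence obtain a linear relative Dehn function. The only caveat is your assertion that a trivial word of relative length $m$ is realized by a closed path with at most $m$ essential edges: interpreting the letters as loops at a common base-point requires connecting arcs (e.g.\ through a spanning tree of essential edges), so the path may use a constant multiple of $m$ essential edges -- this is precisely the length comparison the paper carries out in its first claim -- but the discrepancy only affects the multiplicative constant and not linearity, so your argument goes through.
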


\begin{proof}

We first construct a finite relative generating set for $G=\pi_1(X)$. Choose a maximal spanning tree $T$ of essential edges in $\gos{X}$. Orient the essential edges of $\gos{X}\setminus T$. Now the finite relative generating set $\mathcal{S}=S\sqcup\overline{S}$ is in one-to-one correspondence with the set of these oriented edges and their formal inverses. Moreover, a normal generating set for the kernel of the natural map from $F(S)*(*_{P\in \PP}P)\to G$ can be identified with the set of boundary paths of each essential $2$-cell of $X$, after choice of base-point in $T$.

Let $p$ be a reduced, cyclically reduced path in $\gos{X}$ such that $[p]$ represents the trivial element of $G$. Let $\PPP$ be the union of all $P\in\PP$, and let $\frgsl{p}$ denote the word length of $p$ in $\mathcal{S}\cup\PPP$. Note that we can compute $\frgsl{p}$ by counting the number of essential edges of $p$ in $\gos{X}\setminus T$, plus the number of maximal subloops of $p$ which lie entirely in a single vertex space. Let $D\to X$ be a reduced diagram for $p$ which uses a minimal number of essential $2$-cells, and call the number of essential $2$-cells in such a diagram $\da{p}$. By Lemma \ref{vkl}, having a linear relative Dehn function with respect to the finite relative generating set above is equivalent to requiring that there exist constants $a,b$ such that $\da{p}\leq am+b$ for each such $p$ with $\frgsl{p}\leq m$.

To find such constants, we will also need to consider the ``Bass-Serre length'' of $p$, denoted by $\bsl{p}$, which is just the number of essential edges occurring in $p$. We claim that:
\begin{itemize}
\item[(1)] $\bsl{p}$ is bounded above by a linear function of $\frgsl{p}$, and
\item[(2)] $\da{p}$ is bounded above by a linear function of $\bsl{p}$.
\end{itemize}

To see the first claim, note that since $T$ is finite, there is a constant $d$ such that any reduced path which stays entirely inside it (using only essential edges) can use at most $d$ essential edges. In particular any reduced path $p'$ in $\gos{X}$ with $\bsl{p'}>d$ will either use an essential edge of $\gos{X}\setminus T$ or contain a subloop representing a nontrivial element of some vertex space. Thus if $p'$ is a subpath of $p$ with $\bsl{p'}=d+1$, $p'$ contributes at least one unit of length to $\frgsl{p}$. This shows that

\[\frac{\bsl{p}}{d+1}-1\leq \frgsl{p},\]

i.e.

\[\bsl{p}\leq (d+1)\frgsl{p}+(d+1).\]

For the second claim, use Dehn's algorithm: Let $D\to X$ be a reduced diagram for $p$ which uses a minimal number of essential $2$-cells. Suppose first that $D$ contains at least two essential $2$-cells. Then $D$ contains an extreme essential $2$-cell $\alpha$ by Proposition \ref{4.11}. Since $n(X)\geq 2$, $\alpha$ has exponent at least two, and thus strictly more than half of the essential edges of $\partial\alpha$ lie on $\partial D$. Let $D'$ be the unique component of $D\setminus\alpha$ which contains essential $2$-cells (it is unique since $\alpha$ is extreme). The path $p'=\partial D'$ has the property that $\bsl{p'}\leq \bsl{p}-1$. Also, $D'$ uses a minimal number of essential $2$-cells since $D$ does. By induction on $\bsl{p}$, we may assume that there exist positive constants $a'$ and $b'$ such that $\da{p'}\leq a'\bsl{p'}+b'$. Assume without loss that $a',b'\geq 1$. We have that

\[\da{p}=\da{p'}+1\leq a'\bsl{p'}+b'+1\leq a'\bsl{p}-a'+b'+1\leq a'\bsl{p}+b'\]

as well. On the other hand, if $D$ contains one or fewer essential $2$-cells, then $\da{p}\leq 1$. In particular, we again have that $\da{p}\leq a'\bsl{p}+b'$.

Stacking the inequalities from claims (1) and (2) gives us our linear relative Dehn function.
\end{proof}

\section{Walls and ladders}
\label{sect:wl}

From now on, assume that the staggered generalized $2$-complex $X$ with $n(X)\geq 2$ has the additional property that each of the vertex groups of $X$ admits a proper and cocompact action on a $\cz$ cube complex. We also continue to assume that $\gos{X}$ is a finite graph of spaces.

Since locally indicable groups are necessarily torsion-free, our assumption that the vertex groups are cubulable in fact allows us to assume that each vertex space $V$ is a compact non-positively curved (NPC) cube complex, and the universal cover $\tilde{V}$ is a $\cz$ cube complex. Note that this implies in particular that each vertex group is finitely presented since $V$ is a finite $K(G,1)$ for its vertex group. Since $C(X)$ is finite, this also implies that the complex $\ucc{X}$ is locally finite. For metric statements in what follows, we will always be using the $\ell_1$ metric in the $1$-skeleton of $\tilde{V}$.

Note that $\pi_1(X)$ acts geometrically (properly and cocompactly) on $\ucc{X}$ (though no longer freely, since there is a fixed point in each elevation of an essential $2$-cell). We will define our walls as codimension-$1$ immersed hyperspaces in $\ucc{X}$ and then prove that they satisfy the necessary properties to apply the Sageev construction.

Similarly to the description in \cite{manningnotes}, we define walls as components of a ``midcube complex,'' $M(\ucc{X})$. The cube complex $M(\ucc{X})$ and its natural map to $\ucc{X}$ are defined as follows.

We first describe the disjoint union of the cubes of $M(\ucc{X})$. Fix $\frac{1}{2}>\epsilon>0$. Each cell of $\ucc{X}$ is either a cube of some dimension or an essential $2$-cell. Each $k$-dimensional cube $C$ of $\ucc{X}$ contains $k$ midcubes of codimension $1$ obtained by setting exactly one coordinate equal to $\frac{1}{2}$. For us, each of these midcubes $C'$ will give rise to exactly two $(k-1)$-dimensional cubes of $M(\ucc{X})$ equipped with homeomorphisms to two parallel copies of $C'$ distance $\epsilon$ from $C'$ on opposite sides of $C'$. On the other hand, each essential $2$-cell $\alpha$ of $\ucc{X}$ contributes edges to $M(\ucc{X})$ as follows. Suppose that $\alpha$ is of exponent $n$. Each edge $e$ in $\partial\alpha$ is either an essential edge or a $1$-dimensional cube in some $\tilde{V}$. In either case, consider two points in the interior of $e$ which are distance $\epsilon$ from the midpoint of $e$. After choosing an orientation of $\partial\alpha$ we may label them $v_e^-$ and $v_e^+$. There are an analogous pair of points in each edge of $[e]_\alpha$, and we add $n$ edges ($1$-dimensional cubes) to $M(\ucc{X})$ where each edge maps to a path in $\cl{\alpha}$ running from the $v_e^+$ in each edge of $[e]_\alpha$ to the $v_e^-$ in the next edge of $[e]_\alpha$ through $\intr{\alpha}$, and such that the images of these $n$ edges are disjoint. Moreover, we require that the image of edges of $M(\ucc{X})$ mapping to essential $2$-cells is invariant with respect to the action of $\pi_1(X)$ on $\ucc{X}$.

Now identify faces of cubes of $M(\ucc{X})$ as follows: Whenever one of the face identifications of $\ucc{X}$ identifies the images of two faces of cubes of $M(\ucc{X})$, we identify those faces in $M(\ucc{X})$. The walls of $\ucc{X}$ are defined as the components of $M(\ucc{X})$. Figure \ref{fig:walls} shows an illustration of some portions of walls in $\ucc{X}$.

\begin{figure}[htbp]
	\centering
		\includegraphics[width=0.8\textwidth]{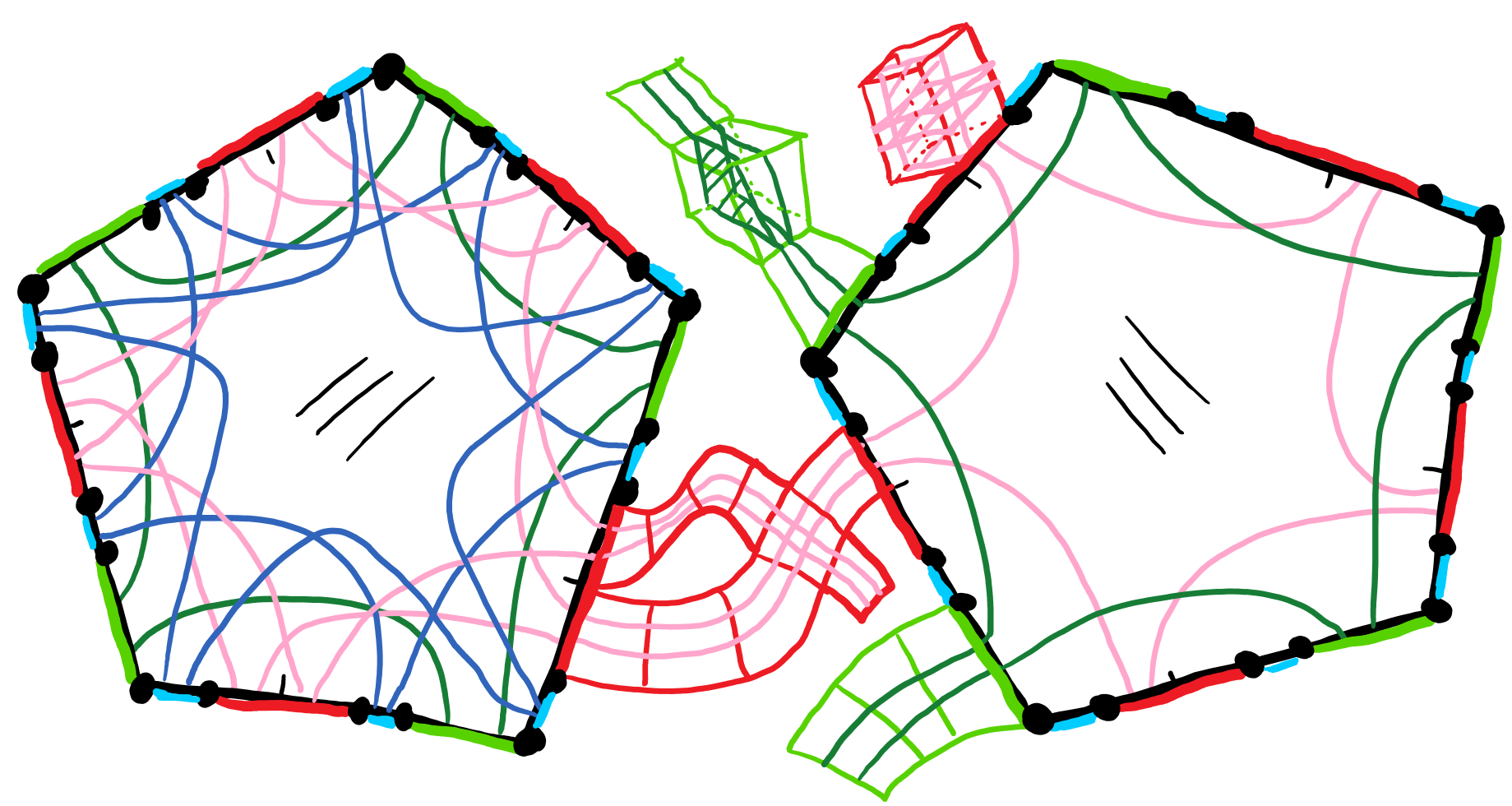}
	\caption{\footnotesize Some portions of walls in $\ucc{X}$. The dark blue segments joining essential (light blue) edges are disjoint from vertex spaces and are part of a walls which are immersed graphs in $\ucc{X}$. The pink and dark green segments joining the non-essential (red and light green) edges join to hyperplanes in vertex spaces of $\ucc{X}$ at their endpoints.}
	\label{fig:walls}
\end{figure}

Note that the action of $\pi_1(X)$ on $\ucc{X}$ preserves the system of walls just defined. Also note that there are two types of walls in $\ucc{X}$:
\begin{itemize}
\item[(i)] The walls which are dual to essential edges and do not intersect any $\tilde{V}$; these walls are graphs.
\item[(ii)] The walls which intersect some $\tilde{V}$. These walls may be higher dimensional. More precisely, these walls are \emph{graphs of hyperplanes}, i.e., they consist of hyperplanes of vertex spaces which are joined to each other by edges crossing essential $2$-cells, with the property that the endpoints of each edge are connected to vertices of hyperplanes.
\end{itemize}

A straightforward observation about walls is that they are locally determined:

\noindent
\begin{lem}
\label{locdet}
For any cell $\omega$ of $\ucc{X}$, if $\text{im}(\Lambda)\cap\omega$ is nonempty and $\text{im}(\Lambda)\cap\omega=\text{im}(\Lambda')\cap\omega$, then $\Lambda=\Lambda'$.
\end{lem}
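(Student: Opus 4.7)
The plan is to show that the intersection $\text{im}(\Lambda)\cap\omega$ uniquely determines which cubes of $M(\ucc{X})$ contribute to it, so that a wall is determined by its restriction to any single cell.

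First, I would describe $\text{im}(\Lambda)\cap\omega$ case-by-case. If $\omega$ is a $0$-cell the intersection is empty and the hypothesis is vacuous. If $\omega$ is an edge, $\text{im}(\Lambda)\cap\omega$ is a subset of the two-point set $\{p_+,p_-\}$ located at distance $\epsilon$ from the midpoint of $\omega$; by the face identification rule, each $p_\pm$ is the image of a single $0$-cube of $M(\ucc{X})$, since all the higher cubes of $M(\ucc{X})$ with a boundary point at $p_\pm$ get identified along that face. If $\omega$ is a cube of $\ucc{X}$ of dimension at least $2$, then the intersection is a disjoint union of open midcube-translates at distance $\epsilon$ from the midcubes of $\omega$. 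If $\omega$ is an essential $2$-cell, the intersection is a disjoint union of the open arcs prescribed by the construction for the orbits $[e]_\omega$.

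Second, I would verify the key injectivity statement: each ``piece'' of $\text{im}(\Lambda)\cap\omega$ is the image (restricted to $\omega$) of a unique cube of $M(\ucc{X})$, modulo face identifications that keep the cube within the same connected component. Type (a) cubes are indexed by a cube $C$ of $\ucc{X}$, a midcube of $C$, and a side $\pm$, and distinct indexings yield distinct parallel translates at distance $\epsilon$. Type (b) edges are indexed by an essential $2$-cell $\alpha$, an orbit $[e]_\alpha$, and a choice among the $n$ arcs, which are disjoint by construction; arcs from different orbits or different essential $2$-cells sit in different positions of $\cl{\alpha}$ or in different $2$-cells. Type (a) and type (b) images lie in disjoint kinds of cells of $\ucc{X}$, so no coincidence occurs between them. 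In situations where the restriction of a higher-dimensional midcube-cube to $\omega$ happens to coincide with a lower-dimensional midcube-cube of $\omega$ itself, the face identification rule guarantees that one appears as a face of the other in $M(\ucc{X})$, so both lie in the same component.

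Finally, assume $\text{im}(\Lambda)\cap\omega=\text{im}(\Lambda')\cap\omega\neq\emptyset$ and pick any piece $P$ of this common set. The second step produces a specific cube $C$ of $M(\ucc{X})$ whose image-in-$\omega$ equals $P$; this cube must lie in $\Lambda$ (as it contributes $P$ to $\text{im}(\Lambda)$) and likewise in $\Lambda'$. Since walls are connected components of $M(\ucc{X})$ and $\Lambda$, $\Lambda'$ share a cube, we conclude $\Lambda=\Lambda'$. The argument is essentially bookkeeping from the definition of $M(\ucc{X})$; the only point that needs care is checking that the construction does not spuriously produce distinct non-face-identified cubes sharing a full image-piece in $\omega$, and this follows directly from the indexing described above.
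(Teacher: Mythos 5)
The paper offers no proof of this lemma at all -- it is stated as ``a straightforward observation about walls,'' so there is nothing to compare your route against; what matters is whether your bookkeeping is sound, and it essentially is. Your structure is the right one: walls are by definition components of $M(\ucc{X})$, each piece of $\text{im}(\Lambda)\cap\omega$ singles out a cube of $M(\ucc{X})$ over $\omega$ (a $0$-cube at one of the two distinguished points of an edge, a parallel midcube translate at one of the $2k$ positions of a $k$-cube, or one of the prescribed arcs of an essential $2$-cell), and the face-identification rule glues to that cube every cube of $M(\ucc{X})$ over an adjacent cell whose image meets $\omega$ in the same piece, so any wall whose trace on $\omega$ contains that piece is the component containing that cube; sharing one piece then forces $\Lambda=\Lambda'$. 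Two small imprecisions are worth flagging. First, arcs coming from \emph{different} orbits $[e]_\alpha$, $[f]_\alpha$ inside the \emph{same} essential $2$-cell are generally not disjoint -- they cross in the interior of $\alpha$ (this is exactly how distinct walls cross there) -- so your phrase ``sit in different positions'' should be read only as saying the arcs are distinct point sets with distinct endpoint pairs, which is all the argument needs; this is also why the hypothesis of the lemma is equality of the traces rather than mere nonempty intersection. Second, in the final paragraph ``this cube must lie in $\Lambda$ (as it contributes $P$ to $\text{im}(\Lambda)$)'' is slightly too quick as stated: a priori $\Lambda$ might meet $P$ only through a cube over a neighboring higher-dimensional cell or adjacent $2$-cell, but as you note in your second step, any such cube has a face identified with the canonical cube over $\omega$, so it lies in the same component; phrased that way the conclusion is correct.
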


It is not clear that the walls we have just defined are well-behaved in $\ucc{X}$. For example, a priori, a wall could travel in some vertex space $\tilde{V}$, leave the space through some essential $2$-cell $\alpha$, and later come back to that same vertex space so that its image in $\ucc{X}$ intersects itself. However, note that each wall is an NPC cube complex and so it makes sense to speak of a local geodesic in the $1$-skeleton of a wall.

\noindent
\begin{dfn} (\textbf{Carrier/wall segment/ladder}). For a wall $\Lambda\looparrowright\ucc{X}$, the \emph{carrier} of $\Lambda$ is the smallest subcomplex of $\ucc{X}$ containing the image of $\Lambda$. A \emph{wall segment} $\lambda$ in a wall $\Lambda$ is a local geodesic in $\Lambda^{(1)}$, embedded except possibly at its endpoints. The \emph{ladder} associated to $\lambda$ is the smallest subcomplex of $\ucc{X}$ containing the image of $\lambda$.
\end{dfn}

Note that ladders are necessarily $2$-dimensional.

\section{Walls embed and separate}
\label{sect:eandc}

In Lauer and Wise's setting, ladders turn out to be simply connected. This is not necessarily true in our case, but they can be patched:

\noindent
\begin{lem}
\label{wallpatch}
Let $H$ be the ladder associated to a wall segment. Then $H$ contains at most two extreme essential $2$-cells, and there is a patching $\pl{H}\to\ucc{X}$ for $H$.
\end{lem}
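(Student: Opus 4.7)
The plan has two parts: apply Lemma~\ref{patching} to obtain the patching $\pl{H}$, then use the linear structure of the ladder to bound extreme essential $2$-cells.

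\textbf{Structure and patching.} Since the wall segment $\lambda_{\text{wall}}$ is a local geodesic in $\Lambda^{(1)}$, and since edges of $\Lambda$ arise only from $2$-cubes of $\ucc{X}$ (via $1$-midcubes) or from chords of essential $2$-cells, its image traverses a sequence of $2$-dimensional cells $\omega_1, \omega_2, \ldots, \omega_k$ of $\ucc{X}$, each either a $2$-cube in some vertex space or an essential $2$-cell, with consecutive cells $\omega_i, \omega_{i+1}$ sharing an edge $e_{i+1}$ dual to the wall's crossing of both. To invoke Lemma~\ref{patching}, I will build a path $\lambda \subseteq H^{(1)}$ going end-to-end through $H$, using one consistent side of each cell's wall crossing: for a $2$-cube $\omega_i$, an edge parallel to the midcube on the chosen side; for an essential $2$-cell $\omega_i$ with chord between consecutive members $e, e'$ of an orbit, the shorter of the two arcs of $\partial\omega_i$ between $e$ and $e'$ (of length $L - 1$, where $L$ is one period of $\partial\omega_i$). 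Inside each vertex space, the restriction of $\lambda$ is combinatorially dual to the hyperplane $\tilde V \cap \Lambda$, hence a geodesic in $\tilde V^{(1)}$ since $\tilde V$ is a $\cz$ cube complex. With a suitable admissible pseudometric (e.g.\ $d_{\tilde V} = d$), the relative-length minimality of $\lambda$ can be verified using Lemmas~\ref{noorbits} and~\ref{geodboundary} to rule out shortcuts. Moreover, $H$ has no isolated edges (every edge lies on the boundary of a $2$-cube or an essential $2$-cell of $H$), so the isolated-edge hypothesis is vacuous, and Lemma~\ref{patching} yields the desired patching $\pl{H} \to \ucc{X}$.

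\textbf{Bound on extreme cells.} I claim that any extreme essential $2$-cell $\alpha$ of $H$ must be $\omega_1$ or $\omega_k$. Suppose instead $\alpha = \omega_i$ with $1 < i < k$. Then the edges $e_i, e_{i+1} \in \partial\alpha$ are shared with the neighbors $\omega_{i\pm 1}$ in $H$, hence are internal in $H$. Consequently, if $\alpha$ is exposed with exposed edge $f$, then $f \notin [e_i]_\alpha$. Any extreme arc $\gamma \supseteq [f]_\alpha$ has length at least $(n-1)L + 1$ where $n = m(\alpha) \geq 2$, so the complement of $\gamma$ in $\partial\alpha$ has length at most $L - 1$, which is strictly less than the cyclic distance $L$ between $e_i$ and $e_{i+1}$ in $[e_i]_\alpha$; hence at least one of $e_i, e_{i+1}$ lies in $\gamma$. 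A case analysis --- invoking Lemma~\ref{notextreme} with $x = i(e_i), y = t(e_{i+1})$ for $n \geq 3$ (both arcs from $x$ to $y$ then have length $\geq L$), and a direct arc count for $n = 2$ (where $\gamma$ would have to start and end at $e_i, e_{i+1}$, leaving no room to contain the second element of $[f]_\alpha$) --- yields a contradiction. Therefore extreme essential $2$-cells are confined to the two ends of the ladder, giving the stated bound.

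\textbf{Main obstacle.} The principal subtleties are (a) verifying the relative-geodesic property of $\lambda$ across the interfaces between vertex-space segments and essential $2$-cell arcs, where one must show no competing path in $\ucc{X}$ reduces the relative length; and (b) supplying condition (ii) of Lemma~\ref{notextreme} when the neighboring cells $\omega_{i\pm 1}$ in the ladder are $2$-cubes rather than essential $2$-cells, which requires tracing the ladder through vertex-space regions (which collapse to points in $\aux{H}$) to identify essential $2$-cells of $\aux{H}$ whose closures meet $\aux{x}$ and $\aux{y}$.
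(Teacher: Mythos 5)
Your overall strategy is the same as the paper's: note the inclusion $H\hookrightarrow\ucc{X}$ is reduced, get the patching from Lemma \ref{patching} because $H$ has no isolated edges, and kill interior essential $2$-cells using Lemma \ref{notextreme} with $x,y$ taken on the two boundary edges dual to the wall segment, which are one period apart. Two remarks on the execution. First, the construction of the side path $\lambda$ and the verification that it is a relative geodesic are unnecessary (and the geodesity claims are not justified as written): since $H$ has no isolated $1$-cells (except in the degenerate case where $H$ is a single edge, which is trivially handled), the isolated-edge hypothesis of Lemma \ref{patching} is vacuous, exactly as you note at the end of that paragraph, and this is all the paper uses. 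Likewise your $n=2$ case split is avoidable: choosing $x$ and $y$ to be the endpoints of the two dual edges lying on \emph{opposite} sides of the wall segment makes the two arcs of $\partial\alpha$ have lengths exactly $L$ and $(n-1)L$, so condition (i) of Lemma \ref{notextreme} holds uniformly for all $n\geq 2$.

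The genuine problem is your intermediate claim that any extreme essential $2$-cell must be $\omega_1$ or $\omega_k$, i.e.\ the first or last $2$-dimensional cell of the ladder. This is false: for a ladder of the form square--essential $2$-cell--square, the unique essential $2$-cell sits in the middle of the cell sequence, yet $\aux{H}$ contains only one $2$-cell, so the extremeness condition is vacuous and that cell \emph{is} extreme. This is precisely where your ``main obstacle (b)'' cannot be overcome: condition (ii) of Lemma \ref{notextreme} requires $\aux{x}$ and $\aux{y}$ to lie in closures of at least two essential $2$-cells of $\aux{H}$, and when all cells of the ladder on one side of $\alpha$ are squares there is no second essential $2$-cell on that side to trace to, so no contradiction is available -- correctly, since such an $\alpha$ can be extreme. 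The statement you should prove (and all the lemma needs) is the paper's: only the first and last \emph{essential} $2$-cells of $H$ are candidates for being extreme. For an essential $2$-cell $\alpha$ with essential $2$-cells of $H$ on both sides along the ladder, your tracing argument does work: the chain of squares between $\alpha$ and the nearest essential $2$-cell on a given side lies in a single vertex space, so it collapses to the single point $\aux{x}$ (resp.\ $\aux{y}$), which then lies in the closure of that essential $2$-cell as well as of $\aux{\alpha}$, giving condition (ii). With the claim corrected in this way, your argument matches the paper's proof.
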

\begin{proof}
Consider the inclusion of $H$ into $\ucc{X}$, which is a reduced map. Note that the first and last essential $2$-cells of $H$ are the only candidates for extreme $2$-cells. Indeed, let $\lambda$ be the wall segment for which $H$ is the associated ladder, and observe that Lemma \ref{notextreme} may be applied to any essential $2$-cell $\alpha$ of $H$ which is not the first or last (taking the points $x$ and $y$ to be respective endpoints of the two edges of $\partial\alpha$ dual to $\lambda$ and on opposite sides of $\lambda$). Note also that $H$ has no isolated $1$-cells, unless $H$ is a single edge. Thus the hypotheses of Lemma \ref{patching} are satisfied and $\pl{H}\to\ucc{X}$ exists.
\end{proof}

The fact that walls embed and separate is a consequence of the following lemma.

\noindent
\begin{lem}
\label{wallcell}
Let $\alpha$ be a $2$-cell of $\ucc{X}$ (essential or not). If $\lambda$ is a wall segment with both endpoints in $\alpha$, then $\lambda$ is contained in $\alpha$.
\end{lem}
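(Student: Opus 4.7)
I would prove this by contradiction. Suppose $\lambda$ has both endpoints in $\alpha$ but $\lambda$ is not contained in $\alpha$. Let $H$ be the ladder of $\lambda$. Because both endpoints lie in $\alpha$, we have $\cl\alpha \subseteq H$, and because $\lambda$ leaves $\alpha$, the complex $H$ contains other cells and (together with an arc in $\alpha$ joining the endpoints of $\lambda$) carries a nontrivial loop. Apply Lemma \ref{wallpatch} to obtain a patching $\pl{H}\to\ucc{X}$, which is a reduced diagram. The key point used next is that any essential $2$-cell that is extreme in $\pl{H}$ must already be extreme in $H$: exposedness in $\pl{H}$ requires the relevant edge orbit to lie in $\partial \pl{H}\subseteq \partial H$. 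By Lemma \ref{wallpatch}, the only candidates for extreme essential $2$-cells in $H$ are the first and last essential $2$-cells traversed by $\lambda$.

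\textbf{Case 1: $\alpha$ is essential.} Then the first and last essential $2$-cells of $H$ are both $\alpha$, so $\pl{H}$ has at most one extreme essential $2$-cell. By Proposition \ref{4.11}, $\pl{H}$ contains at most one essential $2$-cell, necessarily $\alpha$. Therefore $\lambda$ does not cross any essential edge outside of $\alpha$, since essential edges can only be traversed by walls through essential $2$-cells. Consequently each excursion of $\lambda$ out of $\alpha$ is contained in a single vertex-space elevation $\tilde{V}$ meeting $\partial\alpha$. Such an excursion is a local geodesic in a single hyperplane $h$ of $\tilde{V}$ whose endpoints lie on two distinct edges $e_1, e_m$ of $\partial\alpha \cap \tilde{V}$, both dual to $h$. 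By the convention set up in Section \ref{sect:rl}, the maximal subpath of $\partial\alpha$ in $\tilde{V}$ lifts to a geodesic of $\tilde{V}$ that contains both $e_1$ and $e_m$. But this geodesic would cross $h$ twice, contradicting the standard fact that a geodesic in a $\cz$ cube complex crosses each hyperplane at most once.

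\textbf{Case 2: $\alpha$ is a cube in some vertex space $\tilde{V}$.} If $\lambda$ remains in $\tilde{V}$, then $\lambda$ is a local geodesic in a single hyperplane $h$ of $\tilde{V}$, which is itself $\cz$, so $\lambda$ is an actual geodesic in $h$. Both endpoints lie on the single midcube $\alpha\cap h$, and by convexity of that midcube inside $h$, $\lambda$ stays in $\alpha$, contradicting the assumption. If $\lambda$ leaves $\tilde V$, the same ladder-patching analysis as in Case 1 applies: $\pl{H}$ is constrained to contain only the first and last essential $2$-cells visited by $\lambda$, and the neighboring excursions of $\lambda$ inside $\tilde{V}$ can be shown to produce a maximal $\partial\beta$-subpath in $\tilde{V}$ (for a relevant essential $2$-cell $\beta$) that is a geodesic crossing the same hyperplane twice, again a contradiction. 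The main obstacle is verifying this last geodesic-crossing step in full generality: one must ensure that the two edges of $\partial\alpha$ (or $\partial\beta$) where $\lambda$ enters and exits a given vertex space actually lie on the same maximal subpath in that vertex space. This uses the fact that, because vertex groups are torsion-free and $w$ has finite order, $\partial\alpha$ (respectively $\partial\beta$) visits each vertex-space elevation of $\ucc{X}$ at most once, so our geodesic convention from Section \ref{sect:rl} applies to a single path containing both edges.
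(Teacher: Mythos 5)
Your overall strategy matches the paper's: apply the patching machinery of Lemma~\ref{wallpatch} to a ladder and use Proposition~\ref{4.11} to constrain the essential $2$-cells, then split on whether $\alpha$ is essential or a cube and appeal to hyperplane behavior in the vertex spaces. However, there are several concrete gaps.

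First, the claim ``\,$\cl{\alpha}\subseteq H$'' is not justified: if the endpoints of $\lambda$ lie on $\partial\alpha$ but the first and last edges of $\lambda$ pass through $2$-cells other than $\alpha$, then $\alpha$ need not lie in the ladder $H$, and consequently the first and last essential $2$-cells of $H$ need not be $\alpha$. The paper circumvents this by working with $K=\alpha\cup H$ (and later an innermost excursion $\lambda'$ with $K'=\alpha\cup H'$), and explicitly \emph{extends} the segment across $\alpha$ so that $K'$ is itself a ladder with $\alpha$ as its first and last essential $2$-cell; you need to do something like this for the ``only one extreme candidate'' step to go through. Second, in Case~2 you assert that a local geodesic in the $1$-skeleton of a $\cz$ cube complex $h$ is an actual geodesic. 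This is false (e.g., a non-backtracking edge path going three sides around a square of $h$ when $h$ is two-dimensional), so the convexity argument does not apply; the paper instead handles the cube case entirely differently, by locating the first essential $2$-cells $\alpha_u,\alpha_v$ hit by $\lambda'$ from each end, noting these are the only extreme candidates, and observing that they become identified in the auxiliary diagram and therefore cannot be extreme by Lemma~\ref{notextreme}, contradicting Proposition~\ref{4.11}. Third, in Case~1 (and the corresponding part of Case~2) your argument relies on both of the edges $e_1,e_m$ where an excursion of $\lambda$ meets $\partial\alpha$ lying in the \emph{same} maximal subpath of $\partial\alpha$ in $\tilde{V}$; you flag this and sketch a justification via torsion-freeness and the finite order of $w$, but that argument only rules out repetitions coming from different periods of $p^n$ and does not address the possibility of two distinct maximal $\tilde{V}$-subpaths within a single period of $p$. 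The paper avoids the issue altogether by considering the two arcs $\sigma,\sigma'$ of $\partial\alpha\setminus\{u,v\}$: if one has no essential edges, the geodesic convention and hyperplane separation apply directly; if both contain essential edges, the Snipping Lemma gives the contradiction. Adopting those two case distinctions (the auxiliary-diagram argument for cubes and the $\sigma/\sigma'$ dichotomy plus Snipping Lemma for essential $\alpha$) would close the gaps in your write-up.
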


\begin{proof}
Let $H$ be the ladder associated to $\lambda$ and let $K=\alpha\cup H$. Note that $\partial\alpha$ embeds in $\ucc{X}$. If $\alpha$ is essential this follows from Lemma \ref{3.9}, and if $\alpha$ is a square then this is a general fact about $\cz$ cube complexes. We will show that $K$ contains no $2$-cells besides $\alpha$, which proves the lemma. If $K$ contains a $2$-cell besides $\alpha$ then we may choose distinct points $u$ and $v$ in $\partial\alpha\cap\lambda$ such that the portion of $\lambda$ (of positive length) between $u$ and $v$ (which we denote by $\lambda'$) does not internally intersect $\alpha$. Let $H'$ be the ladder associated to $\lambda'$, and note that $K'=\alpha\cup H'$ is itself a ladder (by possibly extending $\lambda'$ across $\alpha$ if necessary). By Lemma \ref{wallpatch}, $K'$ has a patching $\pl{K'}\to\ucc{X}$.

Note first that $\alpha$ cannot be a square. Indeed, if it is, then the wall segment $\lambda'$ passes through an essential $2$-cell, for otherwise we have found a wall segment in a single $\cz$ cube complex which leaves and comes back to the same square, and this contradicts the known behavior of hyperplanes in these spaces. Let $u'$ and $v'$ be the first points along $\lambda'$ from $u$ and $v$, respectively, which lie in the boundary of some essential $2$-cells $\alpha_u$ and $\alpha_v$, which may or may not be distinct. Note that $\alpha_u$ and $\alpha_v$ are the only candidates for extreme essential $2$-cells of $\pl{K'}$. On the other hand, $u'$ and $v'$ become identified in the auxiliary diagram, so  in fact neither $\alpha_u$ nor $\alpha_v$ can be extreme by Lemma \ref{notextreme}. The complex $\pl{K'}$ contradicts Proposition \ref{4.11}.

Thus $\alpha$ is an essential $2$-cell. By extending $\lambda'$ through $\alpha$ if necessary, we see that $\alpha$ is both the first and last essential $2$-cell through which $\lambda'$ passes. Since $\alpha$ is the only candidate for an extreme $2$-cell of $\pl{K'}$ by Lemma \ref{wallpatch}, Proposition \ref{4.11} implies that $\alpha$ is the only essential $2$-cell of $\pl{K'}$, and $\alpha$ is exposed by Lemma \ref{4.7}. Thus $H'$ is made entirely of squares. Let $e_u$ and $e_v$ be the edges of $\partial\alpha$ containing $u$ and $v$. Let $\sigma$ and $\sigma'$ be the two arcs of $\partial\alpha\setminus\{u,v\}$. Suppose one of these arcs, say $\sigma$, contains no essential edges. The arc $e_u\cup\sigma\cup e_v$ is a geodesic in a $\cz$ cube complex, and the wall segment $\lambda'$ shows that some wall segment (lying entirely in that $\cz$ cube complex) crosses it twice. This also contradicts behavior of hyperplanes in these spaces. Thus there are essential edges $e$ and $e'$ in $\sigma$ and $\sigma'$ respectively. On the other hand, $e$ and $e'$ lie on $\partial\pl{K'}$ by the fact that $\alpha$ is the only essential $2$-cell of $\pl{K'}$ and Lemma \ref{3.9}. Connect midpoints of $e$ and $e'$ by a snipping arc running through the interior of $\alpha$ and observe that the wall segment $\lambda'$ contradicts Lemma \ref{snip}.

It follows that $K$ contains no $2$-cells besides $\alpha$, and the lemma is proved.
\end{proof}

\noindent
\begin{prop}
\label{7.4}
(cf \cite[Theorem 7.4]{lw}). Each wall is a tree of hyperplanes and embeds in $\ucc{X}$.
\end{prop}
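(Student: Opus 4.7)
The plan is to argue by contradiction through a minimal counterexample, exploiting the patching construction (Lemma \ref{wallpatch}) and Proposition \ref{4.11}. Note that both failure modes unify into a single phenomenon: if $\Lambda$ does not embed, there is a wall segment $\lambda$ in $\Lambda$ between \emph{distinct} points $p_1 \neq p_2$ with a common image $q = f(p_1) = f(p_2)$ in $\ucc{X}$; and if $\Lambda$ contains a non-trivial cycle in its graph-of-hyperplanes structure, there is a closed, combinatorially non-trivial wall segment $\lambda$ (with $p_1 = p_2$). In either situation I would choose $\lambda$ to minimize the number of essential $2$-cells it crosses, form the ladder $H$, and invoke Lemma \ref{wallpatch} to obtain a patching $\pl{H} \to \ucc{X}$. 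Lemma \ref{wallpatch} further pins down that the only candidates for extreme essential $2$-cells of $\pl{H}$ are the first and last essential $2$-cells $\alpha_1$ and $\alpha_k$ that $\lambda$ traverses.

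The main step is to derive a contradiction based on where the common image $q$ sits. If $q$ lies in the interior of a single cell $C$ (a square or an essential $2$-cell) of $\ucc{X}$, then both endpoints of $\lambda$ lie in $C$, and Lemma \ref{wallcell} forces $\lambda \subseteq C$. But the intersection of $\Lambda$ with a single $2$-cell is a disjoint union of simple midcube-like arcs, each with two distinct endpoints lying on the boundary of $C$, so no such $\lambda$ can have its endpoints identified. If instead $q$ lies on an essential edge, at an essential vertex, or in the interior of some vertex space $\tilde V$, then $\alpha_1$ and $\alpha_k$ are both incident to the structure through $q$. I would then verify that for each of $\alpha_1$ and $\alpha_k$, the two vertices on $\partial\alpha$ closest to the endpoints of $\lambda$ satisfy the two hypotheses of Lemma \ref{notextreme} in $\pl{H}$: condition (i) follows from $n(X) \geq 2$ together with the fact that $\lambda$ traverses $\alpha$ along a single wall edge, and condition (ii) follows because the common image $q$ glues the two sides of $\lambda$ back together in $\aux{\pl{H}}$, placing those vertices in the closures of two distinct essential $2$-cells. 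Hence neither $\alpha_1$ nor $\alpha_k$ is extreme in $\pl{H}$, so $\pl{H}$ has no extreme essential $2$-cell. By minimality $\pl{H}$ has at least two essential $2$-cells (see the next paragraph), contradicting Proposition \ref{4.11}.

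The main obstacle will be the degenerate low-complexity cases. When $\lambda$ crosses no essential $2$-cell, $\lambda$ is a wall segment inside a single vertex space $\tilde V$, and embedding together with the tree structure follows from the corresponding standard facts about hyperplanes in the $\cz$ cube complex $\tilde V$ (combined with Lemma \ref{vertexspacesconvex}). The trickiest case is when $\lambda$ traverses exactly one essential $2$-cell $\alpha$, so $\alpha_1 = \alpha_k = \alpha$, and $\lambda$ exits $\alpha$ into a vertex space $\tilde V$ and returns to $\alpha$ through the same (or another) edge after following a hyperplane of $\tilde V$. I expect to rule this case out by applying Lemma \ref{wallcell} to the first and last edges of $\lambda$ lying in $\cl\alpha$ and using convexity of $\tilde V$ (Lemma \ref{vertexspacesconvex}) together with local injectivity of hyperplanes in $\cz$ cube complexes to produce a strictly shorter bad wall segment, contradicting the minimal choice of $\lambda$.
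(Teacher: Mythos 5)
Your overall strategy is viable, and it ultimately rests on the same decisive ingredient as the paper, namely Lemma \ref{wallcell}; but the paper's proof is much shorter than your proposal and avoids your case analysis entirely. In the paper, both failure modes are reduced at once to a wall segment with both endpoints in a single $2$-cell: a homotopically nontrivial closed wall segment can be re-based at a point inside any $2$-cell its ladder meets, and a self-intersection of $\Lambda$ is always witnessed inside a $2$-cell (essential or a cube, passing to a $2$-dimensional face in the cube case), because points of the $1$-skeleton have a unique preimage in $M(\ucc{X})$ by construction. Then Lemma \ref{wallcell} forces the offending segment into that single $2$-cell, while the fact that $\partial$ of each $2$-cell embeds and the wall pieces inside one $2$-cell are disjoint embedded arcs in the midcube complex shows the ladder has at least two $2$-cells -- an immediate contradiction. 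So your Case A \emph{is} the paper's proof, and your Case B (re-running the patching machinery of Lemma \ref{wallpatch}, Lemma \ref{notextreme} and Proposition \ref{4.11} directly) duplicates work that is already packaged inside the proof of Lemma \ref{wallcell} and is largely unnecessary: with the observation above, the identified point $q$ can always be taken in the interior of a $2$-cell. (A small inaccuracy: the image of $\Lambda$ inside a square need not be a disjoint union of arcs, since copies of the two different midcubes cross; the argument survives because a wall segment confined to the square stays in a single edge of $M(\ucc{X})$ over that square, whose endpoints have distinct images.)

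The one genuine soft spot is your deferred ``trickiest case,'' where $\lambda$ crosses exactly one essential $2$-cell. There Proposition \ref{4.11} gives nothing if the patching adds no essential $2$-cells, and the repair you propose -- producing a strictly shorter bad wall segment to contradict minimality -- does not obviously go through: for instance, if the two vertex-space sheets of $\Lambda$ adjacent to the endpoints of $\lambda$ have coincident or crossing images, the new bad pair you would extract still requires at least one essential $2$-cell to connect inside $\Lambda$, so the count need not drop. The correct resolution is again Lemma \ref{wallcell} (or, for two sheets over the same vertex space, the fact that each component of $M(\ucc{X})$ over a vertex space maps isomorphically onto a parallel copy of a hyperplane, so either the endpoints coincide or the intersection is witnessed at a square-interior point, returning you to your Case A). As written, this case is stated as an expectation rather than proved, so you should either supply that argument or, better, adopt the paper's reduction, which makes the minimal-counterexample framework and the location-of-$q$ case analysis unnecessary.
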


\begin{proof}

If some wall $\Lambda$ is not simply connected, then there exists a wall segment $\lambda$ of positive length in $\os{\Lambda}$ which is a loop. Let $H$ be the ladder associated to $\lambda$. Note that $H$ contains at least two $2$-cells since the boundaries of $2$-cells of $\ucc{X}$ embed. Pick a $2$-cell $\alpha$ in $H$. The previous Lemma implies that every wall segment connecting any pair of points in $\lambda\cap\partial\alpha$ passes through the interior of $\alpha$. This contradicts that $H$ contains at least two $2$-cells.

Thus $\Lambda$ is simply connected. Since it is an NPC cube complex, it is in fact a $\cz$ cube complex. We thus see that $\Lambda$ is a tree if it is a wall of type (i), and a tree of hyperplanes if it is a wall of type (ii).

Now suppose that a wall $\Lambda$ does not embed in $\ucc{X}$. Then $\Lambda$ intersects itself in some essential $2$-cell $\alpha$ or some cube $c$. In the latter case, there is some $2$-dimensional face of $c$ in which we will witness the intersection of $\Lambda$ with itself. Thus we may choose a wall segment $\lambda$ which intersects itself exactly once in a $2$-cell $\alpha$ (essential or not) and let $H$ be the ladder associated to $\lambda$. Note that $H$ contains at least two $2$-cells since the boundaries of $2$-cells of $\ucc{X}$ embed. The previous Lemma implies that every wall segment connecting any pair of points in $\lambda\cap\partial\alpha$ passes through the interior of $\alpha$. This contradicts that $H$ contains at least two $2$-cells.
\end{proof}

This result permits us to casually confuse a wall $\Lambda$ with its image in $\ucc{X}$, a liberty we will take freely in what follows.

\noindent
\begin{cor}
\label{wallsep}
Each wall in $\ucc{X}$ is separating.
\end{cor}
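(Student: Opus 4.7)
The plan is to prove separation via a parity-of-crossings argument using disk diagrams. First note that $\ucc{X}$ is simply connected: it is obtained from the simply connected space $\tilde{X}$ by identifying elevations of essential $2$-cells with the same boundary, a quotient which does not introduce new fundamental group. Next, I want to exploit the fact that the midcube complex $M(\ucc{X})$ is \emph{doubly}-sided by construction: each midcube of a cube of $\ucc{X}$ gives rise to two parallel copies offset by $\pm\epsilon$ in $M(\ucc{X})$, and each essential $2$-cell provides endpoints $v_e^+$ and $v_e^-$ on either side of every essential edge crossed. Consequently, at every point of $\Lambda$, there are well-defined local ``$+$'' and ``$-$'' sides. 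Because $\Lambda$ is connected and embeds in $\ucc{X}$ by Proposition \ref{7.4}, these local labels propagate consistently to yield two disjoint one-sided neighborhoods $N^+,N^-$ of $\Lambda$.

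The heart of the argument is the claim that for any loop $\gamma$ in $\ucc{X}$ transverse to $\Lambda$, the number of transverse intersections $\#(\gamma\cap\Lambda)$ is even. Since $\ucc{X}$ is simply connected, $\gamma$ bounds a disk diagram $\psi:D\to\ucc{X}$ by Theorem \ref{vkl}. I then inspect the preimage $\psi^{-1}(\Lambda)$ in $D$ cell by cell. In each square of $D$ whose image meets $\Lambda$, the preimage is a single midcube arc joining midpoints of two parallel edges. In each essential $2$-cell $\alpha$ of $D$, the preimage consists of finitely many disjoint star-arcs, one for each time $\Lambda$ traverses $\alpha$; the fact that these arcs are pairwise disjoint is built into the construction of $M(\ucc{X})$ and is preserved by $\psi$ since $\Lambda$ embeds. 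Gluing these pieces along the $1$-skeleton of $D$, the $\pm$-labels of the endpoints match coherently between adjacent cells, so $\psi^{-1}(\Lambda)$ is a properly embedded compact $1$-submanifold of $D$ with $\partial(\psi^{-1}(\Lambda))\subset\partial D$. Every compact $1$-manifold with boundary has an even number of boundary points, so $\#(\gamma\cap\Lambda)$ is even.

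To conclude, I define the ``side of $\Lambda$'' invariant: call two points $x,y\in\ucc{X}\setminus\Lambda$ equivalent if some (equivalently, every) path from $x$ to $y$ transverse to $\Lambda$ meets $\Lambda$ an even number of times. The parity claim ensures this is well-defined, and the local $\pm$ neighborhoods $N^+,N^-$ witness that this equivalence relation has at least two classes; any path from a $+$ point to a $-$ point must cross $\Lambda$. Thus $\ucc{X}\setminus\Lambda$ is disconnected, as required.

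The main obstacle is verifying that $\psi^{-1}(\Lambda)$ is genuinely a $1$-manifold, rather than a $1$-complex with branch points inside essential $2$-cells. This requires that when $\Lambda$ enters an essential $2$-cell of $\ucc{X}$ along a given edge, the arc it uses is locally unique, and that the different star-arcs contributed by different position classes do not share interior points. Both facts follow from the explicit construction of $M(\ucc{X})$ (where the $n$ star-arcs for a single position class are specified to have disjoint images and parallel offsets separate sides in squares) together with the embedding statement of Proposition \ref{7.4}, which rules out a wall running through the same cell twice in incompatible ways.
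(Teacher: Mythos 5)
Your proof is essentially correct, but it takes a genuinely different route from the paper. The paper's proof of Corollary \ref{wallsep} is short and global: by construction and Proposition \ref{7.4} each wall is locally separating, hence has an $I$-bundle neighborhood; since a wall is a tree of hyperplanes it is contractible, so the bundle is a product and $\ucc{X}$ decomposes as a graph of spaces with the wall as a single simply connected edge space; $H^1(\ucc{X})=0$ then forces the underlying graph to be a segment rather than a loop, so the wall separates. You instead run the classical mod-$2$ crossing argument: simple connectivity of $\ucc{X}$ lets you fill an edge loop by a diagram via Theorem \ref{vkl}, and the preimage of $\Lambda$ in the diagram is (essentially) a compact $1$-manifold with boundary on $\partial D$, because $\Lambda$ meets each closed $2$-cell of $\ucc{X}$ in a disjoint union of properly embedded arcs ending at the $v^{\pm}$ points --- disjointness within a single cell coming from the construction of $M(\ucc{X})$ together with the embedding statement of Proposition \ref{7.4}, exactly as you note. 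Hence every edge loop crosses $\Lambda$ an even number of times, while a short transverse arc crosses it once, and separation follows. What your route buys is that it bypasses the bundle-triviality/graph-of-spaces step entirely; in particular you never actually need the wall to be globally two-sided. What the paper's route buys is brevity and no diagram bookkeeping.

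Three points in your write-up need tightening. First, your claim that the local $\pm$ labels ``propagate consistently'' because $\Lambda$ is connected and embedded is not justified as stated: connected, embedded, and locally two-sided do not imply globally two-sided in general (compare the core circle of a M\"obius band). Fortunately your parity mechanism never uses the global labels, only a single short arc crossing $\Lambda$ exactly once, so you should simply delete that claim rather than repair it (the paper gets global two-sidedness from contractibility of the wall, which is the honest justification if you want it). Second, Theorem \ref{vkl} applies to closed \emph{edge} paths, whereas the loops you must rule out arise from arbitrary continuous paths in $\ucc{X}\setminus\Lambda$ closed up by a transverse arc; you need a routine cellular-approximation step pushing such a path into controlled position cell by cell without changing its mod-$2$ intersection with $\Lambda$, using that in each closed cell every complementary region of the wall arcs meets the boundary of that cell. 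Third, $\psi^{-1}(\Lambda)$ need not literally be a $1$-manifold: an edge of $D$ lying in no $2$-cell contributes isolated preimage points; but such an edge is traversed twice by $\partial D$, so it contributes an even number of crossings and the parity conclusion survives. None of these is a conceptual obstruction, but as written they are gaps of rigor in an otherwise sound alternative argument.
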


\begin{proof}
For any point $p$ in a wall $\Lambda$, $\Lambda$ separates a neighborhood of $p$ into exactly two components, by Lemma \ref{7.4} and construction. Thus each wall is locally separating and has an $I$-bundle neighborhood. And since each wall is a tree of hyperplanes (also Lemma \ref{7.4}), each wall is contractible. Thus each $I$-bundle neighborhood is actually a product. Thus for each wall, $\ucc{X}$ decomposes as a graph of spaces with a single simply connected edge space. Since $H^{1}(\ucc{X})=0$, this graph of spaces is a dumbell space (not a loop), and each wall is separating.
\end{proof}

Here are some miscellaneous convenient lemmas about the geometry of walls.

\noindent
\begin{lem}
\label{random2}
Let $\gamma$ be a relative geodesic edge path in a vertex space $\tilde{V}$ of $\ucc{X}$. Let $\Lambda$ be a wall. Then $\Lambda\cap\gamma$ is either empty or a single point.
\end{lem}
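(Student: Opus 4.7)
The plan is a proof by contradiction: suppose $\Lambda\cap\gamma$ contains two distinct points $p_1,p_2$, lying on edges $e_1,e_2$ of $\gamma$. Since $\gamma$ is a relative geodesic in $\tilde{V}$, its restriction is a geodesic in the $1$-skeleton of the $\cz$ cube complex $\tilde{V}$ with the $\ell_1$ metric, and such geodesics cross each hyperplane at most once. Hence either $e_1=e_2$, in which case $p_1,p_2$ are the two points $v_{e_1}^{\pm}$ on different sheets of the hyperplane dual to $e_1$, or $e_1\neq e_2$ are dual to distinct hyperplanes $h_1,h_2$. The case $e_1=e_2$ is ruled out directly by Lemma~\ref{wallcell}: the wall segment from $p_1$ to $p_2$ must lie in $\cl{\alpha}$ for every $2$-cell $\alpha$ containing $e_1$, but within any such closure $v_{e_1}^{+}$ and $v_{e_1}^{-}$ are not joined by a wall segment---in a square the two midcube sheets are disjoint components, and in an essential $2$-cell the wall chords connect $v_e^+$ only to $v_{e'}^-$ for the next edge $e'\in[e]_\alpha$, never back to $v_e^-$.

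Assume $e_1\neq e_2$ and $h_1\neq h_2$. By Proposition~\ref{7.4}, $\Lambda$ is a tree of hyperplanes, so there is a unique wall segment $\lambda$ in $\Lambda$ from $p_1$ to $p_2$; since distinct hyperplane sheets of $\tilde{V}$ lie in distinct components of the midcube complex restricted to $\tilde{V}$, $\lambda$ must traverse at least one essential $2$-cell of $\ucc{X}$. Let $H$ be the ladder of $\lambda$, $\gamma'$ the subpath of $\gamma$ from the initial endpoint of $e_1$ to the terminal endpoint of $e_2$, and set $E=H\cup\gamma'$. Every $2$-cell of $E$ comes from $H$ and every edge of $H$ lies in a $2$-cell of $H$, so any isolated edge of $E$ lies on $\gamma'$; since $\gamma'$ maps to a relative geodesic, Lemma~\ref{patching} yields a patching $\pl{E}\to\ucc{X}$, a reduced diagram. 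By the definition of patching, any extreme essential $2$-cell of $\pl{E}$ lies in $H$.

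The contradiction will come from Proposition~\ref{4.11}. Any middle essential $2$-cell of $H$ fails to be extreme by Lemma~\ref{notextreme}, applied exactly as in the proof of Lemma~\ref{wallpatch}. In the non-degenerate case where $\lambda$ traverses two or more essential $2$-cells, the first and last cells $\alpha_1,\alpha_k$ of $H$ both have boundary meeting the connected region $\tilde{V}\cap E$, so in $\aux{\pl{E}}$ the single vertex representing $\tilde{V}\cap E$ lies in the closures of both $\aux{\alpha_1}$ and $\aux{\alpha_k}$. A subpath $\gamma_{\alpha_1}\subset\partial\alpha_1$ witnessing extremeness contains all $n\geq 2$ members of some class $[e]_{\alpha_1}$, so its length is at least $(n-1)|p|+1$ (where $\partial\alpha_1=p^n$) and its complementary arc has length at most $|p|-1$. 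A position analysis---using that the complementary arc is shorter than one period, and that the entry edge of $\lambda$ in $\alpha_1$ lies in $\tilde{V}$---forces a vertex of $\partial\alpha_1\cap\tilde{V}$ into the interior of $\gamma_{\alpha_1}$. Then $\aux{\gamma_{\alpha_1}}$ meets $\cl{\aux{\alpha_k}}$ at a non-endpoint, contradicting extremeness. By symmetry $\alpha_k$ is also not extreme, so $\pl{E}$ has no extreme essential $2$-cell, violating Proposition~\ref{4.11}.

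The hard part will be the degenerate case $\alpha_1=\alpha_k$, where $\lambda$ traverses only a single essential $2$-cell $\alpha$, since the symmetric argument above collapses. One plan here is to show that the loop generating $\pi_1(E)$---the concatenation of $\gamma'$ with one period of $\partial\alpha$ reversed---cannot be nullhomotoped in $\ucc{X}$ without using essential $2$-cells, by invoking Lemma~\ref{frei} to see that $\gamma'$ represents an element of a vertex group while the period of $\partial\alpha$ does not. This forces the patching to add at least one new essential $2$-cell to $\pl{E}$, bringing the total to two or more; but only $\alpha$ is a candidate for an extreme $2$-cell in $H$, so Proposition~\ref{4.11} is again violated, completing the contradiction.
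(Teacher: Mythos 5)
Your non-degenerate case (at least two essential $2$-cells in the ladder) is essentially sound, and it is a different route from the paper: you drive the count of extreme cells to zero and contradict Proposition \ref{4.11}, whereas the paper keeps the extreme cell, takes two members of its exposed class on opposite sides of the chord $\lambda\cap\alpha$, joins them by a snipping arc, and exhibits a path around through the ladder and the essential-edge-free path $\gamma$ that contradicts Lemma \ref{snip}; the residual no-essential-cell case is then killed by the fact that a hyperplane of $\tilde{V}$ cannot cross a geodesic twice. Your end-cell argument should really just be an application of Lemma \ref{notextreme}: take $x,y$ to be endpoints of the two edges of $\partial\alpha_1$ dual to $\lambda$; condition (i) holds as in Lemma \ref{wallpatch}, and condition (ii) holds because the entry edge lies in the connected vertex-space region containing $\gamma'$ and the exit edge of $\alpha_k$ (so its collapsed vertex lies in $\cl{\aux{\alpha_k}}$), while the exit edge's region is adjacent to $\aux{\alpha_2}$. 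Phrased your way (``meets $\cl{\aux{\alpha_k}}$ at a non-endpoint''), the step is shaky, since the interior vertex you produce can collapse to the same point of $\aux{\pl{E}}$ as an endpoint of $\aux{\gamma_{\alpha_1}}$; citing Lemma \ref{notextreme} sidesteps this.

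The genuine gap is the case you yourself flag, where $\lambda$ crosses a single essential $2$-cell $\alpha$, and your plan for it does not work. First, Lemma \ref{frei} is a $\pi_1$-injectivity statement about loops; neither $\gamma'$ nor the arc of $\partial\alpha$ between the two dual edges is closed, so the assertion that one ``represents an element of a vertex group while the other does not'' has no meaning as stated. Second, and more seriously, even granting that every reduced disk diagram filling your generating loop must contain an essential $2$-cell (which one can argue, since the loop traverses each essential edge of the $\partial\alpha$-arc exactly once), this does not force the patching $\pl{E}$ to contain a \emph{second} essential $2$-cell: the folding step in the construction of Lemma \ref{patching} removes cancelable pairs between $E$ and the filling disks, and any essential cell of the disk may be absorbed into $\alpha$ itself, leaving $\pl{E}$ with $\alpha$ as its only essential $2$-cell. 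In that situation Proposition \ref{4.11} gives no contradiction, and your argument terminates without one. This single-cell configuration is the main case of the lemma (the wall dips out of $\tilde{V}$ through one cell and returns), so it must be handled; the paper's snipping-arc argument does so uniformly, because when $\alpha$ is the only essential cell it is exposed by Lemma \ref{4.7}, both complementary arcs of the chord contain members of the exposed class, and the initial squares, $\gamma'$, and final squares supply a path around the snipping arc, contradicting Lemma \ref{snip}. You would need to supply an argument of this kind (or otherwise rule out the single-cell patching) to close the gap.
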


\begin{proof}
Since $\gamma$ lies in a vertex space, it is in fact a geodesic by definition. Suppose $\Lambda$ intersects $\gamma$ in two distinct points $x$ and $y$. Let $\lambda$ be a wall segment connecting $x$ to $y$ and let $H$ be the associated ladder. The complex $K=H\cup\gamma$ is a subcomplex of $\ucc{X}$ which has a natural reduced map to $X$, and it satisfies the hypotheses of Lemma \ref{patching}, so let $\pl{K}$ be a patching for $K$. Note $\pl{K}$ has a maximum of two extreme $2$-cells by Lemma \ref{wallpatch} applied to $H$. If $\pl{K}$ has an essential $2$-cell, then $H$ contains essential $2$-cells and the first one $\alpha$ through which $\gamma$ passes is extreme in $\pl{K}$ by Proposition \ref{4.11}. Let $e$ be an exposed essential edge lying in the boundary of $\alpha$, and choose two elements $e_1$ and $e_2$ of $[e]_\alpha$ which lie on opposite sides of $\lambda\cap\alpha$. Connect $e_1$ and $e_2$ by a snipping arc across the interior of $\alpha$, and observe that this snipping arc is non-separating, contradicting the snipping lemma. Indeed we can get from one side to the other by following $H$ to $\gamma$, traversing $\gamma$ from $x$ to $y$ (or $y$ to $x$), and then going through the other portion of $H$ until reaching the snipping arc. This works because there are no essential edges in $\gamma$. Thus there are no essential $2$-cells in $\pl{K}$. But this means that a connected component of $\Lambda\cap\tilde{V}$ (which is a hyperplane in $\tilde{V}$ by Proposition \ref{7.4}) crosses the geodesic  $\gamma$ twice, which contradicts the behavior of hyperplanes in $\cz$ cube complexes.
\end{proof}

We record the following immediate corollary.

\noindent
\begin{cor}
\label{wallintersectvs}
For each wall $\Lambda$ and each vertex space $\tilde{V}$, $\Lambda\cap\tilde{V}$ is either empty or consists of a single hyperplane in $\tilde{V}$.
\end{cor}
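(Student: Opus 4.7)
The plan is to unpack what $\Lambda\cap\tilde{V}$ can be and then derive a contradiction from two distinct hyperplanes using Lemma \ref{random2}. First I would note that by the very construction of walls as components of the midcube complex $M(\ucc{X})$, the intersection of $M(\ucc{X})$ with any vertex space $\tilde{V}$ is exactly the midcube complex of $\tilde{V}$. Since $\Lambda$ is a component of $M(\ucc{X})$ and each hyperplane of $\tilde{V}$ is connected, $\Lambda\cap\tilde{V}$ is a disjoint union of hyperplanes of $\tilde{V}$. The entire content of the corollary is therefore that this union has at most one element.

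Next I would argue the upper bound by contradiction. Suppose $\Lambda\cap\tilde{V}$ contains two distinct hyperplanes $h_1\neq h_2$ of $\tilde{V}$. Choose edges $e_1,e_2$ of $\tilde{V}$ dual to $h_1,h_2$ respectively; since an edge of a $\cz$ cube complex is dual to exactly one hyperplane and $h_1\neq h_2$, both endpoints of $e_1$ lie on the same side of $h_2$ and both endpoints of $e_2$ lie on the same side of $h_1$. Writing $\tilde{V}\setminus h_i=H_i^+\sqcup H_i^-$ and rechoosing labels if necessary, we may assume $e_1\subset\overline{H_2^+}$ and $e_2\subset\overline{H_1^+}$. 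Let $x$ be the endpoint of $e_1$ in $H_1^-$ and $y$ the endpoint of $e_2$ in $H_2^-$. Then $x\in H_1^-\cap H_2^+$ and $y\in H_1^+\cap H_2^-$, so $x$ and $y$ are separated by each of $h_1$ and $h_2$.

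Finally, take any geodesic edge path $\gamma$ in $\tilde{V}^{(1)}$ from $x$ to $y$; the standard $\cz$ cube complex fact that a geodesic crosses every hyperplane separating its endpoints shows that $\gamma$ crosses both $h_1$ and $h_2$, hence meets $\Lambda$ in at least two distinct points. Taking the admissible pseudometric $d_{\tilde{V}}$ to equal the edge-length metric $d$ (as in the first example following Definition \ref{rg}), $\gamma$ is a relative geodesic, and we have contradicted Lemma \ref{random2}. There is no real obstacle here beyond this combinatorial setup for producing vertices on opposite sides of both hyperplanes, which is exactly why the author flags the corollary as immediate.
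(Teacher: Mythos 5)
Your argument is correct and is exactly the intended derivation: the paper records this corollary as immediate from Lemma \ref{random2}, and your construction of a geodesic in $\tilde{V}^{(1)}$ crossing two putative hyperplanes of $\Lambda\cap\tilde{V}$ simply fills in the routine cube-complex details of that deduction. Nothing is missing beyond the paper's own (omitted) proof.
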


\noindent
\begin{lem}
\label{random3}
Let $\gamma$ be a relative geodesic in $\ucc{X}$ and suppose $\Lambda\cap\gamma$ consists of at least two distinct points $x$ and $y$. If $\lambda$ is a wall segment in $\Lambda$ connecting $x$ to $y$, then $\lambda$ passes through at least one essential $2$-cell.
\end{lem}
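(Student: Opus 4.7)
The plan is to argue by contradiction, ultimately reducing to Lemma \ref{random2}. Suppose $\lambda$ passes through no essential $2$-cell.

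First I would observe that if $\lambda$ uses none of the edges of $M(\ucc{X})$ that cross an essential $2$-cell, then $\lambda$ consists entirely of midcube edges from squares of $\ucc{X}$. Since every cube of $\ucc{X}$ lies in some vertex space, and distinct vertex spaces meet only along essential edges (which are not on any square), adjacent midcube edges in $\Lambda$ must belong to squares of a common vertex space. Hence $\lambda$ is contained in a single vertex space $\tilde V$, and its ladder $H$ is built entirely of squares of $\tilde V$ and contains no essential $2$-cells. In particular, $x, y \in \tilde V$.

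Let $\gamma'$ be the (closed) subpath of $\gamma$ from $x$ to $y$, and form $K = H \cup \gamma'$. Applying Lemma \ref{patching} with $\gamma'$ playing the role of the relative geodesic containing every isolated edge of $K$, I obtain a reduced patching $\pl{K} \to \ucc{X}$. By Lemma \ref{4.7}, if $\pl{K}$ contained an essential $2$-cell then it would contain an exposed one; but essential $2$-cells added by the patching are non-exposed by definition, and $H$ has none, so $\pl{K}$ contains no essential $2$-cells at all.

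The main obstacle is then to show that $\gamma'$ itself contains no essential edges. If an essential edge $e$ of $\gamma'$ existed, it would be isolated in $\pl{K}$, since the only $2$-cells of $\pl{K}$ are squares, whose boundaries contain no essential edges. A standard $H_1$ argument shows that an isolated edge in a simply connected $2$-complex is separating. Hence $\pl{K}$ decomposes as a tree of blocks joined by the isolated essential edges coming from $\gamma'$, so any loop in $\pl{K}$ has net crossing number zero at each such edge. Since $\lambda \subseteq \tilde V$ crosses no essential edge, the loop $\lambda \cdot (\gamma')^{-1}$ has the same net crossings at $e$ as $\gamma'$. If $\gamma'$ crosses $e$ once then this net count is $\pm 1$, contradicting simple connectivity of $\pl{K}$. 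Otherwise $\gamma'$ crosses $e$ at least twice in opposite directions, yielding a subpath of $\gamma'$ of the form $e \cdot \sigma \cdot e^{-1}$; the loop $\sigma$ in the simply connected space $\ucc{X}$ is null-homotopic, so replacing this subpath with the constant path at its basepoint strictly decreases the relative length of $\gamma'$, contradicting that $\gamma'$ is a relative geodesic.

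Therefore $\gamma'$ has no essential edges, and since $x, y \in \tilde V$ it follows that $\gamma' \subseteq \tilde V$. Applying Lemma \ref{random2} to the relative geodesic $\gamma'$ and the wall $\Lambda$ then gives $\abs{\Lambda \cap \gamma'} \leq 1$, contradicting that $x \neq y$ both lie in $\Lambda \cap \gamma'$ and completing the proof.
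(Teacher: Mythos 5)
Your proposal is correct and takes essentially the same route as the paper's proof: form the ladder of $\lambda$ together with the geodesic segment, apply Lemma \ref{patching}, use Lemma \ref{4.7} and the definition of a patching to exclude essential $2$-cells, rule out essential edges of the geodesic by the isolated-edge/separation argument in the simply connected patched complex, and conclude by reducing to a single vertex space and invoking Lemma \ref{random2}. Your restriction to the subsegment of $\gamma$ between $x$ and $y$ and your explicit treatment of a possible opposite-direction double traversal of an essential edge only make explicit details the paper leaves implicit.
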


\begin{proof}

Let $H$ be the ladder associated to $\lambda$, and let $K=H\cup\gamma$. Then $K$ satisfies the hypotheses of Lemma \ref{patching}, so let $\pl{K}\to\ucc{X}$ be a patching. If $\lambda$ does not pass through an essential $2$-cell, then $H$ is made entirely of squares, and thus so is $\pl{K}$ by Lemma \ref{4.7}. This implies that there are no essential edges in $\gamma$, because any such edge would be isolated and nonseparating in $\pl{K}$. Thus $\pl{K}$ maps to a single vertex space $\tilde{V}$ of $\ucc{X}$. Since $\gamma$ is a relative geodesic mapping to a single vertex space, it is a geodesic in that vertex space. The fact that $\Lambda\cap V$ crosses $\gamma$ twice is a contradiction.
\end{proof}

\section{Walls are relatively quasiconvex}
\label{sect:relqc}

In Lauer and Wise's setting, walls turn out to be quasi-convex. This is used in conjunction with the fact that one-relator groups with torsion are Gromov hyperbolic to apply a theorem of Sageev and conclude that the action of these groups on their associated dual cube complexes are cocompact.

We will use a relative version of this argument. As we argued in Lemma \ref{rh}, $G=\pi_1(X)$ is hyperbolic relative to the vertex groups. In this secton, this will be an ingredient in a proof that each wall stabilizer is quasiconvex relative to the vertex groups. This result will be used in Section \ref{sect:action} when we apply a generalization of Sageev's theorem by Hruska-Wise to conclude that the action on the dual cube complex is cocompact.

\subsection{Geometric relative quasiconvexity}

We will first prove the following geometric relative quasiconvexity statement about wall carriers and then translate it to the algebraic relative quasiconvexity of wall stabilizers. In this lemma, we only use the metric on $\os{\ucc{X}}$. The $2$-cells are irrelevant for the argument.

\noindent
\begin{lem}
\label{georqc}
Suppose that $n(X)\geq 4$. Let $\Lambda$ be a wall in $\ucc{X}$. There is a uniform constant $W$ such that if $\gamma$ is a relative geodesic in $\os{\ucc{X}}$ between vertices in the carrier $C$ of $\Lambda$, then every vertex of $\gamma$ which lies in an essential edge is within distance $W$ of $C$.
\end{lem}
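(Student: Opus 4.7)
The plan is to build a disk diagram between $\gamma$ and a wall-segment path through the carrier $C$, apply the extreme-cell constraints coming from $n(X)\geq 4$, and thereby force the diagram to be ``thin'' enough to bound the distance uniformly.

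To set this up, I will choose a wall segment $\lambda\subset\Lambda$ connecting points of $C$ near $x$ and $y$, producing an associated ladder $H\subset C$. The compact subcomplex $K=H\cup\gamma$ of $\ucc{X}$ has a natural inclusion into $\ucc{X}$, and since $\gamma$ is a relative geodesic containing every isolated essential edge of $K$, Lemma \ref{patching} produces a reduced, simply connected patching $\pl{K}\to\ucc{X}$.

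I will then impose two critical constraints on $\pl{K}$. First, by Lemma \ref{wallpatch} (with Lemma \ref{notextreme} ruling out interior cells of the ladder as extreme, and the patching definition ruling out added cells as exposed), $\pl{K}$ has at most two extreme essential $2$-cells---only the first and last cells of $H$ qualify. Second, by Proposition \ref{5.4}, any internal essential $2$-cell in $\pl{K}$ would force at least $2n(X)\geq 8$ extreme cells, contradicting the previous bound. Hence $\pl{K}$ has no internal essential $2$-cells: every essential $2$-cell of $\pl{K}$ is external.

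I will then translate this diagrammatic information into a uniform metric bound. Cocompactness of the $\pi_1(X)$-action on $\ucc{X}$ gives finitely many orbits of essential $2$-cells, each of bounded combinatorial diameter $M$. Combining the absence of internal essential $2$-cells with the two-extreme cap and Lemma \ref{geodboundary} should force the essential $2$-cells of $\pl{K}$ to arrange in a chain-like strip bridging the $\gamma$-side of $\partial\pl{K}$ to the $H$-side, with bounded depth. Every vertex of $\gamma$ lying on an essential edge is then within a bounded number of such $2$-cells from $H\subset C$, yielding a uniform bound $W$ on the order of $M$.

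The principal obstacle is making the ``chain-like strip'' structure rigorous. Essential $2$-cells of $\pl{K}$ could in principle stack into the interior of the diagram, each only touching the $\gamma$-side of $\partial \pl{K}$ but not directly the $H$-side. I expect ruling this out requires an inductive analysis on sub-diagrams of $\aux{\pl{K}}$, iteratively applying Propositions \ref{4.11} and \ref{5.4} together with Lemma \ref{geodboundary} to pin down the distribution of each essential $2$-cell's edges across the two sides of $\partial \pl{K}$. The convexity of vertex spaces (Lemma \ref{vertexspacesconvex}) will additionally be needed to control any vertex-space portions of $\pl{K}$, ensuring the metric bound survives the passage back from $\aux{\pl{K}}$ to $\pl{K}$.
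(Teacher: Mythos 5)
Your setup coincides with the paper's: form the ladder $H\subset C$ through $x$ and $y$, patch $K=H\cup\gamma$ via Lemma \ref{patching}, note that Lemma \ref{wallpatch} caps the number of extreme essential $2$-cells of $\pl{K}$ at two, and conclude from Proposition \ref{5.4} that every essential $2$-cell of $\pl{K}$ is external (for this step exponent $\geq 2$ already suffices; $n(X)\geq 4$ is not what is doing the work here). Up to that point you are on the paper's track.

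The remainder, however, is where the actual content of the lemma lies, and your proposal leaves it as an acknowledged conjecture rather than a proof. Externality only gives each essential $2$-cell an essential edge along $\gamma$; it does not by itself produce a ``chain-like strip of bounded depth'' connecting the $\gamma$-side to the $H$-side, and no amount of cocompactness or bounded cell diameter substitutes for that missing structure. The paper closes this gap by arranging the patching so that there is a planar disk $D\subset\pl{K}$ with boundary arcs $\gamma$ and $\sigma\subset H$, letting $A$ be the essential $2$-cells of $D$ meeting $\sigma$, and then analyzing, for a vertex $z$ on an essential edge of $\gamma$ with $z\notin\cl{A}$, the intermediate disk $D'$ between consecutive $A$-cells: its outer boundary decomposes as $\delta_1\delta_2\delta_3$ with $\delta_1,\delta_3$ on the boundaries of (at most two, by planarity and externality) $A$-cells and $\delta_2$ essential-edge-free. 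The key claim is that $D'$ contains at most one essential $2$-cell, and its proof is exactly where $n(X)\geq 4$ enters: if an extreme cell of $D'$ had all but one of its exposed-edge orbit on the relative geodesic portion $\delta$, then Lemma \ref{notweird} would force $\delta$ to traverse more than half the essential edges of that cell (since $m-1\geq 3$ out of $m$ periods), contradicting Lemma \ref{geodboundary}; hence two orbit members lie on $\delta_1\cup\delta_3$, and a case analysis using Lemma \ref{shareboundary} and Proposition \ref{5.4} rules out two extreme cells. Finally Lemma \ref{noorbits} places an exposed edge of the single remaining cell on $\delta_1\cup\delta_3$, giving $d(z,H)\leq W_X$. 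None of these ingredients (Lemmas \ref{shareboundary}, \ref{notweird}, \ref{noorbits}, \ref{geodboundary}, and the planarity bookkeeping) appear in your sketch, and the scenario you worry about --- cells meeting only the $\gamma$-side and receding from $H$ --- is precisely what this analysis excludes. As written, the proposal establishes the easy half and defers the essential half, so it is not yet a proof.
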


\begin{proof}

First note that since $\gos{X}$ is a finite graph of spaces, the set $C(X)$ is finite, and there is an upper bound $W_X$ on the number of edges (essential or not) in the attaching map of the elements of $C(X)$.

Let $\gamma$ be a relative geodesic in $\os{\ucc{X}}$ whose endpoints $x$ and $y$ are vertices in $C$. If $\gamma$ is contained in $C$, then we are done. By passing to an innermost subpath of $\gamma$ which lies outside of $C$, we may assume that $\gamma\cap C=\{x,y\}$. Since $x$ and $y$ lie in $C$, there is a ladder $H$ in $C$ containing $x$ and $y$ with associated wall segment $\lambda$, and $\gamma$ does not internally intersect $H$. The subcomplex $K=\gamma\cup H$  satisfies Lemma \ref{patching}, so let $\pl{K}\to\ucc{X}$ be a patching. When choosing generators of $\pi_1(K)$ to perform the patching, choose them so that there is exactly one generator which uses the path $\gamma$. Call the disk associated to this generator $D$ and make the choice that this is $D_1$, the first disk, in the patching construction. With this choice we may assume there is a planar subcomplex $D$ of $\pl{K}$, homeomorphic to a disk, such that $\gamma$ is one arc of $\partial{D}$ and the other arc $\sigma$ lies in $H$. Note also that $\sigma$ has no edges on $\partial\pl{K}$.

Note $\pl{K}$ has a maximum of two extreme $2$-cells since $H$ does (by Lemma \ref{wallpatch}). Thus Proposition \ref{5.4} implies that every essential $2$-cell of $\pl{K}$ is external (since the exponent of each essential $2$-cell is at least two). In particular, this holds for every essential $2$-cell of $D$, and in fact every essential $2$-cell of $D$ has an essential edge lying along $\gamma$.

Let $A$ be the union of essential $2$-cells of $D$ whose closures intersect $H$ (i.e., their boundaries intersect $\sigma$). Let $z$ be a point in an essential edge $e$ of $\gamma$. These are the points we will show are uniformly close to $H$. If $z\in\cl{A}$, then $d(z,H)\leq\frac{W_X}{2}$. If $z\notin\cl{A}$, let $\delta$ be the maximal connected subpath of $\gamma$ containing $z$ such that $\intr{\delta}\cap\cl{A}$ is empty. Since every $2$-cell of $A$ has an edge on $\gamma$, the complex $\cl{D\setminus\cl{A}}$ is a tree of disks. Let $D'$ be the maximal subcomplex of $\cl{D\setminus\cl{A}}$ which contains $z$ and is homeomorphic to a disk. Let $\delta'$ be the path $\partial D'\setminus\intr{\delta}$ (the other boundary arc of $D'$), and label the endpoints of $\delta'$, $x'$ and $y'$ in such a way that $x'$ lies on the subpath of $\gamma$ between $y'$ and $x$.

We claim that at most two essential $2$-cells in $A$ are adjacent to $\delta'$ along essential edges. Indeed, if there are three or more let $\alpha$ be one which is not the first, $\alpha_1$, or the last, $\alpha_2$ (with respect to a chosen orientation of $\delta'$). Since $\alpha$ is external in $\pl{K}$, there is an essential edge $f$ of $\alpha$ on $\partial{\pl{K}}$, and because $\alpha$ lies in $D$, $f$ lies on $\gamma$. Without loss of generality, suppose that $f$ lies in the portion of $\gamma$ between $z$ and $x$. Because $D$ is planar, whichever of $\alpha_1$ or $\alpha_2$ intersects the subpath of $\delta'$ between $\cl{\alpha}\cap\delta'$ and $x'$ cannot also intersect $\sigma$, contradicting that it lies in $A$. This proves the claim.

The above claim shows that $\delta'$ decomposes as a path $\delta_1\delta_2\delta_3$, where $\delta_1$ and $\delta_3$ are (possibly degenerate) paths, each of which lies along the boundary of an essential $2$-cell of $A$ , and $\delta_2$ is a (possibly degenerate) subpath of $\sigma$ which does not use any essential edges and maps to a single vertex space. See figure \ref{fig:fig7} for the general picture.

\begin{figure}[htbp]
	\centering
		\includegraphics[width=1\textwidth]{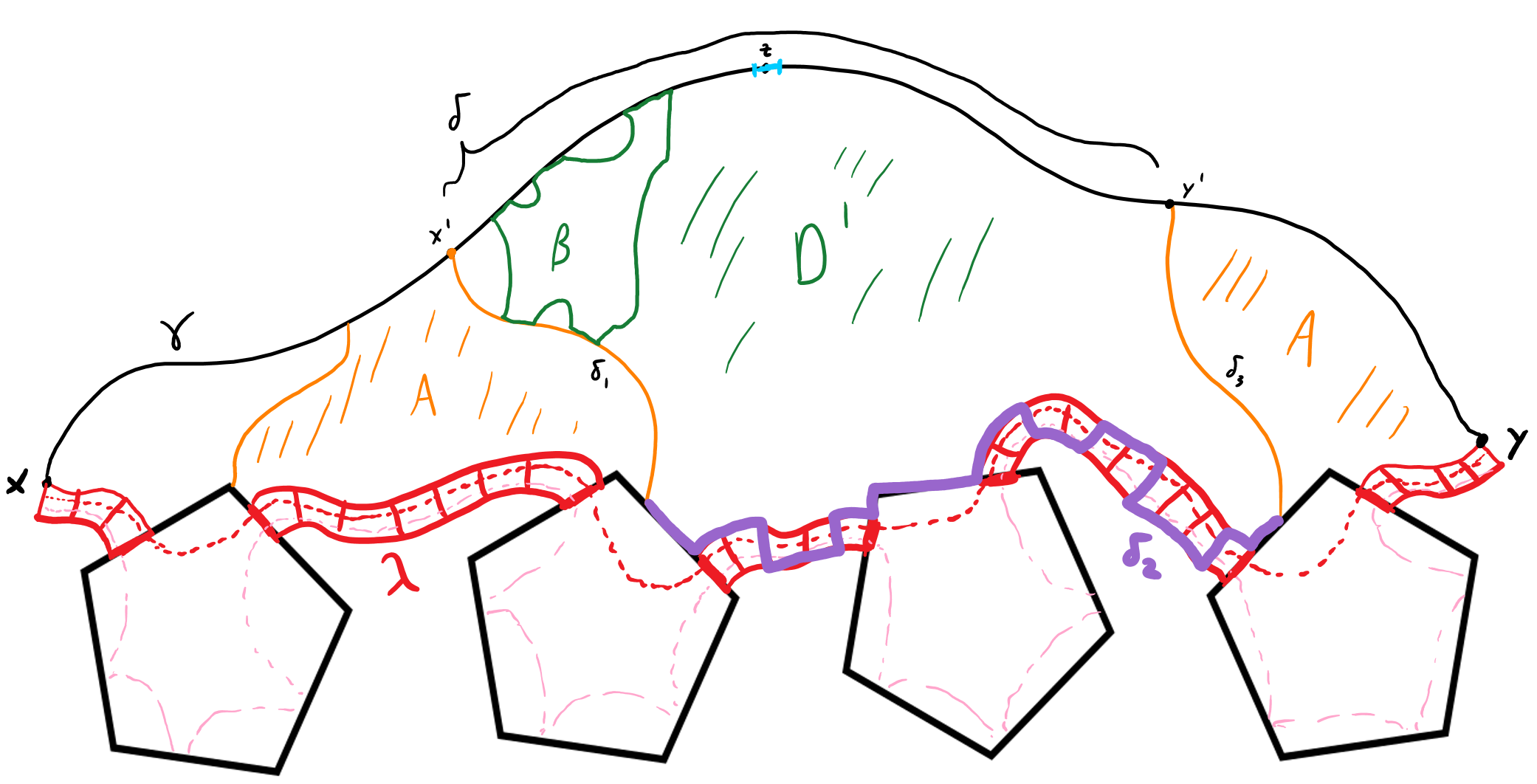}
	\caption{\footnotesize An illustration of the general case. Because $\delta_1$ and $\delta_3$ are so short, $\delta$ is a relative geodesic, $\delta_2$ contains no essential edges, and $n(X)\geq 4$, any candidate $\beta$ for an extreme essential $2$-cell of $D'$ must have exposed edges on all of $\delta_1$, $\delta$, and $\delta_3$. This shows that $D'$ contains a single essential $2$-cell which contains $z$ and intersects $\delta_1\cup\delta_3$, so that $z$ is close to $A$.}
	\label{fig:fig7}
\end{figure}

Next, we claim that $D'$ contains at most one essential $2$-cell. To see this claim, suppose that $D'$ contains two or more essential $2$-cells. Then $D'$ contains at least two extreme $2$-cells $\alpha$ and $\beta$ by Proposition \ref{4.11}, with, say, exposed edges $e$ and $f$, respectively. Note that all elements of $[e]_\alpha$ and $[f]_\beta$ lie along $\delta_1\cup\delta\cup\delta_3$ since $\delta_2$ contains no essential edges. In fact, it must be the case that at least two elements $e_1$ and $e_2$ of $[e]_\alpha$ lie along $\delta_1\cup\delta_3$. Indeed, otherwise $m-1$ elements of $[e]_\alpha$ along $\delta$, where $m$ is the exponent of $\alpha$. Lemma \ref{notweird} implies that $\delta$ visits every essential edge of some subpath of $\partial\alpha$ containing these $m-1$ elements of $[e]_\alpha$. Since $m\geq n(X)\geq 4$, this subpath contains strictly more than half of the essential edges of $\partial\alpha$. This contradicts Lemma \ref{geodboundary} since $\delta$ is a relative geodesic. Similarly, at least two elements $f_1$ and $f_2$ of $[f]_\beta$ lie along $\delta_1\cup\delta_3$. Now consider the following statements:

\begin{itemize}
\item $\{e_1,e_2\}$ lies in $\delta_1$.
\item $\{e_1,e_2\}$ lies in $\delta_3$.
\item $\{f_1,f_2\}$ lies in $\delta_1$.
\item $\{f_1,f_2\}$ lies in $\delta_3$.
\end{itemize}

If \emph{none} of these statements hold then both $\alpha$ and $\beta$ have boundary intersecting both $\delta_1$ and $\delta_2$, so either $\alpha$ or $\beta$ is internal in $\pl{K}$ by planarity of $D'$, which contradicts Proposition \ref{5.4}. On the other hand, if \emph{any} of these statements hold, we immediately obtain a contradiction to Lemma \ref{shareboundary}, since $\delta_1$ and $\delta_3$ both lie in the boundary of a single essential $2$-cell. This contradiction proves the claim.

Since $z\notin A$, $D'$ contains a single essential $2$-cell $\alpha$, and $z\in\partial\alpha$. By Lemma \ref{4.7}, $\alpha$ is exposed in $D'$ with exposed edge $e$, say. By Lemma \ref{noorbits}, some element of $[e]_\alpha$ lies in $\delta_1\cup\delta_3$. This shows that $d(z,A)\leq \frac{W_X}{2}$ and $d(z,H)\leq W_X$, so setting $W=W_X$ proves the lemma.
\end{proof}

Problem: Does Lemma \ref{georqc} hold when $n(X)\in\{2,3\}$? One seems to run into trouble when trying to rule out the case where $D'$ contains a ``fat'' region of squares in its interior. Lauer and Wise do not experience this difficulty in their setting.

To apply the Hruska-Wise cocompactness criterion, we also need to know that wall stabilizers act cocompactly on their associated walls:

\noindent
\begin{lem}
\label{wallsrqc}
Let $\Lambda$ be a wall of $\ucc{X}$. Then $H=\stab{\Lambda}$ acts cocompactly on the carrier of $\Lambda$, and thus on $\Lambda$.
\end{lem}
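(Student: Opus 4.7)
The plan is to establish cocompactness of the $H$-action on the carrier $C$; cocompactness of the $H$-action on $\Lambda$ will then follow because, by Proposition \ref{7.4}, $\Lambda$ embeds in $\ucc{X}$ as a closed $H$-invariant subset of $C$, so $H\backslash\Lambda$ is a closed subspace of the compact space $H\backslash C$.

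First I would observe that since $\gos{X}$ is a finite graph of spaces and each vertex group acts cocompactly on its vertex space, $G=\pi_1(X)$ acts cocompactly on $\ucc{X}$; in particular, there are only finitely many $G$-orbits of cells in $\ucc{X}$. Moreover, since $\ucc{X}$ is locally finite, each cell is met by only finitely many walls.

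The key observation is that for each cell $\omega$ of $\ucc{X}$, the intersection $(G\omega)\cap C$ decomposes into finitely many $H$-orbits. Indeed, a translate $g\omega$ lies in $C$ if and only if $\Lambda$ meets $g\omega$, equivalently $g^{-1}\Lambda$ meets $\omega$. Since only finitely many walls meet $\omega$, there are only finitely many possibilities for the wall $g^{-1}\Lambda$. For each such candidate wall $\Lambda'$, the set $\{g\in G:g^{-1}\Lambda=\Lambda'\}$ is either empty or a single left $H$-coset (since $g_1^{-1}\Lambda=g_2^{-1}\Lambda$ if and only if $g_2g_1^{-1}\in H$). It follows that the cells $g\omega$ arising from such $g$ all lie in a common $H$-orbit, so $(G\omega)\cap C$ contains at most as many $H$-orbits as the finite set of walls through $\omega$ that are $G$-equivalent to $\Lambda$.

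Summing over the finitely many $G$-orbits of cells in $\ucc{X}$, the carrier $C$ contains only finitely many $H$-orbits of cells, and since each cell is compact, $H\backslash C$ is compact. The argument is essentially bookkeeping combined with local finiteness, so I do not anticipate a substantial obstacle; the main step requiring care is to verify, using Lemma \ref{locdet} implicitly, that the correspondence between walls through a fixed cell and $H$-orbits of cells in the carrier behaves as claimed.
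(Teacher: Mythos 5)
Your proposal is correct in substance, but it takes a genuinely different route from the paper's proof, so a comparison is worth recording. The paper works through the natural map $\phi:\ucc{X}\to X$: for each cell $\beta$ of $X$ it enumerates the finitely many pieces of $\phi(\Lambda)\cap\beta$, assigns to each carrier cell lying over $\beta$ a ``type'' (well defined because of Lemma \ref{wallcell}), and then shows that two carrier cells of the same type are $H$-equivalent by choosing $g\in G$ carrying one to the other (composed with a finite-order rotation when they are essential $2$-cells) and invoking the local determination of walls (Lemma \ref{locdet}) to conclude that this $g$ actually stabilizes $\Lambda$. You instead work upstairs and parametrize by group elements: for a fixed orbit representative $\omega$, the wall $g^{-1}\Lambda$ ranges over the finitely many walls meeting $\omega$, and for each candidate wall the set of admissible $g$ is a single coset $Hg_0$ (your defining condition $g_2g_1^{-1}\in H$ is correct; note that this set is $Hg_0$, i.e.\ what is usually called a right coset), so the translates $g\omega$ inside the carrier fall into finitely many $H$-orbits, and summing over the finitely many $G$-orbits of cells finishes the count. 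Your version needs neither Lemma \ref{wallcell} nor Lemma \ref{locdet} and sidesteps the rotation adjustment, using only $G$-invariance of the wall system, local finiteness, and cocompactness of the $G$-action on $\ucc{X}$; the paper's version is more explicitly geometric and gives a concrete bound (the number of subwall pieces in a cell of $X$) on the orbits over each cell. One small imprecision to repair: a cell of the carrier need not itself be met by $\Lambda$ (it may only be a face of a cell whose interior meets $\Lambda$), and a cell whose closure meets $\Lambda$ need not lie in the carrier, so your ``if and only if'' should be replaced by first bounding $H$-orbits of cells whose \emph{interiors} meet $\Lambda$ and then observing that every carrier cell is a face of such a cell; this is a one-line fix and does not affect the argument. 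Your passage from cocompactness on the carrier to cocompactness on $\Lambda$ (via Proposition \ref{7.4}, $\Lambda$ being a closed $H$-invariant subcomplex-like subset meeting each closed cell in a compact set) matches what the paper leaves implicit.
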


\begin{proof}

Let $C$ be the carrier of $\Lambda$ in $\ucc{X}$. We claim that there are finitely many $H$-orbits of cells of $C$, which implies the result. Let $\phi:\ucc{X}\to X$ be the natural map. Let $\beta$ be any cell of $X$ which intersects $\phi(C)$. Now $\phi(\Lambda)\cap \beta$ consists of finitely many codimension-1 (in $\beta$) ``subwalls'' of $\beta$. Enumerate these subwalls $\lambda_1,\cdots,\lambda_k$. By Lemma \ref{wallcell}, any cell $\alpha$ of $C$ which maps to $\beta$ has a well-defined type $i\in\{1,\cdots,k\}$, defined to be the unique index for which $\phi^{-1}(\lambda_i)\cap\alpha$ lies in $\Lambda$. Let $\alpha$ and $\alpha'$ be cells of the same type. Since the action of $G=\pi_1(X)$ is essentially the universal covering space action (except on essential $2$-cells where the following is still true), there is an element $g\in G$ which takes $\alpha$ to $\alpha'$. Moreover, because these cells are the same type, $\phi^{-1}(\lambda_i)\cap\alpha'$ lies in both $g\Lambda$ and $\Lambda$ (in case $\alpha$ and $\alpha'$ are essential $2$-cells, we may need to compose with a finite-order ``rotation'' in $\stab{\alpha'}$). Now, since walls are locally determined (Lemma \ref{locdet}), this shows that $g$ in fact stabilizes $\Lambda$, i.e. $g\in H$. Thus the number of $H$-orbits of $\phi^{-1}(\beta)$ is bounded above by $k$. Since $\beta$ was arbitrary, this proves the claim and the lemma.
\end{proof}

\subsection{Algebraic relative quasiconvexity}

To show wall stabilizers are relatively quasiconvex, we will use the following definition of relative quasiconvexity, which we quote from \cite{hr1}. In that paper, Hruska shows that this notion of relative quasiconvexity is well-defined and equivalent to no fewer than four others, at least in the case that the peripheral groups are finitely generated and there are finitely many peripheral groups. See \cite{hr1} for the definitions of cusp-uniform action and truncated space.

\noindent
\begin{dfn}
\label{relqc3} (\textbf{Relatively quasiconvex}) \cite[Definition 6.6]{hr1} (``QC-3'') Suppose $G$ is countable, $\PP=\{P_1,\ldots,P_m\}$ is a finite collection of subgroups, and that $(G,\PP)$ is relatively hyperbolic. A subgroup $H\leq G$ is \emph{relatively quasiconvex} (with respect to $\PP$) if the following holds. Let $(Y,\rho)$ be a proper $\delta$-hyperbolic metric space on which $(G,\PP)$ has a cusp-uniform action. Let $Y-U$ be a truncated space for $G$ acting on $Y$. For some base-point $x\in Y-U$, there is a constant $\mu\geq 0$ such that whenever $\gamma$ is a geodesic in $Y$ with endpoints in the orbit $Hx$, we have \[\gamma\cap(Y-U)\subset N_\mu(Hx),\] where the $\mu$-neighborhood $N_\mu(Hx)$ of $Hx$ is taken with respect to the metric $\rho$ on $Y$.

\end{dfn}

\noindent
\begin{prop}
\label{stabsrqc}
The stabilizer of each wall in $\ucc{X}$ is quasiconvex relative to the collection of vertex groups of $X$ when $n(X)\geq 4$.
\end{prop}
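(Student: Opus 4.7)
The plan is to verify Hruska's definition \ref{relqc3} by building the ambient $\delta$-hyperbolic space $Y$ from $\ucc{X}$ via the Groves-Manning combinatorial horoball construction, attaching a horoball to each $G$-translate of each vertex space; this produces a proper $\delta$-hyperbolic space on which $G$ has a cusp-uniform action, with the truncated space $Y - U$ identified with $\os{\ucc{X}}$ equipped with an admissible choice of pseudometrics where each $d_{\tilde V}$ satisfies $|d_{\tilde V}(x,y) - 2\log(d(x,y)+1)| < C$, as in the third bullet after Definition \ref{rg}. Fix the wall $\Lambda$, let $H = \stab{\Lambda}$, and choose a base-point $x$ in the carrier $C$ of $\Lambda$.

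First I would exploit a standard feature of combinatorial horoballs: any geodesic $\gamma$ in $Y$ between vertices of $\ucc{X}$ alternates between arcs in $Y - U$ and short excursions through horoballs, and its trace $\gamma \cap (Y - U)$ is uniformly close (in the $Y - U$ metric) to a relative geodesic $\gamma'$ in $\os{\ucc{X}}$ between the same endpoints, in the sense of Definition \ref{rg}, with respect to the log-type pseudometrics above. Thus to bound $\gamma \cap (Y - U)$ in a neighborhood of $Hx$ it suffices to bound $\gamma'$ in such a neighborhood.

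Second, I would apply Lemma \ref{georqc} to $\gamma'$: since $\gamma'$ has both endpoints in $C$, every vertex of $\gamma'$ lying on an essential edge is within graph-distance $W$ of $C$. Points of $\gamma' \cap (Y - U)$ that lie outside all vertex spaces are such essential-edge vertices and are therefore handled directly. For a point of $\gamma \cap (Y - U)$ lying inside some vertex space $\tilde V$, the logarithmic nature of $d_{\tilde V}$ forces it to be within uniformly bounded $d_{\tilde V}$-distance of the essential edges at which $\gamma'$ enters and exits $\tilde V$; those entry/exit vertices lie in essential edges and are thus within $W$ of $C$ by Lemma \ref{georqc}, so such points too lie within uniform $Y$-distance of $C$. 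Combining with Lemma \ref{wallsrqc}, which provides a compact fundamental domain $K$ for the action of $H$ on $C$ and hence $C \subset N_{\diam{K}}(Hx)$, I obtain a single uniform constant $\mu$ with $\gamma \cap (Y - U) \subset N_\mu(Hx)$, verifying Definition \ref{relqc3}.

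The main obstacle is the careful comparison between the $Y$-metric on $Y - U$ and the native graph metric on $\os{\ucc{X}}$ used in Lemma \ref{georqc}: Lemma \ref{georqc} is stated in the unweighted edge metric, while Definition \ref{relqc3} demands closeness in the $Y$-metric, and the two differ by roughly a logarithm on each vertex space. Fortunately, the combinatorial horoball construction is designed precisely so that geodesics in $Y$ (restricted to $Y - U$) coarsely correspond to relative geodesics in $\ucc{X}$, so a uniform graph-metric bound yields a (larger but still uniform) bound in the $Y$-metric. If one prefers to sidestep the horoball translation, one can instead invoke one of the equivalent formulations of relative quasiconvexity from \cite{hr1} phrased directly in terms of relative geodesics in a relative Cayley graph; then Lemma \ref{georqc} together with Lemma \ref{wallsrqc} yields the result essentially immediately.
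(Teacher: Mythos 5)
Your proposal is correct and follows essentially the same route as the paper: augment $\ucc{X}$ with Groves--Manning combinatorial horoballs over the vertex spaces to get a $\delta$-hyperbolic space with a cusp-uniform $G$-action, then verify Hruska's Definition \ref{relqc3} by applying Lemma \ref{georqc} to a relative geodesic between points of the carrier and using Lemma \ref{wallsrqc} to pass from the carrier to a neighborhood of the orbit $Hx$. The only notable difference is that the paper places the vertex spaces \emph{inside} the horoballs, so its truncated space is just the disjoint union of essential edges and $\gamma\cap(Y-U)$ is exactly the set of essential edges of $\gamma$ (and it works with a specific geodesic in the augmented space chosen to agree with the relative geodesic on essential edges), which removes the need for your extra step of bounding points of $\gamma$ lying inside vertex spaces via the logarithmic distortion of $d_{\tilde V}$.
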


\begin{proof}

We will proceed by ``augmenting'' the space $\ucc{X}$, which is decidedly not $\delta$-hyperbolic, in general, by attaching ``combinatorial horoballs'' to form a space $A(\ucc{X})$ which is $\delta$-hyperbolic and on which $G$ acts in a cusp uniform manner, as follows.

As in Section \ref{sect:rh}, let $\PP=P_1,\ldots, P_m$ be the vertex groups of $X$ and choose a maximal spanning tree $T$ of essential edges of $\gos{X}$. Let $\mathcal{S}=S\sqcup \overline{S}$ be the set of oriented essential edges of $X$ not in $T$ and their formal inverses. Then $\mathcal{S}$ is a finite relative generating set for $(G,\PP)$. The Cayley graph $\Gamma$ of $G$ with respect to $\mathcal{S}$ is disconnected, in general.

Now, attach Groves-Manning ``combinatorial horoballs'' to $\Gamma$ to form the ``augmented space'' $\aug{\Gamma}$ associated to the data $(G,\PP,\mathcal{S})$. See \cite[Definitions 4.1 and 4.3]{hr1} for the precise construction. To each $P_i$ is associated a $\cz$ cube complex which induces a natural left-invariant metric $d_i$ on it. The rough idea is that for each coset $gP_i$, we take countably many copies of $gP_i$ indexed by the naturals, attach ``vertical edges'' between each element of $gP_i$ in every level and the corresponding element above and below it, and ``horizontal edges'' between elements of $gP_i$ in the same level of $d_i$-distance less than or equal to $2^j$, where $j$ is the level. The original coset $gP_i$ sits at level $0$. Let $\horo_\Gamma(g,i)$ be the combinatorial horoball above the coset $gP_i$, which by convention includes the original $gP_i$ at level $0$, as well as any edges added there. By \cite[ Theorem 4.4]{hr1} (originally proved by Groves and Manning) and relative hyperbolicity of $(G,\PP)$, the augmented space $\aug{\Gamma}$ is connected and $\delta$-hyperbolic.

On the other hand, let $\coll{X}$ be the space obtained by collapsing $T$ to a point. This collapse lifts to a $G$-equivariant quotient map $f:\ucc{X}\to\coll{\ucc{X}}$, where the target is obtained by collapsing each copy of $T$ in $\ucc{X}$; this map is a quasi-isometry.

Now, $G$ acts naturally on $\coll{\ucc{X}}$, and each vertex space of $\coll{\ucc{X}}$ is stabilized by some $gP_ig^{-1}$. We label this vertex space $\tilde{V}_g^i$. We now form the augmented space $\aug{\coll{\ucc{X}}}$ by building a combinatorial horoball $\horo_X(g,i)$ above the one-skeleton of $\tilde{V}_g^i$, again with respect to the cube complex metric, for each $(g,i)$ (as before, $\horo_\Gamma(g,i)$ includes the one-skeleton of $\tilde{V}_g^i$ by convention). We can identify the group elements of $gPg^{-1}$ with vertices of $\tilde{V}_g^i$ via the orbit map (choosing the image of $T$ in $\ucc{X}_c$ as a base-point). Thus, $\horo_\Gamma(g,i)$ is a full subgraph of $\horo_X(g,i)$ for each $(g,i)$.

Observe now that the Cayley graph $\Gamma$ includes naturally inside of $\coll{\ucc{X}}$. By the observation of the previous paragraph, there is also a natural inclusion $\aug{\Gamma}\hookrightarrow \aug{\coll{\ucc{X}}}$, which we now claim is a quasi-isometry. Assuming this claim, we have that $\aug{\coll{\ucc{X}}}$ is $\delta$-hyperbolic (after possibly modifying $\delta$).

To see the claim, first choose $K>\max_i(\text{diam}_{d_i}(P_i))$. It is clear that $\aug{\Gamma}$ is $K$-cobounded in $\aug{\coll{\ucc{X}}}$. It remains to show that $\aug{\Gamma}$ is quasi-isometrically embedded. For points $x$ and $y$ of $\zs{\aug{\Gamma}}$, it is also clear that $d_{\aug{\coll{\ucc{X}}}}(x,y)\leq d_{\aug{\Gamma}}(x,y)$. In the other direction, we seek a constant $K'$ such that $d_{\aug{\Gamma}}(x,y)\leq K'd_{\aug{\coll{\ucc{X}}}}(x,y)+K'$. Let $\gamma$ be a geodesic in $\os{\aug{\coll{\ucc{X}}}}$ between $x$ and $y$. Then $\gamma$ decomposes as a path of the form $\gamma_0e_1\gamma_1e_2\ldots e_k\gamma_k$ where each $e_j$ is an essential edge and each $\gamma_j$ is a (possibly empty) edge path in some $\horo_X(g,i)$. By \cite[Lemma 3.10]{gm}, we may assume that each $\gamma_j$ consists of at most two vertical segments and a single horizontal segment of length at most $3$. Moreover, since the endpoints of $\gamma_j$ lie in the image of the orbit map, these vertical segments also lie in $\horo_\Gamma(g,i)$. Now, the horizontal segment $h_j$ may not belong to $\horo_\Gamma(g,i)$, but because its endpoints are connected by a path of length at most $3$, there is a path $h_j'$ of length $5$ in $\horo_\Gamma(g,i)$ between its endpoints, where $h_j'$ consists of two vertical segments of length $2$ and a single horizontal edge two levels above $h_j$. Replacing each $h_j$ by $h_j'$, we obtain a path $\gamma'$ between $x$ and $y$ in $\aug{\Gamma}$, and since $\ell(h_j')\leq \ell(h_j)+4$, we have that $\ell(\gamma')\leq \ell(\gamma)+4(k+1)$. But also $d_{\aug{\Gamma}}(x,y)\leq \ell(\gamma')$ and $k\leq \ell(\gamma)=d_{\aug{\coll{\ucc{X}}}}(x,y)$, so $d_{\aug{\Gamma}}(x,y)\leq 5d_{\aug{\coll{\ucc{X}}}}(x,y)+4$. Setting $K'=5$ proves the claim.

Finally, build the augmented space $\aug{\ucc{X}}$. For each vertex space $\tilde{V}_g^i$ of $\ucc{X}$ which is stabilized by $g P_i g^{-1}$, build a combinatorial horoball above it using the cube complex metric as in the case of $X_c$. In fact, since the map $f$ is the identity on $\tilde{V}_g^i$, the horoball just added will be an isometric copy of $\horo_X(g,i)$. The map $f$ thus extends to a quasi-isometry $\tilde{f}:\aug{\ucc{X}}\to \aug{\coll{\ucc{X}}}$ which is the identity on combinatorial horoballs, so that $\aug{\ucc{X}}$ is $\delta$-hyperbolic (after possibly modifying $\delta$).

Now, we claim that $G$ has a cusp-uniform action on $\aug{\ucc{X}}$ with truncated space the disconnected union of all essential edges of $\ucc{X}$. In other words, the vertex spaces of $\ucc{X}$, along with their combinatorial horoballs, form a collection of disjoint $G$-equivariant horoballs (in the cusp-uniform sense) centered at the parabolic points of $G$. It is clear that $G$ acts coboundedly on this truncated space.

To see this, one can construct explicit horofunctions on these horoballs. For each vertex space $\tilde{V}$ of $\ucc{X}$, let $\horo_{\tilde{V}}$ be the combinatorial horoball above it. Let $d_A$ be the natural metric on $\aug{\ucc{X}}$. Define a function $\tilde{v}:\aug{\ucc{X}}\to \RR$ by

\begin{displaymath}
   \tilde{v}(x) = \left\{
     \begin{array}{lr}
       d_A(x,\tilde{V}) & : x \in \horo_{\tilde{V}} \\
       -d_A(x,\tilde{V}) & : \text{otherwise}
     \end{array}
   \right.
\end{displaymath} 

It is easy to check using elementary hyperbolic geometry that $\tilde{v}$ is a horofunction centered at the parabolic point $\xi$ in the Gromov boundary of $\aug{\ucc{X}}$ which can be identified with any geodesic ray starting in $\tilde{V}$ and using only vertical edges. This proves the claim.

For each vertex space $\tilde{V}$ of $\ucc{X}$, define $d_{\tilde{V}}(x,y)=d_A(x,y)$ for all $x,y\in\tilde{V}^{(0)}$.  The property of $G$-invariance is clear, so this is an admissible choice of pseudometrics.

To complete the proof, pick a basepoint $x$ in the carrier $C$ of $\Lambda$ and let $H=\stab{\Lambda}$, so that $Hx$ lies in $C$. Let $x',y'$ in $Hx$, and let $\gamma'$ be a relative geodesic in $\os{\ucc{X}}$ between $x'$ and $y'$ (with respect to the admissible choice of pseudometrics above). Let $\gamma$ be a geodesic in $A(\os{\ucc{X}})$ which agrees with $\gamma'$ on essential edges (it is clear by the construction of the pseudometrics that such a geodesic exists). Note that the intersection of $\gamma$ with the truncated space is precisely the set of essential edges of $\gamma$. Applying Lemma \ref{georqc} to $\gamma'$, we see that every essential edge of $\gamma'$ lies uniformly close to $C$. Thus the same is true for $\gamma$, and the proposition is proved.
\end{proof}

\section{Walls satisfy linear separation}
\label{sect:ls}

In order to conclude that the action of $G=\pi_1(X)$ on its associated dual cube complex is proper, we will argue that the walls in $\ucc{X}$ satisfy the ``linear separation property,'' which roughly means that the number of walls separating pairs of points in $\ucc{X}$ grows at least linearly with their distance. Hruska and Wise describe how the linear separation property leads to properness of the dual cube complex action in \cite[Theorem 5.2]{hw2}.

The precise statement we will prove is as follows:

\noindent
\begin{prop}
\label{9.1}
Suppose that $n(X)\geq 4$. There are constants $\kappa$ and $\epsilon$ such that for any vertices $x,y \in\ucc{X}$, the number of walls separating $x$ and $y$ is at least $\kappa d(x,y)-\epsilon$.
\end{prop}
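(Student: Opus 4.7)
Let $\gamma$ be a geodesic in $\os{\ucc{X}}$ from $x$ to $y$ and decompose it as $\gamma = \gamma_0 e_1 \gamma_1 \cdots e_k \gamma_k$ with each $e_j$ an essential edge and each $\gamma_i$ a (possibly trivial) edge path contained in a vertex space $\tilde{V}_i$, so that $d(x,y) = k + \sum_i |\gamma_i|$. Each edge of $\gamma$ is dual to a unique wall of $\ucc{X}$: a type (ii) wall extending the hyperplane of $\tilde{V}_i$ dual to that edge (by Corollary \ref{wallintersectvs}), or a type (i) wall dual to $e_j$. Since $\tilde{V}_i$ is $\cz$ and $\gamma_i$ is a geodesic in it by Lemma \ref{vertexspacesconvex}, distinct edges of a single $\gamma_i$ correspond to distinct hyperplanes of $\tilde{V}_i$, hence distinct walls. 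Thus the total number of wall crossings along $\gamma$, counted with the multiplicity of the wall, is exactly $d(x,y)$.

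The plan is to establish a uniform constant $M$, depending only on $X$, such that for every geodesic $\gamma$ in $\os{\ucc{X}}$ and every wall $\Lambda$, $\#(\gamma \cap \Lambda) \leq M$. Granting this, let $T$ be the number of distinct walls crossing $\gamma$ and $S$ the number of those that separate $x$ from $y$; then $T \geq d(x,y)/M$, and since every wall crossed an even (and hence $\geq 2$) number of times fails to separate, a simple parity accounting gives a lower bound of the form $S \geq d(x,y)/M' - $ const for a suitable $M'$, which yields linear separation with $\kappa = 1/M'$ and $\epsilon$ absorbing the constant.

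To establish the multiplicity bound I would combine Lemma \ref{georqc} with the cocompactness of $\stab(\Lambda)$ on the carrier $C$ of $\Lambda$ (Lemma \ref{wallsrqc}). If $\gamma \cap \Lambda = \{z_1, \ldots, z_m\}$ listed in order along $\gamma$, then every $z_j$ lies in $C$, and by Corollary \ref{wallintersectvs} consecutive points $z_j, z_{j+1}$ cannot lie in the same vertex space of $\ucc{X}$; hence the subpath of $\gamma$ from $z_j$ to $z_{j+1}$ must traverse at least one essential edge. Since each such subpath is itself a geodesic between points of $C$, Lemma \ref{georqc} places those essential edges within uniform distance $W$ of $C$. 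Passing to the compact quotient $C / \stab(\Lambda)$ and using that $\gamma$ is a genuine $\ell_1$-geodesic (so cannot revisit a bounded-diameter region too often without contradicting minimality of length), a pigeonhole argument bounds $m$ uniformly.

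The main obstacle will be converting the count of distinct walls $T$ into the count $S$ of \emph{separating} walls: a wall crossed an even number of times contributes to $T$ but not to $S$. The cleanest way to handle this is to strengthen the multiplicity result to rule out even-multiplicity crossings on long geodesics: if $\gamma$ crosses $\Lambda$ at exactly two points $p$ and $q$, the wall segment from $p$ to $q$ together with the subarc of $\gamma$ between them bounds a subcomplex to which Lemma \ref{patching} applies, and the resulting patched diagram should, via the extreme-$2$-cell machinery of Sections \ref{sect:ext} and \ref{sect:ext2} (together with the $n(X) \geq 4$ hypothesis used analogously to Lemma \ref{georqc}), force the subarc to admit a shortcut, contradicting that $\gamma$ is a geodesic. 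Implementing this even-crossings-are-short step rigorously is the technical heart of the argument.
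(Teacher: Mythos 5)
There is a genuine gap, and it sits exactly where you locate ``the technical heart'' of your argument. A multiplicity bound $M$ on the number of times a wall can meet a geodesic does not, by itself, produce \emph{any} separating walls: a priori every wall met by $\gamma$ could be met an even number of times, in which case $T\geq d(x,y)/M$ while $S=0$, so no ``parity accounting'' converts the one bound into the other. You try to close this by showing that two crossings force a shortcut along $\gamma$, but that statement is false in this setting. Inside a single essential $2$-cell $\alpha$ with boundary $p^n$, the wall entering $\alpha$ through an edge $e$ exits through the next edge of $[e]_\alpha$, one period later along $\partial\alpha$; a subpath of $\partial\alpha$ of length $\abs{p}+1$ containing both of these edges uses only about a $\frac{1}{n}\leq\frac{1}{4}$ fraction of $\partial\alpha$ (consistent with Lemma \ref{geodboundary}) and is genuinely geodesic in many examples (e.g.\ one-relator groups with torsion), yet it is crossed twice by that wall, and the complementary arc of $\partial\alpha$ has length $(n-1)\abs{p}-1$, so no shortcut exists. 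Running your patching/extreme-cell argument on $\alpha\cup\gamma'$ produces nothing contradictory here: the only candidate extreme cell is $\alpha$ itself, and by Lemma \ref{wallcell} the wall segment never leaves $\alpha$. Indeed, the double-crossed ladders of this section (see Lemma \ref{direction} and Figure \ref{fig:fig8}) are introduced precisely because double crossings do occur.

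What the argument actually needs, and what the paper proves, is an \emph{existence} statement rather than a non-existence statement: Lemma \ref{9.4} shows that within $W_X+1$ edges of every edge of $\gamma$ there is some wall meeting $\gamma$ exactly once (hence separating $x$ from $y$, by Corollary \ref{wallsep}). This is where $n(X)\geq 4$ enters, via Lemma \ref{direction} (a double-crossing wall $\Lambda_e^x$ admits a double-crossed ladder bending toward $x$) and Lemma \ref{notrouble} (no double-crossed pair of ladders at adjacent edges), which together force one of boundedly many nearby dual walls to cross only once. Proposition \ref{9.1} then follows by selecting edges of $\gamma$ spaced more than $2(W_X+1)$ apart, so the resulting single-crossing walls are distinct. (Two further small corrections to your setup: in this wall system each edge is dual to \emph{two} walls, not one; and distinct edges of $\gamma$ in a vertex space are dual to distinct hyperplanes only because $\gamma$ restricts to a geodesic there --- that part is fine, but it is irrelevant to the missing step.) Your multiplicity-bound program, even if completed, would have to be supplemented by essentially the double-crossed-ladder analysis anyway, so the proposal as written does not yield the proposition.
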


We will be assuming for contradiction that walls frequently ``double-cross'' geodesics. We will use the following definition.

\noindent
\begin{dfn} (\textbf{Double-crosses/double-crossed ladder}). Let $\gamma$ be a geodesic in $\os{\ucc{X}}$ between two $0$-cells $x$ and $y$ of $\ucc{X}$. For every edge $e$ of $\gamma$, there are two dual walls to $e$ which intersect $e$ in the points $v_e^x$ and $v_e^y$, labeled so that $d(x,v_e^x)<d(x,v_e^y)$. Call the wall which passes through $v_e^x$, $\Lambda_e^x$, and the wall passing through $v_e^y$, $\Lambda_e^y$. We say that $\Lambda_e^x$ \emph{double-crosses} $\gamma$ if there is a wall segment $\lambda_e^x$ in $\Lambda_e^x$ between $v_e^x$ and another distinct point $u_e^x$ along $\gamma$. If this behavior occurs we will pass to an initial such wall segment emanating from $v_e^x$ and assume that $\Lambda_e^x$ does not cross $\gamma$ between $v_e^x$ and $u_e^x$. There is a unique ladder $H_e^x$ associated to $\lambda_e^x$. Let $\gamma_e^x$ be the subsegment of $\gamma$ connecting the edges containing $v_e^x$ and $u_e^x$. Let $Y=Y_e^x=\gamma_e^x\cup H_e^x$. We call the subcomplex $Y_e^x$ a \emph{double-crossed ladder} of $\gamma$ at $(e,x)$, if it exists. See figure \ref{fig:fig8} for an illustration.

\end{dfn}


\begin{figure}[htbp]
	\centering
		\includegraphics[width=1\textwidth]{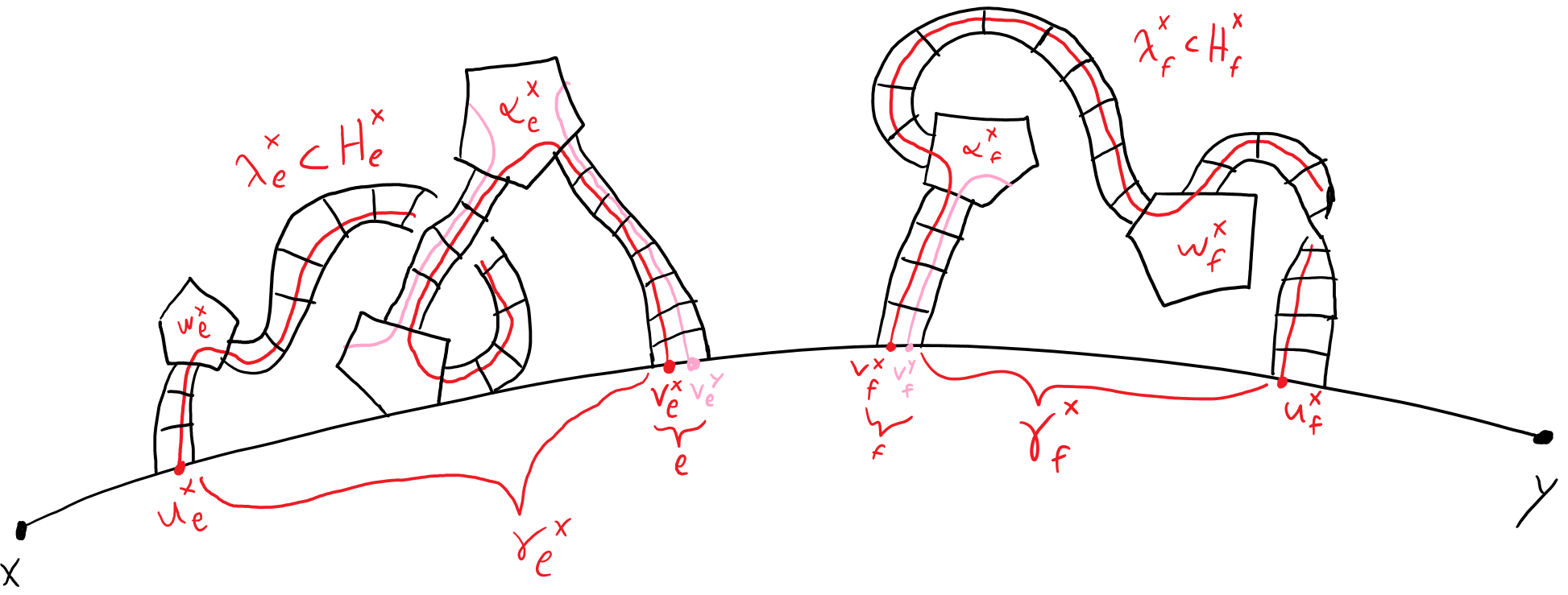}
	\caption{\footnotesize Some double-crossed ladders. The ladder $H_e^x$ bends in the direction of $x$, and $H_f^x$ bends in the direction of $y$. Here the rank of $\pi_1(Y_e^x)$ is $2$. Some pathologies for double-crossed ladders may be ruled out immediately. For example, the depicted twist in $H_f^x$ is ruled out by Corollary \ref{wallsep}.}
	\label{fig:fig8}
\end{figure}

\noindent
\begin{dfn} (\textbf{Returns}). Let $Y_e^x$ be a double-crossed ladder of $\gamma$ at $(e,x)$, with associated ladder $H_e^x$. We say that $Y_e^x$ (or $H_e^x$) \emph{returns} through an essential $2$-cell if that $2$-cell is the first or last essential $2$-cell of $H_e^x$ through which the wall segment $\lambda_e^x$ passes, as we traverse $\lambda_e^x$ starting from $v_e^x$. We use the notation $\alpha_e^x$ for the first $2$-cell through which $Y_e^x$ returns, and $\omega_e^x$ for the last.
\end{dfn}

Lemma \ref{random3} implies that whenever $Y_e^x$ is a double-crossed ladder, $\alpha_e^x$ and $\omega_e^x$ always exist, and they are clearly unique. It is possible that $\alpha_e^x=\omega_e^x$.

\noindent
\begin{dfn} (\textbf{Bends in the direction of}). Let $Y_e^x$ be a double-crossed ladder of $\gamma$ at $(e,x)$ with associated ladder $H_e^x$. We say that $Y_e^x$ (or $H_e^x$) \emph{bends in the direction of} $x$ if $d(u_e^x,x)<d(v_e^x,x)$. Otherwise we say that $Y_e^x$ (or $H_e^x$) bends in the direction of $y$. We make analogous definitions for $Y_e^y$ (or $H_e^y$) with $x$ and $y$ interchanged.
\end{dfn}

The following lemma allows us to determine the direction in which walls bend, but only when $n(X)\geq 4$. The lemma is false for $n(X)\in\{2,3\}$.

\noindent
\begin{lem}
\label{direction}
Suppose that $n(X)\geq 4$. For some edge $e$ of $\gamma$, suppose that a wall $\Lambda_e^x$ double-crosses $\gamma$. Then there is a double-crossed ladder $Y_e^x$ of $\gamma$ at $(e,x)$ with associated ladder $H_e^x$ which bends in the direction of $x$.
\end{lem}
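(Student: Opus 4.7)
The plan is to argue by contradiction. Suppose $\Lambda := \Lambda_e^x$ double-crosses $\gamma$ but every intersection of $\Lambda$ with $\gamma$ other than $v_e^x$ lies strictly on the $y$-side of $v_e^x$ along $\gamma$. I would pick the innermost such crossing $u$, a wall segment $\lambda \subseteq \Lambda$ from $v_e^x$ to $u$ not meeting $\gamma$ internally, and the associated ladder $H$. With $\gamma'$ the subsegment of $\gamma$ from the edge of $v_e^x$ to the edge of $u$, observe that $\gamma'$ is a relative geodesic containing every isolated edge of $Y := H \cup \gamma'$, so Lemma \ref{patching} produces a patching $\pl{Y} \to \ucc{X}$.

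Next I would invoke the structural results on this patched diagram. Lemma \ref{wallpatch} bounds the number of extreme essential $2$-cells of $\pl{Y}$ by two, and since $n(X) \geq 4$, an internal essential $2$-cell would by Proposition \ref{5.4} force at least $2\,n(X) \geq 8$ extreme essential $2$-cells, a contradiction. Hence every essential $2$-cell of $\pl{Y}$ is external, and Proposition \ref{4.11} forces $\alpha := \alpha_e^x$ and $\omega := \omega_e^x$ (the only candidates in $H$, by the proof of Lemma \ref{wallpatch}) to both be extreme, hence exposed via some orbit of essential edges in their boundary. The key local observation is that the arc of $\Lambda$ inside $\alpha$ runs from $v_e^x$ to the point $v_{e_-}^+$ on $e_-$, where $e_-$ is the neighbor of $e$ in $[e]_\alpha$ lying on the $x$-side of $e$ within $\partial\alpha$; this is immediate from the midcube-complex construction, since tracing the unique arc of $\Lambda$ at $v_e^x$ inside $\alpha$ leads to the representative of $[e]_\alpha$ adjacent to $e$ on the $x$-side of $\partial\alpha$ regardless of the orientation of $\partial\alpha$ relative to $\gamma$. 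Since $e_-$ is internal in $\pl{Y}$ (shared with the next essential $2$-cell of $H$), the orbit $[e]_\alpha$ cannot witness exposedness of $\alpha$.

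The hard part will be closing the geometric contradiction from this setup. Because $\gamma'$ leaves $\alpha$ in the forward direction while the wall exits $\alpha$ through $e_-$ on the $x$-side of $e$ in $\partial\alpha$, the ladder $H$ must ``curl back'' inside the simply connected patching $\pl{Y}$, and the hypothesis $n(X) \geq 4$ supplies at least $n-2 \geq 2$ further representatives of $[e]_\alpha$ along the long arc of $\partial\alpha$ between $e_-$ and $e$. I would show that the embedded structure of $\pl{Y}$ forces at least one of these, or a cell whose boundary contains it, into the interior of $\pl{Y}$, contradicting exposedness of $\alpha$ via any essential edge orbit or the absence of internal essential $2$-cells established above. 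The degenerate subcase where $H$ contains a single essential $2$-cell (so $\alpha = \omega$) is handled directly by a Snipping Lemma argument on the wall arc inside $\alpha$ together with $\gamma'$, and the case where $e$ is non-essential (so $\lambda$ enters $\alpha$ through a vertex-space hyperplane at some essential edge $f$ different from $e$) requires only a routine adaptation of the local observation, tracking which side of $f$ the wall crosses.
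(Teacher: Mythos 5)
Your setup is sound and matches the paper's: negate the conclusion, pass to an innermost crossing, patch $Y=H\cup\gamma'$ via Lemma \ref{patching}, and use Lemma \ref{wallpatch} with Propositions \ref{4.11}/\ref{5.4} to see that only $\alpha_e^x$ and $\omega_e^x$ can be extreme. Your ``key local observation'' is also correct and is exactly the place where the bending hypothesis enters: it is the reason why, in the paper's notation, the \emph{short} arc of $\partial\alpha$ between the two edges dual to $\lambda$ (length $\abs{p}-1$) lies on the side of the wall \emph{away} from $\gamma'$, i.e.\ is the arc $\sigma_{\text{out}}$. But after that point your argument stops being an argument. The contradiction you aim for --- that $\alpha$ is not exposed ``via any essential edge orbit,'' or that some essential $2$-cell is internal --- is not what this configuration yields, and your sketch gives no mechanism for it. Exposedness of $\alpha$ is \emph{guaranteed} (Lemma \ref{4.7}/Proposition \ref{4.11}), and it may be witnessed by an orbit $[f]_\alpha$ completely different from $[e]_\alpha$; showing that $[e]_\alpha$ has an internal member (via $e_-$) proves nothing about $[f]_\alpha$. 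Likewise, whether the further representatives of $[e]_\alpha$ on the long arc are internal or boundary in $\pl{Y}$ is not controlled: those edges need not lie in $Y$ at all, and the patching disks may or may not cover them, so ``the embedded structure of $\pl{Y}$ forces at least one of these into the interior'' is not something you can conclude. Indeed no internal essential $2$-cell need exist, and the paper's proof never produces one.

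The actual closing argument, which is missing from your proposal, runs through the exposed orbit itself: let $[f]_\alpha$ be \emph{any} exposed orbit of $\alpha$. Since $\sigma_{\text{out}}$ has length $\abs{p}-1$, either some member of $[f]_\alpha$ lies on $\sigma_{\text{out}}$, or the two dual edges $e_1,e_2$ themselves lie in $[f]_\alpha$ (this forces $\alpha=\omega_e^x$ with $e_1,e_2$ on $\gamma'$, which is killed by Lemma \ref{notweird} plus Lemma \ref{noorbits} --- not by a bare Snipping Lemma argument, as you assert for your degenerate subcase). In the main case, each of the other $m-1$ members of $[f]_\alpha$, which lie on $\sigma_{\text{in}}$, must actually lie on $\gamma'$: otherwise a snipping arc through $\alpha$ joining the member on $\sigma_{\text{out}}$ to one off $\gamma'$ is contradicted by the connected graph $\gamma'\cup\lambda$ in $\pl{Y}$. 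Then Lemma \ref{notweird} forces $\gamma'$ to traverse every essential edge of $\sigma_{\text{in}}$ between the first and last such members, and since $m\geq 4$ this is strictly more than half the essential edges of $\partial\alpha$, contradicting Lemma \ref{geodboundary}. This is where $n(X)\geq 4$ genuinely enters (your only uses of it --- excluding internal cells via Proposition \ref{5.4} and counting $n-2$ extra representatives of $[e]_\alpha$ --- do not produce a contradiction). Finally, your treatment of the case where $e$ is non-essential is deferred as ``routine''; the paper avoids a separate case by working from the start with the edges $e_1,e_2$ of $\partial\alpha$ dual to $\lambda$ and transporting the two sides of the wall along $\lambda$ using Corollary \ref{wallsep}, and your local observation would need exactly this transport to survive that case.
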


\begin{proof}
Suppose that every double-crossed ladder $Y_e^x$ bends in the direction of $y$. Let $Y=Y_e^x$ be a double-crossed ladder with the property that $\Lambda_e^x$ does not cross $\gamma$ between $v=v_e^x$ and $u=u_e^x$. By Corollary \ref{wallsep}, $\ucc{X}\setminus\Lambda_e^x$ decomposes into two components $\ucc{X}_{\text{in}}$ and $\ucc{X}_{\text{out}}$, labeled so that $\gamma'=\gamma_e^x$ maps to $\ucc{X}_{\text{in}}$.

Let $e_1$ and $e_2$ be the edges of $\partial\alpha=\partial\alpha_e^x$ which are dual to $\lambda=\lambda_e^x$ (they may be essential or not), labeled so that there is a path from $e_1$ to $e$ inside $\lambda$. Suppose $\partial\alpha=p^m$ in $X$, where $p$ is not a proper power. Orient $e_1$ so that it crosses $\lambda$ in the same direction that $e$ crosses it, and extend this orientation to $\partial\alpha$. Let $\sigma_{\text{in}}$ and $\sigma_{\text{out}}$ be the two subpaths of $\partial\alpha\setminus\{e_1,e_2\}$, oriented consistently with $\partial\alpha$, and labeled so that $\sigma_{\text{in}}$ maps to $\ucc{X}_{\text{in}}$ and $\sigma_{\text{out}}$ maps to $\ucc{X}_{\text{out}}$ (we may do this since $\alpha\cap\Lambda_e^x$ consists only of the arc $\alpha\cap\lambda$ by Lemma \ref{wallcell}). Thus no point of $\sigma_{\text{out}}$ lies along $\gamma'$.

Note that $Y$ satisfies the hypotheses of Lemma \ref{patching} and let $\pl{Y}$ be a patching for $Y$. Note that $\alpha$ and $\omega_e^x$ are the only essential $2$-cells of $\pl{Y}$ which can be extreme, and in fact they are extreme by Lemma \ref{4.7} (if they are distinct). We claim that $\sigma_{\text{out}}$ is not internal in $\pl{Y}$. To see this, let $f$ be an exposed essential edge of $\alpha$. Since $\sigma_{\text{out}}$ has length $\abs{p}-1$, either some element of $[f]_\alpha$ lies along $\sigma_{\text{out}}$, in which case we are done, or $e_1,e_2\in[f]_\alpha$. In the latter case, $\alpha=\omega_e^x$ and both $e_1$ and $e_2$ lie along $\gamma'$. Lemma \ref{notweird} implies that every element of $[f]_{\alpha}$ lies along $\gamma'$, which contradicts Lemma \ref{noorbits}. This proves the claim.

Since $e_1$ and $e_2$ do not lie in $[f]_\alpha$, we may choose $f$ to be the element of $[f]_\alpha$ which lies in $\sigma_{\text{out}}$. The other $m-1$ elements of $[f]_\alpha$ lie in $\sigma_{\text{in}}$. Note that every such element must lie along $\gamma'$. Indeed, if this is not the case then given an element $f'\in[f]_\alpha$ which lies in $\sigma_{\text{in}}$ but not along $\gamma'$, we may join $f$ and $f'$ by a snipping arc running through the interior of $\alpha$. The graph $\gamma'\cup\lambda$ now contradicts Lemma \ref{snip}.

Thus the geodesic $\gamma'$ visits $m-1$ elements of $[f]_\alpha$. Lemma \ref{notweird} implies that $\gamma'$ visits each essential edge of $\sigma_{\text{in}}$ in turn. Let $f'$ and $f''$ be the first and last elements of $[f]_\alpha$ along $\sigma_{\text{in}}$. Since $m\geq 4$, the minimal subpath of $\gamma'$ containing these two edges contains strictly more than half of the essential edges of $\partial\alpha$. This contradicts Lemma \ref{geodboundary}.
\end{proof}

The following definition describes an impossible configuration of a pair of double-crossed ladders in $\ucc{X}$. We will show that if linear separation fails we can find such a configuration.

\noindent
\begin{dfn} (\textbf{Double-crossed pair of ladders}). Let $\gamma$ be a geodesic in $\os{\ucc{X}}$ with endpoints $0$-cells $x$ and $y$. Let $e_a$ and $e_b$ be adjacent edges along $\gamma$. Suppose that $Y_a$ and $Y_b$ are double-crossed ladders at $(e_a,z_a)$ and $(e_b,z_b)$, respectively, where $z_a,z_b\in\{x,y\}$. Suppose further that $Y_a$ and $Y_b$ bend in the same direction and that $\alpha_a=\alpha_{e_a}^{z_a}$ and $\alpha_b=\alpha_{e_b}^{z_b}$ are distinct. In this case we call the subcomplex $Y=Y_a\cup Y_b$ of $\ucc{X}$ a \emph{double-crossed pair of ladders}. We denote by $\omega_a$ the last essential $2$-cell through which $Y_a$ returns, $\lambda_a$ the wall segment associated to $Y_a$, $H_a$ its associated ladder, etc. Similarly define $\omega_b$, $\lambda_b$, and $H_b$, etc.
\end{dfn}

\noindent
\begin{lem}
\label{notrouble}
There does not exist a double-crossed pair of ladders in $\ucc{X}$.
\end{lem}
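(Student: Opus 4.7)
The plan is to argue by contradiction, so suppose such a double-crossed pair $Y = Y_a \cup Y_b$ exists. By definition both ladders bend in the same direction; WLOG both bend toward $x$. The subpath $\gamma' \subseteq \gamma$ spanning the entire configuration is a geodesic in $\os{\ucc{X}}$, hence also a relative geodesic, and it contains every isolated edge of $Y$ (the outer ladder edges all lie in $2$-cells of $H_a$ or $H_b$). Thus Lemma \ref{patching} produces a patching $\pl{Y} \to \ucc{X}$.

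First I would restrict which essential $2$-cells of $\pl{Y}$ can be extreme, following the argument in the proof of Lemma \ref{wallpatch}. If $\alpha$ is an interior essential $2$-cell of $H_a$ (i.e., not equal to $\alpha_a$ or $\omega_a$), then $\lambda_a$ enters $\alpha$ through an essential edge $e_i$ shared with the previous cell of $H_a$ and exits through the consecutive essential edge $e_{i+1} \in [e_i]_\alpha$ shared with the next cell. Taking $x$ and $y$ to be the endpoints of $e_i$ and $e_{i+1}$ on opposite sides of $\lambda_a$, Lemma \ref{notextreme} applies: condition (ii) holds since $\aux{x}$ and $\aux{y}$ each lie in the closure of a neighboring essential $2$-cell, and condition (i) holds because $n(X) \geq 3$ forces both boundary arcs of $\partial\alpha$ between $x$ and $y$ to be at least as long as one period $p$. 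An analogous argument rules out interior cells of $H_b$. Hence every extreme essential $2$-cell of $\pl{Y}$ lies in $\{\alpha_a, \omega_a, \alpha_b, \omega_b\}$---at most four candidates.

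Next, Proposition \ref{5.4} rules out internal essential $2$-cells in $\pl{Y}$: any such cell would have exponent at least $n(X) \geq 4$, forcing at least eight extreme cells and exceeding the bound of four. So every essential $2$-cell of $\pl{Y}$ is external. Proposition \ref{4.11} still demands at least two extreme cells (using $\alpha_a \neq \alpha_b$ to ensure $\pl{Y}$ has more than one essential $2$-cell).

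The hard part will be extracting a contradiction from the surviving configurations. For each candidate extreme $2$-cell $\beta$ with exposed edge $f$, all $n$ elements of $[f]_\beta$ must lie on $\partial\pl{Y}$; since wall segments meet essential edges only transversally, each such element lies on $\gamma'$ or on the outer boundary of one of the ladders. Using Lemma \ref{shareboundary} and the assumption $\alpha_a \neq \alpha_b$, I would show that at most one element of $[f]_\beta$ can lie on an outer-boundary portion of another cell, so $\gamma'$ contains at least $n - 1 \geq 3$ elements of $[f]_\beta$. Then, as in the proof of Lemma \ref{direction}, Lemma \ref{notweird} forces $\gamma'$ to traverse every essential edge of the boundary arc spanning these $n - 1$ consecutive $[f]_\beta$-elements; since $n \geq 4$, this arc comprises strictly more than half of the essential edges of $\partial\beta$, contradicting Lemma \ref{geodboundary}. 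The bookkeeping is delicate: one must enumerate cases by the cyclic order of $v_a, u_a, v_b, u_b$ along $\gamma$, by whether the walls $\Lambda_a$ and $\Lambda_b$ coincide or cross, and by whether $H_a$ and $H_b$ share essential $2$-cells, at every stage exploiting $\alpha_a \neq \alpha_b$ to keep the extreme candidates genuinely distinct and the ladders from collapsing into a single simpler configuration.
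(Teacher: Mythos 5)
Your setup (patching $\pl{Y}$, reducing the extreme candidates to $\alpha_a,\omega_a,\alpha_b,\omega_b$ via the Lemma \ref{wallpatch}/Lemma \ref{notextreme} argument, and playing this off against Proposition \ref{4.11}) matches the paper, but the core mechanism you propose for eliminating the surviving candidates has a genuine gap. You claim that for a candidate extreme cell $\beta$ with exposed edge $f$, ``at most one element of $[f]_\beta$ can lie on an outer-boundary portion of another cell,'' citing Lemma \ref{shareboundary}, and conclude that at least $n-1$ elements of $[f]_\beta$ lie on $\gamma'$, contradicting Lemma \ref{geodboundary}. But Lemma \ref{shareboundary} bounds how many elements of $[f]_\beta$ can lie on the boundary of a \emph{different} essential $2$-cell; the elements of $[f]_\beta$ in question lie on $\partial\beta$ itself, and nothing prevents several of them from sitting on the free outer side of the ladder (the part of $\partial\beta$ on $\partial\pl{Y}$ away from $\gamma'$). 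In Lemma \ref{direction} the analogous forcing of $m-1$ orbit elements onto the geodesic is \emph{not} automatic: it comes from the wall-separation/snipping argument using $\gamma'\cup\lambda$ and the fact that $\sigma_{\text{out}}$ lies on the far side of the wall, and that argument is exactly what fails when the ladder bends toward $x$ --- which is the configuration you are in, since both $Y_a$ and $Y_b$ bend toward $x$. Indeed, if your step were valid as stated it would apply verbatim to a \emph{single} double-crossed ladder bending toward $x$ and would show that no wall ever double-crosses a geodesic, which is precisely what the paper cannot prove (this is why Lemma \ref{direction} only controls the bending direction and why Lemma \ref{9.4} is structured around pairs). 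A further warning sign: the paper remarks that Lemma \ref{notrouble} holds for $n(X)\in\{2,3\}$, and its proof never uses $n(X)\geq 4$, whereas your argument hinges on it at the decisive step.

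For comparison, the paper's route is different where it matters: it shows (i) if $\alpha_a\neq\omega_a$ then $\alpha_a$ is not extreme, and symmetrically for $\alpha_b$, by applying Lemma \ref{notextreme} to the endpoints of the two edges of $\partial\alpha_a$ dual to $\lambda_a$ --- here the adjacency of $e_a$ and $e_b$ and $\alpha_a\neq\alpha_b$ are used to see that, in the auxiliary diagram, one endpoint lies in $\cl{\aux{\alpha_b}}$ and the other in the closure of the next cell of $H_a$; and (iii) if $\omega_a\neq\omega_b$ then at most one of them is extreme, via a delicate case analysis (choosing the exposed edges $f_a,f_b$ on $\gamma$ closest to $x$, building a snipping arc across $\omega_a$, and exhibiting explicit paths through $\gamma$, $\lambda_a$, $\lambda_b$ connecting its two sides to contradict Lemma \ref{snip}). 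Together these leave at most one extreme cell, contradicting Proposition \ref{4.11}. Your proposal never supplies a working substitute for steps (i)--(iii), so as it stands the proof does not go through.
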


Remark: This lemma is true when $n(X)\in\{2,3\}$. This is what makes the following proof so technical.

\begin{proof}
Let $Y=Y_a\cup Y_b$ be a double-crossed pair of ladders. Suppose without loss of generality that $Y_a$ and $Y_b$ bend in the direction of $x$. Note that $Y$ satisfies the hypotheses of Lemma \ref{patching}, and let $\pl{Y}$ be a patching. The only candidates for extreme $2$-cells of $\pl{Y}$ are $\alpha_a$, $\omega_a$, $\alpha_b$, and $\omega_b$. We know that $\pl{Y}$ contains at least two essential $2$-cells since $\alpha_a$ and $\alpha_b$ are distinct. Observe that $H_a$ and $H_b$ embed in $\pl{Y}$, but they may overlap with each other.

We will prove the following statements:

\begin{itemize}
\item[(i)] If $\alpha_a\neq\omega_a$, then $\alpha_a$ is not extreme.
\item[(ii)] If $\alpha_b\neq\omega_b$, then $\alpha_b$ is not extreme.
\item[(iii)] If $\omega_a\neq\omega_b$, then at most one of $\omega_a$ and $\omega_b$ can be extreme.
\end{itemize}

Taken together, these statements imply that $\pl{Y}$ contains at most one extreme essential $2$-cell. This contradicts Proposition \ref{4.11}.

To see statement (i), temporarily orient $e_a$ and $e_b$ so that their terminal points coincide. Let $f_a$ and $g_a$ be the edges of $\partial\alpha_a$ which are dual to $\lambda_a$ (they may be essential or not), labeled so that there is a path from $f_a$ to $e_a$ inside $\lambda_a$ which does not internally intersect $\alpha_a$. Suppose $\partial\alpha=p^m$ in $X$, where $p$ is not a proper power. Orient $f_a$ so that it crosses $\lambda_a$ in the same direction that $e_a$ crosses it, and extend this orientation to $\partial\alpha_a$. Now the terminal points $t(f_a)$ and $t(g_a)$ of $f_a$ and $g_a$ are the length of $p$ apart in $\partial\alpha_a$. Moreover, in the auxiliary diagram $\aux{Y}$, $\aux{t(f_a)}$ lies in $\aux{\alpha_b}$ and $\aux{t(g_a)}$ lies in $\aux{\beta}$ for some essential $2$-cell of $Y_a$ distinct from $\alpha_a$, since $\alpha_a\neq\omega_a$. Lemma \ref{notextreme} proves the claim. Note that this argument does not depend on the direction in which $\lambda_a$ bends. Switching the symbols $a$ and $b$, an identical argument shows that $\alpha_b$ is not extreme if $\alpha_b\neq\omega_b$, and statement (ii) is proved. See figure \ref{fig:fig9}.

\begin{figure}[htbp]
	\centering
		\includegraphics[width=0.7\textwidth]{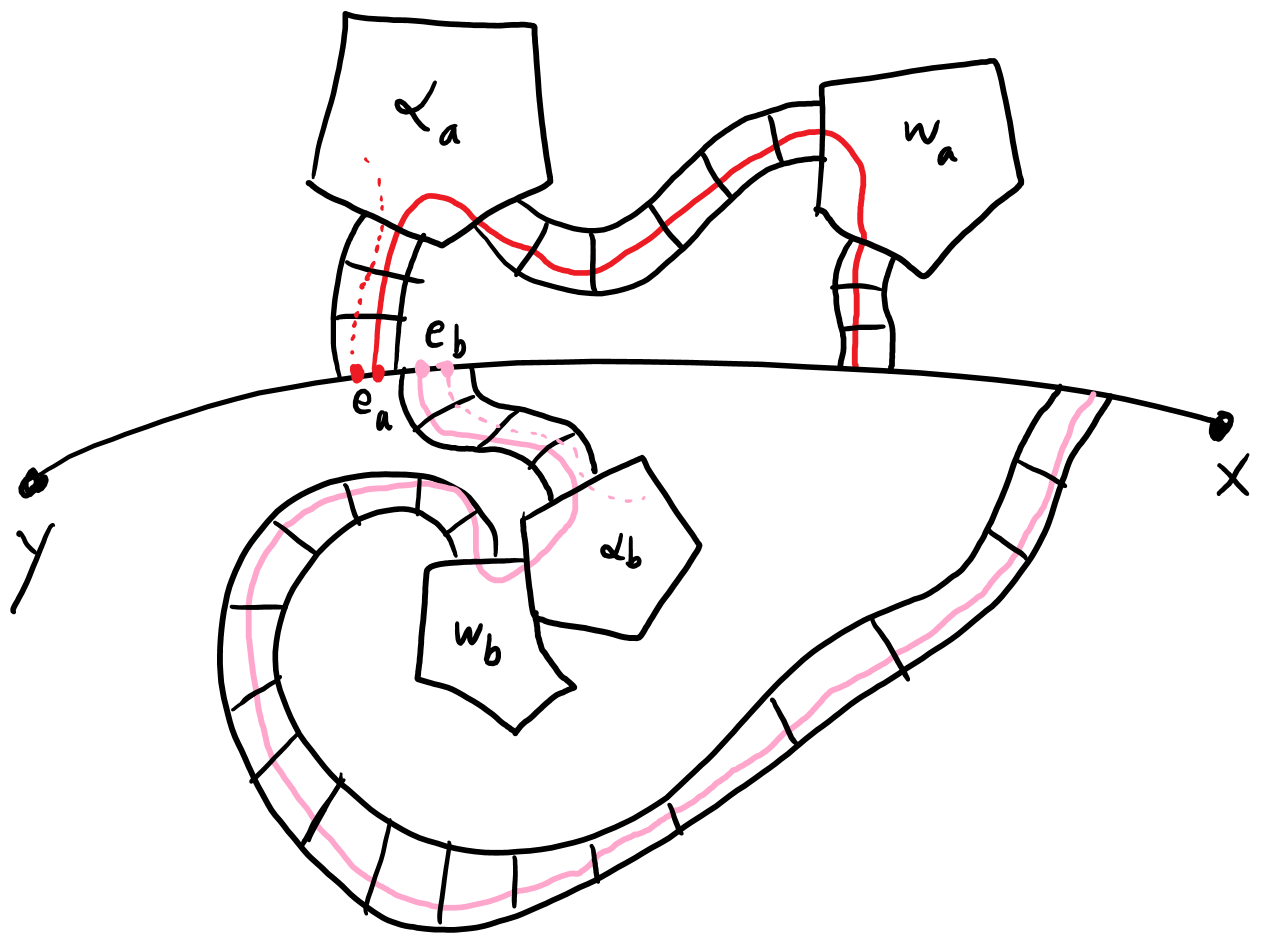}
	\caption{\footnotesize Proving statements (i) and (ii). The point is that $\alpha_a$ and $\alpha_b$ prevent each other from being extreme, provided that $H_a$ and $H_b$ both contain at least two essential $2$-cells.}
	\label{fig:fig9}
\end{figure}

The following fact will be useful in proving statement (iii): Suppose $\omega_a$ is extreme with exposed essential edge $f_a$. Then some element of $[f_a]_{\omega_a}$ lies along $\gamma$. To see this, not that in case some element of $[f_a]_{\omega_a}$ contains the terminal point of $\lambda_a$ along $\gamma$, this is obvious. Otherwise, we may pick two elements from $[f_a]_{\omega_a}$ on opposite sides of $\lambda_a$, neither of which lies along $\gamma$, for contradiction. Connect these two edges by a snipping arc running across $\omega_a$. This arc is non-separating in $\pl{Y}$, since there is a path from one side to the other in the graph $(\gamma\cup\lambda_a)\cap Y_a$; this contradicts Lemma \ref{snip}. Similarly, if $\omega_b$ is extreme with exposed essential edge $f_b$, then some element of $[f_b]_{\omega_b}$ lies along $\gamma$.

Finally, we prove statement (iii). Suppose for contradiction that $\omega_a\neq\omega_b$, but both are extreme. Among all exposed essential edges $e'$ of $\omega_a$ (meaning that all members of $[e']_{\omega_a}$ lie on the boundary of $\pl{Y}$), choose the one which is on $\gamma$ and closest to $x$ along $\gamma$ and call it $f_a$. Define $f_b$ similarly. Note $f_a\neq f_b$ since all elements of both $[f_a]_{\omega_a}$ and $[f_b]_{\omega_b}$ lie in $\partial\pl{Y}$. There are two cases according to whether $f_b$ is closer to $x$ than $f_a$ or vice-versa.

Suppose first that $f_b$ is closer to $x$ than $f_a$. In this case we will show that there are two edges in $\partial\omega_a\cap\partial \pl{Y}$ which can be connected together by a non-separating snipping arc through $\omega_a$, contradicting Lemma \ref{snip}. Orient $f_a$ so that it points towards $x$ along $\gamma$ and extend this orientation to $\partial\omega_a$. Let $g_a$ be the next element of $[f_a]_{\omega_a}$ after $f_a$. Note that $g_a$ does not lie along $\gamma$. Indeed, if it does, then by choice of $f_a$, $g_a$ lies closer to $y$ along $\gamma$ than $f_a$ by Lemma \ref{notweird}. Lemma \ref{notweird} also implies that every element of $[f_a]_{\omega_a}$ lies along $\gamma$, which contradicts Lemma \ref{noorbits}.

Connect midpoints of $f_a$ and $g_a$ together by a snipping arc that runs across $\omega_a$ and let $S$ be a closed neighborhood of this arc which includes the vertices $i(f_a)$, $t(f_a)$, $i(g_a)$, and $t(g_a)$ but is small enough so that $\partial S\cap\partial\omega_a=f_a\cup g_a$. Orient $S$ by declaring that the edge of $S$ running from $t(f_a)$ to $i(g_a)$ is the front edge of $S$, and the edge running from $i(f_a)$ to $t(g_a)$ is the back edge. Let $v_a$ denote the first point (with respect to the orientation of $\lambda_a$) in $\omega_a\cap\lambda_a$. Note that $v_a$ does not lie in $S$, for otherwise $\lambda_a$ runs through the center of $S$ connecting $g_a$ to $f_a$, but because $g_a$ lies on the boundary of $\pl{Y}$ this would mean $g_a=e_a$, contradicting that $g_a$ does not lie on $\gamma$. Note also that $e_a\neq f_a$, as this scenario would imply $\alpha_a=\omega_a$ and either force $g_a$ to lie on $\gamma$ or give rise to another contradiction to Lemma \ref{notweird}.

There are now some cases to consider.

\begin{itemize}

\item Case 1: The vertices $v_a$ and $t(f_a)$ lie in different components of $\omega_a\setminus\intr{S}$. This case is illustrated in figure \ref{fig:fig10}. In this case we find a path from $t(f_a)$ to the back edge of $S$ in $\pl{Y}\setminus\intr{S}$ as follows:

Starting from $t(f_a)$, travel along $\gamma$ until reaching $f_b$. From $i(f_b)$, travel inside the interior of $\omega_b$ to reach $\lambda_b$. Next, travel backwards along $\lambda_b$ all the way through $H_b$ until reaching $e_b$. If at any point we cross $S$, then it means that $\omega_a$ is identified with an essential $2$-cell in the ladder $H_b$ \textit{distinct} from $\omega_b$, but this cannot happen since we already know that none of these $2$-cells are extreme. Once arriving at $e_b$, travel within $e_b\cup e_a$ to $\lambda_a$ -- here we will not touch $S$ because $e_a\neq g_a$ and $e_b\neq g_a$ since $g_a$ does not lie on $\gamma$, $e_b\neq f_a$ since $\alpha_b\neq\omega_a$ but $f_a$ lies on the boundary of $\pl{Y}$, and $e_a\neq f_a$ as previously observed. Finally, continue along $\lambda_a$ all the way through $H_a$ until entering $\omega_a$ through $v_a$ and reaching the back edge of $S$ in $\omega_a$ (we will not touch $S$ in any other essential $2$-cell since $H_a$ is a subcomplex of $\ucc{X}$). The path we have found connects the front and back edges of $S$ in $\pl{Y}\setminus\intr{S}$ and contradicts Lemma \ref{snip}.

\begin{figure}[htbp]
	\centering
		\includegraphics[width=0.7\textwidth]{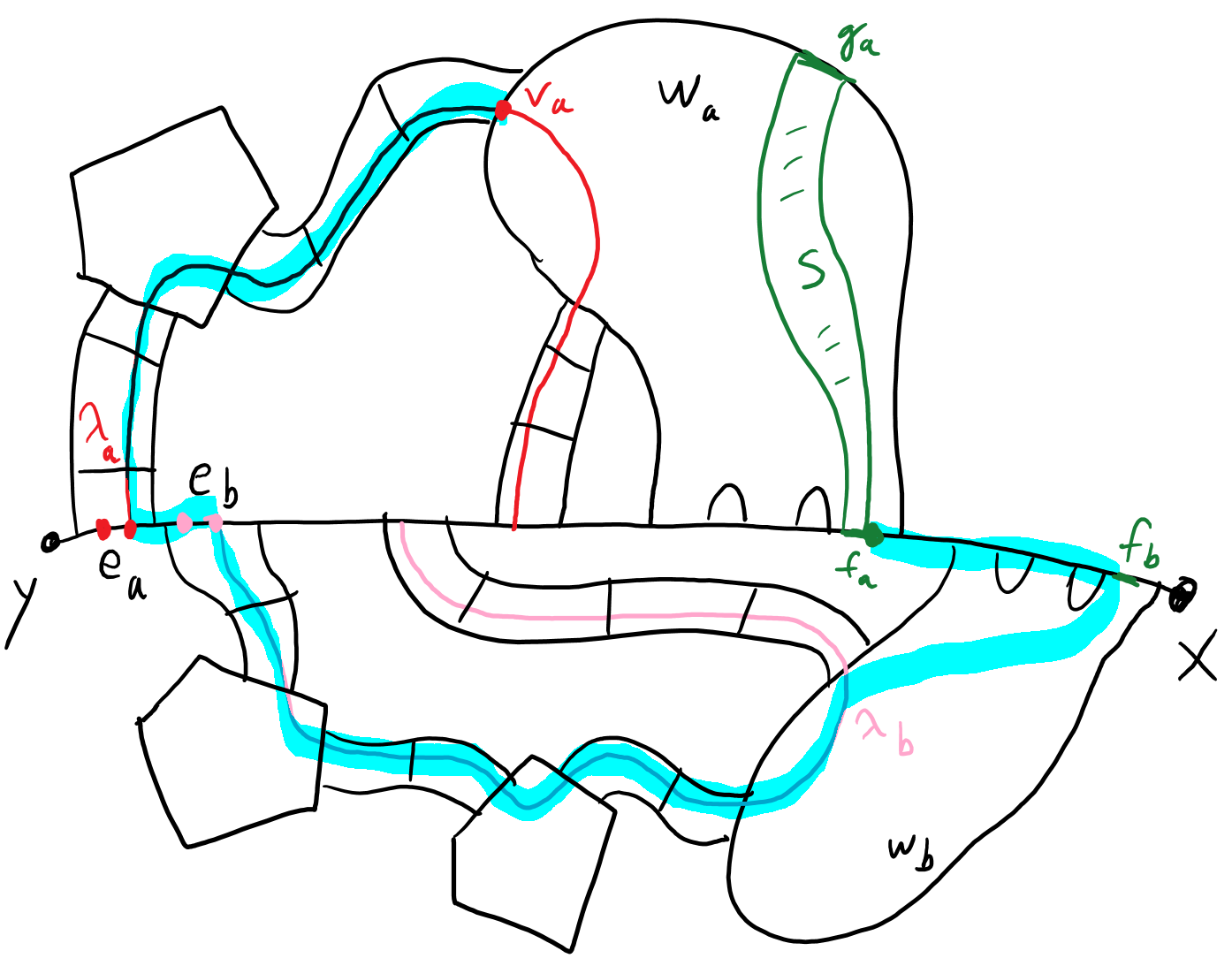}
	\caption{\footnotesize An example of what could happen in case 1. The highlighted blue path gives the contradiction to Lemma \ref{snip}. }
	\label{fig:fig10}
\end{figure}


\item Case 2: The vertices $v_a$ and $t(f_a)$ lie in the same component of $\omega_a\setminus\intr{S}$. This case further breaks into two subcases. Note that $e_a\neq f_a$ as previously observed.

\begin{itemize}

\item Subcase 1: The edge $e_a$ is strictly closer to $y$ along $\gamma$ than $f_a$ is. This subcase is illustrated in figure \ref{fig:fig11}. In this case we find a path from $t(f_a)$ to the back edge of $S$ in $\pl{Y}\setminus\intr{S}$ as follows:

Starting from $t(f_a)$, travel along $\gamma$ until reaching $i(f_b)$, and then through the interior of $\omega_b$ to reach $\lambda_b$. Travel backwards through $\lambda_b$ to reach $e_b$ (for the same reasons as the previous case, this path does not touch the interior of $S$). Since $e_b$ is adjacent to $e_a$ and $e_b\neq f_a$ (as in the previous case), it is the case that $e_b$ is strictly closer to $y$ along $\gamma$ than $f_a$ is. Thus there is a path in $\gamma$ from the initial point of $\lambda_b$ to $i(f_a)$ which avoids $S$. We have again contradicted Lemma \ref{snip}.

\begin{figure}[htbp]
	\centering
		\includegraphics[width=0.7\textwidth]{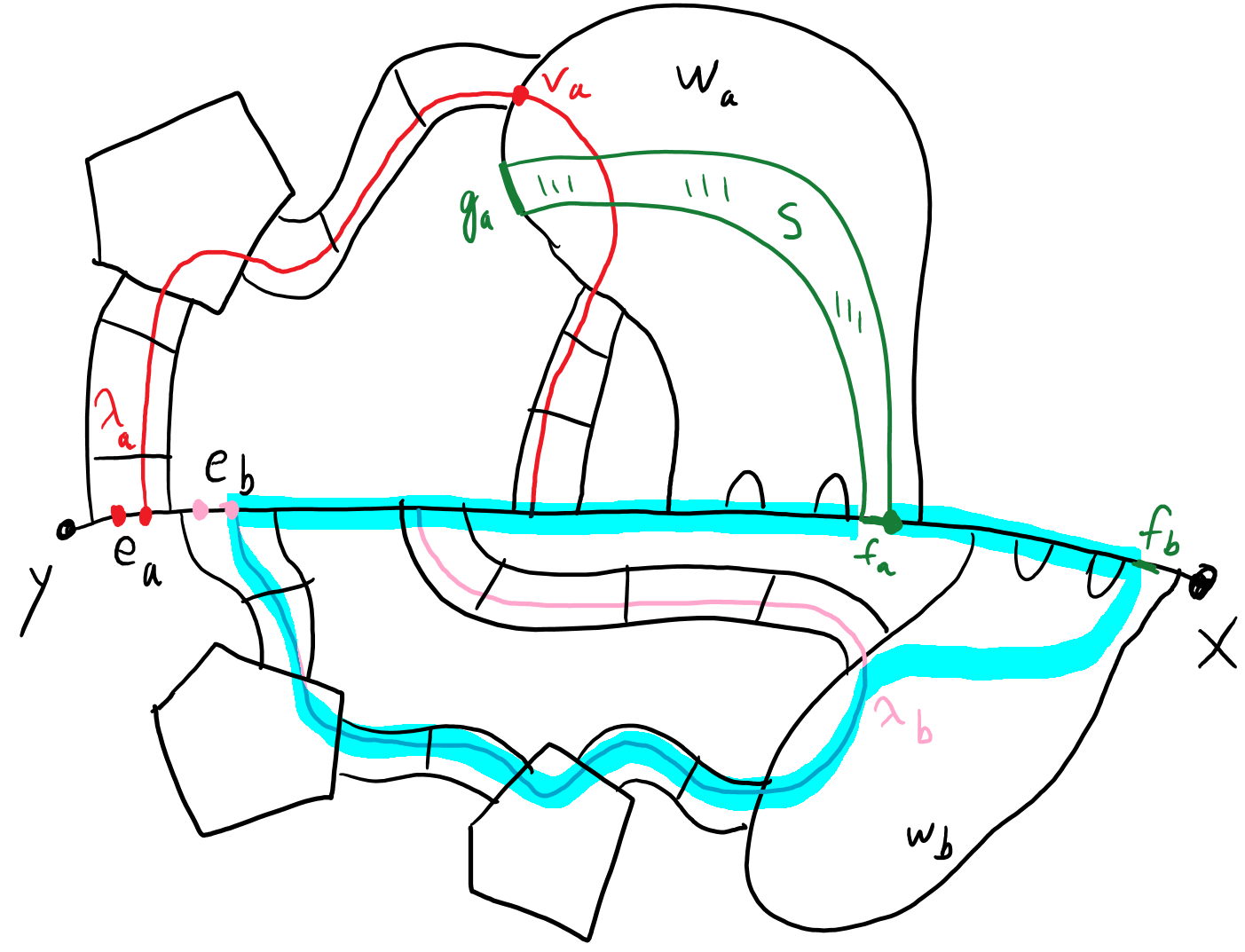}
	\caption{\footnotesize An example of subcase 1.}
	\label{fig:fig11}
\end{figure}

\item Subcase 2: The edge $e_a$ is strictly closer to $x$ along $\gamma$ than $f_a$ is. This subcase is illustrated in figure \ref{fig:fig12}. Let $e'_a$ be the edge of $\gamma$ which is dual to the terminal edge of $\lambda_a$, and oriented so that it points in the direction of $x$. Note that $e_a\neq e'_a$ (for example by Lemma \ref{random2}), and $e'_a$ is strictly closer to $x$ along $\gamma$ than $e_a$. Let $w_a^{\text{front}}$ and $w_a^{\text{back}}$ be the vertices of $S\cap\lambda_a$, labeled according to whether they are on the front or back edge of $S$. In this case we find a path from $w_a^{\text{back}}$ to $w_a^{\text{front}}$ in $\pl{Y}\setminus\intr{S}$ as follows:

Travel from $w_a^{\text{back}}$ to $e'_a$ along $\lambda_a$ in the forward direction, and travel backwards along $\gamma$ from $e_a'$ to $e_a$. Then simply travel forward along $\lambda_a$ through $H_a$ until reaching $w_a^{\text{front}}$. This again contradicts Lemma \ref{snip}.

\begin{figure}[htbp]
	\centering
		\includegraphics[width=0.7\textwidth]{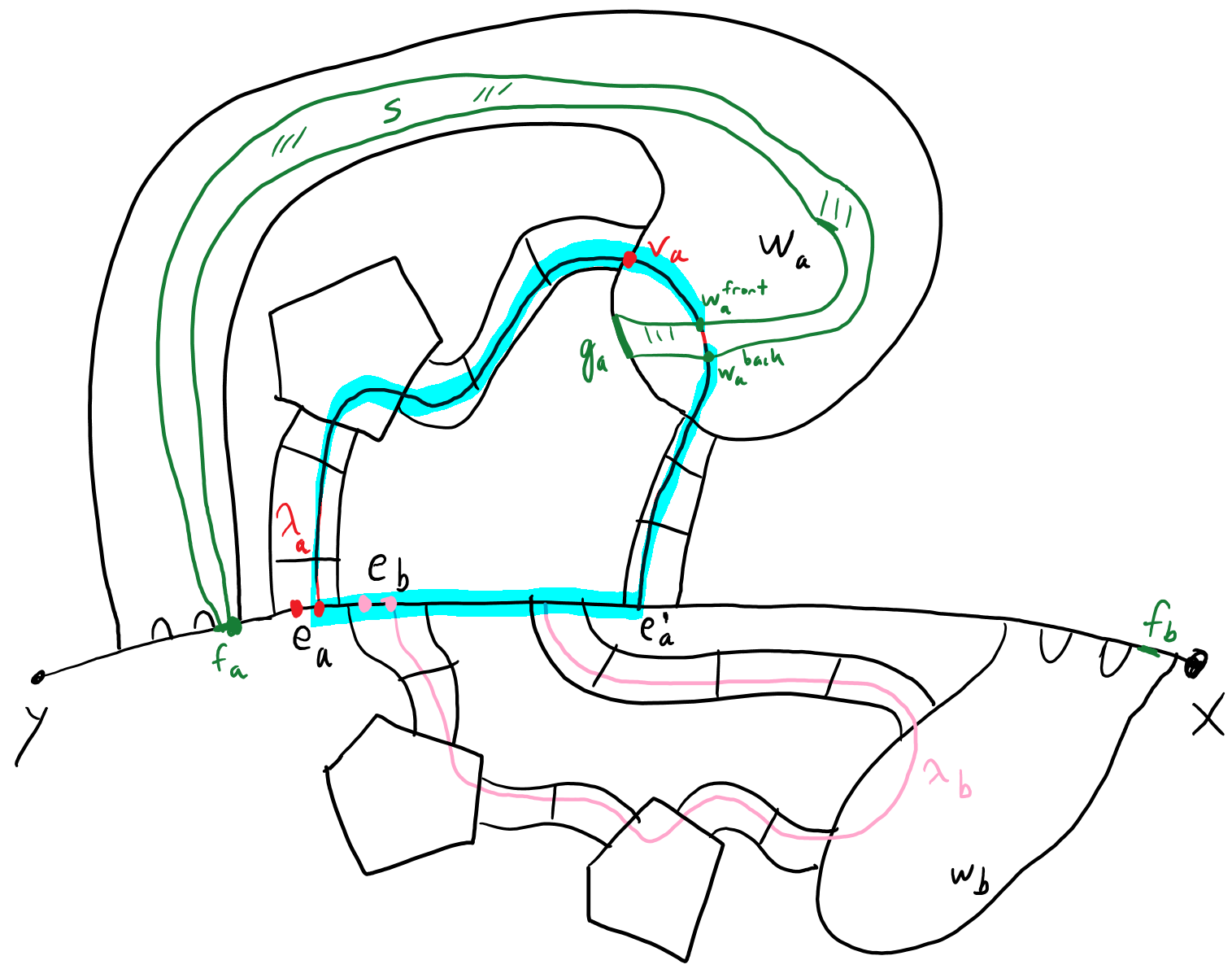}
	\caption{\footnotesize An example of subcase 2. The idea that $e_a$ could be closer to $x$ than $f_a$ seems like a strange pathology, but we have to deal with it separately since we have not ruled it out.}
	\label{fig:fig12}
\end{figure}

\end{itemize}

\end{itemize}

For the case in which $f_a$ is closer to $x$ than $f_b$, the argument is identical, except that we exchange the roles of $a$ and $b$ in the above argument. Note that the above argument does not depend on the order in which $e_a$ and $e_b$ occur along $\gamma$, but only uses that these edges are adjacent in $\gamma$.
\end{proof}

\noindent
\begin{lem}
\label{9.4}
Let $\gamma$ be a geodesic in $\os{\ucc{X}}$ with endpoints $0$-cells $x$ and $y$. Suppose that $n(X)\geq 4$. For any $1$-cell $e$ of $\gamma$, there exists a wall that intersects $\gamma$ exactly once, and the point of intersection is within $W_X+1$ edges of $e$.
\end{lem}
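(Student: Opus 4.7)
The plan is to argue by contradiction: suppose that no wall of $\ucc{X}$ intersects $\gamma$ exactly once at a point within $W_X+1$ edges of $e$. Then for every edge $f$ of $\gamma$ within this range, the two walls $\Lambda_f^x$ and $\Lambda_f^y$ dual to $f$ must both double-cross $\gamma$. I will work with the $x$-side walls (the $y$-side is handled identically) and fix a block of $W_X+1$ consecutive edges $e_1,\ldots,e_{W_X+1}$ of $\gamma$ lying in this range.

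For each $i$, applying Lemma \ref{direction} to $\Lambda_{e_i}^x$ yields a double-crossed ladder $Y_i^x$ at $(e_i,x)$ bending toward $x$, with first-return essential $2$-cell $\alpha_i^x$. If $\alpha_i^x\neq\alpha_{i+1}^x$ for some $i$, then $Y_i^x\cup Y_{i+1}^x$ is a double-crossed pair of ladders bending in the same direction, which Lemma \ref{notrouble} forbids. Hence there is a single essential $2$-cell $\alpha$ with $\alpha_i^x=\alpha$ for all $i=1,\ldots,W_X+1$, and every $\Lambda_{e_i}^x$ first enters $\alpha$ via its wall segment $\lambda_i^x$.

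The decisive step is a pigeonhole argument on $\partial\alpha$. For each $i$, let $f_i\subset\partial\alpha$ denote the essential edge through which $\Lambda_{e_i}^x$ first enters $\alpha$: if $e_i$ itself is essential then $f_i=e_i$, and otherwise $e_i$ lies in some vertex space $\tilde V$ and $f_i$ is the essential edge of $\partial\alpha$ at which the hyperplane $\Lambda_{e_i}^x\cap\tilde V$ terminates (well-defined by Corollary \ref{wallintersectvs}). Since the attaching map of $\alpha$ uses at most $W_X$ edges by definition of $W_X$, producing $W_X+1$ distinct $f_i$ delivers the desired contradiction.

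The main obstacle is thus showing that the $f_i$ are pairwise distinct. Suppose $f_i=f_j$ for some $i<j$. If the walls $\Lambda_{e_i}^x$ and $\Lambda_{e_j}^x$ enter $\alpha$ at the same dual point of $f_i$, then Lemma \ref{locdet} forces $\Lambda_{e_i}^x=\Lambda_{e_j}^x$; invoking Corollary \ref{wallintersectvs} and Lemma \ref{random2} (when at least one of $e_i, e_j$ is non-essential, the merged wall's hyperplane inside $\tilde V$ would cross the restriction of $\gamma$ to $\tilde V$ twice) gives a contradiction, and when both are essential one gets $e_i=e_j$ directly. If instead the walls enter at the two distinct dual points of $f_i$, I would join those dual points by a snipping arc across the interior of $\alpha$ and construct a bypass by chaining the portion of $\gamma$ from $e_i$ to $e_j$ with appropriate subarcs of the ladders $H_i^x$ and $H_j^x$, contradicting the Snipping Lemma \ref{snip}. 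This case analysis follows the pattern of the proofs of Lemmas \ref{direction} and \ref{notrouble}.
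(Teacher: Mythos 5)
Your skeleton matches the paper's: Lemma \ref{direction} makes all the $x$-side double-crossings bend toward $x$, Lemma \ref{notrouble} applied to adjacent edges forces a common first-return $2$-cell $\alpha$ along a stretch of more than $W_X$ edges, and one then finishes with a count at $\alpha$ bounded by $W_X$ (the paper phrases this as ``at most $W_X$ wall segments pass through $\alpha_a$,'' taking the largest index $k$ with first return through $\alpha_a$ and concluding at $e_{k+1}$, rather than your block-of-$W_X+1$ contradiction). The problem is precisely in the counting step that you yourself flag as the main obstacle. First, your assignment $e_i\mapsto f_i$ is not well defined as written: when $e_i$ is a cube edge of a vertex space $\tilde V$, the segment $\lambda_i^x$ runs inside the single hyperplane $\Lambda_{e_i}^x\cap\tilde V$ and enters $\alpha$ through a \emph{non-essential} boundary edge of $\alpha$ lying in $\tilde V$ (the wall edges inside $\alpha$ join dual points of edges in the same position class, which all have the same type); there is no ``essential edge of $\partial\alpha$ at which the hyperplane terminates.'' This is repairable by letting $f_i$ be the boundary edge of $\alpha$, of either type, through which $\lambda_i^x$ first enters $\alpha$ (the bound $W_X$ counts all edges of attaching maps), but as stated the objects being pigeonholed are not defined.

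Second, and more seriously, the distinctness of the $f_i$ is only half proved. Your same-dual-point case is essentially fine, but in the case of entries at the two distinct dual points of a common edge $f$ you offer only a sketch, and the proposed snipping argument does not go through as set up: in any patched complex containing $\alpha$ together with the relevant ladder portions, $f$ is an \emph{interior} edge, so the endpoints of your arc do not lie on the boundary and Lemma \ref{snip} does not apply; no actual ``bypass'' is exhibited. The correct (and simpler) finish is: if $f_i=f_j=f$, then either $f$ is essential, in which case $e_i=f=e_j$, or $f$ is non-essential in some $\tilde V$, in which case $e_i$ and $e_j$ are also non-essential edges of $\tilde V$ and each is parallel to $f$, hence dual to the same hyperplane of $\tilde V$; since $\gamma\cap\tilde V$ is a single geodesic segment (Lemma \ref{vertexspacesconvex}), that hyperplane would cross it twice, contradicting the behavior of hyperplanes in $\cz$ cube complexes exactly as in Lemma \ref{random2}, so again $e_i=e_j$. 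Finally, a boundary issue: your block of $W_X+1$ consecutive edges in the prescribed range need not exist when $\gamma$ is short; the paper avoids this because bending toward $x$ guarantees the next edge $e_{k+1}$ exists, and the short case is anyway easy since the wall dual to the edge adjacent to an endpoint, on the endpoint side, cannot double-cross by Lemma \ref{direction}. Until the distinctness step and these definitional points are filled in, the pigeonhole, and hence the proof, is incomplete.
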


\begin{proof}
As in the proof of Lemma \ref{georqc}, let $W_X$ be an upper bound on the number of edges (essential or not) in the attaching map of the elements of $C(X)$.

If either wall dual to $e$ does not double-cross $\gamma$, then we are done. Thus, assume that $\Lambda_e^x$ double-crosses $\gamma$. Fix a wall segment $\lambda_e^x$ associated to this double crossing and let $Y_e^x$ be the associated double-crossed ladder. By Lemma \ref{direction}, we may assume that $Y_e^x$ bends in the direction of $x$. By Lemma \ref{random3}, the first essential $2$-cell through which $Y_e^x$ returns, $\alpha_a=\alpha_e^x$, exists. Let $\gamma_x$ be the subsegment of $\gamma$ between $e$ and $x$, including $e$. Consider the sequence of successive edges of $\gamma_x$ starting with $e$ and moving towards $x$, $\{e=e_1,e_2,e_3,\ldots\}$. Let $k$ be the largest integer with the property that $\Lambda_{e_k}^x$ double crosses $\gamma$ and such that $\alpha_a$ is the first essential $2$-cell through which some wall segment $\lambda_{e_k}^x$ returns. Since there are at most $W_X$ wall segments passing through $\alpha_a$, $k\leq W_X$. Define $Y_a$ to be the double-crossed ladder associated to $\lambda_{e_k}^x$. By Lemma \ref{direction}, we may assume $Y_a$ bends in the direction of $x$. In particular, $e_{k+1}$ exists.

Now, observe that the wall $\Lambda_{k+1}^x$ crosses $\gamma$ exactly once. Indeed, if not then there is a double-crossed ladder $Y_b=Y_{k+1}^x$ at $(e_{k+1},x)$ which bends in the direction of $x$ by Lemma \ref{direction}, and $\alpha_a\neq\alpha_b$ by definition of $k$. Thus $Y_a\cup Y_b$ is a pair of double-crossed ladders, contradicting Lemma \ref{notrouble}.
\end{proof}

Proposition \ref{9.1} follows easily (assuming of course that $n(X)\geq 4$).

Problem: Just as Lauer and Wise do, we wonder -- Does $\ucc{X}$ satisfy the linear separation property relative to its walls when $n(X)\in\{2,3\}$? It appears difficult to produce a pair of double-crossed ladders in this situation, since one has less control over the direction in which double-crossed ladders bend.

\section{Existence of the action}
\label{sect:action}

In this section we will prove the main theorem, that is that $\pi_1(X)$ acts properly and cocompactly on a $\cz$ cube complex. We first invoke the so-called ``Sageev contruction'' to obtain an action of $\pi_1(X)$ on a $\cz$ cube complex.

\noindent
\begin{dfn} (\textbf{Wallspace/dual cube complex}). Let $Y$ be a metric space and let $\mathcal{W}$ be a collection of closed, connected subspaces of $Y$, each of which separates $Y$ into two components. We call $(Y,\mathcal{W})$ a (geometric) \emph{wallspace}. If a group $G$ acts properly and cocompactly on $Y$ preserving both its metric and wallspace structures, then Sageev shows that $G$ acts on a $\cz$ cube complex $\mathcal{C}(Y)$, called the \emph{dual cube complex} \cite{ms}. A summary can be found in \cite[Construction 3.2, Theorem 3.7, Remark 3.11]{hw2}.
\end{dfn}

Properness of this action in our setting will follow immediately from what we proved in Section \ref{sect:ls}. Cocompactness will follow by an application of \cite[Theorem 7.12]{hw2}. We state a simplified version of this theorem below.

\noindent
\begin{thm} (cf \cite[Theorem 3.1]{jw}).
\label{jw3.1}
Let $(Y,\mathcal{W})$ be a wallspace. Suppose $G$ acts properly and cocompactly on $Y$ preserving both its metric and wallspace structures, and the action on $\mathcal{W}$ has only finitely many $G$-orbits of walls. Suppose $G$ is hyperbolic relative to $\PP$ with $\PP$ finite. Suppose $\stab{\Lambda}$ acts cocompactly on $\Lambda$ and is relatively quasiconvex for each wall $\Lambda\in\mathcal{W}$. For each $P\in\PP$ let $Y_P\subset Y$ be a nonempty $P$-invariant $P$-cocompact subspace. Let $\mathcal{C}(Y)$ be the cube complex dual to $(Y,\mathcal{W})$ and for each $P\in\PP$ let $\mathcal{C}_*(Y_P)$ be the cube complex dual to $(Y_P,\mathcal{W}_P)$, where $\mathcal{W}_P$ consists of all walls $\Lambda$ with the property that $\diam{\Lambda\cap\nbhd{Y_P}{d}}=\infty$ for some $d=d(\Lambda)$.

Then there exists a compact subcomplex $K$ such that $\mathcal{C}(Y)=GK\cup\bigcup_{P\in\PP} G\mathcal{C}_*(Y_P)$. In particular, $G$ acts cocompactly on $\mathcal{C}(Y)$ provided that each $\mathcal{C}_*(Y_P)$ is $P$-cocompact.
\end{thm}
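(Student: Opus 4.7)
The plan is to follow the standard template for relative cocompactness of Sageev constructions in a relatively hyperbolic setting, as developed by Hruska and Wise, rather than attempting an independent argument: this statement is cited from \cite[Theorem 7.12]{hw2}, so what I want to sketch is the architecture of that argument in the specialized form given here.

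First I would fix a basepoint $y_0 \in Y$ and recall that vertices of $\mathcal{C}(Y)$ correspond to consistent orientations of walls in $\mathcal{W}$; let $v_0$ be the canonical vertex orienting every wall toward $y_0$. For any vertex $v$ of $\mathcal{C}(Y)$, let $\Phi(v) \subset \mathcal{W}$ denote the set of walls whose orientations at $v$ and $v_0$ disagree. The combinatorial distance from $v_0$ to $v$ in $\mathcal{C}(Y)$ is $|\Phi(v)|$, and each wall in $\Phi(v)$ separates $y_0$ from a geometric representative of $v$ in $Y$. Because $G$ acts cocompactly on $Y$ with finitely many orbits of walls, it suffices to bound $|\Phi(v)|$ uniformly on vertices $v$ that lie outside $\bigcup_{P \in \PP} G\mathcal{C}_*(Y_P)$; finitely many conjugacy classes of such bounded configurations then yields a finite $K$.

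Next I would argue the key dichotomy: if $\Phi(v)$ is large, then either (a) a uniform proportion of the walls in $\Phi(v)$ are pairwise disjoint with carriers that can be connected by a short geodesic in $Y$ avoiding neighborhoods of peripheral cosets, or (b) the walls of $\Phi(v)$ aggregate on a single translate $gY_P$ in the sense that all but boundedly many have infinite-diameter intersection with some neighborhood of $gY_P$. Alternative (a) gives a uniform bound on $|\Phi(v)|$ using cocompactness of the $G$-action on $Y$ together with the fact that stabilizers of walls are relatively quasiconvex and act cocompactly on their walls, hence behave essentially like quasiconvex subgroups in the hyperbolic cusped space. Alternative (b) places $v$ in $G\mathcal{C}_*(Y_P)$ by construction of $\mathcal{W}_P$, since the walls separating $v_0$ from $v$ (up to a bounded exception) all belong to $\mathcal{W}_P$.

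The main obstacle is establishing the dichotomy itself: formalizing the passage between the ambient geometry of $Y$ and the cusped $\delta$-hyperbolic geometry $\aug{Y}$ so that relative quasiconvexity of wall stabilizers translates into a concrete aggregation statement. This is where Hruska-Wise use the bounded coset penetration property of relatively hyperbolic groups, together with a doubling-type argument showing that an unboundedly large collection of relatively quasiconvex subsets that pairwise come close must concentrate near a common peripheral coset. Assuming that step, the cocompactness conclusion is routine: finitely many $G$-orbits of ``non-peripheral'' vertices $v$ give finitely many $G$-orbits of cubes meeting each such $v$, and taking $K$ to be a finite subcomplex containing one representative cube from each orbit completes the proof. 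If each $\mathcal{C}_*(Y_P)$ is in addition $P$-cocompact, then each $G\mathcal{C}_*(Y_P)$ is a union of finitely many $G$-orbits of cubes, so enlarging $K$ by finitely many cubes yields global cocompactness of the $G$-action on $\mathcal{C}(Y)$.
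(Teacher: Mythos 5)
Your proposal is consistent with the paper's treatment: the paper does not prove this statement at all, but presents it as a simplified restatement of \cite[Theorem 7.12]{hw2} and simply cites that result, which is exactly what you do. Your architectural sketch of the Hruska--Wise argument (vertices as consistent orientations, the dichotomy between a uniformly bounded set of separating walls and aggregation of walls near a peripheral coset $gY_P$, with the key aggregation/bounded-packing step explicitly deferred to \cite{hw2}) is a fair gloss of the cited proof, so nothing further is required here.
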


For us, $G=\pi_1(X)$, $Y=\ucc{X}$, $\mathcal{W}$ is the collection of walls we defined in $\ucc{X}$, and $\PP$ is the finite collection of vertex groups of $X$. Each vertex group $P$ has an associated vertex space $V_P$ in $X$ (a compact NPC cube complex). Fix a base-point in $\ucc{X}$ and let $Y_P$ to be the copy of the universal cover of $V_P$ in $\ucc{X}$ (a $\cz$ cube complex) with $\stab{Y_P}=P$. 

In order to apply this theorem, it remains to show that each $\mathcal{C}_*(Y_P)$ is $P$-cocompact, as we will see. The following key lemma says, roughly, that a relative geodesic with large projection to $Y_P$ comes very close to $Y_P$.

\noindent
\begin{lem}
\label{bigproj}
Fix $Y_P$. Suppose $\gamma$ is a relative geodesic in $\os{\ucc{X}}$ with endpoints $0$-cells $x$ and $y$. Let $\pi_x$ and $\pi_y$ be nearest-point projections of $x$ and $y$ to the vertex set of $Y_P$. For all $d\geq 0$, there exists $R\geq0$ such that if $d(x,\pi_x)\leq d$, $d(y,\pi_y)\leq d$, and $d(\pi_x,\pi_y)>R$, then there is an essential edge $e$ of $\gamma$ within $W_X/2$ edges of $Y_P$ (where $W_X$ is an upper bound on the lengths of attaching maps of essential $2$-cells in $X$).
\end{lem}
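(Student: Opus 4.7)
The plan is to argue by contradiction using a van Kampen diagram filling the loop between $\gamma$ and a broken path via $Y_P$, then analyzing extreme and internal essential $2$-cells via the results of Sections \ref{sect:ext} and \ref{sect:ext2}.

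Concretely, using convexity of $Y_P$ (Lemma \ref{vertexspacesconvex}), I would form the path $\gamma' = \gamma_1\gamma''\gamma_2$ from $x$ to $y$, where $\gamma_1$ and $\gamma_2$ are geodesic connectors of length $\leq d$ to $\pi_x$ and from $\pi_y$ respectively, and $\gamma''$ is a geodesic in $Y_P$ of length $> R$; note that $\gamma''$ contains no essential edges. I would fill the nullhomotopic loop $\gamma\cdot(\gamma')^{-1}$ with a reduced disk diagram $\psi:D\to \ucc{X}$, and suppose for contradiction that every essential edge of $\gamma$ lies at distance $> W_X/2$ from $Y_P$ in $\os{\ucc{X}}$.

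Two structural observations drive the argument. First, any essential $2$-cell $\alpha$ of $D$ having a boundary edge on $\gamma''$ lies entirely within $W_X/2$ of $Y_P$, since its attaching map has length at most $W_X$; under our hypothesis no such $\alpha$ can have an essential edge on $\gamma$. Second, since $\gamma$ is a relative geodesic, Lemma \ref{geodboundary} implies that at most half of each orbit $[e]_\alpha$ of an exposed essential edge of an extreme essential $2$-cell $\alpha$ of $D$ lies on $\gamma$; with $n(X)\geq 4$, at least $n/2\geq 2$ elements of $[e]_\alpha$ must lie elsewhere on $\partial D$, and since $\gamma''$ has no essential edges, they must lie on $\gamma_1\cup\gamma_2$. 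As these two connectors carry at most $2d$ essential edges in total, and distinct extreme $2$-cells use disjoint essential boundary edges on $\partial D$, the diagram $D$ has at most $4d/n$ extreme essential $2$-cells, independent of $R$.

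The main obstacle is turning this bound into a contradiction for large $R$. By Proposition \ref{5.4}, a single internal essential $2$-cell of $D$ forces $2n\geq 8$ extreme $2$-cells, so whenever $4d/n<2n$ all essential $2$-cells of $D$ are external; combining this with the first observation and using the Snipping Lemma (Lemma \ref{snip}) on essential $2$-cells bridging between $\gamma$ and $\gamma''$, one sees that the diagram cannot connect $\gamma$ to $\gamma''$ across a long $\gamma''$ without producing an essential $2$-cell whose boundary meets both $\gamma''$ and an essential edge of $\gamma$, violating the hypothesis. For the remaining regime $d\geq n^2/2$, I would instead combine the linear relative Dehn function provided by Lemma \ref{rh} (which bounds the area of $D$ linearly in the relative length of $\partial D$) with the convexity of $Y_P$ (to control square strips along $\gamma''$) and propagate across $D$: every essential $2$-cell is either adjacent to $\gamma''$ (hence, by observation one, contains no essential edge of $\gamma$) or separated from $\gamma''$ by a strip of squares in $Y_P$, whose far boundary is still a path in $Y_P$; iterating, $\gamma$ itself would be forced to lie in or near $Y_P$ once $R$ exceeds a threshold $R_0(d,X)$, contradicting the hypothesis. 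Setting $R:=R_0(d,X)$ completes the proof.
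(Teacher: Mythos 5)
Your setup matches the paper's: form the quadrilateral $x\to\pi_x\to\pi_y\to y$ using convexity of $Y_P$ (Lemma \ref{vertexspacesconvex}), fill with a reduced diagram, and aim to show every essential $2$-cell is external and then locate an external cell far from $\pi_x,\pi_y$ whose boundary bridges from near $Y_P$ to an essential edge of $\gamma$. But the central step fails as written. Your bound on the number of extreme $2$-cells is of the form (essential edges on the connectors)$/2\lesssim d$, so Proposition \ref{5.4} only rules out internal essential $2$-cells when $d$ is below a fixed threshold; for large $d$ you fall back on a sketch (``linear relative Dehn function,'' ``strips of squares,'' ``propagate and iterate'') that is not an argument --- no mechanism is given that forces $\gamma$ toward $Y_P$, and the Dehn function controls area, not the geometric position of cells. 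This is exactly where the paper does the real work: it shows the diagram has \emph{at most two} extreme essential $2$-cells, independently of $d$, by observing that the exposed orbit $[e]_\alpha$ of any exposed cell cannot lie wholly on any one of the relative geodesic sides (Lemma \ref{noorbits}), cannot meet $\gamma'$ (no essential edges), and cannot hit both connectors (that would shortcut $\pi_x$ to $\pi_y$); hence every exposed cell must straddle one of the two corners $x'$, $y'$, and an ordering argument along $\gamma_0$ (in the spirit of Lemma \ref{notextreme}, using the auxiliary diagram) excludes any ``middle'' extreme cell. That claim, not a $d$-dependent count, is what lets Proposition \ref{5.4} and Lemma \ref{4.7} force all essential $2$-cells to be external. (Your per-cell observation that at least two orbit elements miss $\gamma$ is fine, via Lemma \ref{notweird} and Lemma \ref{geodboundary} with $n(X)\geq 4$, but it does not yield a bound independent of $d$.)

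The endgame is also too loose. An essential $2$-cell responsible for the far side of the $Y_P$-region need not have a boundary edge on $\gamma''$ at all: it can be separated from $\gamma''$ by a region of squares mapping into $Y_P$, so your ``first observation'' does not apply to it. The paper handles this by passing to the maximal subdiagram $D'$ containing $\gamma'$ and mapping to $Y_P$, and working with its inner boundary arc: every edge there lies on an essential $2$-cell of $D\setminus D'$, which is automatically within $W_X/2$ of $Y_P$. Moreover, externality of that cell only gives an essential edge somewhere on $\partial D$; to force it onto $\gamma$ rather than onto the short connectors you must choose the inner-boundary edge at distance greater than $W_X/2+d$ from both $\pi_x$ and $\pi_y$ (a shortcut through the cell's boundary otherwise contradicts the nearest-point projections), which is precisely why $R$ is taken on the order of $W_X+4d+2$. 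Your proposal never rules out the external essential edge landing on $\gamma_1\cup\gamma_2$, and the Snipping Lemma invocation is not developed enough to substitute for this. (Minor: you should also trim the quadrilateral at the corners $x$ and $y$ so the boundary word is cyclically reduced before asking for a reduced filling, as the paper does.)
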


\begin{proof}
First, note that if any edge of $\gamma$ maps to $Y_P$, then the closest essential edge along $\gamma$ to this edge satisfies the conclusion of the lemma with $R=0$.

Let $d$ be given and assume $d(x,\pi_x)\leq d$ and $d(y,\pi_y)\leq d$. Assume that $x$ and $y$ are far enough apart that $d(\pi_x,\pi_y)>W_X+4d+2$. By the triangle inequality, this will imply in particular that $d(x,y)>2d$.

Form a quadrilateral as follows: Let $\gamma_x$ (resp. $\gamma_y$) be a geodesic edge path from $x$ to $\pi_x$ (resp. $y$ to $\pi_y$), and let $\gamma'$ be a geodesic edge path from $\pi_x$ to $\pi_y$. Orient everything so that $\sigma=\gamma\gamma_y\gamma'\gamma_x$ is a closed loop. Note that $\gamma'$ lies in $Y_P$ by Lemma \ref{vertexspacesconvex}. Also note that there is no backtracking in any of $\gamma$, $\gamma_y$, $\gamma_x$, or $\gamma'$, so there can only be backtracking at the corners. We make $\sigma$ cyclically reduced as follows. First note that there is no backtracking of $\sigma$ at $\pi_x$ or $\pi_y$ by the fact that these points are nearest-point projections of $x$ and $y$ to $Y_P$ and $\gamma'$ lies in $Y_P$. Now, there may be backtracking at $x$, so let $x'$ be the last vertex along $\gamma$ (from $x$) in the image of $\gamma_x$, and similarly define $y'$ to be the last vertex along $\gamma$ (from $y$) in the image of $\gamma_y$. The fact that $d(x,y)>2d$ ensures that there will remain at least one edge of $\gamma$ running from $x'$ to $y'$. Note also that if $x'=\pi_x$ or $y'=\pi_y$, then $\gamma\cap Y_P$ is nonempty and we are done with $R=0$ as before. Let $\gamma_0=\gamma|_{[x',y']}$, $\gamma_{x'}=\gamma_x|_{[\pi_x,x']}$, and $\gamma_{y'}=\gamma_y|_{[y',\pi_y]}$. Redefine $\sigma=\gamma_0\gamma_{y'}\gamma'\gamma_{x'}$. It is clear that there is no folding of $\sigma$ at $x'$ or $y'$ so $\sigma$ is reduced and cyclically reduced.

Fill $\sigma$ with a reduced disk diagram $D\to\ucc{X}(\to X)$ using Lemma \ref{vkl}. If $D$ has no essential $2$-cells then all of $D$ maps to $Y_P$, so set $R=0$ and we are done. Otherwise, Suppose $\alpha$ is an exposed $2$-cell of $D$ with exposed edge $e$. We make the following observations:

\begin{itemize}
\item It is not the case that there exist $e,f\in[e]_\alpha$ with $e$ along $\gamma_{x'}$ and $f$ along $\gamma_{y'}$, otherwise $\partial\alpha$ offers a shortcut between $\gamma_{x'}$ and $\gamma_{y'}$ so that $d(\pi_x,\pi_y)<W_X/2+2d<W_X+4d+2$, a contradiction.
\item It is the case that $\cup[e]_\alpha\nsubset\gamma_{x'}$, $\cup[e]_\alpha\nsubset\gamma_{y'}$, and $\cup[e]_\alpha\nsubset\gamma_0$, since all of these paths are relative geodesics (by Lemma \ref{noorbits}).
\item No element of $[e]_\alpha$ lies along $\gamma'$ (since by Lemma \ref{vertexspacesconvex} no edge of $\gamma'$ is essential).
\end{itemize}

Thus $\alpha$ must ``straddle'' $x'$, i.e. at least one element of $[e]_\alpha$ lies in $\gamma_0$ and at least one in $\gamma_{x'}$, and all elements of $[e]_\alpha$ lie in $\gamma_{x'}\cup\gamma_0$. Alternatively, $\alpha$ could straddle $y'$.

Now we claim that $D$ contains at most $2$ extreme $2$-cells. To see this, first note that there is a natural linear order on the extreme two cells of $D$ induced by the order in which their boundaries are encountered while traversing $\gamma_0$ from $x'$ to $y'$. If there are three or more extreme essential $2$-cells, then we may choose one which is not the first or last with respect to this order. Call this $2$-cell $\alpha$ and suppose that $\alpha$ is exposed with exposed edge $e$. Without loss of generality, we may assume that $\alpha$ straddles $x'$. Let $e_1$ be an element of $[e]_\alpha$ along $\gamma_0$ and $e_2$ an element of $[e]_\alpha$ along $\gamma_{x'}$. Let $\gamma_1$ and $\gamma_2$ be the two minimal paths in $\partial\alpha$ containing $e_1$ and $e_2$, and labeled so that the component of $D\setminus\gamma_2$ which contains $x'$ also contains $\alpha$. Now any candidate for an extreme subpath of $\partial\alpha$ containing all elements of $[e]_\alpha$ must contain $\gamma_1$ or $\gamma_2$. But note that the image of $\gamma_1$ in the auxiliary diagram $\aux{D}$ internally intersects an essential $2$-cell of $\aux{D}$ which lies before $\alpha$ in the order determined by $\gamma_0$. Similarly, the image of $\gamma_2$ in $\aux{D}$ internally intersects an essential $2$-cell of $\aux{D}$ which lies after $\alpha$ in the order determined by $\gamma_0$. Since $e$ was arbitrary, this shows that no extreme subpath of $\partial\alpha$ exists, i.e., $\alpha$ is not extreme.

Using this claim and applying Proposition \ref{5.4} and Lemma \ref{4.7}, we see that every essential $2$-cell of $D$ is external.

Now, let $D'$ be the maximal connected subdiagram of $D$ containing $\gamma'$ and mapping to $Y_P$. Call the other arc of $\partial D'$ from $\pi_y$ to $\pi_x$, $\gamma_1$. Note that no edge of $\gamma_1$ lies in $\gamma_{x'}$ or $\gamma_{y'}$ since $\pi_y$ and $\pi_x$ are nearest-point projections. If any edge of $\gamma_1$ belongs to $\gamma_0$, then some edge of $\gamma$ maps $Y_P$ and we are done. Thus we may assume that every edge of $\gamma_1$ belongs to an essential $2$-cell of $D$ lying in $D\setminus D'$.

\begin{figure}[htbp]
	\centering
		\includegraphics[width=0.8\textwidth]{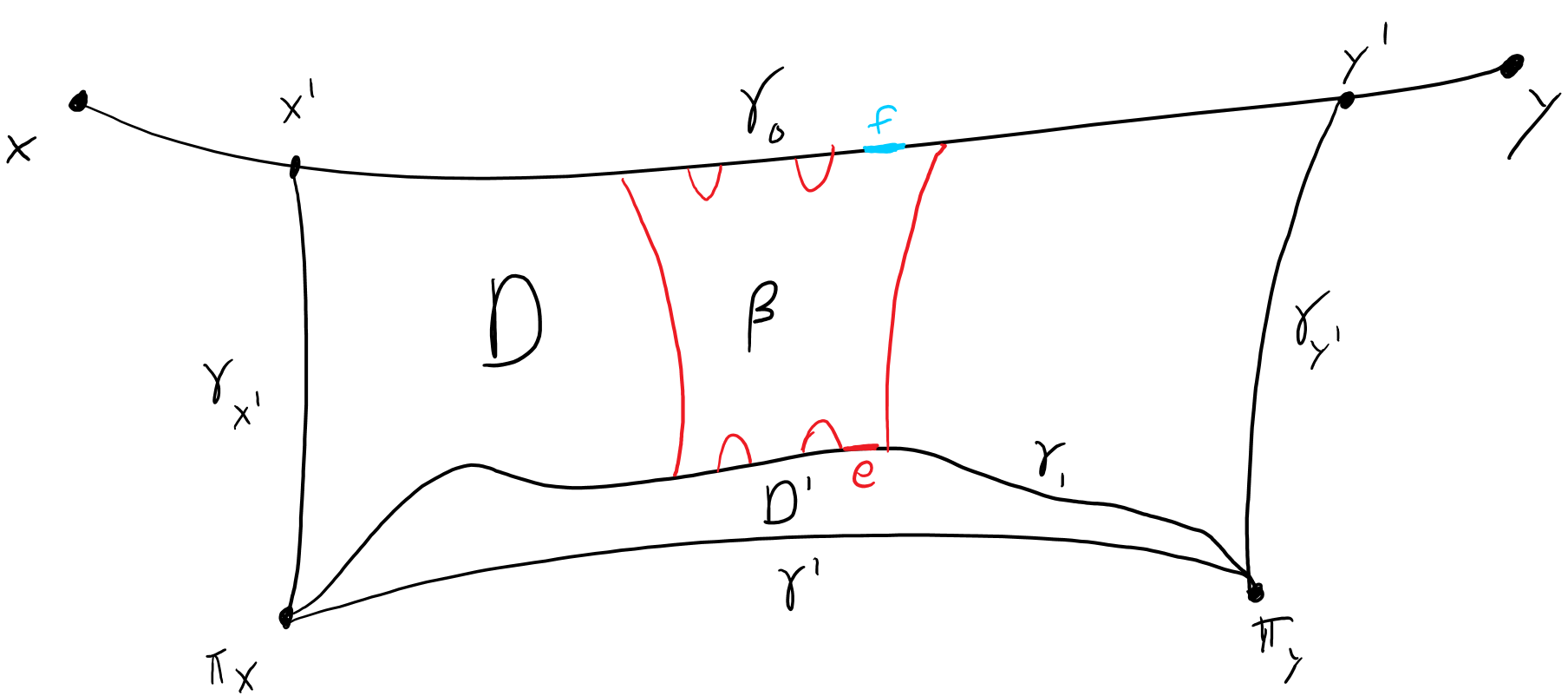}
	\caption{\footnotesize The general case in this lemma. The subdiagram $D'$ maps entirely to $Y_P$. By choosing $\pi_x$ and $\pi_y$ sufficiently far apart, we can find the essential $2$-cell $\beta$ which does not intersect $\gamma_{x'}$ or $\gamma_{y'}$. Since $\beta$ is external in $D'$, we can find the blue essential edge $f$ on $\gamma$, showing that $\gamma$ passes close to $Y_P$.}
	\label{fig:geodclose}
\end{figure}

Since $\ell(\gamma_1)\geq\ell(\gamma')>W_X+4d+2\geq W_X+2d+2$, we may choose an edge $e$ of $\gamma_1$ with the property that $d(e,\pi_x)>W_X/2+d$ and $d(e,\pi_y)>W_X/2+d$. Let $\beta$ be the essential $2$-cell of $D$ with $e$ in its boundary. The observation above implies $\beta$ is external with essential edge $f$ (say) along $\partial D$. Observe that $f$ does not lie along $\gamma_{x'}$, as this would offer a shortcut through $\partial\beta$ from $e$ to $\pi_x$ of length less than or equal to $W_X/2+d$, contradicting the triangle inequality. Similarly, $f$ does not lie along $\gamma_{y'}$. Thus $f$ lies along $\gamma_0$. Now the shorter path along $\partial\beta$ from $e$ to $f$ maps to a path in $\ucc{X}$ from $Y_P$ to an essential edge of $\gamma$ of length less than or equal to $W_X/2$, and we see that $R\geq W_X+4d+2$ satisfies the conclusion of the lemma. See figure \ref{fig:geodclose}.
\end{proof}

\noindent
\begin{lem} \label{periphcc}
Each $\mathcal{C}_*(Y_P)$ is $P$-cocompact.
\end{lem}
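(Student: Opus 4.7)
The plan is to identify $(Y_P,\mathcal{W}_P)$ with the standard hyperplane wallspace on the $\cz$ cube complex $Y_P$, and then appeal to the fact that the Sageev dual of a $\cz$ cube complex equipped with all its hyperplanes as walls is (canonically $P$-equivariantly isomorphic to) the cube complex itself. Since $P$ acts cocompactly on $Y_P$ by hypothesis, $P$-cocompactness of $\mathcal{C}_*(Y_P)$ will then be immediate.

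First I would argue that the restriction map $\Lambda\mapsto\Lambda\cap Y_P$ is a $P$-equivariant bijection between those walls in $\mathcal{W}_P$ with nonempty intersection with $Y_P$ and the hyperplanes of $Y_P$. Well-definedness and injectivity follow from Corollary~\ref{wallintersectvs} and Lemma~\ref{locdet}; surjectivity follows from the construction of $M(\ucc{X})$, since every midcube of $Y_P$ is a midcube of $\ucc{X}$ and so belongs to a unique component of $M(\ucc{X})$, which by Corollary~\ref{wallintersectvs} meets $Y_P$ in exactly that hyperplane. Walls in $\mathcal{W}_P$ disjoint from $Y_P$ place all of $Y_P$ on a single side by Corollary~\ref{wallsep} and so contribute trivial partitions that do not affect the Sageev dual. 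Hence the wallspace structure $\mathcal{W}_P$ induces on $Y_P$ is exactly its natural hyperplane wallspace.

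Next I would check that $\mathcal{W}_P$ contains only finitely many $P$-orbits. There are finitely many $G$-orbits of walls by cocompactness of the $G$-action on $\ucc{X}$. Within each $G$-orbit $G\Lambda_0$, the walls lying in $\mathcal{W}_P$ correspond to the double cosets $Pg\stab{\Lambda_0}$ for which $\stab{\Lambda_0}\cap g^{-1}Pg$ is infinite; this characterization follows from a pigeonhole argument combining the $\stab{\Lambda_0}$-cocompactness of the carrier (Lemma~\ref{wallsrqc}) with the $P$-cocompactness of $Y_P$, applied to translates of a fundamental domain meeting $\nbhd{Y_P}{d}$. By Proposition~\ref{stabsrqc}, $\stab{\Lambda_0}$ is relatively quasiconvex, so the standard finiteness result for conjugacy classes of infinite peripheral intersections in a relatively quasiconvex subgroup (due to Hruska) yields only finitely many such double cosets.

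To finish, I would invoke the standard fact that the Sageev dual of a $\cz$ cube complex with all hyperplanes as walls is the cube complex itself. Applied to $Y_P$, this gives $\mathcal{C}_*(Y_P)\cong Y_P$ as $P$-spaces, and the hypothesis that $P$ acts cocompactly on $Y_P$ finishes the proof. The hardest part will be making the bijection precise: one must rule out a wall approaching $Y_P$ arbitrarily closely without actually intersecting it (which might otherwise place a wall in $\mathcal{W}_P$ that contributes nothing to the wallspace on $Y_P$), and handle any compact hyperplanes of $Y_P$ by either verifying that their extensions lie in $\mathcal{W}_P$ or by observing that their omission produces only a $P$-cocompact quotient of $Y_P$, which is still $P$-cocompact.
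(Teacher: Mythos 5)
Your headline plan --- identifying $(Y_P,\mathcal{W}_P)$ with the hyperplane wallspace of $Y_P$, so that $\mathcal{C}_*(Y_P)$ is (a restriction quotient of) $Y_P$ and cocompactness is automatic --- is not what the paper does, and its key step is exactly the delicate point. You discard the walls of $\mathcal{W}_P$ that are disjoint from $Y_P$ on the grounds that they induce trivial partitions of $Y_P$. But those walls are the reason the peripheral hypothesis of Theorem \ref{jw3.1} is nontrivial: a wall can satisfy $\diam{\Lambda\cap\nbhd{Y_P}{d}}=\infty$ while running alongside $Y_P$ without ever crossing it, and large families of pairwise-crossing walls of this kind produce cubes of the ambient dual deep in the peripheral region that $\mathcal{C}_*(Y_P)$ is meant to account for. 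If such walls could simply be ignored, the lemma would follow from $P$-cocompactness of $Y_P$ alone and Lemmas \ref{bigproj} and \ref{georqc} would be superfluous; the paper instead proves a statement about \emph{all} walls of $\mathcal{W}_P$: by Lemma \ref{bigproj} combined with geometric relative quasiconvexity of carriers (Lemma \ref{georqc}), every wall of $\mathcal{W}_P$ passes within a uniform distance $d'$ of $Y_P$ (it is not shown, and need not be true, that it meets $Y_P$), and then local finiteness plus $P$-cocompactness of $\nbhd{Y_P}{d'}$ gives finitely many $P$-orbits of walls in $\mathcal{W}_P$, which the paper takes to be exactly what $P$-cocompactness of $\mathcal{C}_*(Y_P)$ means. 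Your bijection with hyperplanes (via Corollary \ref{wallintersectvs} and Lemma \ref{locdet}) and your compact-hyperplane/restriction-quotient caveat are fine as far as they go, but they do not repair the dismissal of the non-crossing walls, so the proposal as written has a genuine gap at its main step.

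Your middle paragraph, however, comes much closer to the paper's actual target, by a genuinely different route. Characterizing the walls of $\mathcal{W}_P$ inside a $G$-orbit $G\Lambda_0$ by the double cosets $Pg\stab{\Lambda_0}$ with $g\stab{\Lambda_0}g^{-1}\cap P$ infinite (the forward direction is your pigeonhole using Lemma \ref{wallsrqc}, $P$-cocompactness of $\nbhd{Y_P}{d}$ and local finiteness; the converse uses properness; almost-malnormality of the peripheral structure makes the double-coset bookkeeping exact for infinite intersections), and then invoking Hruska's finiteness of infinite peripheral intersections of a relatively quasiconvex subgroup together with Proposition \ref{stabsrqc} and the finiteness of $G$-orbits of walls, yields precisely the conclusion ``finitely many $P$-orbits of walls in $\mathcal{W}_P$'' that the paper derives geometrically. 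If you promote that argument to the main one (and, as the paper does, take this finiteness as the meaning of $P$-cocompactness of $\mathcal{C}_*(Y_P)$), you get an algebraic proof that bypasses Lemma \ref{bigproj} entirely at the cost of importing Hruska's theorem; the paper's route stays within its own diagrammatic toolkit and yields the stronger uniform closeness statement, from which the orbit count is elementary.
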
 

\begin{proof}

Supppose that $\Lambda$ is a wall of $\ucc{X}$ with the property that $\diam{\Lambda\cap\nbhd{Y_P}{d}}=\infty$ for some $d$. Consider points $x$ and $y$ which are very far apart in $\Lambda\cap\nbhd{Y_P}{d}$. Let $\pi_x$ and $\pi_y$ be their projections to $Y_P$, and let $\gamma$ be a relative geodesic between them. By the triangle inequality, $d(\pi_x,\pi_y)$ grows with $d(x,y)$. Choose $x$ and $y$ far enough apart that $d(\pi_x,\pi_y)>R$, where $R(d)$ is chosen according to Lemma \ref{bigproj}. By that lemma, there is a point $z$ in $Y_P$ within distance $W_X/2$ of an essential edge $e$ of $\gamma$. By geometric relative quasiconvexity of wall carriers (Lemma \ref{georqc}), the distance from $e$ to the carrier of $\Lambda$ is uniformly bounded, which also means the distance from $e$ to $\Lambda$ is uniformly bounded since any point in the carrier is within $W_X/2$ of $\Lambda$. So $\Lambda$ passes uniformly close to $Y_P$ independently of $\Lambda$, say within some distance $d'$.

Now, since $P=\stab{Y_P}$ acts cocompactly on $Y_P$ (its action is a covering space action and the vertex space for $P$ is a compact NPC cube complex), $P$ also acts cocompactly on $\nbhd{Y_P}{d'}$ by local finiteness of $\ucc{X}$. Since every wall $\Lambda$ with $\diam{\Lambda\cap\nbhd{Y_P}{d}}=\infty$ for some $d$ meets $\nbhd{Y_P}{d'}$, there are finitely many $P$-orbits of such walls. This is exactly what it means for $\mathcal{C}_*(Y_P)$ to be $P$-cocompact.
\end{proof}

Putting everything together, we have the main theorem for staggered generalized $2$-complexes with locally indicable vertex groups and $n(X)\geq 4$.

\noindent
\begin{thm}
\label{main2}
Let $X$ be a staggered generalized $2$-complex. Suppose that $X$ has locally indicable vertex groups and that $n(X)\geq 4$. Suppose that for each vertex space $V$ of $X$, $\pi_1(V)$ acts properly and cocompactly on a $\cz$ cube complex. Then $\pi_1(X)$ acts properly and cocompactly on a $\cz$ cube complex.
\end{thm}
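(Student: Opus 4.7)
The plan is to assemble the pieces developed throughout the paper and feed them into the Sageev construction together with the Hruska--Wise cocompactness criterion (Theorem \ref{jw3.1}). First I would set $G=\pi_1(X)$, $Y=\ucc{X}$, and take $\mathcal{W}$ to be the collection of walls constructed in Section \ref{sect:wl}. The group $G$ acts on $\ucc{X}$ by covering-like transformations (with finite stabilizers at interiors of essential $2$-cells); since $C(X)$ is finite and each vertex space is a compact NPC cube complex, $\ucc{X}$ is locally finite and the $G$-action is proper and cocompact on $Y$. The construction of walls is $G$-equivariant, and local finiteness of $\ucc{X}$ together with finiteness of $C(X)$ and of the number of vertex spaces of $X$ forces the number of $G$-orbits of walls to be finite. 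Proposition \ref{7.4} and Corollary \ref{wallsep} show that $(Y,\mathcal{W})$ is a wallspace in the required sense, so the Sageev construction produces a CAT(0) cube complex $\mathcal{C}(Y)$ on which $G$ acts.

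Next I would verify the remaining hypotheses of Theorem \ref{jw3.1}. Let $\PP$ be the (finite) set of vertex groups; Lemma \ref{rh} shows $G$ is hyperbolic relative to $\PP$. For each $P\in\PP$ take $Y_P\subset \ucc{X}$ to be the vertex space stabilized by $P$, which is $P$-cocompact since the corresponding vertex space in $X$ is compact. Lemma \ref{wallsrqc} gives cocompactness of $\stab{\Lambda}$ on $\Lambda$, and Proposition \ref{stabsrqc} provides relative quasiconvexity of each wall stabilizer. The $P$-cocompactness of each peripheral dual complex $\mathcal{C}_*(Y_P)$ is Lemma \ref{periphcc}. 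With all hypotheses in place, Theorem \ref{jw3.1} yields that $G$ acts cocompactly on $\mathcal{C}(Y)$.

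For properness, I would invoke the linear separation property established in Proposition \ref{9.1}: there exist constants $\kappa,\epsilon>0$ such that for any two $0$-cells $x,y\in\ucc{X}$, the number of walls separating $x$ and $y$ is at least $\kappa\,d(x,y)-\epsilon$. Combined with the fact that $G$ acts properly on $\ucc{X}$, the standard criterion (see \cite[Theorem 5.2]{hw2}) implies that the induced action on $\mathcal{C}(Y)$ is proper: a sequence $g_n\in G$ stabilizing a given vertex $v\in\mathcal{C}(Y)$ would preserve the sides of every wall separating some fixed $0$-cell $x_0\in\ucc{X}$ from $g_n x_0$, forcing $d(x_0, g_n x_0)$ to be bounded, which by properness of the $G$-action on $\ucc{X}$ forces $\{g_n\}$ to be finite.

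The real content has already been done; the main obstacle for this final theorem is just checking that every hypothesis of Theorem \ref{jw3.1} has been addressed and that the properness criterion applies in the relatively hyperbolic setting. The only subtle point I would be careful about is that the action of $G$ on $\ucc{X}$ is not free (elements act as rotations on essential $2$-cells of exponent $\geq 2$), but the stabilizers involved are finite, so this does not obstruct either properness of the $G$-action on $\ucc{X}$ or the translation of linear separation into properness on $\mathcal{C}(Y)$. Putting these together proves Theorem \ref{main2}, and Theorem \ref{main} then follows as Corollary \ref{main3} by taking $X$ to be a dumbbell space for $A\ast B$ with a single $2$-cell attached along a loop representing $w^n$, which is visibly a staggered generalized $2$-complex with $n(X)=n\geq 4$ and locally indicable, cubulable vertex groups.
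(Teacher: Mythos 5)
Your proposal is correct and follows essentially the same route as the paper's own proof: both use the Sageev construction, verify the hypotheses of Theorem \ref{jw3.1} via Lemmas \ref{rh}, \ref{wallsrqc}, \ref{periphcc} and Proposition \ref{stabsrqc} to obtain cocompactness, and invoke Proposition \ref{9.1} together with \cite[Theorem 5.2]{hw2} for properness. The extra remark about finite $2$-cell stabilizers is a nice clarification but does not constitute a genuinely different argument.
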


\begin{proof}

As before, let $G=\pi_1(X)$. Let $\mathcal{W}$ be the collection of walls in $\ucc{X}$ coming from the construction of Section \ref{sect:wl}. Let $\mathcal{C}$ be the cube complex dual to the action of $G$ on the wallspace $(\ucc{X},\mathcal{W})$.

By Proposition \ref{9.1}, the wallspace $(\ucc{X},\mathcal{W})$ satisfies linear separation. By \cite[Theorem 5.2]{hw2}, the action of $G$ on $\mathcal{C}$ is proper.

Let $\PP$ be the finite collection of vertex groups of $X$. Each vertex group $P$ has an associated vertex space $V_P$ in $X$ (a compact NPC cube complex). Fix a base-point in $\ucc{X}$ and let $Y_P$ to be the copy of the universal cover of $V_P$ in $\ucc{X}$ (a $\cz$ cube complex) with $\stab{Y_P}=P$.

Observe that all hypotheses of Theorem \ref{jw3.1} are satisfied. Indeed, it is clear that $G$ acts properly and cocompactly on $\ucc{X}$ preserving both its metric and wallspace structures, and the action on $\mathcal{W}$ has only finitely many $G$-orbits of walls. Relative hyperbolicity of $(G,\PP)$ was shown in Lemma \ref{rh}. For each wall $\Lambda$, Lemma \ref{wallsrqc} implies $\stab{\Lambda}$ acts cocompactly on it, and we showed $\stab{\Lambda}$ is relatively quasiconvex in Proposition \ref{stabsrqc}. Finally,  each $\mathcal{C}_*(Y_P)$ is $P$-cocompact by Lemma \ref{periphcc}.

Applying Theorem \ref{jw3.1}, the action of $G$ on $\mathcal{C}$ is cocompact and the theorem is proved.
\end{proof}

\noindent
\begin{cor}
\label{main3}
Let $A$ and $B$ be locally indicable, cubulable groups, $w$ a word in $A*B$ which is not conjugate into $A$ or $B$, and $n\geq 4$. Then $G=A*B/\nc{w^n}$ is cubulable.
\end{cor}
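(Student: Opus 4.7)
The plan is to realize $G$ as the fundamental group of a staggered generalized $2$-complex $X$ satisfying the hypotheses of Theorem~\ref{main2}. Since $A$ and $B$ are cubulable and locally indicable (hence torsion-free), they are fundamental groups of compact NPC cube complexes $X_A$ and $X_B$. I would build the total space $\gos{X}$ as the graph of spaces with the two vertex spaces $X_A$ and $X_B$ joined by a single essential edge $e$ carrying a trivial edge space; this gives $\pi_1(\gos{X}) \cong A*B$ and $E(X) = \{e\}$.

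Next, since $w$ is not conjugate into $A$ or $B$, I may replace $w$ by a cyclic conjugate (which does not change $\nc{w^n}$) to assume $w$ is cyclically reduced in the free product $A*B$. Then $w$ has a normal form alternating between nontrivial elements of $A$ and $B$, and $w^n$ can be realized by a regular edge loop $p^n$ in $\os{\gos{X}}$ built by concatenating edge loops in $X_A$ and $X_B$ that represent the consecutive factors, with traversals of $e^{\pm 1}$ in between. The loop $p^n$ is cyclically reduced in the sense of the preliminary definition (the subloops between consecutive crossings of $e$ represent nontrivial elements of the vertex groups), and its image contains $e$. Attaching a single essential $2$-cell $\alpha$ along $p^n$ produces $X$; since $C(X) = \{\alpha\}$ and $E(X) = \{e\}$ are singletons, the required staggering is automatic. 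By the Seifert--van Kampen theorem, $\pi_1(X) \cong A*B/\nc{w^n} = G$.

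Finally, I would verify the remaining hypotheses of Theorem~\ref{main2}. The vertex groups $A$ and $B$ are locally indicable and cubulable by assumption, and the exponent $m(\alpha)$ (the largest $m$ for which $w^n$ is an $m$-th power in $A*B$) satisfies $m(\alpha) \geq n \geq 4$, so $n(X) = m(\alpha) \geq 4$. Theorem~\ref{main2} then yields a proper and cocompact action of $G$ on a $\cz$ cube complex.

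All of the substantive difficulty has already been absorbed into Theorem~\ref{main2}, so there is no real obstacle in deducing the corollary; the only step requiring any care is translating the group-theoretic hypothesis that $w$ is not conjugate into a factor into the topological requirement that the attaching map of $\alpha$ be a cyclically reduced edge path containing an essential edge, and this is routine via the standard normal form in a free product.
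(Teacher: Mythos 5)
Your proposal is correct and matches the paper's own argument: after assuming $w$ cyclically reduced, one builds the dumbbell-type model space from compact NPC cube complexes for $A$ and $B$ joined by a single essential edge, attaches one $2$-cell along $w^n$ (so the staggering is trivial and $n(X)\geq n\geq 4$), and applies Theorem \ref{main2}. Your extra care in translating ``not conjugate into a factor'' into a cyclically reduced attaching path containing the essential edge is exactly the routine verification the paper leaves implicit.
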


\begin{proof}
We may assume that $w$ is cyclically reduced. Build a model space $X$ for $G=A*B/\nc{w^n}$ by starting with a dumbell space $X_A\vee X_B$ of non-positively curved cube complexes with $\pi_1(X_A)=A$ and $\pi_1(X_B)=B$, and then attaching a $2$-cell to a path corresponding to the word $w^n$, so that $\pi_1(X)=G$. Observe that $X$ is trivially staggered generalized and Theorem \ref{main2} applies.
\end{proof}


\phantomsection

\addcontentsline{toc}{section}{References}

\bibliographystyle{amsalpha}

\bibliography{paper}

\textsc{Mathematics Department, University of Oklahoma, Norman, OK 73019, USA}

Email: \texttt{\href{mailto:bwstucky@ou.edu}{bwstucky@ou.edu}}

URL: \texttt{\href{http://benstuc.ky}{http://benstuc.ky}}

\end{document}